\DeclareFontFamily{U}{mathx}{}
\DeclareFontShape{U}{mathx}{m}{n}{<-> mathx10}{}
\DeclareSymbolFont{mathx}{U}{mathx}{m}{n}
\DeclareMathAccent{\widecheck}{0}{mathx}{"71}
\numberwithin{equation}{section}
\theoremstyle{plain} 
\newtheorem{theorem}{Theorem}[section]
\newtheorem{lemma}[theorem]{Lemma}
\newtheorem{proposition}[theorem]{Proposition}
\newtheorem{remark}[theorem]{Remark}
\newtheorem{assumption}[theorem]{Assumption}
\theoremstyle{definition}
\newtheorem{example}[theorem]{Example}
\renewcommand{\Re}{\mathrm{Re}\,}
\renewcommand{\Im}{\mathrm{Im}\,}
\newcommand{\hell}{\hat{\ell}}
\newcommand{\E}{{\mathbb E }}
\newcommand{\R}{{\mathbb R }}
\newcommand{\N}{{\mathbb N}}
\newcommand{\Z}{{\mathbb Z}}
\newcommand{\C}{{\mathbb C}}
\newcommand{\ii}{\mathrm{i}}
\newcommand{\ee}{\mathrm{e}}
\newcommand{\dd}{\mathrm{d}}
\newcommand{\sgn}{\mathrm{sgn}}
\newcommand{\vertiii}[1]{{\left\vert\kern-0.25ex\left\vert\kern-0.25ex\left\vert #1 
		\right\vert\kern-0.25ex\right\vert\kern-0.25ex\right\vert}}
\newcommand{\nc}{\normalcolor}
\newcommand{\bs}{\boldsymbol}
\def\Tr{\mathrm{Tr}}
\def\<{\langle}
\def\>{\rangle}
\renewcommand{\mathbf}[1]{\bs{#1}}
\title[Eigenstate Thermalisation at the edge for Wigner Matrices]{Eigenstate Thermalisation at the edge for Wigner Matrices}
\author[Cipolloni \and Erd\H{o}s \and Henheik]{Giorgio Cipolloni \qquad \quad L\'aszl\'o Erd\H{o}s\(^{*}\) \qquad \quad Joscha Henheik\(^{*}\)}
\address{G.C., Princeton Center for Theoretical Science and Department of Mathematics
	Princeton University, Princeton, NJ 08544, USA}
\email{gc4233@princeton.edu} 
\address{L.E.~and J.H., IST Austria, Am Campus 1, 3400 Klosterneuburg, Austria}
\email{lerdos@ist.ac.at}
\email{joscha.henheik@ist.ac.at}
\thanks{\(^*\)Supported by ERC Advanced Grant ``RMTBeyond'' No.~101020331}
\subjclass[2020]{60B20, 82B10, 58J51 } 
\keywords{Eigenstate Thermalisation Hypothesis, Quantum Unique Ergodicity, Local Law, Method of Characteristics}
\date{\today} 
\begin{document}
	 \begin{abstract}  We prove the Eigenstate Thermalisation Hypothesis for Wigner matrices uniformly in
		the entire spectrum, in particular near the spectral edges,
		with a bound on the fluctuation that is optimal for any observable. 
		This complements  earlier works of Cipolloni et.~al.~\cite{ETHpaper, A2}
		and Benigni et.~al.~\cite{BenigniLopatto2103.12013, 2303.11142} that	were restricted either to the bulk of
		the spectrum or to special observables.  As a main ingredient, we prove a new 
		multi-resolvent local law that optimally  accounts for the edge scaling. 
	\end{abstract}

	\maketitle

\section{Introduction}

In the physics literature, the \emph{Eigenstate Thermalisation Hypothesis (ETH)} asserts that 
each eigenfunction of a sufficiently chaotic quantum system 
is uniformly distributed in the phase space. This concept was coined by Srednicki \cite{Srednicki}
after similar ideas appeared in the seminal paper of Deutsch \cite{deutsch1991}.
While the original physics setup concerns genuine many-body systems, especially
a small system in a heat bath described by standard statistical mechanics, Deutsch 
has  also formulated a phenomenological version of  ETH for the simplest chaotic quantum system,
 the Wigner ensemble. In this form, ETH asserts that
  for any deterministic observable (matrix) $A$
and for any normalised eigenvector $\bm{u}$ of a large  $N\times N$ 
Wigner matrix, the quadratic form $\langle \bm{u}, A\bm{u}\rangle$
is very close to its statistical average, which, in the Wigner case,
 is the normalized trace $\langle A\rangle : =\frac{1}{N} \Tr A$:
 \begin{equation}\label{eth}
  |\langle \bm{u}, A\bm{u}\rangle - \langle A\rangle|\lesssim \frac{\| A\|}{\sqrt{N}}.
 \end{equation}
The $1/\sqrt{N}$ speed of convergence is optimal and it is  in agreement with 
the earlier predictions of Feingold and Peres \cite{FeinPeres}, see also \cite{EckFisch}.
For more physics background and references, see
the introduction of \cite{ETHpaper}.

In the mathematics literature the same phenomenon  is known as the 
\emph{Quantum Unique Ergodicity  (QUE)}. In precise mathematical terms it was formulated by
Rudnick and Sarnak \cite{RudnickSarnak1994}
for standard quantisations of ergodic classical dynamical systems  and proved only in some special cases \cite{Lindenstrauss, Soundararajan, HoloSound, BrooksLindenstrauss}, often as a purely limiting statement 
without optimizing the  speed of convergence.
The key point is to control the behaviour of \emph{all} eigenfunctions; a
similar result for \emph{most} eigenfunctions  (called \emph{Quantum Ergodicity}) is  much easier and
has  been earlier discovered by \u{S}nirel’man \cite{snirelman1974}, see also \cite{ColinDeVerdiere1985, Zelditch1987}.

Motivated by the  paper of Deutsch \cite{deutsch1991} and the novel
technical developments in random matrix theory,
the ETH for Wigner matrices in the form~\eqref{eth}   has been the object of 
intense study in recent years.   An important question is
 the precise dependence of the error term in the right hand side on $A$. 
 The first proof of \eqref{eth} given in \cite{ETHpaper} involved 
  the operator norm $\lVert \mathring{A}\rVert$ of the traceless part $ \mathring{A} :=A-\langle A\rangle$ 
  of $A$, but
 this estimate is far from optimal for low rank observables. For example,
if $A= |\bm{q}\rangle\langle \bm{q}|$ is  a rank--one projection 
onto a normalised vector $\bm{q}\in\C^N$, then $\langle \bm{u}, A\bm{u}\rangle = | \langle \bm{u}, \bm{q}\rangle|^2$ 
which is known to be essentially of order $1/N$ by the \emph{complete delocalisation of eigenvectors},
see \cite{ESY2009, EYY2012, KnowYin, BEKYY, BenLopDeloc}. However the result in  \cite{ETHpaper} gives only 
the suboptimal estimate  $ | \langle \bm{u}, \bm{q}\rangle|^2 \lesssim 1/\sqrt{N}$ for this special observable.

 In the Gaussian (GUE/GOE) case, the eigenvectors are uniformly Haar distributed,
hence explicit moment calculations for $\langle \bm{u}, A\bm{u}\rangle$ are possible by Weingarten calculus. 
The result \nc indicates the following optimal form of~\eqref{eth}:
\begin{equation}
\label{eth1}
	\left| \langle \bm{u}_i, A \bm{u}_j\rangle - \delta_{ij} \langle A \rangle \right| \lesssim
		\frac{\langle |\mathring{A}|^2 \rangle^{1/2}}{\sqrt{N}}.
	\end{equation}
Note that this estimate  involves the (normalised) 
 Hilbert-Schmidt norm $\langle |\mathring{A}|^2 \rangle^{1/2}$   instead of the operator norm\footnote{Note that 
$\langle |\mathring{A}|^2 \rangle^{1/2}$ 
 is substantially smaller than $\lVert \mathring{A}\rVert$ for matrices  $\mathring{A}$ of low
rank;   in fact, if $\mbox{rank}(\mathring{A})=1$, then 
 $\lVert \mathring{A}\rVert= \sqrt{N} \langle |\mathring{A}|^2 \rangle^{1/2}$, losing the entire $\sqrt{N}$ factor
 in~\eqref{eth} compared with the optimal~\eqref{eth1}.  },
and  it can also be extended to different eigenvectors $\bm{u}_i, \bm{u}_j$.
In particular, \eqref{eth1} recovers the optimal delocalisation bound for eigenvectors
as a special case.

 The optimal form of ETH \eqref{eth1} for any Wigner matrix was proved 
   for the special case when $A$ is a projection in \cite{BenigniLopatto2103.12013, 2303.11142},
and for arbitrary $A$ but only  in the bulk of the spectrum\footnote{We point out that the end of the proof
of Theorem 2.2 in  the published version of  \cite{A2} contained a small error; a 
correct and in fact simpler argument
was given in the updated arXiv:2203.01861 version of the paper.}  in \cite{A2}.  In fact, 
$\sqrt{N} [\langle \bm{u}_i, A \bm{u}_j\rangle - \delta_{ij} \langle A\rangle ]$ is asymptotically normal
with variance proportional to $\langle |\mathring{A}|^2 \rangle^{1/2}$  (see \cite{normalfluc, A2})
in the bulk,
showing that the Hilbert-Schmidt norm 
$\langle |\mathring{A}|^2 \rangle^{1/2}$ is indeed the optimal one.
In the main theorem  of the current paper 
(see Theorem~\ref{thm:ETH} below) we prove \eqref{eth1} for all observables and 
all eigenfunctions, giving  the optimal ETH for Wigner matrices
in all spectral regimes.

We remark that ETH is expected to hold for much more general random matrix ensembles. For example the approach in \cite{ETHpaper} could be  directly generalized 
to a special class of generalized Wigner matrices
in \cite{GenWigETH}. 
Furthermore, ETH in the bulk has recently been extended
to deformed random matrix ensembles \cite{iidpaper, equipart}, 
where both the leading  term $\delta_{ij} \langle A \rangle$
and the correct replacement for the traceless  part of $A$ in the right hand side of~\eqref{eth1} 
became considerably more involved, in particular they are energy dependent.
The edge regime and the optimal norm of $A$ in the error term are still open questions for these ensembles,
but we leave them to further works and for simplicity we focus on the Wigner case here.

The key tool to achieve our ETH is a new \emph{multi--resolvent local law} with traceless observables \nc 
that is optimal
at the spectral edges.
 Multi--resolvent local laws in general  refer to concentration results for alternating products of resolvents 
of a random matrix and deterministic matrices (observables). \nc  Their  proofs are typically more difficult 
at the spectral edges since,   besides correctly accounting for the traceless observables, 
their optimal form also \nc requires to exploit 
a  delicate cancellation mechanism; the smallness of the local density of eigenvalues 
must accurately  compensate for the linear instability of a nonlinear equation that governs the fluctuation.
In contrast to the previous proofs of local laws behind ETH results, here we apply a dynamical 
approach, the \emph{method characteristic flow} complemented with a Green function comparison
argument.
 While this method has already been  extensively
tested for single resolvent local laws \cite{Bourgade2021, HuangLandon, AdhiHuang, LLS, AdhiLandon, LandSos, AggaHuang}, 
only two papers concern  the multi-resolvent situation \cite{2210.12060, bourfalc},
neither of them focuses on the critical  edge behaviour. 
On top of the edge, we will need to track another key aspect of the local law; in order to obtain 
the Hilbert-Schmidt norm in \eqref{eth1}, the same norm must appear in the local law as well.
Typically, errors in the local laws involve the operator
 norm of the deterministic matrices between the resolvents,
the control in the much harder Hilbert-Schmidt sense was considered only very recently in \cite{A2}.
However, this work did not track the optimal edge behaviour. Our new local law  is simultaneously optimal
in both aspects. We will explain the strength of this new local law in the context of previous works
in Section~\ref{sec:loclaw}.

\subsection*{Notations}
By $\lceil x \rceil := \min\{ m \in \Z \colon m \ge x \}$ and $\lfloor x \rfloor := \max\{ m \in \Z \colon m \le x \}$
we denote the upper and lower integer part of a real number $x \in \R$.
We set $[k] := \{1, ... , k\}$ for $k \in \N$ and $\langle A \rangle := d^{-1} \mathrm{Tr}(A)$, $d \in \N$,
for the normalised trace of a $d \times d$-matrix $A$.
For positive quantities $A, B$ we write $A \lesssim B$ resp.~$A \gtrsim B$ and mean that $A \le C B$ resp.~$A \ge c B$ for some $N$-independent constants $c, C > 0$ that depend only on the basic control parameters of the model
in Assumption~\ref{ass:entries} below. Moreover, for $N$-dependent positive quantities $A, B$, we write $A \ll B$ whenever $A/B \to 0$ as $N \to \infty$.

 We denote vectors by bold-faced lower case Roman letters $\boldsymbol{x}, \boldsymbol{y} \in \C^{N}$, for some $N \in \N$, and define 
\begin{equation*}
	\langle \boldsymbol{x}, \boldsymbol{y} \rangle := \sum_i \bar{x}_i y_i\,, 
	\qquad A_{\boldsymbol{x} \boldsymbol{y}} := \langle \boldsymbol{x}, A \boldsymbol{y} \rangle\,.
\end{equation*}
Matrix entries are indexed by lower case Roman letters $a, b, c , ... ,i,j,k,... $ from the beginning or the middle of the alphabet and unrestricted sums over those are always understood to be over $\{ 1 , ... , N\}$. 

Finally, we will use the concept  \emph{'with very high probability'},  meaning that for any fixed $D > 0$, the probability of an $N$-dependent event is bigger than $1 - N^{-D}$ for all $N \ge N_0(D)$. Also, we will use the convention that $\xi > 0$ denotes an arbitrarily small positive exponent, independent of $N$.
 Moreover, we introduce the common notion of \emph{stochastic domination} (see, e.g., \cite{semicirclegeneral}): For two families
\begin{equation*}
	X = \left(X^{(N)}(u) \mid N \in \N, u \in U^{(N)}\right) \quad \text{and} \quad Y = \left(Y^{(N)}(u) \mid N \in \N, u \in U^{(N)}\right)
\end{equation*}
of non-negative random variables indexed by $N$, and possibly a parameter $u$, we say that $X$ is stochastically dominated by $Y$, if for all $\epsilon, D >0$ we have 
\begin{equation*}
	\sup_{u \in U^{(N)}} \mathbf{P} \left[X^{(N)}(u) > N^\epsilon Y^{(N)}(u)\right] \le N^{-D}
\end{equation*}
for large enough $N \ge N_0(\epsilon, D)$. In this case we write $X \prec Y$. If for some complex family of random variables we have $\vert X \vert \prec Y$, we also write $X = O_\prec(Y)$. 
\subsection*{Acknowledgment.} We thank Volodymyr Riabov for his help with creating Figure \ref{fig:flow}  and for pointing out the missing condition $|\sigma|< 1$ in Assumption \ref{ass:entries}.
\section{Main results} \label{sec:mainres}

We consider $N\times N$ Wigner matrices $W$, i.e.~$W$ is a random real symmetric or complex Hermitian  matrix $W=W^*$ with independent entries (up to the Hermitian symmetry) and with
 single entry distributions $w_{aa}\stackrel{\dd}{=}N^{-1/2}\chi_{\dd}$, and $w_{ab}\stackrel{\dd}{=}N^{-1/2}\chi_{\mathrm{od}}$, for $a>b$. The random variables $\chi_{\mathrm{d}},\chi_{\mathrm{od}}$ 
 satisfy the following assumptions.\footnote{By inspecting our proof, it is easy to see that  actually we do not need to assume that the off-diagonal entries of $W$ are  identically distributed. We only need that they all have the same second moments, but higher moments can be different.}
\begin{assumption}
\label{ass:entries}
The off-diagonal distribution $\chi_{\mathrm{od}}$ is a real or complex centered random variable, $\E\chi_{\mathrm{od}}=0$, with $\E|\chi_{\mathrm{od}}|^2 = 1$, and we have that $\sigma := \E \chi_{\mathrm{od}}^2$ satisfies $|\sigma| < 1$.
The diagonal distribution is a real centered random variable, $\E \chi_{\mathrm{d}} =0$. Furthermore, we assume the existence of high moments, i.e.~for any $p\in \N$ there exists $C_p > 0$ such that
\[
\E \big[|\chi_{\mathrm{d}}|^p+|\chi_{\mathrm{od}}|^p\big]\le C_p\,. 
\]
\end{assumption}
Our main result is the optimal form of the eigenstate thermalization hypothesis (ETH) for Wigner matrices uniformly in the spectrum, in particular, including the spectral edges. Its proof is given in Section~\ref{subsec:proofETH} and it is
based on a new  \emph{multi-resolvent local law}, Theorem~\ref{thm:main} below.

\begin{theorem}[Eigenstate Thermalization Hypothesis] \label{thm:ETH}
	Let $W$ be a Wigner matrix satisfying Assumption~\ref{ass:entries} with orthonormalized eigenvectors $\bm{u}_1, ... , \bm{u}_N$ and let $A \in \C^{N \times N}$ be deterministic. Then
	\begin{equation}\label{eq:eth}
		\max_{i,j \in [N]} \left| \langle \bm{u}_i, A \bm{u}_j\rangle - \delta_{ij} \langle A \rangle \right| \prec 
		\frac{\langle |\mathring{A}|^2 \rangle^{1/2}}{\sqrt{N}}
	\end{equation}
	where $\mathring{A} := A - \langle A \rangle$ denotes the traceless part of $A$. 
	\end{theorem}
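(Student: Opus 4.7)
The plan is to reduce Theorem~\ref{thm:ETH} to the multi-resolvent local law, Theorem~\ref{thm:main}, via the spectral-decomposition/positivity trick of the earlier ETH works~\cite{ETHpaper, A2}, now run with the spectral parameter all the way down to the local eigenvalue scale uniformly in the spectrum, including at the edge where this scale degenerates from $1/N$ to $N^{-2/3}$. Writing $z = E + \ii\eta$ and using the spectral resolution of $\mathrm{Im}\,G(z)$, one has the identity
\begin{equation*}
 \langle \mathrm{Im}\,G(z)\,\mathring{A}\,\mathrm{Im}\,G(z)\,\mathring{A}^{*}\rangle
 \;=\; \frac{1}{N}\sum_{k,l}\frac{\eta^{2}\,|\mathring{A}_{kl}|^{2}}{|\lambda_{k}-z|^{2}|\lambda_{l}-z|^{2}},
 \qquad \mathring{A}_{kl}\deq\langle\bm{u}_{k},\mathring{A}\bm{u}_{l}\rangle,
\end{equation*}
in which the quantity to be estimated in~\eqref{eq:eth} is precisely $|\mathring{A}_{ij}|$ (for both $i=j$ and $i\neq j$, using $\|\bm{u}_i\|=1$ on the diagonal). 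Since all summands are non-negative, retaining only the $(k,l)=(i,j)$ term gives the master bound $|\mathring{A}_{ij}|^{2}\le N\eta^{-2}|\lambda_{i}-z|^{2}|\lambda_{j}-z|^{2}\,\langle\mathrm{Im}\,G(z)\mathring{A}\,\mathrm{Im}\,G(z)\mathring{A}^{*}\rangle$.

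Choose now $E := (\gamma_{i}+\gamma_{j})/2$, where $\gamma_{k}$ is the $k$-th semicircle quantile, and set $\eta$ equal to the maximum of $|\gamma_{i}-\gamma_{j}|$ and the local fluctuation scale $N^{-1+\xi}/\rho_{\mathrm{sc}}(E)$; the latter is $\sim N^{-1+\xi}$ in the bulk and $\sim N^{-2/3+\xi}$ near the edges. Classical eigenvalue rigidity --- a consequence of the known single-resolvent local law --- then gives $|\lambda_{i}-z|, |\lambda_{j}-z|\prec\eta$ with very high probability, so the rigidity prefactor is $\prec N\eta^{2}$. On the other hand, a short Ward-identity calculation --- $\mathrm{Im}\,G = (G(z)-G(\bar z))/2\ii$ together with the deterministic approximation $\langle G(z_{1})\mathring{A}G(z_{2})\mathring{A}^{*}\rangle \approx m(z_{1})m(z_{2})\langle|\mathring{A}|^{2}\rangle$ for traceless Wigner observables --- identifies the leading term of the remaining multi-resolvent trace as $(\mathrm{Im}\,m(z))^{2}\langle|\mathring{A}|^{2}\rangle = \rho_{\mathrm{sc}}(E)^{2}\langle|\mathring{A}|^{2}\rangle$, and Theorem~\ref{thm:main} is precisely the statement that this approximation holds uniformly down to the local scales indicated, with a strictly smaller error. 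Assembling everything,
\begin{equation*}
 |\mathring{A}_{ij}|^{2}\;\prec\;N\eta^{2}\,\rho_{\mathrm{sc}}(E)^{2}\,\langle|\mathring{A}|^{2}\rangle
 \;=\; N\,(\eta\,\rho_{\mathrm{sc}}(E))^{2}\,\langle|\mathring{A}|^{2}\rangle
 \;\prec\; N^{-1+2\xi}\,\langle|\mathring{A}|^{2}\rangle,
\end{equation*}
because $\eta\,\rho_{\mathrm{sc}}(E)\asymp N^{-1+\xi}$ throughout the spectrum (both in the bulk, where $\rho_{\mathrm{sc}}\sim 1$ and $\eta\sim 1/N$, and at the edges, where $\rho_{\mathrm{sc}}\sim N^{-1/3}$ and $\eta\sim N^{-2/3}$). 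A union bound over $i,j\in[N]$ and the freedom in $\xi>0$ deliver~\eqref{eq:eth}.

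This reduction is short; the main obstacle is Theorem~\ref{thm:main} itself, which must simultaneously deliver two features that no previous multi-resolvent local law for Wigner achieves: first, the dependence on $\mathring{A}$ through the normalized Hilbert--Schmidt norm $\langle|\mathring{A}|^{2}\rangle^{1/2}$ rather than the operator norm (operator-norm bounds~\cite{ETHpaper} lose the full $\sqrt{N}$ for rank-one observables), and second, the correct vanishing at the edges, which requires the smallness of $\rho_{\mathrm{sc}}$ to \emph{cancel} rather than compound with the near-singularity of the stability operator of the underlying vector Dyson equation. The announced proof is dynamical: evolve $W$ under an Ornstein--Uhlenbeck flow $W\mapsto W_{t}$, propagate a bootstrap bound on $\langle\mathrm{Im}\,G_{t}\mathring{A}\,\mathrm{Im}\,G_{t}\mathring{A}^{*}\rangle$ along the induced characteristic flow of the spectral parameter $z_{t}$, close the bootstrap by a Gr\"onwall argument exploiting the edge cancellation along the flow, and finally remove the Gaussian component added by the flow by Green function comparison. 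Relative to the earlier characteristic-flow multi-resolvent analyses~\cite{2210.12060, bourfalc}, the real novelty is to keep the Hilbert--Schmidt dependence on $\mathring{A}$ and the edge-critical scaling alive \emph{simultaneously} along the entire trajectory.
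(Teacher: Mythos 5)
Your reduction via the spectral decomposition and positivity of the summands is the right idea and matches the paper's strategy, but the choice of a \emph{single} spectral parameter $z=E+\ii\eta$ introduces a genuine gap for distant indices $i,j$. You set $E=(\gamma_i+\gamma_j)/2$ and $\eta=\max\bigl(|\gamma_i-\gamma_j|,\,N^{-1+\xi}/\rho_{\rm sc}(E)\bigr)$ so that rigidity gives $|\lambda_i-z|,|\lambda_j-z|\prec\eta$, and you then claim $\eta\,\rho_{\rm sc}(E)\asymp N^{-1+\xi}$. These two requirements are incompatible: whenever $|\gamma_i-\gamma_j|$ exceeds the local scale $N^{-1+\xi}/\rho_{\rm sc}(E)$ --- e.g.~$i=1$, $j=N$, where $\eta\sim 1$ and $\rho_{\rm sc}(E)\sim 1$ --- your final chain of inequalities gives $|\mathring{A}_{ij}|^2\prec N(\eta\rho_{\rm sc}(E))^2\langle|\mathring{A}|^2\rangle\sim N\,\langle|\mathring{A}|^2\rangle$, which is off by a factor $N^2$ from the target $N^{-1}\langle|\mathring{A}|^2\rangle$. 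Shrinking $\eta$ to restore $\eta\rho\sim N^{-1+\xi}$ instead destroys the rigidity step $|\lambda_i-z|\prec\eta$ and blows up the prefactor $N\eta^{-2}|\lambda_i-z|^2|\lambda_j-z|^2$.

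The fix is to use two independent spectral parameters, exactly as in the spectral decomposition you implicitly cite: take $z_1=E_1+\ii\eta_1$ with $E_1\approx\gamma_i$ and $z_2=E_2+\ii\eta_2$ with $E_2\approx\gamma_j$, each $\eta_l$ being the \emph{local} scale defined by $N\eta_l\rho(E_l+\ii\eta_l)=N^\xi$. Then the $(i,j)$ term of
\begin{equation*}
\langle \Im G(z_1)\mathring A\,\Im G(z_2)\mathring A^*\rangle
=\frac{1}{N}\sum_{k,l}\frac{\eta_1}{(\lambda_k-E_1)^2+\eta_1^2}\,\frac{\eta_2}{(\lambda_l-E_2)^2+\eta_2^2}\,|\mathring A_{kl}|^2
\end{equation*}
together with rigidity $|\lambda_i-E_1|\prec\eta_1$, $|\lambda_j-E_2|\prec\eta_2$ yields the prefactor $N\eta_1\eta_2$, and Theorem~\ref{thm:main} (with $k=2$, $\mathfrak I_2=[2]$) gives $\langle \Im G(z_1)\mathring A\Im G(z_2)\mathring A^*\rangle\prec\rho_1\rho_2\langle|\mathring A|^2\rangle$, so that $|\mathring A_{ij}|^2\prec N(\eta_1\rho_1)(\eta_2\rho_2)\langle|\mathring A|^2\rangle\prec N^{-1+2\xi}\langle|\mathring A|^2\rangle$ uniformly in $i,j$. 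You should also make explicit that uniformity over $E_1,E_2\in[-2,2]$ is obtained by a standard grid argument combined with the $O(N)$-Lipschitz continuity of resolvents at imaginary part $\ge N^{-1}$; the union bound over $i,j$ alone is not enough since $E_1,E_2$ depend on the random eigenvalues through $\gamma_i,\gamma_j$ only up to rigidity errors. Everything else in your reduction (Hilbert--Schmidt dependence, edge-uniform local scale, the dynamical origin of Theorem~\ref{thm:main}) is aligned with the paper.
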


\subsection{Multi-resolvent local laws}\label{sec:loclaw}

Consider the resolvent $G(z):=(W-z)^{-1}$, with $z\in\C\setminus\R$. It is well known 
 that in the limit $N\to \infty$ the resolvent becomes deterministic, with its deterministic approximation 
 $m_{\mathrm{sc}}(z)\cdot I$, where $m_{\mathrm{sc}}$ is  the Stieltjes transform of the semicircular law:
\begin{equation}
\label{eq:semicirc}
m(z):=m_{\mathrm{sc}}(z)=\int_\R\frac{1}{x-z}\rho_{\mathrm{sc}}(x)\,\dd x, \qquad\quad \rho_{\mathrm{sc}}(x):=\frac{1}{2\pi}\sqrt{[4-x^2]_+}.
\end{equation}
This  holds even in the local regime as long as $|\Im z|\gg N^{-1}$; such concentration results are commonly called 
{\it local laws.}  

The single resolvent local law, in its simplest form\footnote{Traditionally \cite{EYY2012, KnowYin, BEKYY}, 
 local laws did not consider arbitrary test matrix $A$, but only $A=I$ or special rank one projections
 leading the \emph{isotropic local laws}. General $A$ was included later, e.g.~in~\cite{slowcorr}.}, asserts  that
\begin{equation}
\label{eq:singleG}
\big|\langle (G(z)- m(z))A\rangle\big|\prec \frac{\| A \|}{N\eta}, \qquad \eta:=| \Im z|,
\end{equation}
holds for any deterministic matrix (\emph{observable}) $A$. The $1/N\eta$ error is optimal
for $A=I$
in the relevant $\eta\lesssim1 $ regime
and $N\eta \langle G(z)-m(z)\rangle$ is approximately Gaussian with variance of order one \cite{HeKnowles}.
However, for traceless observables, i.e. $\langle A\rangle =0$, 
hence $A=\mathring{A}$, the bound in~\eqref{eq:singleG} improves to
the optimal form,
\begin{equation*}
\big|\langle (G(z)- m(z))A\rangle\big| =  \big|\langle G(z)A\rangle\big| \prec \frac{  \sqrt{\rho(z)}}{N\sqrt{\eta}}
\langle |A|^2\rangle^{1/2},
 \qquad \rho(z):= \frac{1}{\pi}| \Im m( z)|.
\end{equation*}
The improvement in the $\eta$-power  together and the additional density factor $\rho(z)$ 
relevant near the spectral edges
were first observed in~\cite{ETHpaper}, while the optimal dependence on the  
Hilbert-Schmidt norm of $A$ was proved in~\cite{A2}.
Single resolvent local laws, however, are not sufficient to control the eigenfunction overlaps as in \eqref{eq:eth}.
While the local law, via the spectral decomposition of $\Im G = \frac{1}{2\ii}(G-G^*)$,
\begin{equation}
\label{eq:res}
    \langle \Im G(z)A\rangle  = \frac{1}{N} \sum_i \frac{\eta}{(\lambda_i-E)^2 +\eta^2} \langle \bm{u}_i, A \bm{u}_i\rangle,
    \qquad z=E+\ii \eta,
\end{equation}
gives an effectively local average of approximately $N\eta$ diagonal 
overlaps $\langle \bm{u}_i, A \bm{u}_i\rangle$, inferring
the size of a single overlap is not possible just from this average since $\langle \bm{u}_i, A \bm{u}_i\rangle$ may change sign as $i$ varies.

Two-resolvent local laws are much more powerful. In particular, using
\begin{equation}\label{specdec}
    \langle \Im G(z_1)A\Im G(z_2)A^*\rangle  = 
    \frac{1}{N} \sum_{i,j} \frac{\eta}{(\lambda_i-E_1)^2 +\eta^2} 
    \frac{\eta}{(\lambda_j-E_2)^2 +\eta^2}|\langle \bm{u}_i, A \bm{u}_j\rangle|^2,
    \quad z_l=E_l+\ii \eta, \;\; l=1,2,
\end{equation}
we see that for a traceless observable, $\langle A\rangle=0$,
a bound of the form 
\begin{equation}
\label{eq:2G}
 \langle \Im G(z_1)A\Im G(z_2)A^*\rangle \prec \| A\|^2
\end{equation}
 at $\eta\sim N^{-1+\xi}$, $\xi>0$,
would imply that a local average (in both indices) of $|\langle \bm{u}_i, A \bm{u}_j\rangle|^2$
is bounded by $N^{-1+2\xi}\|A\|^2$. Since $|\langle \bm{u}_i, A \bm{u}_j\rangle|^2$ is positive (unlike 
$\langle \bm{u}_i, A \bm{u}_i\rangle$ in \eqref{eq:res}), we can deduce the optimal bound 
$|\langle \bm{u}_i, A \bm{u}_j\rangle|^2\prec \frac{1}{N} \|A\|^2$ for each overlap. This argument 
in this form is valid only in the bulk; near the spectral edges the right hand side of \eqref{eq:2G} needs to be improved to
$\rho(z_1)\rho(z_2)\|A\|^2$; this was already achieved in~\cite{ETHpaper}. However, to obtain 
the optimal Hilbert-Schmidt norm  of the observable in~\eqref{eq:eth}
a second improvement to the form
\begin{equation}
\label{eq:2Gimproved}
 \langle \Im G(z_1)A\Im G(z_2)A^*\rangle \prec \rho(z_1)\rho(z_2) \langle |A|^2\rangle, \qquad  \langle A\rangle=0, 
\end{equation}
is necessary. The main achievement of the current paper is to extract both types
of improvement \emph{simultaneously.}

While Theorem~\ref{thm:ETH} requires only the upper bound~\eqref{eq:2Gimproved}
for  $\Im G A \Im G A$, along its proof other alternating products 
 of resolvents (with or without $\Im$)
 and deterministic matrices emerge. 
More precisely, setting  $G_i:=G(z_i)$ and considering 
 deterministic matrices $B_i$,  the main object of interest is
\begin{equation}
\label{eq:mainobj}
G_1B_1G_2B_2G_3\dots B_{k-1} G_k
\end{equation}
for some fixed $k$. We will call expressions of the form~\eqref{eq:mainobj}  \emph{(resolvent) chains}.
We will show a {\it multi-resolvent local law}, i.e. 
that any chain \eqref{eq:mainobj} concentrates around a deterministic object  and give an upper bound
on the fluctuation.  The  interesting regime is the local one, i.e. when $|\Im z_i|\ll 1$.
We will also consider the case when some of the $G_i$'s are replaced by their imaginary part $\Im G_i$, 
and we will  show
 that in this case the fluctuations are reduced close to the edge of the spectrum by some factor of $|\Im m(z_i)|$
which is essentially the density $\rho_{\mathrm{sc}}$ at $\Re z_i$.

It turns out \cite{ETHpaper} 
that the sizes of both the deterministic limit of~\eqref{eq:mainobj} and its fluctuation are substantially reduced
if some of the  matrices $B_i$ are traceless. 
Therefore, in the main part of the paper we  study \eqref{eq:mainobj} when all the matrices $B_i$ are  traceless,
  $\langle B_i\rangle =0$, this will also imply a local law for \eqref{eq:mainobj} for generic $B_i$'s 
  using that any  matrix $B$  can be decomposed into a constant and a traceless part 
  as $B=\langle B\rangle\cdot I+\mathring{B}$.

 We will prove local laws that are optimal simultaneously in the two different aspects mentioned above 
 in addition to account for the improvement owing to the traceless observables.
The first aspect is to trace the improvement near the  spectral edges in terms of additional $\rho$-powers;
in general the presence of each $\Im G$ provides an additional $\rho$ factor.
 Second,  instead of the technically much easier Euclidean matrix  norm (operator norm)
 of the $B_i$'s,   we need to use the more sophisticated Hilbert-Schmidt norm. 
 One additional advantage of using the Hilbert-Schmidt norm  is that it enables us to test 
 the chain in~\eqref{eq:mainobj} against  rank one matrices
  and still get optimal bounds. 
In particular, testing it  against the projection $|\bm{x}\rangle\langle \bm{y}|$
 immediately gives the so-called {\it isotropic local laws}, 
i.e. concentration for the
individual matrix elements $\langle \bm{x}, G_1B_1\dots B_{k-1} G_k\bm {y}\rangle$, for
any deterministic vectors $ \bm{x},\bm {y}$. 

Our results also hold for the case when 
the spectral parameters $z_i$'s are different,
 but we will not explore the additional potential improvements
from this fact since it is not needed for ETH.
 While in some part of the argument we track   the different values of $|\Im z_i|$
precisely (instead of overestimating them by the worst one), we will not exploit the additional gain 
from possibly different real parts $\Re z_i$; this study is left for future investigations.

  Multi-resolvent local laws for
    chains \eqref{eq:mainobj}  with traceless deterministic matrices
   have been the object of interest in several recent papers,
    however in each  of these works only one aspect of the fluctuations of \eqref{eq:mainobj} was taken into consideration: either the problem was optimal only in the bulk of the spectrum \cite{A2}, 
    hence missing $\rho$ factors were ignored, 
    or the error term was estimated using the crude operator norm of the $B_i$ \cite{ETHpaper, multiG},
    or only chains of length one ($k=1$) had an optimal error term  in both aspects~\cite{functionalCLT}.
    Our new result  (Theorem~\ref{thm:main} below)  does not have any of these restriction: 
    we give a bound on the fluctuation of \eqref{eq:mainobj} uniformly in the spectrum 
 with optimal $N$- and $\rho$-powers and with the  Hilbert-Schmidt norm on the traceless $B_i$'s.

\subsubsection{Preliminaries on the deterministic approximation} \label{sec:prelim}
Before stating our main technical result we introduce some additional notation. Given a  non-crossing partition $\pi$ of 
the set $[k]:=\set{1,\ldots,k}$ arranged in cyclic order,  the partial trace $\mathrm{pTr}_{\pi}$ 
of an ordered set of matrices $B_1, \ldots, B_{k-1}$ is defined as
\begin{equation}
	\label{eq:partrdef}
	\mathrm{pTr}_\pi(B_1,\ldots,B_{k-1}): = \prod_{S\in\pi\setminus \mathfrak{B}(k)}\left\langle\prod_{j\in S}B_j\right\rangle\prod_{j\in \mathfrak{B}(k)\setminus\set{k}} B_j,
\end{equation} 
with $\mathfrak{B}(k)\in\pi$ denoting the unique block containing $k$. Then, for generic $B_i$'s, the deterministic approximation of \eqref{eq:mainobj} is given by \cite[Theorem~3.4]{thermalisation}:
\begin{equation}
	\label{eq:Mdef}
	M_{[1,k]} = M(z_1,B_1,\ldots,B_{k-1},z_k) := \sum_{\pi\in\mathrm{NC}([k])}\mathrm{pTr}_{K(\pi)}(B_1,\ldots,B_{k-1}) \prod_{S\in\pi} m_\circ[S ],
\end{equation}
where $\mathrm{NC}([k])$ denotes the non-crossing partitions of the set $[k]$, and $K(\pi)$ denotes the Kreweras complement of $\pi$ (see \cite[Definition~2.4]{thermalisation} and \cite{Kreweras}).  Furthermore, for any subset $S\subset [k]$ we define $m[S]:=m_\mathrm{sc}[\bm{z}_S]$ as the iterated divided difference of $m_\mathrm{sc}$ evaluated in $\bm{z}_S:=\{z_i: i\in S\}$
which can also be written as
\begin{equation}\label{msc dd}
	m[S]=m_\mathrm{sc}[\bm{z}_S] =m_\mathrm{sc}[\set{z_i: i \in S}] = \int_{-2}^2\rho_\mathrm{sc}(x)\prod_{i\in S}\frac{1}{x-z_i}\dd x.
\end{equation}
We denote by $m_\circ[\cdot ]$  the free-cumulant transform of $m[\cdot]$ which is uniquely defined implicitly by the relation
\begin{equation}
	\label{eq:freecumulant}
	m[S] = \sum_{\pi\in\mathrm{NC}(S)} \prod_{S'\in\pi} m_\circ[S'],  \qquad \forall S\subset [k],
\end{equation}
e.g. $m_\circ[i,j]=m[\set{i,j}]-m[\set{i}]m[\set{j}]$. For example, for $k=2$ we have
\begin{equation}
	\begin{split}
		M(z_1,B_1,z_2) &= \langle B_1\rangle(m_\mathrm{sc}[z_1,z_2]-m_\mathrm{sc}(z_1)m_\mathrm{sc}(z_2)) + B_1 m_\mathrm{sc}(z_1)m_\mathrm{sc}(z_2) \\
		&= \frac{\langle B_1\rangle}{2\pi}\int_{-2}^2\frac{\sqrt{4-x^2}}{(x-z_1)(x-z_2)}\dd x + (B_1-\braket{B_1}) m_\mathrm{sc}(z_1)m_\mathrm{sc}(z_2).
	\end{split}
\end{equation}

 The main objects of interest  within this section are general resolvent chains 
\begin{equation}
	\mathcal{G}_1 B_1\mathcal{G}_2 B_2\dots B_{k-1}\mathcal{G}_k
\end{equation}
where 
$\mathcal{G}_i \in \{G_i, \Im G_i\}$, and we denote by $\mathfrak{I}_k\subset [k]$ 
the set of the indices for which $\mathcal{G}_i=\Im G_i$. 
Note that  some resolvents may be replaced with their imaginary parts.
 In order to generalize \eqref{eq:Mdef},  for any subset $\mathfrak{I}_k\subset [k]$  we 
 define\footnote{Calligraphic letters
 like $ \mathcal{G}, \mathcal{M}$ indicate that we may consider $\Im G$ instead of some resolvents $G$ in the chain.}
\begin{equation}
	\label{eq:Mdefim}
	\mathcal{M}_{[1,k]}= \mathcal{M}(z_1,B_1,\dots,B_{k-1},z_k;\mathfrak{I}_k):=\sum_{\pi\in\mathrm{NC}([k])}\mathrm{pTr}_{K(\pi)}(B_1,\ldots,B_{k-1}) \prod_{S\in\pi} m_\circ^{(\mathfrak{I}_k)}[S],
\end{equation}
with $m_\circ^{(\mathfrak{I}_k)}[S]$ implicitly defined as in \eqref{eq:freecumulant} with $m[S]$ replaced with $m^{(\mathfrak{I}_k)}[S]$, where
\begin{equation} \label{eq:Mdivdiff}
	m^{(\mathfrak{I}_k)}[S]= m^{(\mathfrak{I}_k)}[\set{z_i: i \in S}]: = \int_{-2}^2\rho_\mathrm{sc}(x)\left(\prod_{i\in \mathfrak{I}_k \cap S}
	\Im \frac{1}{x-z_i}\right)\left(\prod_{i\in S\setminus \mathfrak{I}_k}\frac{1}{x-z_i}\right)\dd x.
\end{equation}
We now give some bounds on the deterministic approximations in the case where all matrices in \eqref{eq:Mdefim} are traceless, $\langle B_i \rangle = 0$.\footnote{From now on we use the convention that traceless matrices are denoted by $A$, while general deterministic matrices are denoted by $B$.} 
The proof of the following lemma is presented in Appendix \ref{sec:addtech}. 
\begin{lemma}[$M$-bounds] \label{lem:Mbound}
		Fix $k \ge 1$. Consider spectral parameters $z_1, ... , z_{k+1} \in \C \setminus \R$ and traceless matrices $A_1, ... , A_k \in \C^{N \times N}$. Moreover, let 
		\begin{equation*}
		{\eta}_j := |\Im z_j|\,, \qquad m_j := m_{\rm sc}(z_j)\,, \qquad \rho_j :=\frac{1}{\pi} |\Im m_j|\,.
	\end{equation*}
	\begin{itemize}
		\item[(a)] Denoting $	\ell := \min_{j \in [k]}\big[ \eta_j (\rho_j + \mathbf{1}(j \notin \mathfrak{I}_k))\big]$ and assuming $N \ell \ge 1$, we have the average bound
		\begin{equation} \label{eq:Mbound}
			\left| \langle \mathcal{M}(z_1, A_1, ... , A_{k-1}, z_k; \mathfrak{I}_k) A_k \rangle \right| \lesssim \left(\prod_{i \in \mathfrak{I}_k} \rho_i\right) 
				N^{k/2 - 1}\prod_{j \in [k]} \langle |A_j|^2 \rangle^{1/2}\,. 
		\end{equation}
		\item[(b)] Denoting $	\ell := \min_{j \in [k+1]}\big[ \eta_j (\rho_j + \mathbf{1}(j \notin \mathfrak{I}_{k+1}))\big]$ and assuming $N \ell \ge 1$, we have the isotropic bound\footnote{The isotropic bound for $|\langle \bm x, \mathcal{M} \bm y\rangle|$ 
		in~\eqref{eq:MboundISO} is the
		same as the norm bound  $\| \mathcal{M}\|$.}
		\begin{equation} \label{eq:MboundISO}
			\left| \langle \bm x, \mathcal{M}(z_1, A_1, ... , A_{k}, z_{k+1}; \mathfrak{I}_{k+1}) \bm y \rangle \right| \lesssim \left(\prod_{i \in \mathfrak{I}_{k+1}} \rho_i \right) 
				N^{k/2}\prod_{j \in [k]} \langle |A_j|^2 \rangle^{1/2}\,.  
		\end{equation}
		for arbitrary bounded deterministic vectors $\Vert \bm x \Vert \,, \Vert \bm y \Vert \lesssim 1$.
	\end{itemize}
	\end{lemma}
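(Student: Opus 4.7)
My plan is to start from the non-crossing partition expansion \eqref{eq:Mdefim} of $\mathcal{M}_{[1,k]}$ and bound $\langle\mathcal{M}_{[1,k]}A_k\rangle$ term by term. Two elementary inputs are needed. The first is the trace H\"older inequality
\[
\big|\langle A_{i_1}A_{i_2}\cdots A_{i_r}\rangle\big|\le N^{r/2-1}\prod_{s=1}^{r}\langle|A_{i_s}|^2\rangle^{1/2},
\]
which follows by iterating the Cauchy--Schwarz bound $|\langle XY\rangle|\le\langle|X|^2\rangle^{1/2}\langle|Y|^2\rangle^{1/2}$ for the Hilbert--Schmidt inner product, together with the sub-multiplicativity $\langle|XY|^2\rangle\le N\langle|X|^2\rangle\langle|Y|^2\rangle$, both one-line consequences of entry-wise Cauchy--Schwarz. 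The second is the bound
\[
\big|m_\circ^{(\mathfrak{I}_k)}[S]\big|\lesssim \prod_{i\in S\cap\mathfrak{I}_k}\rho_i\cdot\big(\min_{i\in S}\eta_i\big)^{-(|S|-1)}
\]
for multi-element blocks, with $|m_\circ^{(\mathfrak{I}_k)}[\{i\}]|=\rho_i$ if $i\in\mathfrak{I}_k$ and $\le 1$ otherwise; this follows from the integral representation \eqref{eq:Mdivdiff} by noting that each factor $\Im(x-z_i)^{-1}$ localises the integrand on an $\eta_i$-window around $E_i$ at height $\rho_i/\eta_i$, producing one $\rho_i$ factor per $\mathfrak{I}_k$-index, while the residual divided-difference structure contributes the $\eta_{\min}^{-(|S|-1)}$ scaling.

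The main combinatorial step is to expand $\langle\mathcal{M}_{[1,k]}A_k\rangle$ as a sum over $\pi\in\mathrm{NC}([k])$ and exploit the tracelessness of the $A_j$. A term indexed by $\pi$ vanishes unless (i) every block of the Kreweras complement $K(\pi)$ other than $\mathfrak{B}(k)$ has size at least two, since a singleton $\{j\}$ yields the factor $\langle A_j\rangle=0$, and (ii) $\mathfrak{B}(k)$ itself has size at least two, since otherwise the matrix part is the identity $I$ and contracts with $A_k$ to $\langle A_k\rangle=0$. For any surviving $\pi$, applying the trace H\"older bound block-by-block produces the $N$-prefactor
\[
\prod_{S\in K(\pi)} N^{|S|/2-1}=N^{k/2-|K(\pi)|}=N^{|\pi|-k/2-1},
\]
which is maximised at the all-singletons partition $\pi=\hat{0}_k$ and gives exactly the claimed $N^{k/2-1}\prod_j\langle|A_j|^2\rangle^{1/2}$. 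At $\hat{0}_k$ the $m_\circ$-product factorises as $\prod_{i}m_\circ^{(\mathfrak{I}_k)}[\{i\}]$ and equals $\prod_{i\in\mathfrak{I}_k}\rho_i$ up to a constant (using $|m_i|\le 1$ for $i\notin\mathfrak{I}_k$).

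The main obstacle is controlling the sub-leading partitions $|\pi|<k$: reducing $|\pi|$ by one unit costs a factor $N^{-1}$ in the $N$-power relative to the leading order, while the corresponding enlargement of some block $S\in\pi$ worsens the $m_\circ$-bound by a factor $\eta_{\min(S)}^{-1}$. Under the hypothesis $N\ell\ge 1$ these two effects precisely cancel: every extra inverse $\eta_i$ is bounded by $(N\eta_i)^{-1}\le 1$ when $i\notin\mathfrak{I}_k$, and by $(N\eta_i\rho_i)^{-1}\le 1$ when $i\in\mathfrak{I}_k$, once the corresponding $\rho_i$ from the higher-order cumulant is absorbed into the global $\prod_{i\in\mathfrak{I}_k}\rho_i$. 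The isotropic bound (b) is proved by the same combinatorial argument, with the matrix coming from $\mathfrak{B}(k+1)$ sandwiched between $\bm{x}$ and $\bm{y}$ and estimated via the complementary inequality $\|A\|\le\sqrt{N}\langle|A|^2\rangle^{1/2}$ applied to each matrix factor.
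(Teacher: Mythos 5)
Your overall strategy matches the paper's proof via Lemma~\ref{lem:Mboundstrong}: expand $\langle\mathcal M_{[1,k]}A_k\rangle$ over non-crossing partitions, discard terms where a singleton block of $K(\pi)$ produces a vanishing trace, bound the surviving partial traces via a trace-H\"older inequality, and bound the $m_\circ$-factors term by term, with the hypothesis $N\ell\ge1$ controlling the sub-leading partitions. The combinatorial accounting (the $N^{|\pi|-k/2-1}$ exponent, the vanishing of singletons by tracelessness) and the use of $\|A\|\le\sqrt{N}\langle|A|^2\rangle^{1/2}$ for the isotropic part are all consistent with the paper.

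However, your claimed estimate on the free cumulant,
\[
\big|m_\circ^{(\mathfrak{I}_k)}[S]\big|\lesssim \Big(\prod_{i\in S\cap\mathfrak{I}_k}\rho_i\Big)\,\big(\min_{i\in S}\eta_i\big)^{-(|S|-1)},
\]
is \emph{false}, and this is the crux of the lemma. The correct denominator is $\ell^{|S|-1}$ with the global $\ell = \min_j\eta_j\big(\rho_j+\mathbf{1}(j\notin\mathfrak{I}_k)\big)$, see \eqref{eq:intrepbound}; since $\ell\lesssim\eta_{\min(S)}$ (and often $\ell\ll\eta_{\min(S)}$ near the edge), your bound is strictly stronger than the true one. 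Concretely, take $S=\{1,2\}\subset\mathfrak{I}_k$, $z_1=E+\ii\eta$, $z_2=E+2\ii\eta$ with $E$ near a spectral edge so that $\rho_1\approx\rho_2\approx\rho\ll1$. Then
\[
m^{(\mathfrak{I}_k)}[\{1,2\}] = \int\rho(x)\,\frac{\eta}{(x-E)^2+\eta^2}\cdot\frac{2\eta}{(x-E)^2+4\eta^2}\,\dd x \;\sim\; \frac{\rho}{\eta}\,,
\]
(localize at $x\approx E$ where $\rho(x)\approx\rho$, the peak has height $\sim\rho/\eta^2$ and width $\sim\eta$), which dominates the subtracted product $m[\{1\}]\,m[\{2\}]\sim\rho^2$, so $|m_\circ^{(\mathfrak{I}_k)}[\{1,2\}]|\sim\rho/\eta$. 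Your bound gives $\rho_1\rho_2/\eta_{\min}=\rho^2/\eta$, which is off by a factor $\rho\ll1$, i.e.\ it is polynomially too small (e.g.\ with $\rho=N^{-1/4}$, $\eta=N^{-1/2}$, your bound is $O(1)$ while the true value is $\sim N^{1/4}$, and $N\ell=N^{1/4}\ge1$ is satisfied). The heuristic is where you go wrong: you cannot simultaneously ``localize'' the integral to each $\eta_i$-window to harvest a $\rho_i$ from \emph{every} $\Im$-factor and also pay only one $\eta_{\min}^{-1}$ per extra element of $S$. What one can do is integrate \emph{one} $\Im(x-z_j)^{-1}$ against $\rho(x)$ to produce a single $\rho_j$, while the remaining $|S|-1$ factors are only bounded in $L^\infty$ by $1/\eta_i$; the missing $\rho_i$'s needed to reach $\prod_{i\in S\cap\mathfrak{I}_k}\rho_i$ are then re-inserted at the cost of replacing each $\eta_i^{-1}$ by $(\eta_i\rho_i)^{-1}\le\ell^{-1}$. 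This is precisely what the definition of $\ell$ encodes and why $\ell$, not $\eta_{\min(S)}$, must appear. Once you correct the $m_\circ$-bound to $\prod_{i\in S\cap\mathfrak{I}_k}\rho_i\cdot\ell^{-(|S|-1)}$, your ensuing $(N\ell)^{-1}\le1$ cancellation analysis goes through as written.
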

Note that	\eqref{eq:Mbound} already reflects the different aspects of our local law: it correctly accounts for 
the $\rho$-powers for each $\Im G$, it involves the Hilbert-Schmidt norm of the observables
and it is not hard to see that the $N$-power is also optimal.   Note that
the isotropic bound~\eqref{eq:MboundISO} is  stated separately for convenience, \nc
although it will be a straightforward consequence of the average bound~\eqref{eq:Mbound}.

\subsubsection{Multi-resolvent local law} \label{sec:main}
As our main  input for Theorem \ref{thm:ETH}, we will prove
 the following multi-resolvent local law, optimally accounting for the decay of the density at the edge. 

 \begin{theorem}[Multi-resolvent local law with optimal edge dependence] \label{thm:main}
	Let $W$ be a Wigner matrix satisfying Assumption~\ref{ass:entries}, and fix $k \in \N$. Consider spectral parameters $z_1, \ldots , z_{k+1} \in \C \setminus \R$, the associated resolvents $G_j = G(z_j) := (W-z_j)^{-1}$ with $\mathcal{G}_j \in \{ G_j, \Im G_j\}$, and traceless matrices $A_1, \ldots , A_k \in \C^{N \times N}$. 
	Finally, let
	\begin{equation}
		\label{eq:defpar}
		{\eta}_j := |\Im z_j|\,, \qquad m_j := m_{\rm sc}(z_j)\,, \qquad \rho_j :=\frac{1}{\pi} |\Im m_j|\,, \qquad j\in[ k+1].
	\end{equation}
\begin{itemize}
\item[(a)] Denote by $\mathfrak{I}_k$ the set of indices $j \in [k]$ where $\mathcal{G}_j = \Im G_j$. 
Then, setting 
$$\ell := \min_{j \in [k]}\big[ \eta_j (\rho_j + \mathbf{1}(j \notin \mathfrak{I}_k))\big],
$$ 
we have the \emph{average law}
	\begin{equation} \label{eq:mainAV}
	\left| \langle \mathcal{G}_1 A_1  \mathcal{G}_2\ldots \mathcal{G}_k A_k \rangle - \langle \mathcal{M}_{[1,k]}A_k \rangle \right| \prec \left[\left(\prod_{i \in \mathfrak{I}_k} \rho_i \right) \wedge \max_{i \in [k]} \sqrt{\rho_i}\right] \, 
\frac{N^{k/2 - 1}}{\sqrt{N \ell}} \,  \prod_{j \in [k]} \langle |A_j|^2 \rangle^{1/2}
   \,,
\end{equation}
uniformly in spectral parameters satisfying $ \min_j N \eta_j \rho_j  \ge N^{\epsilon}$ 
and $\max_j |z_j| \le N^{1/\epsilon}$ for some $\epsilon > 0$. 
\item[(b)] Denote by $\mathfrak{I}_{k+1}$ the set of indices $j \in [k+1]$ where $\mathcal{G}_j = \Im G_j$. Then, setting $$\ell := \min_{j \in [k+1]}\big[ \eta_j (\rho_j + \mathbf{1}(j \notin \mathfrak{I}_{k+1}))\big],
$$ we have the \emph{isotropic law}
\begin{equation} \label{eq:mainISO}
	\left| \langle \bm x, \mathcal{G}_1 A_1\mathcal{G}_2 \ldots  A_k \mathcal{G}_{k+1} \bm y \rangle - \langle \bm x,  \mathcal{M}_{[1,k+1]}\bm y \rangle \right| \prec \left[\left(\prod_{i \in \mathfrak{I}_{k+1}} \rho_i \right) \wedge \max_{i\in [k+1]} \sqrt{\rho_i}\right] \,
		\frac{N^{k/2}}{\sqrt{N \ell}} \, \prod_{j \in [k]} \langle |A_j|^2 \rangle^{1/2}  \,,
\end{equation}
uniformly in bounded deterministic vectors $\Vert \bm x \Vert \,, \Vert \bm y \Vert \lesssim 1$ and spectral parameters satisfying $\min_j N \eta_j \rho_j  \ge N^{\epsilon}$ and $\max_j |z_j| \le N^{1/\epsilon}$ for some $\epsilon > 0$. 
\end{itemize}
\end{theorem}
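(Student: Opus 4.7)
The plan is to prove Theorem~\ref{thm:main} via the dynamical approach announced in the introduction, namely the method of characteristic flow combined with a Green function comparison (GFT) argument, and to carry out an induction on the length $k$ of the chain in which the average and isotropic bounds~\eqref{eq:mainAV}--\eqref{eq:mainISO} are established simultaneously (indeed the Itô differential of an averaged chain produces isotropic chains and vice versa). For the dynamical part I would introduce the Ornstein--Uhlenbeck matrix flow $W_t = \ee^{-t/2} W_0 + \sqrt{1-\ee^{-t}}\, U$, where $U$ is an independent Gaussian matrix of the appropriate symmetry class, set $G_{j,t} := (W_t - z_{j,t})^{-1}$, and choose the spectral parameters $z_{j,t}$ to evolve along the characteristics of the semicircular flow, $\dot{z}_{j,t} = -\tfrac{1}{2} z_{j,t} - m_{\mathrm{sc}}(z_{j,t})$. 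The crucial features of this choice are that $m_{\mathrm{sc}}(z_{j,t})$ is preserved, that $\eta_{j,t} := |\Im z_{j,t}|$ grows at rate proportional to $\rho_{j,t}$, and hence that the edge-scaling parameter $N \eta_{j,t} \rho_{j,t}$ is conserved; this is precisely the invariant that allows us to transport bounds from a larger scale, where they are easy, down to the optimal scale $\eta \gtrsim N^{-1}/\rho$ without losing $\rho$-factors.

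Applying Itô's formula to
\begin{equation*}
X_t^{(k)} := \langle \mathcal{G}_{1,t} A_1 \mathcal{G}_{2,t} A_2 \cdots \mathcal{G}_{k,t} A_k \rangle - \langle \mathcal{M}_{[1,k],t} A_k \rangle
\end{equation*}
(and analogously to the isotropic version) produces a drift term that, by construction of the characteristics and the defining identity~\eqref{eq:freecumulant}, cancels to leading order against the evolution of $\mathcal{M}$, leaving a martingale term plus forcing terms which are expressible as chains of length $k+1$ and $2k$. The martingale part is controlled by its quadratic variation, which via Cauchy--Schwarz yields an expression of the schematic form $\int_0^t \langle \G A \cdots A \G A^* \cdots A^* \G \rangle \, \dd s$ involving a chain of length $2k$ paired with itself; by the induction hypothesis this is bounded with the optimal Hilbert--Schmidt and $\rho$-dependence. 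The forcing terms of length $k+1$ are handled either directly by induction or by using the $M$-bounds of Lemma~\ref{lem:Mbound}. Integrating the resulting Gronwall-type inequality along the characteristic flow, the invariance of $N \eta \rho$ ensures that no $\rho$-factors are lost in transit, so the bound established at the initial mesoscopic scale is propagated to the target scale. Finally, since the flow has introduced a Gaussian component, a standard GFT argument (comparing moments of $X$ for $W_t$ and the target Wigner matrix) removes the Gaussian convolution.

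Two structural features of the statement require additional care and will be the main obstacles. First, the minimum $[\prod_{i \in \mathfrak{I}_k}\rho_i] \wedge \max_i \sqrt{\rho_i}$ in~\eqref{eq:mainAV} reflects the fact that the full product of densities is obtained only through a \emph{reduction inequality} that bounds a $\G A \cdots A \G A^* \cdots A^* \G$-type chain by a power of $\Im G$-containing shorter chains, trading each $G$ for a $\sqrt{\rho}$ via an $\eta \G^* \G = \Im G / \eta$ identity; this is what allows the optimal edge gain to propagate through the martingale bound, and verifying that the correct combinatorial count of $\rho$-factors emerges is the delicate step. Second, extracting the optimal Hilbert--Schmidt norm $\langle |A_j|^2\rangle^{1/2}$ (as opposed to the operator norm) demands that in every Cauchy--Schwarz application one rearranges matrices into a form $A_j A_j^*$ that can be absorbed as a new traceless observable into an even longer chain, which must then again be estimated by the same scheme; this calls for a carefully designed induction that tracks multiple quantities at once.

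The induction is closed by initialising at a scale $\eta_0 = N^{-1+\delta}$ using the existing multi-resolvent local laws of \cite{ETHpaper, multiG, A2} (weaker in one of the two aspects, but sufficient as an input), then improving them via the flow to the optimal simultaneous form. The isotropic bound~\eqref{eq:mainISO} is derived in parallel, noting (as remarked after Lemma~\ref{lem:Mbound}) that it is essentially a norm bound on $\mathcal{M}$ and its fluctuation; the only new feature is to test the average-chain Itô expansion against rank-one matrices $|\bm x\rangle \langle \bm y|$ through an additional resolvent, which again lengthens the chain by one and plugs back into the induction.
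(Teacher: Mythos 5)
Your three-step scheme---OU flow along semicircular characteristics, master inequalities via It\^o with a reduction inequality and Ward identity, and a GFT to remove the Gaussian component---is indeed the paper's architecture, and you correctly identify the two mechanisms that must work simultaneously (the $\rho$-gain from the $\Im G$'s and the Hilbert--Schmidt norm extracted via Cauchy--Schwarz on elongated chains). However, two structural points are missing or wrong.

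First, the initialization. You propose to start the flow at a mesoscopic scale $\eta_0 = N^{-1+\delta}$ using the pre-existing multi-resolvent laws of \cite{ETHpaper, multiG, A2}. This would not close the argument: those results are each suboptimal in one of the two aspects (operator norm, or missing edge $\rho$-factors), so they only give $\Phi_k(0) \prec N^{c}$ for some $c>0$, not $\Phi_k(0) \prec 1$, whereas the master-inequality iteration in Proposition~\ref{pro:masterinIM} has the form $\Phi_k(t) \prec 1 + (N\hell_T)^{-1/4}\sum_l \tilde{\phi}_l\tilde{\phi}_{k-l}$ where the additive ``$1$'' comes precisely from the initial bound $\Phi_k(0) \prec 1$. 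If that ``$1$'' were $N^c$, the Gronwall loop would never contract. The paper instead runs the characteristics \emph{backwards} from the target $z$ to a global initial condition with $\mathrm{dist}(z_{j,0},[-2,2]) \gtrsim 1$ (Lemma~\ref{lem:propchar}), where Proposition~\ref{prop:initial} provides the bound by elementary norm estimates with no circularity.

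Second, the GFT step is not ``standard.'' In a classical GFT local laws serve as a priori input for \emph{all} interpolating ensembles and are leveraged to compare statistics at or below the eigenvalue spacing; here the local laws are available only for the Gaussian-divisible ensemble produced by the flow and must be \emph{proven} for the target $W$---the reverse direction. The paper therefore runs a \emph{self-consistent} Lindeberg replacement (Section~\ref{sec:GFT}), and crucially a single replacement step does not contribute $o(N^{-2})$: one must gain from the summation over all $\sim N^2$ replacement positions using the off-diagonal index structure of the deterministic $M$-terms and Schwarz inequalities (Lemma~\ref{lem:sumMs}, the bookkeeping in~\eqref{eq:Msums}, and Lemma~\ref{lem:gainAV} in the averaged case). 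Without identifying this summation-gain mechanism the telescoping in~\eqref{eq:telescope} does not close. Finally, a small slip: $m_{\rm sc}(z_{j,t})$ is \emph{not} preserved along the characteristics---it obeys $\partial_t m = m/2$; what is preserved up to constants is $\rho_{j,t}$, while $\eta_{j,t}$ \emph{decreases} at a rate proportional to $\rho_{j,t}$, and $N\eta_{j,t}\rho_{j,t}$ is decreasing (not conserved) but remains bounded below by its terminal value.
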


Observe that, in the regime $N \ell \gg 1$, the error terms in \eqref{eq:mainAV} and \eqref{eq:mainISO} are smaller by an additional small $(N \ell)^{-1/2}$-factor compared to the size of the leading terms in \eqref{eq:Mbound} and \eqref{eq:MboundISO}, respectively.

\begin{remark}[Optimality]\label{rmk:opt} The bounds \eqref{eq:mainAV} and \eqref{eq:mainISO}
are optimal (up to the $N^\xi$ factor hidden in the $\prec$-relation)
in the class of bounds that involve only the parameters $N$, $\eta_i$, $\rho_i$ and the Hilbert-Schmidt
norm of $A_i$'s. This fact can be seen by computing the variance of the left hand sides in the case when $W$
is a GUE matrix. The resolvents can be written out by spectral theorem, similarly to~\eqref{specdec}, and the
variance with respect to the eigenvectors can be explicitly computed by Weingarten calculus, while the
variance with respect to the eigenvalues (that are independent of the eigenvectors) can be identified
from well-known central limit theorems for linear statistics of eigenvalues. For example, for $k=2$, $A_1=A_2=A$,
$z_1= z_2 =z$
and $\mathfrak{I}_k=\emptyset$,  in this way we obtain
\begin{equation}\label{var}
\sqrt{   \E\big|\langle GAGA\rangle - m^2 \langle A^2\rangle \big|^2}
   \sim \frac{1}{N\eta}\langle A^2\rangle + \frac{\sqrt{\rho}}{N\sqrt{\eta}} \langle A^4\rangle^{1/2}.
\end{equation}
After estimating $\langle A^4\rangle \le N \langle A^2\rangle^2$, which may saturate for certain $A$,
we see the optimality of \eqref{eq:mainAV}
for this case. The general case is a similar, albeit somewhat tedious calculation. 
\end{remark}
\nc

\begin{remark}[Interpretations]\label{rmk:MHS}
  We have two  further \nc comments on Theorem \ref{thm:main}.
	\begin{itemize} 
\item[(i)]  For $\mathfrak{I}_k = \emptyset$ and $\mathfrak{I}_{k+1} = \emptyset$ both bounds, \eqref{eq:mainAV} and \eqref{eq:mainISO}, have already been proven in \cite[Theorem~2.2 and Corollary 2.4]{A2}. In the complementary cases $\mathfrak{I}_k \neq \emptyset$ and $\mathfrak{I}_{k+1} \neq \emptyset$, we point out that the minimum $\big[ ... \wedge ... \big]$ in \eqref{eq:mainAV} and \eqref{eq:mainISO} is realized by the product  
$\prod_{i\in \mathfrak{I}}\rho_i$  since $\rho_i \lesssim 1$. 
In particular, as a rule of thumb, every index $j$ for which $\mathcal{G}_j = \Im G_j$, decreases both the size of the deterministic approximation \eqref{eq:Mbound}--\eqref{eq:MboundISO} and the size of the error \eqref{eq:mainAV}--\eqref{eq:mainISO} by a factor $\rho_j$, with $\rho_j \le 1$, compared to the case when $\mathcal{G}_j = G_j$. An exception to this rule is \eqref{eq:mainAV} for $k=1$; here
 the bounds for $\langle G A \rangle$ and $\langle \Im G A \rangle$ are identical.

\item[(ii)] 	The estimates in Theorem \ref{thm:main} remain valid if we replace 
\begin{equation} \label{eq:Mreplace}
	\begin{split}
		\langle \mathcal{M}_{[1,k]}A_k \rangle &\longrightarrow \left(\prod_{i\in \mathfrak{I}_k }
		\Im m_i \right)\left(\prod_{i\notin  \mathfrak{I}_k}m_i\right) \langle A_1... A_k \rangle \\
		\langle \bm x, \mathcal{M}_{[1,k+1]}\bm y \rangle &\longrightarrow \left(\prod_{i\in \mathfrak{I}_{k+1} }
		\Im m_i \right)\left(\prod_{i\notin  \mathfrak{I}_{k+1}}m_i\right) \langle \bm x, A_1... A_k \bm y \rangle
	\end{split}
\end{equation}
in \eqref{eq:mainAV} and \eqref{eq:mainISO}, respectively, i.e., if we consider
only the trivial  partition into singletons $\pi$ in the definition~\eqref{eq:Mdefim} of $\mathcal{M}_{[1,k]}$. 
This is simply due to the fact that all other summands 
in \eqref{eq:Mdefim} are explicitly smaller than the error terms in~\eqref{eq:mainAV}--\eqref{eq:mainISO}.
 A proof of this fact is given in Appendix \ref{sec:addtech}. 
\end{itemize}
\end{remark}

\begin{remark}[Generalisations]  We mention a few direct generalisations of Theorem \ref{thm:main} whose proofs
are omitted as they are straightforward. 

\begin{itemize}
\item[(i)] 	In Theorem \ref{thm:main} each $\mathcal{G}$ can be
 replaced by a product of $\mathcal{G}$'s and an individual $\mathcal{G}$ may
also stand for $|G|$, not only for $G$ or $\Im G$ (see \cite[Lemma 3.2]{multiG}, \cite[Lemma 3.1]{A2}, and also Lemma \ref{lem:G^2lemma} below). 
We refrain from stating these results explicitly as they are easily obtained using appropriate integral representations of general products of such
$\mathcal{G}$'s in terms of a single $\Im G$. 

\item[(ii)]  We stated the multi--resolvent local laws in Theorem \ref{thm:main} 
only for $\mathcal{G}_j\in \{G_j,\Im G_j\}$, however, inspecting the proof, one can easily see that
it also leads to a local law for 
$\mathcal{G}_j\in\{G_j, \Im G_j, G^\mathfrak{t}_j,\Im G^\mathfrak{t}_j\}$, where $G^\mathfrak{t}$ stands for
the transpose of $G$.
  In particular, this implies that the ETH in Theorem~\ref{thm:ETH} can also  be extended to
\begin{equation}\label{eq:ethbar}
		\max_{i,j \in [N]} \left| \langle \overline{\bm{u}_i}, A \bm{u}_j\rangle - \langle A \rangle \langle \overline{\bm{u}_i}, \bm{u}_j\rangle \right| \prec 
		\frac{\langle |\mathring{A}|^2 \rangle^{1/2}}{\sqrt{N}}.
	\end{equation}
	Furthermore, setting $\sigma:= \E \chi_{\mathrm{od}}^2$, 
	for $|\sigma|<1$ we have (see \cite[Theorem 2.3]{ETHpaper})
	\[
	\big|\langle \overline{\bm{u}_i}, \bm{u}_j\rangle\big|\prec \frac{C_\sigma}{\sqrt{N}}.
	\]
	In two extreme cases $\sigma=\pm 1$, 
	we have $|\langle \overline{\bm{u}_i}, \bm{u}_j\rangle|=\delta_{i,j}$ if $\sigma=1$ and $|\langle \overline{\bm{u}_i}, \bm{u}_j\rangle|=\delta_{i,N-j+1}$ 
	if $\sigma=-1$ and $\E(W_{aa}^2)=0$
 (see~\cite[Remark 2.4]{ETHpaper}). We remark that here $\bm{u}_i$ denotes the eigenvector corresponding to the eigenvalue $\lambda_i$, with the $\lambda_i$'s labeled in increasing order. 
 	\end{itemize}
	\end{remark}

\subsection{Proof of Theorem~\ref{thm:ETH}}\label{subsec:proofETH}
	
	Fix $\epsilon>0$, pick $E\in [-2,2]$  and define $\eta(E)$ implicitly by
	\[
	N\eta(E)\rho(E+\ii\eta(E))= N^\epsilon.
	\]
	Let $A$ be a traceless matrix $\langle A\rangle=0$, then by spectral decomposition~\eqref{specdec} and the well-known eigenvalue rigidity\footnote{Rigidity asserts that the increasingly ordered eigenvalues $\lambda_i$
	are very close to the $i$-th $N$-quantile $\gamma_i$ of the semicircle density $\rho_{\rm sc}$ in
	the sense $|\lambda_i-\gamma_i|\prec N^{-2/3} [i\wedge (N+1-i)]^{-1/3}$,
	i.e. each eigenvalue is strongly concentrated around the corresponding quantile essentially on the scale
	of the local eigenvalue spacing. }  (see, e.g., \cite{EYY2012}) it is easy to see that (see \cite[Lemma~1]{ETHpaper} for more details)
	\begin{equation*}
		\max_{i,j \in [N]} N\left| \langle \bm{u}_i, A \bm{u}_j\rangle  \right|^2
		\prec N^{2\epsilon}\sup_{E_1,E_2\in [-2,2]} \frac{\big|\langle \Im G(E_1+\ii \eta(E_1))A\Im G(E_2+\ii\eta(E_2)) A^*\rangle\big|}{\rho(E_1+\ii\eta(E_1))\rho(E_2+\ii\eta(E_2))} \prec N^{2\epsilon}\langle |A|^2\rangle\,. 
	\end{equation*}
	We point out that in the last inequality we used  \eqref{eq:mainAV} for $k=2$ and $\mathfrak{I}_2=[2]$:
	\[
	\big|\langle \Im G_1 A \Im G_2 A^*\rangle- \Im m_1\Im m_2\langle |A|^2\rangle\big|\prec  \frac{\rho_1\rho_2}{\sqrt{N\ell}}\langle |A|^2\rangle\,.
	\]
	The fact that this bound holds simultaneously for all $E_1=\Re z_1\in [-2,2]$ and $ E_2=\Re z_2\in [-2,2]$ follows by 
	a simple grid argument together with 
	the Lipschitz regularity of the resolvent  (with Lipschitz constant
	of order $N$ at spectral parameters with imaginary part bigger than $1/N$).
	This completes the proof of Theorem \ref{thm:ETH}. \qed
	
	The rest of the paper is devoted to
	the proof of the multi-resolvent local law, Theorem~\ref{thm:main}.

\section{Multi--resolvent local law: Proof of Theorem \ref{thm:main}} \label{sec:proof}
In this section we prove 
 the \emph{multi-resolvent local laws}
in Theorem \ref{thm:main} via the following three steps: 
\begin{itemize}
\item[\bf 1.] \textbf{Global law.} Prove  a multi-resolvent  \emph{global law}, i.e. for spectral parameters ``far away" from the spectrum, $\min_j \mathrm{dist}(z_j, [-2,2]) \ge  \delta \nc $ for some small $\delta > 0$ (see Proposition \ref{prop:initial}). 
\item[\bf 2.] \textbf{Characteristic flow.} Propagate the global law to a \emph{local law} by considering the evolution of the Wigner matrix $W$ along the Ornstein-Uhlenbeck flow, 
thereby introducing an almost order one Gaussian component (see Proposition \ref{prop:zig}). The spectral parameters evolve from the global regime to the local regime according to the \emph{characteristic (semicircular) flow}. 
The simultaneous effect of these two evolutions is a key cancellation of two large terms. \nc
\item[\bf 3.] \textbf{Green function comparison.} Remove the Gaussian component by a Green function comparison (GFT) argument (see Proposition \ref{prop:zag}).
\end{itemize}

As the first step, we have the following global law. Its proof, which is analogous to the proofs presented in \cite[Appendix~B]{multiG} and \cite[Appendix~A]{A2}, is given in Appendix \ref{app:globallaw} for completeness. 
We point out that all these proofs do not use the system of master inequalities and the bootstrapped error analysis
that form the technical backbone of~\cite{multiG, A2}, they use simple norm bounds on the resolvents. 
In particular, 
Proposition \ref{prop:initial} holds for general deterministic matrices since the traceless condition plays no role in this case. 
\begin{proposition}[Step 1: Global law]
\label{prop:initial} 
Let $W$ be a Wigner matrix satisfying Assumption~\ref{ass:entries}, and fix any $k \in \N$ and $\delta>0$.  Consider spectral parameters $z_1, ... , z_{k+1} \in \C \setminus \R$, the associated resolvents $G_j = G(z_j) := (W-z_j)^{-1}$, with $\mathcal{G}_j \in \{ G_j , \Im G_j \}$, and deterministic matrices $B_1, ... , B_k \in \C^{N \times N}$. Denote $\eta_{i}:=|\Im z_{i}|$ and $\rho_{i}:=\pi^{-1}|\Im m_{\rm sc}(z_{i})|$. 
Then, uniformly in deterministic matrices $B_i$ and in spectral parameters satisfying $\mathrm{dist}(z_j, [-2, 2])\ge \delta$, the following holds.
\begin{itemize}

\item[(a)] Let $\mathfrak{I}_k$ be the set of indices  $j \in [k]$ where $\mathcal{G}_{j}=\Im G_{j}$, and
define
${\ell}:= \min_{j \in [k]}\big[\eta_{j}(\rho_{j}+\bm1(j\notin\mathfrak{I}_{k}))\big]$. Then we have the averaged bound
\begin{equation}
\label{eq:maininAV}
\left| \langle \mathcal{G}_1 B_1 ... \mathcal{G}_k B_k \rangle - \langle \mathcal{M}_{[1,k]}B_k \rangle \right| \prec \left[\left(\prod_{i \in \mathfrak{I}_k} \rho_{i} \right) \wedge \max_{i \in [k]} \sqrt{\rho_{i}} \right] \frac{N^{k/2-1}}{\sqrt{N \ell}} \prod_{j \in [k]} \langle |B_j|^{2} \rangle^{\frac{1}{2}}
 \,. 
	\end{equation}
	\item[(b)] Let $\mathfrak{I}_{k+1}$ be the set of indices  $j \in [k+1]$ where $\mathcal{G}_{j}=\Im G_{j}$, and
	define
	${\ell}:= \min_{j \in [k+1]}\big[\eta_{j}(\rho_{j}+\bm1(j\notin\mathfrak{I}_{k+1}))\big]$. 
	Then, for deterministic unit vectors ${\bm x}, {\bm y}$, we have the isotropic bound
	\begin{equation}
\label{eq:maininISO}
\left| (\mathcal{G}_1 B_1 ... B_k \mathcal{G}_{k+1})_{{\bm x}{\bm y}} - (\mathcal{M}_{[1,k+1]})_{{\bm x}{\bm y}} \right| \prec  \left[\left(\prod_{i \in \mathfrak{I}_{k+1}} \rho_{i} \right) \wedge \max_{i \in [k+1]} \sqrt{\rho_{i}} \right]	\frac{N^{k/2}}{\sqrt{N \ell}} \prod_{j \in [k]} \langle |B_j|^{2} \rangle^{\frac{1}{2}} \,.
\end{equation}
	\end{itemize}
	\nc
\end{proposition}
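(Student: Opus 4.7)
The plan is to prove Proposition~\ref{prop:initial} by a cumulant (Gaussian integration by parts) expansion of the resolvent chain, proceeding by induction on $k$, while exploiting that in the global regime the resolvents satisfy the \emph{deterministic} norm bounds $\|G_j\|\lesssim 1$ and $\|\Im G_j\|\lesssim \rho_j$ with very high probability. The first bound holds on the event (of very high probability, by standard edge concentration, see \cite{EYY2012}) that $\mathrm{spec}(W)\subset[-2-\delta/2,2+\delta/2]$, since then $\mathrm{dist}(z_j,\mathrm{spec}(W))\ge\delta/2$. The second bound follows on the same event from $\|\Im G(z)\|=\max_i \eta/[(\lambda_i-\Re z)^2+\eta^2]$ and the fact that this quantity is $\asymp \rho(z)$ whenever $\mathrm{dist}(z,[-2,2])\ge\delta$. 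Consequently, the $\prod_{i\in\mathfrak{I}}\rho_i$-improvement in both \eqref{eq:maininAV} and \eqref{eq:maininISO} is absorbed externally by these norm bounds on $\Im G_j$, and the whole task reduces to proving the two estimates with $\mathcal{G}_j=G_j$ for all $j$.

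For the base case $k=1$, I would establish the single-resolvent global law $|\langle (G(z)-m_{\mathrm{sc}}(z))B\rangle|\prec N^{-1}\langle|B|^2\rangle^{1/2}$ (and its isotropic version) by a direct second-moment computation: applying one step of cumulant expansion to $\langle WGB\rangle$ and using $WG=I+zG$ produces the self-consistent equation for $\langle G\rangle$, while the Wick contractions in the variance naturally yield the Hilbert--Schmidt quantity $\sum_{ab}|B_{ab}|^2 = N\langle|B|^2\rangle$. For the inductive step $k-1\to k$, I would expand $\langle G_1 B_1\cdots G_k B_k\rangle$ via cumulant expansion of the $W$-entries of $G_1$ using $WG_1 = I + z_1 G_1$: the leading Wick contractions produce a self-consistent linear system whose solution is exactly $\mathcal{M}_{[1,k]}$ of \eqref{eq:Mdefim} (with its non-crossing-partition / Kreweras-complement structure arising naturally from the pairing combinatorics), while all other terms are either \emph{strictly shorter} resolvent chains (controlled by the inductive hypothesis) or higher-cumulant contributions gaining an additional $N^{-1/2}$-factor. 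The Hilbert--Schmidt norms of the $B_j$'s are preserved along the expansion by repeated applications of Cauchy--Schwarz in the form $|\langle XBY\rangle|\le \langle|YX|^2\rangle^{1/2}\langle|B|^2\rangle^{1/2}$, combined with the norm bound $\|G_j\|\lesssim 1$.

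The main obstacle I foresee is the combinatorial bookkeeping required to identify the leading deterministic contribution from the Wick contractions with the explicit non-crossing-partition formula \eqref{eq:Mdefim} for $\mathcal{M}_{[1,k]}$. This identification is purely algebraic, independent of the global/local distinction, and has already been worked out in \cite[Theorem~3.4]{thermalisation}, which I would invoke as a black box rather than rederive. The isotropic bound~\eqref{eq:maininISO} then follows by the same cumulant expansion with the outer trace $\langle\cdot\rangle$ replaced by the bilinear form $\langle\bm x,\cdot\,\bm y\rangle$; the increased prefactor $N^{k/2}$ (versus $N^{k/2-1}$ in the averaged case) simply reflects the missing $1/N$-factor coming from not taking an outer trace.
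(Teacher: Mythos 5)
Your overall strategy — cumulant expansion, induction on $k$, deterministic norm bounds on resolvents in the global regime $\mathrm{dist}(z,[-2,2])\gtrsim 1$, and citing \cite[Theorem~3.4]{thermalisation} for the non-crossing combinatorics — matches the paper's approach, which in fact invokes the analogous global-law arguments from \cite[Appendix~A]{A2} and \cite[Appendix~B]{multiG} for the $\mathfrak{I}_k=\emptyset$ case. However, there is a genuine gap in your opening reduction. You claim that the $\prod_{i\in\mathfrak{I}}\rho_i$ improvement ``is absorbed externally by the norm bounds on $\Im G_j$'' and that ``the whole task reduces to proving the two estimates with $\mathcal{G}_j=G_j$ for all $j$.'' This does not work. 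The quantity being controlled is a \emph{fluctuation} $\langle\mathcal{G}_1B_1\cdots\mathcal{G}_kB_k\rangle-\langle\mathcal{M}_{[1,k]}B_k\rangle$, not a product in which $\Im G_j$ appears as a clean multiplicative factor; knowing $\|\Im G_j\|\lesssim\rho_j$ does not let you pull $\rho_j$ out of the fluctuation. Concretely, if you decompose $\Im G_j=\tfrac{1}{2\ii}(G_j-G_j^*)$ and apply the already-proved all-$G$ bound to each of the resulting chains, you recover the estimate \emph{without} the $\prod_{i\in\mathfrak{I}_k}\rho_i$ gain, because that bound simply does not carry it. The paper handles exactly this point: it keeps the $\Im G$'s inside the cumulant expansion and observes that the derivative formula
\begin{equation*}
\partial_{ab}\,\Im G = G\Delta^{ab}\Im G + \Im G\,\Delta^{ab}G^*
\end{equation*}
\emph{preserves} the number of $\Im G$ factors in every term generated by the expansion, so that one $\Im G_j$ survives in every chain produced for each $j\in\mathfrak{I}_k$; only then are the norm bounds $\|\Im G_j\|\lesssim\rho_j$ and $\|G_j\|\lesssim 1/d$ applied, yielding the factor $d^{-(k+1-|\mathfrak{I}_k|)}\lesssim(\prod_{i\in\mathfrak{I}_k}\rho_i)/\sqrt{\ell}$. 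Your plan must keep the $\Im G$'s through the expansion rather than eliminating them up front. A secondary, smaller inaccuracy: your stated $k=1$ base bound $|\langle(G-m)B\rangle|\prec N^{-1}\langle|B|^2\rangle^{1/2}$ is missing the $d^{-1}\sim\sqrt{\rho/\eta}$ factor that the target estimate carries; this is harmless when $\delta$ is a fixed constant but should be tracked to recover the correct form of \eqref{eq:maininAV} for general $z_j$ in the allowed range.
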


In the next Proposition~\ref{prop:zig}, using Proposition \ref{prop:initial} 
as an input, we derive Theorem~\ref{thm:main} for Wigner matrices which have an order one Gaussian component.  
For this purpose we consider the evolution of the Wigner matrix $W$ along the Ornstein-Uhlenbeck flow
\begin{equation}
\label{eq:OUOUOU}
\dd W_t=-\frac{1}{2}W_t\dd t+\frac{\dd B_t}{\sqrt{N}},  \qquad W_0=W,
\end{equation}
with $B_t$ being real symmetric or complex Hermitian Brownian motion\footnote{Strictly speaking, we use this Brownian motion only when $\sigma : =  \E \chi^2_{\mathrm{od}}  $ is real and
$\E  \chi^2_{\mathrm{d}} = 1+ \sigma$, otherwise we need a small modification,
see later in Section~\ref{sec:opAedge}.} 
with entries having $t$ times the same first two moments of $W$, and define its resolvent $G_t(z):=(W_t-z)^{-1}$ with $z\in\C\setminus\R$. Even if not stated explicitly we will always consider this flow only for short  times, i.e. for $0\le t\le T$,  where the maximal time $T$ is smaller than 
$\gamma$, for some small constant $\gamma>0$. Note that along the flow \eqref{eq:OUOUOU} the first two moments of $W_t$ are preserved, and so the self-consistent density of states of $W_t$ is unchanged;
it remains the standard semicircle law. We now want to compute the deterministic approximation of product of resolvents and deterministic matrices with trace zero,
\begin{equation}
\label{eq:quantnochar}
\mathcal{G}_t(z_1)A_1\mathcal{G}_t(z_2) A_2\mathcal{G}_t(z_3)A_3\dots, \qquad\quad \langle A_i\rangle =0,
\end{equation}
and have a very precise estimate of the error term.

In fact, we  also let the spectral parameters evolve with time with a carefully chosen equation
that  conveniently cancels some leading 
error terms in the time evolution of \eqref{eq:quantnochar}. 
\begin{figure}
	\centering
	\includegraphics[scale=1.3]{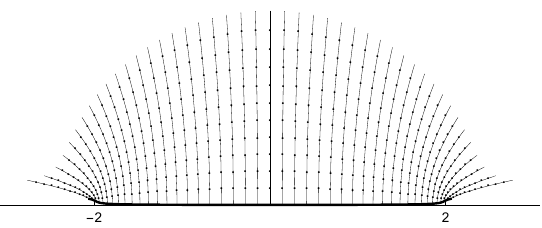}
	\caption{Several trajectories for solutions of \eqref{eq:chardef} are depicted. We chose ten reference times, indicated by dots, showing that the rate of change along the flow strongly depends on $\rho$. 
The solid black line is the graph of $E \mapsto \eta(E)$ with $\eta(E)$ implicitly defined via $\eta(E) \rho(E + \ii \eta(E)) = \mathrm{const}.$ for a  small positive constant. A similar picture also appeared in \cite[Figure 1]{Bourgade2021}.}
	\label{fig:flow}
\end{figure}
The corresponding equation is
the characteristic equation for the semicircular flow, i.e.~given by the first order ODE (see Figure \ref{fig:flow}):
\begin{equation}
\label{eq:chardef}
\partial_t z_{i,t}=-m(z_{i,t})-\frac{z_{i,t}}{2}\,. 
\end{equation}
Define $\eta_{i,t}:=|\Im z_{i,t}|$ and $\rho_{i,t}:=\pi^{-1}|\Im m(z_{i,t})|$. Note that along the characteristics we have
\begin{equation}
\label{eq:characteristics}
\partial_t m(z_{i,t})=-\partial_z m(z_{i,t}) \left(m(z_{i,t})+\frac{z_{i,t}}{2}\right)=-\partial_z m(z_{i,t})\left(-\frac{1}{2m(z_{i,t})}+\frac{m(z_{i,t})}{2}\right)=\frac{m(z_{i,t})}{2},
\end{equation}
where in the last two equalities we used the defining equation $m(z)^2 + zm(z)+1=0$ of the Stieltjes transform of the semicircular law. In particular, taking the imaginary part of~\eqref{eq:characteristics}
we get $\rho_{i,s}\sim \rho_{i,t}$ for any $0\le s\le t$, 
while the behavior of the $\eta_{i,t}$ depends on the regime: in the bulk $\eta_{i,t}$ decreases
 linearly in time with a speed of order one, close to the edge $\eta_{i,t}$ decreases still linearly, but with a 
 speed depending on  $\rho$, i.e. it is slower near the edges.  
 By standard ODE theory we obtain the following lemma:
 \begin{lemma}
 \label{lem:propchar}
Fix an $N$--independent $\gamma>0$, fix $0<T< \gamma \nc$, and pick $z\in \C\setminus \R$.
Then there exists an initial condition $z_0$ such that the solution $z_t$ of  \eqref{eq:chardef} with this initial condition $z_0$ satisfies $z_T=z$. Furthermore, there exists a constant $C>0$ such that \nc $\mathrm{dist}(z_0,[-2,2]) \ge C\nc T$.
 \end{lemma}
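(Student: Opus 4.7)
The plan is to exploit the linearization of \eqref{eq:chardef} already recorded in \eqref{eq:characteristics}: along any characteristic the Stieltjes transform evolves by the scalar linear equation $\partial_t m(z_t) = m(z_t)/2$. This reduces the nonlinear ODE to a purely algebraic inversion problem.

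First, I would integrate this linear equation to obtain $m(z_t) = e^{(t-T)/2}\, m(z)$ for $t \in [0,T]$, and then recover $z_t$ from the inverse of $m_{\mathrm{sc}}$ furnished by the defining quadratic identity $m_{\mathrm{sc}}(w)^2 + w\, m_{\mathrm{sc}}(w) + 1 = 0$, namely $w = -m_{\mathrm{sc}}(w) - 1/m_{\mathrm{sc}}(w)$. This yields the closed-form candidate
\begin{equation*}
z_t \;=\; -e^{(t-T)/2}\, m(z) \;-\; e^{-(t-T)/2}/m(z), \qquad t \in [0,T],
\end{equation*}
and I define $z_0$ as its value at $t = 0$. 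A direct differentiation confirms that $z_t$ solves \eqref{eq:chardef} and $z_T = z$. To justify the identification $m_{\mathrm{sc}}(z_t) = e^{(t-T)/2}m(z)$, I would invoke the classical fact that $m_{\mathrm{sc}}$ is a biholomorphic bijection from the open upper (resp.\ lower) half-plane onto the open upper (resp.\ lower) half of the unit disk, with inverse $w \mapsto -w - 1/w$; since $|e^{(t-T)/2}\,m(z)| \le |m(z)| < 1$ for $t \le T$ and this value has imaginary part of the same sign as $\Im z$, its preimage is in the correct half-plane, ensuring $z_t \in \C \setminus \R$ throughout.

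For the distance bound, the key observation is $|m(z_0)| = e^{-T/2}|m(z)| \le e^{-T/2} \le 1 - cT$ for some $c = c(\gamma) > 0$ uniform in $T \in (0,\gamma)$. Since $|m_{\mathrm{sc}}|$ equals $1$ on $[-2,2]$ and is continuous on the closure of the upper half-plane, with edge expansion $1 - |m_{\mathrm{sc}}(w)| \asymp \sqrt{\mathrm{dist}(w, \{\pm 2\})}$, the sublevel set $\{w : |m_{\mathrm{sc}}(w)| \le 1 - cT\}$ is separated from $[-2,2]$ by a positive distance depending only on $T$ (and $\gamma$), uniformly in $z$.

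The main --- and only mildly delicate --- point is the quantitative behavior near the spectral edges: the square-root vanishing of the density forces the sublevel set to approach $\{\pm 2\}$ only at rate $T^2$ as $T \downarrow 0$, which is weaker than the claimed linear rate. However, in the intended applications of this lemma $T$ is an $N$-independent positive constant bounded by $\gamma$, so the resulting strictly positive lower bound on $\mathrm{dist}(z_0,[-2,2])$ can be absorbed into the constant $C$ in the statement; the qualitative content, namely that $z_0$ sits in the ``global regime'' of Proposition~\ref{prop:initial} uniformly in $z$, is what is actually needed in the sequel.
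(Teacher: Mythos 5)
Your proof is correct and takes the natural route; the paper itself supplies no argument, asserting the lemma only ``by standard ODE theory'', so the closed-form integration of $\partial_t m(z_t)=m(z_t)/2$ combined with the biholomorphic inverse $w\mapsto -w-1/w$ of $m_{\mathrm{sc}}$ is exactly the ``standard'' argument the authors must have had in mind. Your explicit candidate $z_t=-e^{(t-T)/2}m(z)-e^{-(t-T)/2}/m(z)$ is easily checked to solve \eqref{eq:chardef} (one finds $\partial_t z_t=-\tfrac12 m(z_t)+\tfrac1{2m(z_t)}=-m(z_t)-z_t/2$), and the half-plane-preservation check via $|e^{(t-T)/2}m(z)|\le|m(z)|<1$ together with sign-preservation of $\Im$ correctly guarantees $z_t\in\C\setminus\R$ on all of $[0,T]$.

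You also correctly flag a genuine imprecision in the lemma as stated: the distance bound $\mathrm{dist}(z_0,[-2,2])\ge CT$ is slightly optimistic near the spectral edges. Indeed, letting $z\to\pm2$ (so $m(z)\to\mp1$) in your formula gives
\begin{equation*}
z_0\;\longrightarrow\;\pm\big(e^{-T/2}+e^{T/2}\big)=\pm\,2\cosh(T/2)=\pm\Big(2+\tfrac{T^2}{4}+O(T^4)\Big),
\end{equation*}
so that $\mathrm{dist}(z_0,[-2,2])\approx T^2/4$, a quadratic rate; the source is precisely the square-root vanishing $1-|m_{\mathrm{sc}}(w)|\asymp\sqrt{\mathrm{dist}(w,\{\pm2\})}$ near the endpoints that you invoke. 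Strictly speaking the lemma should therefore read $\mathrm{dist}(z_0,[-2,2])\ge CT^2$ (equivalently: some $c(T,\gamma)>0$). As you observe, this is harmless for the paper's argument --- $T$ is eventually fixed to an $N$-independent constant, and the sole purpose of the bound is to put $z_0$ into the domain of Proposition~\ref{prop:initial} with some $N$-independent $\delta>0$, which your $CT^2$ lower bound delivers --- but it is a real discrepancy that is worth noting explicitly, and your proof is transparent about exactly why and where the linear rate fails.
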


 The spectral parameters evolving by
  \eqref{eq:OUOUOU}  will have the property 
  that 
  \begin{equation}
\mathcal{G}_t(z_{1,t})A_1\dots A_{k-1}\mathcal{G}_t(z_{k,t})-\mathcal{M}_{[1,k],t}\approx \mathcal{G}_0(z_{1,0})A_1\dots A_{k-1}\mathcal{G}_0(z_{k,0})-\mathcal{M}_{[1,k],0},
\end{equation}
with $\mathcal{M}_{[1,k],t}:=\mathcal{M}(z_{1,t},A_1,\dots, A_{k-1},z_{k,t})$, for any $0\le t \le T$. 
Note that the deterministic approximation $\mathcal{M}_{[1,k],t}$ depends on time only through 
the time dependence of the spectral parameters.
 The deterministic approximation of \eqref{eq:quantnochar} with fixed spectral parameters
is unchanged along the whole flow \eqref{eq:OUOUOU} since the Wigner semicircular density 
is preserved under the OU flow~\eqref{eq:OUOUOU}.

\begin{proposition}[Step 2: Characteristic flow] \label{prop:zig}
	Fix $\epsilon,\gamma>0$, $0\le T\le \gamma$, $K\in\N$. Consider
	 $z_{1,0},\dots,z_{K+1,0}\in\C \setminus \R$  as initial conditions of the solution $z_{j,t}$  of \eqref{eq:chardef}
	 for  $0\le t\le T$,
define $G_{j,t}:=G_t(z_{j,t})$ 
 and let $\mathcal{G}_{j,t}\in \{G_{j,t}, \Im G_{j,t}\}$. 
 Let $\Vert \bm x \Vert \,, \Vert \bm y \Vert \lesssim1$ be bounded deterministic vectors.

\begin{itemize}

\item[(a)] For any $k\le K$ let $\mathfrak{I}_k$ be the set of indices  $j \in [k]$ where $\mathcal{G}_{j,t}=\Im G_{j,t}$, and
define
${\ell}_{t} := \min_{j \in [k]}\big[\eta_{j,t}(\rho_{j,t}+\bm1(j\notin\mathfrak{I}_{k}))\big]$, 
the time dependent analogue\footnote{We point out that the index $j$ realizing the minimum may change along the time evolution. Additionally, by \eqref{eq:characteristics} and the text below it, we note that if $\min_i N\eta_i\rho_i\ge N^\epsilon$ then $\min_i N\eta_{i,t}\rho_{i,t}\ge N^\epsilon$ for any $0\le t\le T$.}  of $\ell$. 
 Then, assuming that 
\begin{equation}
\label{eq:inass0}
\left| \langle \mathcal{G}_{1,0} A_1 ... \mathcal{G}_{k,0} A_k \rangle - \langle \mathcal{M}_{[1,k],0}A_k \rangle \right|\prec \left[\left(\prod_{i \in \mathfrak{I}_k} \rho_{i,0} \right) \wedge \max_{i \in [k]} \sqrt{\rho_{i,0}} \right]	\frac{N^{k/2 - 1}}{\sqrt{N \ell_{0}}} \prod_{j \in [k]} \langle |A_j|^2 \rangle^{1/2} \,
\end{equation}
holds uniformly for any $k\le K$, any choice of  $A_1,\dots,A_k$ traceless deterministic matrices
and any choice of $z_{i,0}$'s such that 
$N\eta_{i,0}\rho_{i,0}\ge N^\epsilon$ and $ |z_{i,0}| \le N^{1/\epsilon}$,
 then
we have
\begin{equation}
\label{eq:flowgimg}
\left| \langle \mathcal{G}_{1,T} A_1 ... \mathcal{G}_{k,T} A_k \rangle - \langle \mathcal{M}_{[1,k],T}A_k \rangle \right|\prec \left[\left(\prod_{i \in \mathfrak{I}_k} \rho_{i,T} \right) \wedge \max_{i \in [k]} \sqrt{\rho_{i,T}} \right]
	\frac{N^{k/2 - 1}}{\sqrt{N \ell_{T}}} \prod_{j \in [k]} \langle |A_j|^2 \rangle^{1/2}\,,
\end{equation}
for any $k\le K$, again 
uniformly in traceless matrices $A_i$ and
 in spectral parameters satisfying $N \eta_{i, T} \rho_{i, T}  \ge N^{\epsilon}$, $|z_{i,T}|\le N^{1/\epsilon}$.

\item[(b)] Let $\mathfrak{I}_{k+1}$ be the set of indices  $j \in [k+1]$ where $\mathcal{G}_{j,t}=\Im G_{j,t}$, and
 define ${\ell}_{j,t} := \min_{j \in [k+1]} \big[\eta_{j,t}(\rho_{j,t}+\bm1(j\notin\mathfrak{I}_{k+1}))\big]$. Then, assuming that
\begin{equation}
	\label{eq:inass0ISO}
	\left| \langle \bm x, \mathcal{G}_{1,0} A_1 ... A_k \mathcal{G}_{k+1, 0}\bm y\rangle - \langle \bm x, \mathcal{M}_{[1,k+1],0} \bm y \rangle \right|\prec \left[\left(\prod_{i \in \mathfrak{I}_{k+1}} \rho_{i,0} \right) \wedge \max_{i \in [k+1]} \sqrt{\rho_{i,0}} \right]
		\frac{N^{k/2 }}{\sqrt{N \ell_{0}}} \prod_{j \in [k]} \langle |A_j|^2 \rangle^{1/2}  \, 
\end{equation}
holds for any $k\le K$, uniformly in $A$'s and in the spectral parameters as in part (a), and in deterministic vectors,  then
we have 
\begin{equation}
	\label{eq:flowgimgISO}
	\left| \langle \bm x, \mathcal{G}_{1,T} A_1 ... A_k \mathcal{G}_{k+1, T}\bm y\rangle - \langle \bm x, \mathcal{M}_{[1,k+1],T} \bm y \rangle \right|\prec \left[\left(\prod_{i \in \mathfrak{I}_{k+1}} \rho_{i,T} \right) \wedge \max_{i \in [k+1]} \sqrt{\rho_{i,T}} \right]	\frac{N^{k/2 }}{\sqrt{N \ell_{T}}} \prod_{j \in [k]} \langle |A_j|^2 \rangle^{1/2} \,,
\end{equation}
for any $k\le K$, again
uniformly  in $A$'s and in spectral parameters as in part (a), and in deterministic vectors ${\bm x}, {\bm y}$.

\end{itemize}

\end{proposition}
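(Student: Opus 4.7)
The plan is to propagate the global law~\eqref{eq:inass0}--\eqref{eq:inass0ISO} from time $0$ to time $T$ along the coupled Ornstein--Uhlenbeck and characteristic flow by a Grönwall argument, carried out by a simultaneous induction on the chain length $k \le K$ with the averaged and isotropic statements running in parallel. Define
\[
X_{[1,k],t}^{\mathrm{av}} := \<\mathcal{G}_{1,t}A_1\cdots\mathcal{G}_{k,t}A_k\> - \<\mathcal{M}_{[1,k],t}A_k\>,
\]
let $\Phi^{\mathrm{av}}_{k,t}$ denote the right-hand side of~\eqref{eq:flowgimg} evaluated at time $t$, and introduce the analogous $X^{\mathrm{iso}}_{[1,k+1],t}$ and $\Phi^{\mathrm{iso}}_{k+1,t}$ for the isotropic chain. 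The target is to show $|X^{\mathrm{av}}_{[1,k],t}| \prec \Phi^{\mathrm{av}}_{k,t}$ (and its isotropic counterpart) uniformly in $t \in [0,T]$ and $k \le K$, using the hypotheses \eqref{eq:inass0}--\eqref{eq:inass0ISO} as the $t=0$ input.

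The first step is the Itô computation. Combining the identity $\dd G_t(z) = -G_t\,\dd W_t\,G_t + \tfrac{1}{2}G_tW_tG_t\,\dd t + (\text{Itô correction})$ along \eqref{eq:OUOUOU} with the derivative $\partial_t z_{j,t} = -m(z_{j,t}) - z_{j,t}/2$, which contributes $(\partial_t z_{j,t})\mathcal{G}_{j,t}^2$ to $\partial_t\mathcal{G}_{j,t}$, and using both the resolvent identity $W_t G_t = I + z G_t$ and the Stieltjes equation $m^2 + zm + 1 = 0$, the first-order (in $W_t$) drift together with the characteristic derivative is \emph{exactly} matched by the corresponding $\partial_t$ contributions of $\mathcal{M}_{[1,k],t}$. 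This cancellation is the very reason for the choice of~\eqref{eq:chardef}. What remains in $\dd X_{[1,k],t}^{\mathrm{av}}$ is therefore (i) a martingale term driven by the Brownian part of $W_t$, and (ii) an Itô correction that schematically has the form $N^{-1}\sum_{i,j}$ of a product of two sub-chains of total length $\le k+2$, in which the cyclic sequence $A_1,\ldots,A_k$ is split between the two factors.

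The second step bounds these residual terms by $\Phi^{\mathrm{av}}_{k,t}$ up to an $\OO(1)$ Grönwall factor. Every squared resolvent $G_{i,t}^2$ produced either by the $\partial_z$ derivative or by the quadratic variation is converted into $\Im G_{i,t}$ via the Ward identity $G_{i,t}G_{i,t}^{*} = \eta_{i,t}^{-1}\Im G_{i,t}$; this is precisely the mechanism that generates the extra $1/\sqrt{N\ell_t}$ gain with the right $\rho_{i,t}$ placement on the $\mathfrak{I}_k$-indices. The two sub-chains produced by the Itô correction are then estimated by the induction hypothesis at the same time $t$ (for lengths strictly smaller than $k$) and, for their deterministic approximations, by the $M$-bounds of Lemma~\ref{lem:Mbound}. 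When a contraction merges two $A_j$'s into a single non-traceless block $B$, we split $B = \<B\> I + \mathring{B}$: the traceless piece feeds directly into the induction, while the trace piece reduces $k$ by one and contributes an $N^{-1/2}$ factor through its Hilbert--Schmidt norm that matches the bookkeeping exactly. The martingale contribution is handled by the Burkholder--Davis--Gundy inequality applied to the quadratic variation, which has the same schematic structure as the Itô correction and is bounded by the same mechanism, producing an $N^{o(1)}\sqrt{\int_0^T(\cdots)\dd s}$ of the required size. Integrating from $0$ to $T$ and inserting~\eqref{eq:inass0}--\eqref{eq:inass0ISO} at $t=0$ closes the Grönwall loop. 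Part~(b) is run in tandem, since the Itô correction for an isotropic chain involves averaged sub-chains (and vice versa), so only the joint bootstrap closes.

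The main obstacle will be to keep \emph{all three} optimality aspects alive throughout this Itô expansion: the sharp $N$-power, the Hilbert--Schmidt norms $\<|A_j|^2\>^{1/2}$, and the density prefactor $\prod_{i\in\mathfrak{I}_k}\rho_{i,t}\wedge \max_i\sqrt{\rho_{i,t}}$. The $\rho$-bookkeeping is the most delicate point at the edge: each $\Im G_{i,t}$ surviving in a sub-chain must be credited with its $\rho_{i,t}$ via Ward, and the $\min\{\prod\rho,\max\sqrt{\rho}\}$ form — whose favored branch can swap along the characteristic — must be preserved under every contraction. A second subtlety is that along the flow the imaginary parts $\eta_{i,t}$ change and, at the edge, do so with rate proportional to $\rho_{i,t}$; one must verify that $N\ell_t$ stays bounded below by $N^\epsilon$ uniformly in $t\in[0,T]$, which is ensured by the monotonicity implicit in~\eqref{eq:characteristics} and the short-time nature $T < \gamma$ of the flow. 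Finally, to obtain the isotropic law~\eqref{eq:flowgimgISO} one must carefully track the boundary vectors $\bm{x},\bm{y}$ through the Itô expansion, where a contraction produces averaged sub-chains whose deterministic approximation is bounded by the averaged part of Lemma~\ref{lem:Mbound}, and this is again consistent with the joint induction.
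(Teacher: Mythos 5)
Your high-level starting point — propagate via It\^o's formula along the coupled OU/characteristic flow, exploit the cancellation between the characteristic drift $\partial_t z_{j,t}\, G^2_{j,t}$ and the $\frac{z}{2}G^2$ piece of the OU drift, kill the leading residual $\langle G-m\rangle G^2$ by the single-resolvent law, and control the martingale by BDG — is the correct philosophy and matches the paper's strategy. But the Gr\"onwall-plus-simple-induction picture you describe would not close, for several concrete reasons.

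First, the quadratic variation of an averaged chain of length $k$ is a \emph{single} chain of length $2k$, not a product of sub-chains of total length $\le k+2$ (see \eqref{eq:expexprqv}: it is $\sum_i \frac{1}{N^2\eta_i^2}\langle \Im G_i (A_i\ldots)\Im G_i(A_i\ldots)^*\rangle$). Your induction on $k\le K$ therefore refers at step $k$ to quantities of length $2k$, which is not covered by the induction hypothesis. The paper closes this by the reduction inequality (Lemma~\ref{lem:redinphi}), which bounds $\Phi_{2k}$ in terms of $\Phi_k$, $\Phi_{k\pm1}$, plus a $(N\hell)^{1/2}$ error. Without that ingredient the Gr\"onwall loop is open.

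Second, you do not address the fact that the flow for a pure $\Im G$-chain generates \emph{mixed} chains: $\partial_{ab}\Im G = G\Delta^{ab}\Im G + \Im G\Delta^{ab}G^*$, so the $\Omega_a$ terms in \eqref{eq:flowkaim} contain factors like $\widehat{G}_{[\hat i, j]}$ with a plain $G_j$ at the boundary. These cannot be fed into an induction stated only for pure $\Im G$ chains; the paper converts them back using the $G^2$-Lemma \ref{lem:G^2lemma} based on the integral representation \eqref{eq:intrepIM}, with a careful above/below-scale split that is genuinely delicate at the edge. Relatedly, the estimate you obtain at each step is not of self-improving Gr\"onwall form: the master inequality \eqref{eq:AVmasterITERATE} has $\phi_k$ itself on the right-hand side (through the $l=k$ summand), so one needs the abstract iteration Lemma~\ref{lem:iteration} together with a two-step induction in $k$ to bootstrap $\Phi_k\prec 1$; a single integration over $[0,T]$ does not give the result.

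Finally, your claim that the averaged and isotropic statements must be proved by a joint bootstrap because ``the It\^o correction for an averaged chain involves isotropic sub-chains'' is incorrect: the averaged flow is self-contained (the $\Omega_a$ terms are products of \emph{averaged} sub-chains). In the paper the isotropic bound \eqref{eq:flowgimgISO} is a corollary of the averaged one via the rank-one observable $A_{k+1}=N\bm y\bm x^*-\langle\bm x,\bm y\rangle$ and a single application of the $G^2$-Lemma (Section~\ref{subsec:pureIMISO}); no joint induction is needed.
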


Proposition~\ref{prop:zig} is proven in Section~\ref{sec:opAedge}.
As the third and final step, we show that the additional Gaussian component introduced in Proposition~\ref{prop:zig} 
can be removed using a Green function comparison (GFT) argument. 
The proof of this proposition is presented in Section~\ref{sec:GFT}.

\begin{proposition}[Step 3: Green function comparison] \label{prop:zag} 
Let $H^{(\bm v)}$ and $H^{(\bm w)}$ be two $N \times N$ Wigner matrices with matrix elements given by the random variables $v_{ab}$ and $w_{ab}$, respectively, both satisfying Assumption \ref{ass:entries} and having matching moments up to third order,\footnote{This condition can easily be relaxed to being matching up to an error of size $N^{-2}$ as done, e.g., in \cite[Theorem~16.1]{EYbook}.} i.e.
\begin{equation} \label{eq:momentmatch}
	\E \bar{v}_{ab}^u v_{ab}^{s-u} = \E \bar{w}_{ab}^u w_{ab}^{s-u}\,, \quad s \in \{0,1,2,3\}\,, \quad u \in \{0,...,s\}\,. 
\end{equation}
Fix $K \in \N$ and consider spectral parameters $z_1, ... , z_{K+1} \in \C \setminus \R$ satisfying $ \min_j N \eta_j \rho_j  \ge N^{\epsilon}$ and $\max_j |z_j| \le N^{1/\epsilon}$ for some $\epsilon > 0$ and the associated resolvents $G_j^{(\#)} = G^{(\#)}(z_j) := (H^{(\#)}-z_j)^{-1}$ with $\mathcal{G}^{(\#)}_j \in \{ G^{(\#)}_j, \Im G^{(\#)}_j\}$ and $\# = \bm v, \bm w$. Pick   traceless matrices $A_1, ... , A_K \in \C^{N \times N}$. 

Assume that, for $H^{(\bm v)}$, 
we have the following bounds (writing $\mathcal{G}_j \equiv \mathcal{G}_j^{(\bm v)}$ for brevity). 
\begin{itemize}
	\item[(a)] For any $k \le K$, consider any subset of cardinality $k$ of the $K+1$ spectral parameters and, similarly, consider any subset of cardinality $k$ of the $K$ traceless matrices. Relabel both of them by $[k]$, and denote the set of indices $j \in [k]$ by $\mathfrak{I}_k$ where $\mathcal{G}_j = \Im G_j$. Setting $\ell := \min_{j \in [k]}\big[ \eta_j (\rho_j + \mathbf{1}(j \notin \mathfrak{I}_k))\big]$ we have that
	\begin{equation} \label{eq:zagmultiG}
		\left| \langle \mathcal{G}_1 A_1 ... \mathcal{G}_k A_k \rangle - \langle \mathcal{M}_{[1,k]}A_k \rangle \right| \prec \left[\left(\prod_{i \in \mathfrak{I}_k} \rho_i \right) \wedge \max_{i \in [k]} \sqrt{\rho_i}\right] \, 
		\frac{N^{k/2 - 1}}{\sqrt{N \ell}} \,  \prod_{j \in [k]} \langle |A_j|^2 \rangle^{1/2}\,,
	\end{equation}
	uniformly in all choices of subsets of $z$'s and $A$'s. 
			\item[(b)] For any $k \le K$, consider any subset of cardinality $k+1$ of the $K+1$ spectral parameters and, similarly, consider any subset of cardinality $k$ of the $K$ traceless matrices. Relabel them by $[k+1]$ and $[k]$, respectively, and denote the set of indices $j \in [k+1]$ by $\mathfrak{I}_{k+1}$ where $\mathcal{G}_j = \Im G_j$. Setting $\ell := \min_{j \in [k+1]}\big[ \eta_j (\rho_j + \mathbf{1}(j \notin \mathfrak{I}_{k+1}))\big]$ we have that
	\begin{equation} \label{eq:zagmultiGISO}
		\begin{split}
			\left| \langle \bm x, \mathcal{G}_1 A_1 ...  A_k \mathcal{G}_{k+1} \bm y \rangle - \langle \bm x,  \mathcal{M}_{[1,k+1]}\bm y \rangle \right| \prec \left[\left(\prod_{i \in \mathfrak{I}_{k+1}} \rho_i \right) \wedge \max_{i\in [k+1]} \sqrt{\rho_i}\right] \,
			\frac{N^{k/2}}{\sqrt{N \ell}} \, \prod_{j \in [k]} \langle |A_j|^2 \rangle^{1/2}  \,,
		\end{split}
	\end{equation}
	uniformly in all choices of subsets of $z$'s and $A$'s as in part (a) and in bounded deterministic vectors $\Vert \bm x \Vert \,, \Vert \bm y \Vert \lesssim 1$.
\end{itemize}

Then, \eqref{eq:zagmultiG}--\eqref{eq:zagmultiGISO} also hold for the ensemble $H^{(\bm{w})}$, uniformly all choices of subsets of $z$'s and $A$'s and in bounded deterministic vectors. 
\end{proposition}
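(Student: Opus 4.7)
The plan is to use a standard Lindeberg-type Green function comparison (GFT), adapted to the multi-resolvent setting. Since the desired conclusion is a stochastic-domination bound, I would control high moments $\E|X|^{2p}$ of the centred quantities
\begin{equation*}
X^{\mathrm{av}} := \langle \mathcal{G}_1 A_1 \cdots \mathcal{G}_k A_k\rangle - \langle \mathcal{M}_{[1,k]} A_k\rangle, \qquad
X^{\mathrm{iso}} := \langle \bm x, \mathcal{G}_1 A_1 \cdots A_k \mathcal{G}_{k+1} \bm y\rangle - \langle \bm x, \mathcal{M}_{[1,k+1]} \bm y\rangle,
\end{equation*}
for arbitrary $p \in \N$, and then apply Markov's inequality. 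The interpolation is by Lindeberg swapping: order the independent entries $(a,b)$, $a\le b$, linearly and define a sequence of Wigner matrices $H^{(\gamma)}$, $0\le \gamma \le M = O(N^2)$, with $H^{(0)} = H^{(\bm v)}$ and $H^{(M)} = H^{(\bm w)}$, such that $H^{(\gamma)}$ and $H^{(\gamma-1)}$ differ only in the $\gamma$-th off-diagonal entry (and its Hermitian conjugate) or in one diagonal entry.

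At each swap, Taylor-expand $\E_{H^{(\gamma)}}|X|^{2p}$ in $w_{ab}$ and $\bar{w}_{ab}$ up to total order four around $w_{ab}=0$. Differentiation uses $\partial_{w_{ab}} G = -G E^{(ab)} G$ (with $E^{(ab)}$ the matrix unit at position $(a,b)$) and repeated application of the Leibniz rule. The moment-matching assumption \eqref{eq:momentmatch} up to third order produces an exact cancellation of all expansion terms of orders $s=0,1,2,3$ between the $\bm v$- and $\bm w$-expectations, leaving only a fourth-order remainder $R_{4,\gamma}$. Each term in $R_{4,\gamma}$ is a product of at most $2p$ factors, where each factor is either $X$ itself or a resolvent chain of the form $(\mathcal{G}_{j_1} A_{i_1} \cdots \mathcal{G}_{j_r})_{xy}$ with $x,y \in \{a,b\}$, $r\le k + 4$ resolvents (possibly with repeated spectral parameters), and traceless matrices inherited from the $A_i$'s; averaged contractions of such chains also appear. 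All of them are controlled by the hypothesised bounds \eqref{eq:zagmultiG}--\eqref{eq:zagmultiGISO}, the case of repeated spectral parameters following from the distinct case by continuity. Crucially, these bounds propagate from $H^{(\bm v)}$ to each intermediate $H^{(\gamma)}$ by a standard continuity argument in the swap parameter, so no circular dependence arises.

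Multiplying by the per-entry weight $\E[|v_{ab}|^4 + |w_{ab}|^4] \lesssim N^{-2}$ and summing over the $O(N^2)$ swaps, the accumulated change in $\E|X|^{2p}$ is bounded by $N^{-\xi}$ times the target $2p$-th power of the right-hand side of \eqref{eq:zagmultiG}--\eqref{eq:zagmultiGISO}, where the extra smallness comes from the off-diagonal decay $|G_{ab}|\prec N^{-1/2}$ (with appropriate $\rho$-gain when $\mathcal{G}=\Im G$) of each isotropic factor created by the splitting at $(a,b)$. Markov's inequality with $p$ arbitrarily large then transfers the local laws from $H^{(\bm v)}$ to $H^{(\bm w)}$. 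The main technical obstacle is the combinatorial bookkeeping inside $R_{4,\gamma}$: for every way of distributing the four derivatives among the $2p$ copies of $X$, every resulting product of chains must carry exactly the $\rho$-structure and $\sqrt{N\ell}$-denominator dictated by the target bound, and the isotropic factors produced by the splittings must precisely absorb the $N^2$ summation over the swap indices. Handling the diagonal entries ($a=b$), which carry only $O(N)$ weight, and the real-versus-complex Hermitian symmetry in the partial derivatives, is routine but must be explicitly tracked.
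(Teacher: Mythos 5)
The central gap is your claim that the bounds \eqref{eq:zagmultiG}--\eqref{eq:zagmultiGISO} ``propagate from $H^{(\bm v)}$ to each intermediate $H^{(\gamma)}$ by a standard continuity argument in the swap parameter, so no circular dependence arises.'' There is no such continuity: each Lindeberg step is a discrete replacement of a single entry, and after $O(N^2)$ steps $H^{(\gamma)}$ is an altogether different Wigner matrix, for which the multi-resolvent local laws are exactly what one is trying to prove. Assuming them for the intermediate ensembles is circular. In a classical GFT application (e.g.\ to prove universality of correlation functions) the resolvent bounds used along the interpolation are a priori inputs established by independent means for \emph{all} interpolating ensembles; here the interpolation is being used to \emph{prove} those very bounds, so the argument must be made self-consistent. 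The paper does this by tracking the quantity $\Omega_k^p(\gamma)=\max_{\bm u,\bm v\in I_{\bm x\bm y}}\Vert\Psi_k^{(\gamma)}(\bm u,\bm v)\Vert_p^p$ and deriving a discrete Gronwall inequality $\Omega_k^p(\gamma_0)\le C_1 N^{-2}\sum_{\gamma<\gamma_0}\Omega_k^p(\gamma)+C_2 N^\xi$ (Proposition~\ref{prop:gronwall}), within a double induction on the chain length $k$ (isotropic case first) and the moment $p$; the Gronwall step is precisely what replaces the missing ``continuity'' and closes the circularity.

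A second, related gap: you assert that after the fourth-order Taylor remainder the one-step error is $o(N^{-2})$, so that summing over $O(N^2)$ swaps yields an overall $o(1)$ error, with the smallness coming from the off-diagonal decay $|G_{ab}|\prec N^{-1/2}$. This is not quite right. The off-diagonal decay applies to fluctuating factors $G-M$, but the deterministic $M$-chains produced by the splitting at $(a,b)$ satisfy only the isotropic norm bound $|(M_{j+1})_{\bm u\bm v}|\lesssim N^{j/2}$ for a fixed $(a,b)$, with no per-step smallness of order $N^{-1/2}$. Consequently, individual one-step errors are in general \emph{not} $o(N^{-2})$ times the target bound. The paper resolves this by ``gaining from the summation'': the off-diagonal $M$-terms $M_{\bm x\bm e_i}$, $M_{\bm e_j\bm y}$ (and in the averaged case $\langle|M|^2\rangle$ via Lemma~\ref{lem:gainAV}) are \emph{collectively summable} over the replacement indices $(i,j)$, and Schwarz over the $N^2$ positions recovers the missing $N^{-u/2}$ factors (see \eqref{eq:Msums}--\eqref{eq:MsumsSchwarz} and Lemma~\ref{lem:sumMs}). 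Without this summation gain, and without the Gronwall self-consistency, your accounting does not close. The remaining elements you describe (Lindeberg ordering, fourth-order Taylor, moment cancellation at orders $0$--$3$, diagonal/off-diagonal weighting, truncation of the resolvent expansion) are consistent with the paper's proof, but the two points above are not routine bookkeeping and are where the proposal fails.
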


We are now ready to finally conclude the proof of Theorem \ref{thm:main}. Fix $T>0$, and fix $z_1,\dots,z_{k+1}\in\C \setminus \R$ such that  $\min_i N\eta_i \rho_i  \ge N^\epsilon$, and let $z_{i,0}$ be the initial conditions of the characteristics \eqref{eq:chardef} chosen so that $z_{i,T}=z_i$ (this is possible thanks to Lemma~\ref{lem:propchar}).  Then, the assumption \eqref{eq:inass0} of Proposition~\ref{prop:zig} is satisfied  for those $z_{i,0}$ by Proposition~\ref{prop:initial} with $\delta=CT$, where $C>0$ is the constant from Lemma~\ref{lem:propchar}. We can thus use Proposition~\ref{prop:zig} to show that \eqref{eq:flowgimg} and \eqref{eq:flowgimgISO} hold. Finally, the Gaussian component added in Proposition~\ref{prop:zig} is removed using Proposition~\ref{prop:zag} with the aid of a complex version of the standard moment-matching lemma \cite[Lemma~16.2]{EYbook},
see Lemma \ref{lem:momentmatch} in Appendix \ref{app:moma} for more details. \qed

\nc

\section{Characteristic flow: Proof of Proposition~\ref{prop:zig}} \label{sec:opAedge}

In this section we present the proof of Proposition~\ref{prop:zig}. The argument is structured as follows:
\begin{itemize}
\item[(i)] In Section \ref{subsec:pureIM} we begin by proving the average part, Proposition \ref{prop:zig}~(a), in the case when $\mathcal{G}_{j,t}=\Im G_{j,t}$ for each $j\in [k]$, i.e., we prove \eqref{eq:flowgimg} for chains containing only $\Im G$'s. 
Along the flow \eqref{eq:OUOUOU} new resolvents without imaginary part arise, so the pure $\Im G$ structure
cannot be directly maintained. However, 
 we can use the integral representation (see, e.g. \cite[Eq.~(3.14)]{multiG}),
\begin{equation} \label{eq:intrepIM}
\prod_{j=1}^mG(z_j ) = \frac{1}{\pi} \int_\R \Im G (x + \ii \eta) \prod_{j=1}^m \frac{1}{x - z_j + \sgn(\Im z_j) \ii \eta} \dd x, 
\end{equation}
(that is
valid for any $0 < \eta < \min_j \Im z_j$ or $\max_j \Im z_j < - \eta < 0$)
to express each $G$  in terms of $\Im G$, thus the flow for purely $\Im G$ chains will be self-contained.

\item[(ii)] Afterwards, in the very short
Section \ref{subsec:pureIMISO}, we prove the isotropic part, Proposition~\ref{prop:zig}~(b) again
first in the case when $\mathcal{G}_{j,t}=\Im G_{j,t}$ for each $j\in [k+1]$. Due to the Hilbert-Schmidt error terms, 
the isotropic bound \eqref{eq:flowgimgISO} will directly follow from \eqref{eq:flowgimg} proven in 
Section \ref{subsec:pureIM}.
\item[(iii)] Finally, using the integral representation \eqref{eq:intrepIM}  in the special case $m=1$,
we derive the general case of mixed chains from the purely $\Im G$'s case in Section \ref{subsec:mixed}. 
\end{itemize}
Without loss of generality, to keep the presentation simpler, throughout  this section
we will assume that 
  $\sigma : =  \E \chi^2_{\mathrm{od}}$ is real  and $ \E \chi^2_{\mathrm{d}}=1+\sigma$
  (recall that $ \chi_{\mathrm{d}}, \chi_{\mathrm{od}}$ are the distribution of the diagonal
  and off-diagonal matrix elements of $W$, respectively). At the end, in Section~\ref{sec:general}, we will explain how to lift these  two restrictions. 

We recall our choice of the characteristics
\begin{equation} \label{eq:characrecall}
	\partial_t z_{i,t}=-m(z_{i,t})-\frac{z_{i,t}}{2}.
\end{equation}
Additionally, we record the following trivially checkable integration rules for any $\alpha \ge 1$:
\begin{equation}
\label{eq:newintrule}
 \int_0^t\frac{1}{\eta_{i,s}^\alpha}\,\dd s\lesssim \frac{\log N}{\eta_{i,t}^{\alpha-1}\rho_{i,t}} \qquad \text{and} \qquad  \int_0^t\frac{1}{\eta_{s}^\alpha}\,\dd s\lesssim \frac{\log N}{\eta_{t}^{\alpha-2}\hat{\ell}_{t}}  \quad \text{with} \quad \eta_t:=\min_i\eta_{i,t}\,, \quad   \hat{\ell}_t := \min_i \eta_{i,t} \rho_{i,t}\,. 
\end{equation}
Note that, in general, $\hat{\ell}$ differs from $\ell$, introduced in Theorem \ref{thm:main}. However, in case that every resolvent $\mathcal{G}$ in a given chain is an $\Im G$, i.e.~$\mathfrak{I}$ is the full set of indices, then it holds that $\hat{\ell} = \ell$. The notation 'hat' will  be consistently used to indicate that a chain contains only $\Im G$'s (see \eqref{eq:deflongchaingsim}--\eqref{eq:deflongchaingsimbar} below). 

Using the short--hand notation $G_{i,t}:=(W_t-z_{i,t})^{-1}$ with $W_t$ being the solution of \eqref{eq:OUOUOU}, 
we now compute the derivative (recall \eqref{eq:Mdefim})
\begin{equation} \label{eq:timederiv}
\dd \langle (\Im G_{1,t} A_1 ... \Im G_{k,t} -\mathcal{M}(z_{1,t}, A_1, ... , z_{k,t}; [k]) )A_k \rangle = ... 
\end{equation}
along the characteristics with the aid of It\^{o}'s formula.  We point out that the following derivation of the flow holds for any deterministic matrices $A_i$, i.e.  in this derivation we do not assume that $\langle A_i\rangle=0$. 
We will assume again that $A_i$ are traceless only later starting from the beginning of Section~\ref{subsec:pureIM}.\nc

The evolution for \eqref{eq:timederiv} (see \eqref{eq:flowkaim} below) is obtained by multilinearity from the analogous formula for the time derivative of a resolvent chain without any imaginary parts. So first we compute
	\begin{equation}
	\begin{split}
		\label{eq:flowka}
		\dd \langle (G_{[1,k],t}-M_{{[1,k]},t})A_k\rangle&=\frac{1}{\sqrt{N}}\sum_{a,b=1}^N \partial_{ab}  \langle {G}_{[1,k],t}A_k\rangle\dd B_{ab,t}+\frac{k}{2}\langle {G}_{[1,k],t}A_k\rangle\dd t +\sum_{i,j=1\atop i< j}^k\langle {G}_{[i,j],t}\rangle\langle {G}_{[j,i],t}\rangle\dd t \\
		&\quad+\sum_{i=1}^k \langle G_{i,t}-m_{i,t}\rangle \langle {G}^{(i)}_{[1,k],t}A_k\rangle\dd t -\partial_t \langle M_{[1,k],t}A_k\rangle \dd t +\frac{\sigma}{N}\sum_{i,j=1\atop i\le j}^k\langle G_{[i,j],t}G_{[j,i],t}^\mathfrak{t}\rangle\dd t \,,
	\end{split}
\end{equation}
 where $\partial_{ab}:=\partial_{w_{ab}}$ denotes the direction derivative in the direction
  $w_{ab}=w_{ab}(t):=(W_t)_{ab}$. Here we introduced  the notation
\begin{equation*}
	{{G}}_{[{i}, {j}],t}:=\begin{cases}
		G_{i,t}A_i\dots A_{j-1}G_{j,t} & \mathrm{if}\quad  i<j \\
		G_{i,t} & \mathrm{if}\quad i=j \\
		G_{i,t}A_{i,t}\dots \Im G_{k,t}A_k G_{1,t}A_1\dots A_{j-1} G_{j,t} &\mathrm{if}\quad i>j\,,
	\end{cases}
\end{equation*}
and analogously for the deterministic approximation $M_{[i,j],t}$ (cf.~\eqref{eq:Mdef}). Furthermore,
we define ${{G}}_{[{i}, {j}],t}^{(l)}$ exactly as ${{G}}_{[{i},{j}],t}$ but with the $l$--th factor $G_{l,t}$ being
 substituted by $G_{l,t}^2$. For the last term in \eqref{eq:flowka} we used the convention that 
$\langle G_{[i,j],t}^\mathfrak{t}G_{[j,i],t}\rangle=\langle G_{i,t}^\mathfrak{t}G_{i,t}A_{i+1}G_{[i+1,i],t}\rangle$ for $j=i$.

In order to write the derivative \eqref{eq:timederiv} in a manageable form, we need to introduce some further short--hand notations. Set
\begin{equation}
\label{eq:deflongchaingsim}
\widehat{{G}}_{[\hat{i}, \hat{j}],t}:=\begin{cases}
\Im G_{i,t}A_i\dots A_{j-1}\Im G_{j,t} & \mathrm{if}\quad  i<j \\
\Im G_{i,t} & \mathrm{if}\quad i=j \\
\Im G_{i,t}A_{i,t}\dots \Im G_{k,t}A_k\Im G_{1,t}A_1\dots A_{j-1}\Im G_{j,t} &\mathrm{if}\quad i>j,
\end{cases}
\end{equation}
and define $\widehat{{G}}_{[\hat{i}, \hat{j}],t}^{(l)}$  exactly as $\widehat{{G}}_{[\hat{i},\hat{j}],t}$ except  the $l$--th 
factor $\Im G_{l,t}$ is substituted with $G_{l,t}\Im G_{l,t}$. 
Similarly, $\widehat{{G}}_{[\hat{i}, \hat{j}],t}^{(l^*)}$ is defined as $\widehat{{G}}_{[\hat{i}, \hat{j}],t}$ but with the $l$--th 
 $\Im G_{l,t}$ is substituted by $G^*_{l,t}\Im G_{l,t}$. 
Furthermore, we also define 
\begin{equation}
\label{eq:deflongchaingsimbar}
\widehat{{G}}_{[\hat{i}, j],t}:=\begin{cases}
\Im G_{i,t}A_i\dots A_{j-1} G_{j,t} & \mathrm{if}\quad  i<j \\
G_{i,t} & \mathrm{if}\quad i=j \\
\Im G_{i,t}A_{i,t}\dots \Im G_{k,t}A_k\Im G_{1,t}A_1\dots A_{j-1} G_{j,t} &\mathrm{if}\quad i>j;
\end{cases}
\end{equation}
note the absent hat  on the $j$ index
indicates that the last resolvent $G_{j,t}$ is without imaginary part.
We also define $\widehat{G}_{[i^*, \hat{j}],t}$ replacing $\Im G_{i,t}$ with $G_{i,t}^*$ in \eqref{eq:deflongchaingsim}
and similarly
 $\widehat{G}_{[i^*,j],t}$ is defined by replacing $\Im G_{i,t}$ with $G_{i,t}^*$ and $\Im G_{j,t}$ with $G_{j,t}$ in \eqref{eq:deflongchaingsim}.
  In particular, the  `decorations'  of $i$ and $j$ indicate, whether $G_{i,t}$ and $G_{j,t}$ are
   really taken as plain resolvents (no decoration) or as  adjoints (star)  or with imaginary part (hat). 
   We point out that throughout this entire section  'hat' on $G$ indicates that the chain contains only $\Im G_i$ 
   unless specified as in \eqref{eq:deflongchaingsimbar}. Finally, we use  similar notations for the 
   corresponding deterministic approximation $\widehat{M}_{[i^\#,j^\#],t}$ whose 'undecorated' version was
    defined in \eqref{eq:Mdef}. 
 Here $\#$ indicates one of the possible `decorations', i.e. star, hat or no decoration
 and the corresponding change entails modifying the factor $(x-z_i)^{-1}$ in \eqref{msc dd}
to $(x-\bar z_i)^{-1}$ in case of star, and to $\Im (x-\bar z_i)^{-1}$ in case of hat 
(as in \eqref{eq:Mdefim}--\eqref{eq:Mdivdiff}).

 The time derivative of the deterministic term in \eqref{eq:timederiv} is obtained by the following lemma, the proof of which is given in Appendix \ref{sec:addtech}. 
 \begin{lemma} \label{lem:Mcancel}
 	For any $k\ge 1$ we have
 	\begin{align}\label{partialt}
 		\partial_t \langle \widehat{M}_{[\hat{1},\hat{k}],t}A_{k}\rangle=\frac{k}{2}\langle \widehat{M}_{[\hat{1},\hat{k}],t}A_{k}\rangle+\sum_{i,j=1\atop i< j}^k\langle \widehat{M}_{[\hat{i}, j],t}\rangle\langle \widehat{{M}}_{[\hat{j}, i],t}\rangle+\sum_{i,j=1\atop i< j}^k\langle \widehat{M}_{[i^*, \hat{j}],t}\rangle\langle \widehat{{M}}_{[j^*, \hat{i}],t}\rangle \\ 
 		+\sum_{i,j=1\atop i< j}^k\langle \widehat{M}_{[\hat{i}, \hat{j}],t}\rangle\langle \widehat{{M}}_{[j^*, i],t}\rangle+\sum_{i,j=1\atop i< j}^k\langle \widehat{M}_{[i^*,j],t}\rangle\langle \widehat{{M}}_{[\hat{j}, \hat{i}],t}\rangle.
 		\nonumber
 	\end{align}
 \end{lemma}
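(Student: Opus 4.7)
The plan is to differentiate the explicit expansion \eqref{eq:Mdefim} for $\widehat{M}_{[\hat 1,\hat k],t}$ (specialized to $\mathfrak{I}_k = [k]$) term by term in $t$. Since the observables $A_i$ and the partial-trace combinatorics carry no time dependence, the derivative falls entirely on the free-cumulant factors $\prod_{S\in\pi} m_\circ^{([k])}[S]$, which depend on $t$ only through the spectral parameters $z_{i,t}$.

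The first task is to derive a differentiation rule for the divided differences $m^{([k])}[S]$ themselves. Applying $\partial_t$ to a single factor $\Im (x - z_{i,t})^{-1}$ under the integral \eqref{eq:Mdivdiff} and invoking the characteristic equation \eqref{eq:characrecall} produces $\Im \bigl[(m(z_{i,t}) + z_{i,t}/2)(x - z_{i,t})^{-2}\bigr]$. I would then rewrite the double pole as $(x-z)^{-2} = -\partial_{z}(x-z)^{-1}$ and simplify the factor $m(z)+z/2$ via the Stieltjes equation $m(z)^2 + z m(z) + 1 = 0$. The outcome should be a scaling contribution $\tfrac{|S|}{2}\, m^{([k])}[S]$ together with bilinear "splitting" contributions in which the block $S$ is cut at a pair $i < j$ into two shorter divided differences.

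Next I would transfer this rule from $m[\cdot]$ to the free cumulants $m_\circ[\cdot]$ by M\"obius inversion on the non-crossing partition lattice, as encoded in \eqref{eq:freecumulant}. The scaling contributions then sum to $\tfrac{k}{2}\langle \widehat{M}_{[\hat 1,\hat k],t} A_k\rangle$, since $\sum_{S \in \pi}|S|=k$ for each $\pi \in \mathrm{NC}([k])$, producing the first term on the right-hand side of \eqref{partialt}. For the splitting contributions, cutting a single block $S$ of $\pi$ at a pair $i<j$ and then applying the Kreweras complement $K(\pi)$ should produce a factorization of the full chain into two shorter $\widehat{M}$-chains running over the cyclic intervals $[i,j]$ and $[j,i]$. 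Finally, since each $\Im G_\ell$ expands as $\tfrac{1}{2\ii}(G_\ell - G_\ell^*)$, a double pole at either endpoint of a splitting distributes across the four decoration patterns $\{\hat i, i^*\} \times \{\hat j, j^*\}$, generating precisely the four double sums on the right-hand side of \eqref{partialt}.

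The main obstacle is the combinatorial step that identifies a splitting of a block of $\pi$ (combined with the Kreweras complement) as a clean product of two shorter non-crossing partition expansions, rather than a more tangled sum. A cleaner route would be to first verify the analogous identity for $\partial_t \langle M_{[1,k],t} A_k\rangle$ in the all-$G$ case, where the correct formula can be read off as the counterterm that makes the resolvent flow \eqref{eq:flowka} consistent with its deterministic approximation; I would then deduce the all-$\Im G$ version by linearity after writing each $\Im G_\ell$ as $\tfrac{1}{2\ii}(G_\ell - G_\ell^*)$ and collecting the four decoration patterns.
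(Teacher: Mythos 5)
Your ``cleaner route'' is essentially the paper's proof: Lemma \ref{lem:Mcancel} follows from the all-$G$ identity of Lemma \ref{lem:cancM} by multi-linearity, writing each $\Im G_\ell = \tfrac{1}{2\ii}(G_\ell - G_\ell^*)$ and sorting the endpoint decorations into the four patterns that appear. However, ``read off as the counterterm that makes the resolvent flow consistent'' is not a proof of Lemma \ref{lem:cancM}. The flow identity \eqref{eq:flowka} holds at finite $N$ and contains a martingale together with error terms such as $\langle G - m \rangle\langle G^{(i)}_{[1,k]} A_k\rangle$ that are only $O(1/(N\eta))$, so one cannot simply match deterministic coefficients by inspection. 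The paper makes the heuristic rigorous with a tensorization (meta-)argument: replace $W$ by an $(Nd)\times(Nd)$ Wigner matrix and $A_i$ by $A_i \otimes I_d$, which leaves the deterministic approximation $M_{[1,k],t}\otimes I_d$ and hence the target identity unchanged (see \eqref{eq:usefulrel}); integrate \eqref{eq:flowka} over $[s,t]$, take expectations, apply the global law of Proposition~\ref{prop:initial}, and send $d\to\infty$ so that all $\E\langle(G-M)\cdots\rangle$ terms vanish; then divide by $t-s$, let $s\to t$, and extend to all spectral parameters by analyticity. That limiting device is the substance of the proof and is absent from your sketch.

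Your primary route---differentiating \eqref{eq:Mdefim} directly and pushing through the free-cumulant and Kreweras combinatorics---is a genuinely different approach and would, if completed, yield a purely algebraic proof with no random-matrix input. But the step you flag as the main obstacle is in fact the whole problem, and your splitting picture is not quite right even at the simplest nontrivial level. Already for $|S|=2$ one computes from \eqref{eq:characrecall} and \eqref{eq:characteristics} that
\begin{equation*}
	\partial_t m[\{z_1,z_2\}] \;=\; m[\{z_1,z_2\}] \;+\; m[\{z_1,z_2\}]^2\,,
\end{equation*}
so the ``splitting contribution'' is a square, not a product of divided differences over shorter index sets; the recursion at the level of $m[\cdot]$ (and after M\"obius inversion, $m_\circ[\cdot]$) does not factor the way your sketch anticipates. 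Reassembling these pieces across all blocks of $\pi$, together with the Kreweras complement on the $A$-side, into the specific bilinear combination $\sum_{i<j}\langle M_{[i,j]}\rangle\langle M_{[j,i]}\rangle$ on the right-hand side of Lemma~\ref{lem:cancM} is exactly the identity one is trying to prove. As written, Route 1 assumes the answer at the critical step.
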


Hence, by It\^{o}'s formula, for any $k\ge 1$, the evolution of $\widehat{G}_{[\hat{1},\hat{k}],t}$ is given by (for brevity we omit the $\dd t$ differentials)  
\begin{equation}
	\begin{split}
		\label{eq:flowkaim}
		&\dd \langle (\widehat{{G}}_{[\hat{1}, \hat{k}],t}-\widehat{M}_{[\hat{1},\hat{k}],t})A_k\rangle \\
		&=\frac{1}{\sqrt{N}}\sum_{a,b=1}^N \partial_{ab}  \langle \widehat{{G}}_{[\hat{1}, \hat{k}],t}A_k\rangle\dd B_{ab}+\frac{k}{2}\langle (\widehat{{G}}_{[\hat{1}, \hat{k}],t}-\widehat{M}_{[\hat{1}, \hat{k}],t})A_k\rangle
		+\Omega_1 + \Omega_2 + \Omega_3 +\Omega_4+ \Omega_\sigma \\
		&\quad+\sum_{i=1}^k \langle G_{i,t}-m_{i,t}\rangle \langle \widehat{{G}}^{(i)}_{[\hat{1},\hat{k}],t}A_k\rangle+\sum_{i=1}^k \langle G_{i,t}^*-\overline{m_{i,t}}\rangle \langle \widehat{{G}}_{[\hat{1}, \hat{k}],t}^{(i^*)}A_k\rangle+
		 \langle \widehat{{G}}_{[\hat{1}, \hat{k}],t}A_k\rangle\sum_{i=1}^k \frac{\langle \Im G_{i,t}-\Im m_{i,t}\rangle}{\Im z_{i,t}}\, ,	\end{split}
\end{equation}
where 
\begin{equation}
	\small
	\begin{split}
 \Omega_1: & =  \sum_{i,j=1\atop i< j}^k\left[\langle \widehat{{G}}_{[\hat{i}, j],t}-\widehat{M}_{[\hat{i},j],t}\rangle\langle \widehat{{M}}_{[\hat{j},i],t}\rangle+\langle \widehat{{M}}_{[\hat{i}, j],t}\rangle\langle \widehat{{G}}_{[\hat{j},i],t}-\widehat{M}_{[\hat{j}, i],t}\rangle+  \langle \widehat{{G}}_{[\hat{i}, j],t}-\widehat{M}_{[\hat{i},j],t}\rangle\langle \widehat{{G}}_{[\hat{j},i],t}-\widehat{M}_{[\hat{j}, i],t}\rangle \right],\\
 \Omega_2: &= \sum_{i,j=1\atop i< j}^k\left[\langle \widehat{{G}}_{[i^*, \hat{j}],t}-\widehat{M}_{[i^*, \hat{j}],t}\rangle\langle \widehat{{M}}_{[j^*,\hat{i}],t}\rangle+\langle \widehat{{M}}_{[i^*, \hat{j}],t}\rangle\langle \widehat{{G}}_{[j^*,\hat{i}],t}-\widehat{M}_{[j^*,\hat{i}],t}\rangle+ \langle \widehat{{G}}_{[i^*, \hat{j}],t}-\widehat{M}_{[i^*, \hat{j}],t}\rangle\langle \widehat{{G}}_{[j^*,\hat{i}],t}-\widehat{M}_{[j^*,\hat{i}],t}\rangle\right], \\
 \Omega_3:&= \sum_{i,j=1\atop i< j}^k\left[\langle \widehat{{G}}_{[\hat{i}, \hat{j}],t}-\widehat{M}_{[\hat{i},\hat{j}],t}\rangle\langle \widehat{{M}}_{[j^*,i],t}\rangle+\langle \widehat{{M}}_{[\hat{i}, \hat{j}],t}\rangle\langle \widehat{{G}}_{[j^*,i],t}-\widehat{M}_{[j^*, i],t}\rangle+ \langle \widehat{{G}}_{[\hat{i}, \hat{j}],t}-\widehat{M}_{[\hat{i},\hat{j}],t}\rangle\langle \widehat{{G}}_{[j^*,i],t}-\widehat{M}_{[j^*, i],t}\rangle\right],\\
 \Omega_4:&=\sum_{i,j=1\atop i< j}^k\left[\langle \widehat{{G}}_{[i^*, j],t}-\widehat{M}_{[i^*, j],t}\rangle\langle \widehat{{M}}_{[\hat{j},\hat{i}],t}\rangle+\langle \widehat{{M}}_{[i^*, j],t}\rangle\langle \widehat{{G}}_{[\hat{j},\hat{i}],t}-\widehat{M}_{[\hat{j},\hat{i}],t}\rangle+ \langle \widehat{{G}}_{[i^*, j],t}-\widehat{M}_{[i^*, j],t}\rangle\langle \widehat{{G}}_{[\hat{j},\hat{i}],t}-\widehat{M}_{[\hat{j},\hat{i}],t}\right], \\
 \Omega_\sigma:&=\frac{\sigma}{N}\sum_{i,j=1\atop i\le j}^k\left[\langle G_{[\widehat{i},j],t}G_{[\widehat{j},i],t}^\mathfrak{t}\rangle+\langle G_{[i^*,\widehat{j}],t}G_{[j^*,\widehat{i}],t}^\mathfrak{t}\rangle+\langle G_{[\widehat{i},\widehat{j}],t}G_{[j^*,i],t}^\mathfrak{t}\rangle+\langle G_{[i^*,j],t}G_{[\widehat{j},\widehat{i}],t}^\mathfrak{t}\rangle\right]\,.
 \end{split}
\end{equation}

Observe that the flow \eqref{eq:flowkaim} for imaginary parts $\Im G$ contains much more terms compared to a flow for plain resolvents $G$ (see \eqref{eq:flowka}). This is a simple consequence of the fact that each time an $\Im G$ is differentiated it creates two terms, i.e. $\partial_{ab} \Im G=G\Delta^{ab}\Im G+\Im G\Delta^{ab}G^*$, with $\Delta^{ab}$ being a matrix consisting of all zeroes except for the $(a,b)$--entry which is equal to one. 
Furthermore, the novel last term in \eqref{eq:flowkaim} comes from applying a Ward identity, $GG^*= \Im G/\Im z$.
We now write out the random part $\dd \langle \widehat{{G}}_{[\hat{1}, \hat{k}],t}A_k\rangle$ of
the flow \eqref{eq:flowkaim} for the simpler cases $k=1$ and $k=2$  to show its main structure.
Here we used that $\widehat{M}_{\hat{1},t} = \Im m_{1,t}$ with $m_{i}: =m(z_{i,t})$.

\begin{example}
	For $k=1$ we have the evolution
	\begin{equation}
	\begin{split}
		\label{eq:eqk1im}
		\dd \langle \Im G A\rangle&=\sum_{a,b=1}^N\partial_{ab} \langle \Im G A\rangle\frac{\dd B_{ab}}{\sqrt{N}}+\left(\frac{1}{2}+\frac{\langle \Im G-\Im m\rangle}{\Im z_t}\right)\langle \Im G A\rangle+\langle G-m\rangle \langle \Im G A G\rangle \\
		&\quad+\overline{\langle G-m\rangle} \langle \Im G A G^*\rangle+\frac{\sigma}{N}\langle \Im G A GG^\mathfrak{t}\rangle+\frac{\sigma}{N}\langle (G^*)^\mathfrak{t}G^*A \Im G A \rangle+\frac{\sigma}{N}\langle \Im G^\mathfrak{t} G^*A G\rangle\,, 
		\end{split}
	\end{equation}
	and for $k=2$ we get (for keeping the formula somewhat short,  we assume $\sigma=0$) 
	\begin{equation}
		\label{eq:imgflowsdetsub}
		\begin{split}
			\dd \langle \Im G_1 A_1 \Im G_2 A_2\rangle &=\sum_{a,b=1}^N\partial_{ab} \langle \Im G_1A_1 \Im G_2 A_2\rangle\frac{\dd B_{ab}}{\sqrt{N}}+\langle \Im G_1 A_1\Im G_2 A_2\rangle \\
			&\quad+\left(\frac{\langle \Im G_1 -\Im m_1\rangle}{\Im z_{1,t}}+\frac{\langle \Im G_2 -\Im m_2\rangle}{\Im z_{2,t}}\right)\langle \Im G_1 A_1\Im G_2 A_2\rangle+\langle G_2^*A_2G_1\rangle \langle \Im G _1A_1\Im G_2\rangle \\
			&\quad+\langle G_1^* A_1 G_2 \rangle\langle \Im G_2 A_2 \Im G_1\rangle+\langle \Im G_1 A_1 G_2\rangle\langle\Im G_2A_2G_1\rangle +\langle G_2^*A_2\Im G_1\rangle\langle G_1^*A_1\Im G_2\rangle \\
			&\quad+\langle G_1-m_1\rangle \langle \Im G_1 A_1\Im G_2 A_2 G_1\rangle +\langle G_2-m_2\rangle \langle \Im G_2 A_2\Im G_1 A_1 G_2\rangle \\
			&\quad+\langle G_1^*-\overline{m_1}\rangle \langle \Im G_1 A_1\Im G_2 A_2 G_1^*\rangle+\langle G_2^*-\overline{m_2}\rangle \langle \Im G_2 A_2\Im G_1 A_1 G_2^*\rangle.
		\end{split}
	\end{equation}
	Note that \eqref{eq:eqk1im}--\eqref{eq:imgflowsdetsub} combined with \eqref{partialt} give \eqref{eq:flowkaim} for the special cases $k=1,2$.
\end{example}

 \subsection{Proof of Proposition \ref{prop:zig}~(a) for pure $\Im G$-chains} \label{subsec:pureIM}

The goal of this section is to prove 
\begin{equation}
	\label{eq:goaloptaIms}
	\langle \widehat{G}_{[\widehat{1},\widehat{k}],T}A_k \rangle - \langle \widehat{M}_{[\widehat{1},\widehat{k}],T}A_k \rangle =	\langle \widehat{G}_{[\widehat{1},\widehat{k}],0}A_k \rangle - \langle \widehat{M}_{[\widehat{1},\widehat{k}],0}A_k \rangle+
\mathcal{O}_\prec \left(\Big(\prod_{i\in [k]} \rho_{i,T}\Big)\frac{N^{k/2 - 1}}{\sqrt{N \ell_{T}}} \prod_{j \in [k]} \langle |A_j|^2 \rangle^{1/2}\right),
\end{equation}
uniformly in the spectrum and in the choice of traceless matrices $A_i$. We may assume that all the $A_i$'s are Hermitian; the general case follows by multilinearity.

\subsubsection{Master inequalities}  \label{subsec:master}
  For the purpose of proving \eqref{eq:goaloptaIms}, recall the notation $\hat{\ell}_t = \min \eta_{i,t} \rho_{i,t}$ from \eqref{eq:newintrule} and define 
\begin{subequations} 	\label{eq:defphi}
\begin{equation}
\Phi_1(t) :=\frac{N \sqrt{\hat{\ell}_t}}{\rho_t \langle |A|^2 \rangle^{1/2}}\big|\langle G_tA\rangle\big|;
\end{equation}
and for  $k\ge 2$
\begin{equation}
	\Phi_k(t):= \frac{\sqrt{N \hat{\ell}_{t}}}{N^{k/2 - 1} \, \Big(\prod_{i\in [k]} \rho_{i,t}\Big) \prod_{j \in [k]} \langle |A_j|^2 \rangle^{1/2}} \big| \langle (\widehat{G}_{[\widehat{1},\widehat{k}],t}- \widehat{M}_{[\widehat{1},\widehat{k}],t})A_k \rangle\big|\,. 
\end{equation}
\end{subequations}
Note that we defined $\Phi_1(t)$ in a slightly different way than $\Phi_k(t)$ for $k\ge 2$, this is a consequence of the fact that for $k=1$ we have $| \langle GA\rangle |\sim |\langle \Im GA\rangle|$, i.e. for this special case the imaginary part does not reduce the fluctuation unlike for 
 longer chains (see also Remark \ref{rmk:MHS}~(ii)).
The prefactors in \eqref{eq:defphi} are chosen such that we expect $\Phi_k(t)$ to be an essentially order one quantity, see~\eqref{eq:goaloptaIms}. The goal  is to show exactly this, i.e.  that $\Phi_k(t)\prec 1$, uniformly in time $t\le T$
for any $k\ge 1$. Note that by \eqref{eq:inass0} it follows
 \begin{equation}
 \label{eq:initialphi}
 \Phi_k(0)\prec 1,
 \end{equation}
 for any $k\ge 1$. 
  
To prove $\Phi_k(t)\prec 1$, we will derive
 a series of {\it master inequalities} for these quantities with the following structure.
We assume that
\begin{equation} \label{phiphi}
 \Phi_k(t)\prec\phi_k
\end{equation}
 holds for some deterministic control parameter $\phi_k$, \emph{uniformly} in $0\le t\le T$, in 
 spectral parameters satisfying  $N\hell_t\ge N^\epsilon$ and in traceless deterministic matrices $A_j$ (we stress that  $\phi_k$'s depend neither on time, nor on the spectral parameters $z_{i,t}$, nor on the matrices $A_j$). Given this input, we will then show that  $\Phi_k(t)$'s also satisfy a better upper bound in terms of $\phi$'s. 
 Iterating this procedure we will arrive at the final bound $\Phi_k(t) \prec 1$. Furthermore, without loss of generality, we assume that $\phi_k\ge 1$. \nc
 
 \begin{proposition}[Master inequalities]
 \label{pro:masterinIM}
Fix $k\in \N$. Assume that $\Phi_l(t)\prec \phi_l$ for any $1\le l\le k+1 \nc$ uniformly in $t\in [0,T]$,
in spectral parameters with $N\hell_t\ge N^\epsilon$ and in traceless deterministic matrices $A_j$.
Set $\phi_0 :=1$. Then we have the \emph{master inequalities}
\begin{equation}
	\begin{split}
		\label{eq:AVmasterITERATE}
		\Phi_k(t)\prec 1+\frac{1}{(N\hat{\ell}_T)^{1/4}}\sum_{l=1}^k \tilde{\phi}_l\tilde{\phi}_{k-l} \,,
	\end{split}
\end{equation}
uniformly (in the sense explained below \eqref{phiphi}\nc) 
in $t\in [0,T]$, 
where we introduced the shorthand notation
\begin{equation} \label{tildephi}
\tilde{\phi}_l := \phi_l + \mathbf{1}(l \ \mathrm{is \;\; odd}) \sqrt{\phi_{l+1} \phi_{l-1}}\,. 
\end{equation}
\end{proposition}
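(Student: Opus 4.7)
The plan is to derive the master inequality~\eqref{eq:AVmasterITERATE} by integrating the flow equation~\eqref{eq:flowkaim} from $0$ to $t \in [0,T]$ along the characteristics~\eqref{eq:characrecall} and bounding every resulting term using the assumed inputs $\Phi_l \prec \phi_l$ for $l \le k+1$. The initial condition~\eqref{eq:inass0} gives $\Phi_k(0) \prec 1$, which provides the constant $1$ on the right-hand side of~\eqref{eq:AVmasterITERATE}. The drift term $\tfrac{k}{2}\langle(\widehat G_{[\hat 1,\hat k],t}-\widehat M_{[\hat 1,\hat k],t})A_k\rangle$ is a linear feedback that combines with the time derivatives of the normalising prefactors in~\eqref{eq:defphi} (computed via~\eqref{eq:characteristics}) into a Gronwall-tractable contribution absorbed into the initial condition. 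The key structural input is Lemma~\ref{lem:Mcancel}: the $\partial_t \widehat M_{[\hat 1,\hat k],t}$ term is designed to cancel precisely the all-deterministic part of the splitting contributions $\Omega_1+\Omega_2+\Omega_3+\Omega_4$ in~\eqref{eq:flowkaim}, so that only genuinely fluctuating pieces (those containing at least one factor $\widehat G_{[\cdot,\cdot]}-\widehat M_{[\cdot,\cdot]}$) survive.

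After this cancellation, I would estimate each remaining drift contribution as a product of two shorter-chain fluctuations. A block of length $l$ consisting only of $\Im G$'s is controlled by $\phi_l$ through the inductive hypothesis after dividing out the prefactors in~\eqref{eq:defphi}, while blocks with one plain $G$- or $G^\ast$-endpoint (which lack the extra $\rho$ factor) are first reduced to pure $\Im G$ chains via~\eqref{eq:intrepIM} with $m=1$ at the price of an $\int dx$ that is handled by~\eqref{eq:newintrule}. For even $l$ this yields the bound $\phi_l\phi_{k-l}$ directly, whereas odd-length blocks carry no natural Hilbert-Schmidt structure, forcing me to apply Cauchy-Schwarz against an $(l\pm 1)$-length chain with paired traceless matrices; this is precisely the mechanism producing the $\sqrt{\phi_{l+1}\phi_{l-1}}$ piece of $\tilde\phi_l$ in~\eqref{tildephi}. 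The $\Omega_\sigma$ terms containing transposes satisfy the same estimates since transposing a Wigner matrix preserves all single- and multi-resolvent bounds. The single-resolvent insertion terms $\langle G_{i,t}-m_{i,t}\rangle\langle \widehat G^{(i)}_{[\hat 1,\hat k],t}A_k\rangle$, their adjoint counterparts, and the final Ward term $\langle \widehat G_{[\hat 1,\hat k],t}A_k\rangle \sum_i\langle \Im G_{i,t}-\Im m_{i,t}\rangle/\Im z_{i,t}$ are all of product form $\Phi_1\cdot \Phi_k$ after using $\partial_z G=G^2$, $GG^\ast=\Im G/\Im z$, and reducing the squared resolvent insertions by the integral representation. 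The time-integration of the resulting $1/\eta^\alpha$ singularities by~\eqref{eq:newintrule} produces the $(N\hat\ell_T)^{-1/4}$ improvement in~\eqref{eq:AVmasterITERATE}.

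The stochastic term $\tfrac{1}{\sqrt{N}}\sum_{ab}\partial_{ab}\langle \widehat G_{[\hat 1,\hat k],t}A_k\rangle\, dB_{ab}$ is handled by the Burkholder-Davis-Gundy inequality: its quadratic variation expands into an average of paired $2k$-chains that, after Ward reduction and application of the inductive bound $\Phi_{k+1}\prec\phi_{k+1}$, is dominated by $\frac{1}{N\hat\ell_s}\sum_l \tilde\phi_l^2\tilde\phi_{k-l}^2$ (times the prefactor from~\eqref{eq:defphi}); taking the square root and supremum over $s\in[0,T]$ then yields exactly the claimed $(N\hat\ell_T)^{-1/4}\sum_l \tilde\phi_l\tilde\phi_{k-l}$ budget. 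The main obstacle I expect is the simultaneous tracking of three features in every single contribution: (i) the correct density factor $\rho_{i,t}$ must be recovered at every $\Im G$ endpoint and is absent for $G$ or $G^\ast$ endpoints, which forces repeated use of~\eqref{eq:intrepIM}; (ii) the Hilbert-Schmidt norm of each $A_j$ must be preserved throughout, which in turn forces the asymmetric Cauchy-Schwarz behind the definition~\eqref{tildephi} of $\tilde\phi_l$; and (iii) the time-integrated $1/\eta^\alpha$ singularities must be arranged so that only a modest $(N\hat\ell_T)^{-1/4}$ loss arises instead of a full $\hat\ell_T^{-1/2}$, which is what makes the subsequent bootstrap on $\phi_k$ converge to $\Phi_k\prec 1$.
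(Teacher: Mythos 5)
Your overall architecture matches the paper's: integrate the flow \eqref{eq:flowkaim} along the characteristics, exploit the cancellation of Lemma~\ref{lem:Mcancel}, estimate the remaining drift terms by products of shorter-chain fluctuations, handle the martingale via BDG, and carry out the time integration with \eqref{eq:newintrule}. Your identification of the Cauchy--Schwarz mechanism for odd-length chains as the origin of the $\sqrt{\phi_{l+1}\phi_{l-1}}$ piece in $\tilde\phi_l$ is also exactly on point.

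There are, however, two genuine gaps. The first and most serious concerns the quadratic variation term, which after Ward contains chains of length $2k$ (as in \eqref{eq:expexprqv}), not $k+1$. You propose to close these with ``Ward reduction and application of the inductive bound $\Phi_{k+1}\prec\phi_{k+1}$'', but neither mechanism reduces a chain of length $2k$ to anything the induction controls. The paper introduces a dedicated \emph{reduction inequality} (Lemma~\ref{lem:redinphi}), proved via the deterministic Hölder-type bound $|\langle B_1B_2B_3B_4\rangle|\le N\prod_i\langle|B_i|^2\rangle^{1/2}$ with $B_i=\sqrt{|\Im G|}A(\Im G A)^{k/2-1}\sqrt{|\Im G|}$, which is what converts the $2k$-chain into $\tilde\phi_k^2$ while preserving the Hilbert--Schmidt structure. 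Without it the system of master inequalities does not close within the assumed range $l\le k+1$; the same reduction is also needed for the $\Omega_\sigma$ and single-resolvent-insertion terms. Second, your treatment of mixed chains $\widehat G_{[\hat i, j]}$ via the integral representation \eqref{eq:intrepIM} glosses over the actual content of the $G^2$-Lemma~\ref{lem:G^2lemma}: the resulting $x$-integral must be split into an above-scale regime $N\rho(x+\ii\zeta)\zeta\ge N^\xi$, where $\Phi_l\prec\phi_l$ applies, and a below-scale regime, where a separate monotonicity/Schwarz argument is needed because the stochastic bound is not effective there; moreover the $x$-integration is controlled by Lemma~\ref{lem:xrestore}, not by the time-integration rule \eqref{eq:newintrule} as you state.
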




Using the master inequalities, we conclude this section with the proof of \eqref{eq:flowgimg} for pure $\Im G$ chains.

\begin{proof}[Proof of Proposition~\ref{prop:zig}~(a) for pure $\Im G$ chains]
	
	We now consider the master inequalities \eqref{eq:AVmasterITERATE} for $t=T$, with $T$ the time defined in the statement of Proposition~\ref{prop:zig}. 
	
We use  a two-step induction. The base case consists of the cases $k=1,2$: 
	\begin{equation}
			\label{eq:mast12phi}
			\Phi_1(t) \prec 1+\frac{\phi_1+\sqrt{\phi_2}}{(N\hat{\ell}_T)^{1/4}}, \qquad\qquad\quad\Phi_2(t)\prec  1+\frac{\phi_1^2+\phi_2}{(N\hat{\ell}_T)^{1/4}},
	\end{equation}
	uniformly in $t\in [0, T]$. \nc
	
The following abstract iteration lemma shows how to use the master inequalities
for improving the bound on $\Phi$.

	 \begin{lemma}[Iteration]
		\label{lem:iteration} Let $X=X_N(\hell)$ be an $N$-dependent random variable depending 
		also on the parameter $\hell$.
		Fix $\epsilon,\delta>0$. Suppose that for any $l\in \N$  and any $x>0$ 
		 the fact that $X\prec x$ uniformly for 
		$\hat{\ell}\ge N^{-1+l\epsilon}$
		 implies
		\begin{equation}
			\label{eq:iterationrep}
			X\prec A+\frac{x}{B}+x^{1-\alpha}C^\alpha,
		\end{equation}
		uniformly for $\hell\ge N^{-1+(l+l')\epsilon}$, for some constants $l'\in \N$, $B\ge N^\delta>0$, $A,C>0$, and $\alpha\in (0,1)$, and suppose we also know that $X\prec N^D$ uniformly\footnote{We remark that $D,\delta,\alpha$ are $N$--independent constants, all the other quantities may depend on $N$.} 
		 in $\hell\ge N^{-1+\epsilon}$. Then
		\[
		X\prec A+C,
		\]
		uniformly for $\hell\ge N^{-1+(1+ \kappa  l')\epsilon}$, for some $\kappa=\kappa(\alpha,D,\delta)$. 
	\end{lemma}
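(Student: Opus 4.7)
The plan is to bootstrap from the trivial a priori bound $X \prec N^D$ by iterating the improvement rule \eqref{eq:iterationrep} a finite number of times. Each application of \eqref{eq:iterationrep} strictly improves the bound on $X$ at the cost of shrinking the admissible range of $\hat\ell$ by a factor $N^{-l'\epsilon}$, so the number of steps $\kappa$ must be chosen $N$-independent and to depend only on $\alpha, D, \delta$.

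Concretely, I would set $x_0 := N^D$ and define recursively
\[
x_{n+1} := A + \frac{x_n}{B} + x_n^{1-\alpha} C^\alpha.
\]
Induction on $n$, combined with the stability of $\prec$ under the absorption of a bounded number of $N^\xi$ factors, yields $X \prec x_n$ uniformly on $\hat\ell \geq N^{-1+(1+nl')\epsilon}$. Indeed, at step $n$, the hypothesis applied with parameter $l = 1 + nl'$ and input $x = x_n$ produces exactly $X \prec x_{n+1}$ on $\hat\ell \geq N^{-1+(1+(n+1)l')\epsilon}$. The problem is thereby reduced to the deterministic task of showing $x_\kappa \leq N^{\xi_0}(A + C)$ for any prescribed $\xi_0 > 0$ after a bounded number of iterations $\kappa$.

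Passing to log scale $p_n := \log_N x_n$, and using $B \geq N^\delta$, the recursion reads, up to harmless additive $o(1)$ corrections,
\[
p_{n+1} \;\leq\; \max\bigl\{\log_N A,\; p_n - \delta,\; (1-\alpha)\, p_n + \alpha \log_N C\bigr\}.
\]
The middle branch decreases $p_n$ linearly by at least $\delta$ per step; the last branch is a contraction toward $\log_N C$ with factor $1-\alpha < 1$; the first pins $p_n$ at $\log_N A$ once reached. Combining these, $p_\kappa$ falls below $\log_N(A+C) + \xi_0$ after at most $\kappa := \lceil \log(D/\xi_0)/\log(1/(1-\alpha)) \rceil + \lceil D/\delta \rceil$ steps, which is exactly $x_\kappa \leq N^{\xi_0}(A+C)$, and hence $X \prec A + C$ on the desired $\hat\ell$ range.

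The main obstacle is purely bookkeeping: verifying that the $N^\xi$ slacks from each invocation of the $\prec$-hypothesis compound only multiplicatively over the $\kappa$ iterations, so they can be reabsorbed into a single $N^\xi$ by choosing the starting tolerance small. No additional probabilistic input is required beyond the trivial a priori bound and the given implication; only the interplay of the two decay mechanisms (linear via $B \geq N^\delta$ and contractive via $\alpha$) is used, which is why the final $\kappa$ depends only on $\alpha$, $D$, and $\delta$.
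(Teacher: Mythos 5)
Your approach — iterate the improvement rule \eqref{eq:iterationrep} a bounded number of times starting from the a priori bound $N^{D}$, pass to $\log_N$ scale, and count the steps needed — is precisely the one the paper gestures at ("the proof is a simple iteration"). The recursion $x_{n+1}=A+x_n/B+x_n^{1-\alpha}C^{\alpha}$ and the log-scale inequality
\begin{equation*}
p_{n+1}\;\le\; \max\bigl\{\log_N A,\; p_n-\delta,\; (1-\alpha)p_n+\alpha\log_N C\bigr\}+o(1)
\end{equation*}
are correct, and your identification of the two decay mechanisms (linear via $B\ge N^{\delta}$, geometric via $\alpha$) is exactly what drives the argument.

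One point to tighten: your explicit $\kappa$ depends on the tolerance $\xi_0$, so it grows as $\xi_0\to 0$. Since the lemma requires a \emph{single} $\kappa=\kappa(\alpha,D,\delta)$ that fixes the $\hell$-range once and for all, and the conclusion $X\prec A+C$ must hold for every tolerance $\xi$ on that same range, you cannot aim for $x_{\kappa}\le N^{\xi_0}(A+C)$ with $\xi_0$ arbitrarily small while keeping $\kappa$ fixed. The cleaner finish is to stop once $x_{\kappa}\le K(A+C)$ for an $N$-\emph{independent} constant $K=K(\alpha)$ — for instance, in the normalised variable $y_n:=x_n/(A+C)$ the recursion becomes $y_{n+1}\le 1+y_n/B+y_n^{1-\alpha}$ with fixed point $\lesssim 3^{1/\alpha}$ — and then invoke the fact that $\prec$ absorbs $N$-independent multiplicative constants. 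Reaching such a constant-factor neighbourhood of $A+C$ takes $O(D/\delta)$ steps of the linear branch plus $O(\log(D/\delta))$ steps of the geometric branch, a count depending only on $\alpha,D,\delta$. Also note your stated formula for $\kappa$ does not match the step count one actually obtains from your own max-recursion, though this does not affect the structure of the argument. Both of these are at the level of bookkeeping which the paper itself explicitly flags ("for simplicity, we ignore this subtlety here") in the paragraph following the lemma.
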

	\begin{proof} The proof is a simple iteration of \eqref{eq:iterationrep} $\kappa$ times; it is 
	immediate to see that $\kappa$ depends only on $\alpha,D,\delta$.
		\end{proof} 
	 Notice that 
	using Lemma~\ref{lem:iteration} reduces the domain of parameters $\eta_i,\rho_i$ for which the master inequalities \eqref{eq:AVmasterITERATE} hold, e.g. from $\hell_{t}\ge N^{-1+l\epsilon}$ to
	$\hell_{t}\ge N^{-1+(l+l')\epsilon}$, and so on. However, this can happen only finitely many times,
	 and so it does not affect the estimates in the sense of
	stochastic domination that always allows for a small $N$-power tolerance
	that can be achieved by adjusting $\epsilon$ small enough.
	 For simplicity, we ignore
	this subtlety here,  
	 see \cite[Sections~4.1--4.3]{iidpaper} for a more detailed explanation.

	Using iteration from Lemma~\ref{lem:iteration} we obtain
		\begin{equation*}
			\Phi_1(t \nc)\prec 1+\frac{\sqrt{\phi_2}}{(N\hell_T)^{1/4}} \quad \text{and} \quad
			\Phi_2( t\nc)\prec  1+ \frac{\phi_1^2}{(N\hell_T)^{1/4}}
	\end{equation*}
	uniformly in $t\in [0,T]$. 
	  We can thus summarize what we proved so far as follows (all the following statements hold uniformly in $t\in [0,T]$):
	\begin{equation}
	\label{eq:impli}
	\begin{cases}
	\Phi_1(t)\prec \phi_1 \\
	\Phi_2(t)\prec \phi_2 
	\end{cases} \quad\Longrightarrow \quad \begin{cases} \Phi_1(t)\prec 1+\frac{\sqrt{\phi_2}}{(N\hell_T)^{1/4}} \\
	\Phi_2(t)\prec  1+ \frac{\phi_1^2}{(N\hell_T)^{1/4}} \\
	\end{cases}
	\end{equation}
	for any deterministic control parameters $\phi_1,\phi_2$. 
	
	Next, we use \eqref{eq:impli} again replacing the control parameters $\phi_1, \phi_2$ in the input with the new
	\begin{equation}
	\label{eq:tildephi}
	\phi'_1:=1+\frac{(\phi_2)^{1/2}}{(N\hell_T)^{1/4}}, \qquad\quad \phi'_2:=1+ \frac{\phi_1^2}{(N\hell_T)^{1/4}}.
	\end{equation}
	We thus obtain
	\begin{equation}
	\label{eq:conclh}
	\begin{cases}
	\Phi_1(t)\prec 1+\frac{(\phi'_2)^{1/2}}{(N\hell_T)^{1/4}}\lesssim 1+\frac{\phi_1}{(N\hell_T)^{3/8}} \\[1mm]
	\Phi_2(t)\prec  1+ \frac{(\phi'_1)^2}{(N\hell_T)^{1/4}}\lesssim 1+\frac{\phi_2}{(N\hell_T)^{3/4}}
	\end{cases}
	\end{equation}
	where in the second inequalities we used the definitions \eqref{eq:tildephi}.

Finally, using that $\Phi_i(t)\prec \phi_i$ by assumption and applying Lemma~\ref{lem:iteration} once again, we obtain \nc
	\begin{equation*}
		\Phi_1(t\nc )\prec 1 \quad \text{and} \quad \Phi_2(t \nc)\prec 1\,.
	\end{equation*}
	uniformly in $t\in [0,T]$. \nc
	To prove the same relation for $\Phi_l(t \nc)$ with $l\ge 3$,  we use  a step-two induction.  Fix 
	an even $k\ge 4$	and 
	 assume as our induction hypothesis that  $\Phi_l(t \nc)\prec 1$ 
	for any $1\le l\le k-2$, uniformly in $t\in [0,T]$. We now prove that $\Phi_l(t\nc )\prec 1$ also holds for $l=k-1, k$,
	uniformly in $t\in [0,T]$.
	From \eqref{eq:AVmasterITERATE}, using the induction hypothesis 
	$\Phi_l(t\nc )\prec \phi_l:=1$ for $1\le l\le k-2$, we have
	\begin{equation*}
			\Phi_{k-1}(t)\prec 1+\frac{\phi_{k-1}+\sqrt{\phi_k}}{(N\hell_T)^{1/4}}\,,  \qquad\qquad\quad \Phi_k(t)\prec 1+\frac{\phi_k+\phi_{k-1}+\sqrt{\phi_k}}{(N\hell_T)^{1/4}}  \,
	\end{equation*}
 uniformly in $t\in [0, T]$. \nc
	Then using  iteration from Lemma \ref{lem:iteration}, we obtain
	\begin{equation*}
	\Phi_{k-1}(t \nc)\prec 1+\frac{\sqrt{\phi_{k}}}{(N\hell_T)^{1/4}}   \quad \text{and} \quad 
	\Phi_k( t\nc )\prec 1+ \frac{ \phi_{k-1} }{(N\hell_T)^{1/4}} \,
\end{equation*}
  uniformly in $t\in [0, T]$. \nc
Proceeding similarly to \eqref{eq:impli}--\eqref{eq:conclh}, we thus obtain \nc
	\begin{equation*}
		\Phi_{k-1}(t\nc)\prec 1 \qquad \text{and} \qquad \Phi_k( t \nc)\prec 1\,.
	\end{equation*}
This concludes the induction step and hence, by setting $t=T$, \nc the proof of Proposition \ref{prop:zig}~(a)
 modulo the proof of Proposition \ref{pro:masterinIM}, which will be done next. 
\end{proof}

 \subsubsection{Proof of Proposition \ref{pro:masterinIM}} \label{subsec:pfmasterAV}
 As a preparation for the proof of the master inequalities  (Proposition~\ref{pro:masterinIM}), we recall that $t \mapsto \eta_{i,t}$ is decreasing and $\rho_{i,s} \sim \rho_{i,t}$ for any $0 \le s \le t\lesssim1$ (see \eqref{eq:characteristics}, \eqref{eq:characrecall}, and the paragraphs around).

\begin{proof}[Proof of Proposition~\ref{pro:masterinIM}] We begin with the case $k=1$. Hence, for $A_1=A$, we start by rewriting the flow  \eqref{eq:eqk1im} with $\Im G$ replaced by $G= G_t(z_t)$ (recall \eqref{eq:defphi}):
	\begin{equation} \label{eq:eq1}
		\dd \langle GA\rangle=\sum_{a,b=1}^N\partial_{ab}\langle GA\rangle\frac{\dd B_{ab}}{\sqrt{N}}+\frac{1}{2}\langle GA\rangle \dd t+\langle G-m\rangle\langle G^2A\rangle \dd t+\frac{\sigma}{N}\langle GAGG^\mathfrak{t}\rangle\dd t \,.
	\end{equation}
We point out that the additional term $\frac{1}{2}\langle GA\rangle$ in
the rhs.~of \eqref{eq:eq1} can be incorporated  into the lhs.
by differentiating $e^{-t/2}\langle GA\rangle$; the extra exponential factor is irrelevant since $e^{t/2}\sim 1$
for our times $t\lesssim 1$. Note that the same argument applies to the term
\begin{equation*}
\frac{k}{2}\langle (\widehat{{G}}_{[\hat{1}, \hat{k}],t}-\widehat{M}_{[\hat{1}, \hat{k}],t})A_k\rangle
\end{equation*}
appearing in \eqref{eq:flowkaim} for general $k$. We are now ready to obtain the master inequality for $\Phi_1(t)$.

Assume $\Phi_k(t)\prec \phi_k$ for $k=1,2$, in the sense of uniformity
explained after \eqref{phiphi} (recall that $\Phi_1(0)\prec 1$ by \eqref{eq:initialphi}), and we will prove  improved bounds on $\Phi_1(t)$. We first consider the third
summand in \eqref{eq:eq1}. Here, we use the integral representation (see also \cite[Lemma~5.1]{iidpaper})
\begin{equation} \label{eq:niceintrep}
G^2(z) = \frac{1}{2 \pi \ii} \oint_\Gamma \frac{G(w)}{(w-z)^2} \dd w\,,
\end{equation}
which simply follows from residue calculus. Here, $\Gamma$ is a tiny circle of radius $|\Im z|/2$ around $z \in \C \setminus \R$, which ensures that $|\Im w| |\Im m(w)| \sim |\Im z| |\Im m(z)|$ as follows by elementary continuity properties of $m(w)$. In this way, applying \eqref{eq:niceintrep} for every fixed time $s \le t$ and using the fact that the deterministic approximation of $\langle G^2 A \rangle$ vanishes as $\langle A\rangle =0$, we obtain (with the $G_s:= G_s(z_s)$,
$m_s:=m(z_s)$ notation) 
$$
\big|\langle G_s^2 A \rangle\big| \prec \frac{1}{\eta_s} \frac{\rho_s \langle|A|^2 \rangle^{1/2}}{N \sqrt{\hat{\ell}_s}} \phi_1\,. 
$$
Hence, in combination with the single resolvent local law $|\langle G_s-m_s\rangle|\prec 1/(N\eta_s)$, we find
\begin{equation}
	\label{psi11}
	\frac{N\sqrt{\hell_t}}{\rho_t \langle|A|^2 \rangle^{1/2}} \int_0^t \langle G_s-m_s\rangle
	 \langle G^2_s A \rangle\, \dd s\prec 		
	\frac{N\sqrt{\hell_t}}{\rho_t \langle|A|^2 \rangle^{1/2}} \int_0^t  \phi_1\; \frac{\rho_s\langle|A|^2 \rangle^{1/2}}{N^2\eta_s^2\hell_s^{1/2}}\,\dd s\lesssim \frac{\phi_1}{N\hell_t}\log N.
\end{equation}
In the last step we used  the integration estimate \eqref{eq:newintrule} and the fact  that along the characteristics $\hell_s\gtrsim \hell_t$ for $0\le s\le t$.
The prefactor $N \sqrt{\hell_t}/(\rho_t \langle|A|^2 \rangle^{1/2})$ is included in anticipation 
of the same prefactor in the definition of $\Phi_1$ in \eqref{eq:defphi}.

Then we proceed with the estimate of the quadratic variation of the martingale term in \eqref{eq:eq1}:
\[
\begin{split}
\frac{1}{N}\sum_{a,b=1}^N \big[\big|\partial_{ab}\langle G_sA\rangle|^2+\sigma\partial_{ab}\langle G_sA\rangle\overline{\partial_{ba}\langle G_sA\rangle}\big]\dd t&\lesssim\frac{1}{N^3}\sum_{a,b=1}^N \big|(G_sAG_s)_{ab}|^2\dd t \\
&= \frac{1}{N^2}\langle G_s A G_sG_s^* A G_s^*\rangle\dd t= \frac{1}{N^2\eta_t^2}\langle \Im G_s A \Im G_s A\rangle\dd t,
\end{split}
\]
where we used that $\dd [B_{ab},\overline{B_{cd}}]=\delta_{ac}\delta_{bd}+ \sigma\delta_{ad}\delta_{bc}$ 
and the Ward identity $GG^*=\frac{\Im G}{\Im z}$. Then, we write
\[
\langle \Im G_s A \Im G_s A\rangle 
 =\langle \widehat{M}_{[\widehat{1},\widehat{2}],s}A\rangle
 + \Big( \langle \Im G_s A \Im G_s A\rangle -\langle \widehat{M}_{[\widehat{1},\widehat{2}],s}A\rangle \Big)
  \prec \rho_s^2 \langle |A|^2 \rangle+\frac{\rho_s^2 \langle |A|^2 \rangle}{\sqrt{N\hell_s}} \phi_2\,.
\]
Here we used  that the deterministic approximation $\langle \widehat{M}_{[\widehat{1},\widehat{2}],s}A\rangle$
is bounded by $\rho_s^2 \langle |A|^2 \rangle$ and we used 
 \eqref{eq:defphi} together with  $\Phi_2(s)\prec \phi_2$. 
For the time integration of the quadratic variation term, with the appropriate prefactor, we obtain 
\begin{equation}
	\label{eq:qv1}
	\begin{split}
	\frac{N\sqrt{\hell_t}}{\rho_t \langle |A|^2 \rangle^{1/2}} & \left(\int_0^t  \frac{\langle \Im G_s A \Im G_s A\rangle}{N^2\eta_s^2}  \,\dd s\right)^{1/2} \\
	&\prec \frac{N\sqrt{\hell_t}}{\rho_t \langle |A|^2\rangle^{1/2}}\left(\int_0^t  \frac{\rho_s^2 \langle |A|^2 \rangle}{N^2\eta_s^2}\left(1 +\frac{\phi_2}{(N\hell_s)^{1/2}}\right)  \,\dd s\right)^{1/2}\lesssim 1
	+\frac{\sqrt{\phi_2}}{(N\hell_t)^{1/4}}\,.
	\end{split}
\end{equation}
Here in the last inequality we used that along the characteristics $\hell_s\gtrsim \hell_t$ 
for $0\le s\le t$ and the integration rule~\eqref{eq:newintrule}.
Using the Burkholder-Davis-Gundy (BDG) inequality we conclude 
exactly the same estimate \eqref{eq:qv1} for the stochastic term in \eqref{eq:eq1}
in high probability as in quadratic variation.

Next, we estimate the last term in the rhs. of \eqref{eq:eq1}:
\begin{equation}
\begin{split}
\label{eq:newk=1}
\frac{N\sqrt{\hell_t}}{\rho_t\langle |A|^2\rangle^{1/2}}\int_0^t \frac{|\sigma|}{N}\big|\langle G_sAG_sG_s^\mathfrak{t}\rangle\big|\,\dd s&\le \frac{N\sqrt{\hell_t}}{\rho_t\langle |A|^2\rangle^{1/2}}\int_0^t \frac{1}{N\eta_s^{3/2}}\langle \Im G_sA\Im G_s A\rangle^{1/2} \langle \Im G_s \rangle^{1/2}\,\dd s \\
&\prec \frac{N\sqrt{\hell_t}}{\rho_t\langle |A|^2\rangle^{1/2}}\int_0^t \frac{\rho^{1/2}_s}{N\eta_s^{3/2}}\left(\langle |A|^2\rangle \rho_s^2+\frac{\langle |A|^2\rangle \rho_s^2\phi_2}{\sqrt{N\hell_s}}\right)^{1/2}\,\dd s \\
&\lesssim 1+\frac{\sqrt{\phi_2}}{(N\hell_t)^{1/4}},
\end{split}
\end{equation}
where in the first inequality we used Schwarz inequality together with several  Ward identities, and in the second inequality the single resolvent local law $|\langle G_s-m_s\rangle|\prec 1/(N\eta_s)$ to show that $\langle\Im G_s\rangle\prec \rho_s$ (recall that we consider the regime $N\eta_s\rho_s\ge N^\epsilon$, so $1/(N\eta_s)\le \rho_s$).

Putting all these estimates together, and using that $\Phi_1(0)\prec 1$ by \eqref{eq:initialphi} to bound 
the initial condition after integration, we obtain the first master inequality
\begin{equation}
	\Phi_1(t)\prec 1+\frac{\phi_1}{N\hell_t}+\frac{\sqrt{\phi_2}}{(N\hell_t)^{1/4}},
\end{equation}
again in the sense of uniformity explained after \eqref{phiphi}. This gives \eqref{eq:AVmasterITERATE} using $N\widehat{\ell}_t\ge 1$ and $\hell_T\lesssim \hell_t$\footnote{This follows from the fact that by \eqref{eq:chardef} we have $\rho_{i,s}\sim \rho_{i,t}$ and $\eta_{i,t}\lesssim \eta_{i,s}$, for $s\le t$.}. \nc

 For the proof of the master inequalities~\eqref{eq:AVmasterITERATE} 
 with $k\ge 2$, a fundamental input for the estimates of the various terms in \eqref{eq:flowkaim} is
  the following  \emph{$G^2$-Lemma}. Recall that even if we are interested only in 
pure $\Im G$ chains, their evolution equation~\eqref{eq:flowkaim} necessarily contains mixed chains as well. 
The $G^2$-Lemma  turns them back to pure $\Im G$ chains. 
 It expresses how to estimate
 \emph{not} strictly alternating purely $\Im G$ chains in terms of strictly alternating purely $\Im G$ chains based upon
the integral representation~\eqref{eq:intrepIM}.  Note that this formula involves the non-analytic function
$\Im G$ hence simple and flexible contour deformations are prohibited,
contrary to the $k=1$ case, where we did not care about preserving $\Im G$'s 
and  the contour integral \eqref{eq:niceintrep} with the analytic $G(z)$ was applicable.

For brevity we will state the  $G^2$-Lemma for spectral parameters $z_1, ... , z_k$ without time dependence, 
but eventually we will use them  for $z_{1,t}, ... , z_{k,t}$ at any fixed time along the flow.  The proof 
 is given in Section \ref{subsec:proofG^2} below.

\begin{lemma}[$G^2$-Lemma] \label{lem:G^2lemma} Fix $k\ge 2$.  Let $i,j \in [k]$ with $j-i \ge 1$
and assume that $\Phi_l\prec \phi_l$
holds uniformly (in the sense explained after \eqref{phiphi})
 for some control parameters $\phi_l \ge 1$ for $l=1,2,\ldots , k$.
Then, for all versions of $\widehat{G}_{[i^\#,j^\#]}$ and $\widehat{M}_{[i^\#,j^\#]}$, i.e. for any choice of $\#$ indicating star (adjoint), hat (imaginary part) 
or simply no `decoration', we have the following:\footnote{Note that we 
use the $\prec$-notation to purely deterministic quantities. The reason is that it conveniently absorbs 
irrelevant $|\log \eta| \lesssim (\log N)$-factors coming from slightly singular integrals, see Footnote~\ref{log}.}
\begin{equation}
\label{eq:redbm}
\left| \big\langle \widehat{M}_{[i^\#, j^\#]} \big\rangle \right| \prec  \left(\frac{\rho_i \rho_j}{\eta_i \eta_j}\right)^{1/2} \Big(\prod_{n =i+1}^{j-1} \rho_n \Big) \, N^{\frac{j-i}{2}-1} \, \Big(\prod_{m = i}^{j-1} \langle |A_m|^2 \rangle^{1/2}\Big)
\end{equation}
and (the  decorations at the indices $i$ and $j$  on $\widehat{G}$ and on $\widehat{M}$  have to be matching)
\begin{equation}
\label{eq:redbg}
\begin{split}
\left| \big\langle \widehat{G}_{[i^\#,j^\#]} - \widehat{M}_{[i^\#,j^\#]}\big\rangle \right| 
\prec  \, \left(\frac{\rho_i \rho_j}{\eta_i \eta_j} \right)^{1/2} \, \Big(\prod_{n =i+1}^{j-1} \rho_n \Big)  \, \frac{ N^{\frac{j-i}{2}-1} }{\sqrt{N \hell}} \, \Big(\prod_{m = i}^{j-1} \langle |A_m|^2 \rangle^{1/2}\Big)  \tilde{\phi}_{j-i}  \,,
\end{split}
\end{equation}
where we used the notation $\tilde{\phi}_{j-i} = \phi_{j-i} + \mathbf{1}( j-i \ \mathrm{odd} )\sqrt{\phi_{j-i-1} \phi_{j-i+1}}$ 
(as in \eqref{tildephi}).

Moreover, it holds that (now $\#$ indicates star (adjoint) or no `decoration')
\begin{equation}
\label{eq:schwarzeasy}
\left| \big\langle \widehat{{G}}_{[\hat{1}, \hat{k}]}^{(i^\#)}A_k\big\rangle \right| \prec\left( \frac{\rho_i}{\eta_i}\right)^{1/2} \big| \big\langle \Im G_i \big(A_i \Im G_{i+1} ... A_{i-1} \big)\Im G_i \big(A_i \Im G_{i+1} ... A_{i-1}\big)^*\big\rangle\big|^{1/2} \,. 
\end{equation}
\end{lemma}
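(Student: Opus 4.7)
The plan is to prove the three estimates in the order \eqref{eq:schwarzeasy}, \eqref{eq:redbg}, \eqref{eq:redbm}, from the simplest (which only uses one Cauchy--Schwarz) to the more involved. The common core will be the Ward identity $G(z)^\ast G(z)=\Im G(z)/\eta$ combined with the Hilbert--Schmidt Cauchy--Schwarz $|\Tr(AB)|^2\le \Tr(AA^\ast)\,\Tr(B^\ast B)$. Heuristically, whenever a chain has a ``plain'' resolvent $G$ (or $G^\ast$) at a distinguished position, I convert it to an $\Im G$ there at the cost of a prefactor $\sqrt{\rho/\eta}$ and of at most doubling the length of the remaining chain; the resulting pure $\Im G$-chain is then controlled by the hypothesis $\Phi_l\prec\phi_l$ (for \eqref{eq:schwarzeasy} and \eqref{eq:redbg}) or by Lemma~\ref{lem:Mbound} alone (for \eqref{eq:redbm}).

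For \eqref{eq:schwarzeasy}, cyclicity rewrites the trace as $\frac{1}{N}\Tr(\Im G_i\cdot X\cdot G_i^\#)$ with $X=A_i\Im G_{i+1}\cdots A_{i-1}$ the cyclic remainder and $G_i^\#\in\{G_i,G_i^\ast\}$. Writing $\Im G_i=Q^2$ with $Q=(\Im G_i)^{1/2}\ge 0$ (WLOG $\Im z_i>0$) and applying Cauchy--Schwarz to the split $\Tr(Q\cdot QXG_i^\#)$ yields $\Tr(\Im G_i)\cdot\Tr(G_i^{\#\ast}X^\ast\Im G_i XG_i^\#)$; cyclically pairing $G_i^\#$ with $G_i^{\#\ast}$ at the ends, the Ward identity collapses them to $\Im G_i/\eta_i$ and produces $\eta_i^{-1}\Tr(\Im G_i X\Im G_i X^\ast)$. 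Dividing by $N^2$, taking the square root, and invoking $\langle\Im G_i\rangle\lesssim\rho_i$ (single-resolvent local law) yields \eqref{eq:schwarzeasy} uniformly in $\#\in\{\cdot,\ast\}$.

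For \eqref{eq:redbg}, I view the closed trace $\langle\widehat G_{[i^\#,j^\#]}\rangle$ and apply the same Schwarz--Ward mechanism at each non-hatted endpoint. When both endpoints carry hats, $\widehat G_{[\hat i,\hat j]}$ is already a pure $\Im G$-chain; its closed trace can be reduced to $\Phi_{j-i+1}$ via one further Cauchy--Schwarz that separates the adjacent factors $\Im G_j\cdot\Im G_i$ appearing after cyclic rewriting. The target $\sqrt{\rho_i\rho_j/(\eta_i\eta_j)}$ is weaker than the sharp $\rho_i\rho_j$ available for an all-hat chain in the regime $N\eta_n\rho_n\ge N^\epsilon$, so the estimate is straightforward in this case. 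If an endpoint (say $i$) is not hatted, cyclicity presents the trace as $\frac{1}{N}\Tr(G_i^\#\cdot Y)$ with $Y=A_i\Im G_{i+1}\cdots A_{j-1}G_j^\#$, and Cauchy--Schwarz produces $\|G_i^\#\|_{\mathrm{HS}}/\sqrt N\sim\sqrt{\rho_i/\eta_i}$ times $\langle YY^\ast\rangle^{1/2}$. At the $j$-endpoint of $YY^\ast$ the product $G_j^\#(G_j^\#)^\ast$ is bounded by $\Im G_j/\eta_j$ (by Ward when $\#\in\{\cdot,\ast\}$ and by the operator inequality $(\Im G_j)^2\le\Im G_j/\eta_j$ when $\#=\hat\cdot$), so $\langle YY^\ast\rangle^{1/2}$ is $\sqrt{\rho_j/\eta_j}$ times the square root of a closed pure $\Im G$-chain of length $2(j-i)-1$ in which the observables appear as $A_i^2,\ldots,A_{j-1}^2$. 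Splitting each $A_m^2=\langle A_m^2\rangle I+\mathring{A_m^2}$ and applying $\Phi_l\prec\phi_l$ to the resulting sub-chains finishes the estimate. The deterministic bound \eqref{eq:redbm} follows from running the identical argument with Lemma~\ref{lem:Mbound} in place of the $\Phi_l$ hypothesis.

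The hard part will be two-fold. First, the bookkeeping of the exponents of $\rho_n$, $\eta_n$, $N$, and $\langle|A_m|^2\rangle^{1/2}$ in the pure $\Im G$-sub-chains obtained after the doubling and the $\langle A_m^2\rangle$-splitting requires care, though by construction it matches the right-hand side of \eqref{eq:redbg}--\eqref{eq:redbm}. Second, and more subtly, the doubling inevitably produces sub-chains of odd length, for which I must perform one further Cauchy--Schwarz that interpolates between the even-length chains of lengths $l\mp 1$; this is precisely the origin of the correction $\sqrt{\phi_{l-1}\phi_{l+1}}$ in the definition of $\tilde{\phi}_l$. Uniformity in the three endpoint decorations $\#\in\{\cdot,\ast,\hat\cdot\}$ must be checked case by case, but the same $\sqrt{\rho/\eta}$-prefactor is produced in each subcase by the Schwarz--Ward mechanism.
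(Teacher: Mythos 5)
Your argument for \eqref{eq:schwarzeasy} is essentially the paper's: a single trace Cauchy--Schwarz combined with the Ward identity $G G^\ast = \Im G / \eta$. That part is fine.

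For \eqref{eq:redbm} and \eqref{eq:redbg}, however, the endpoint Schwarz is the wrong tool and the approach has a genuine gap. Two separate problems arise.

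First, the \emph{deterministic size mismatch}. When you write $\langle\widehat G_{[i^\#,j^\#]}\rangle = \frac{1}{N}\Tr(G_i^\#\,Y)$ with $Y = A_i \Im G_{i+1}\cdots A_{j-1} G_j^\#$ and apply Cauchy--Schwarz, the closed trace $\langle YY^\ast\rangle$ is a pure $\Im G$-chain of length $2(j-i)-1$ in which $A_i$ appears as $A_i^2$, while $A_{i+1},\dots,A_{j-1}$ each appear twice singly. After normalising $\langle A_m^2\rangle = 1$, the $M$-bound of Lemma~\ref{lem:Mbound} applied to this chain carries the factor $\langle A_i^4\rangle^{1/2}$, which is only controlled by $\sqrt N\,\langle A_i^2\rangle = \sqrt N$. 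A direct count then shows your bound on $\langle\widehat G_{[i,j]}\rangle$ comes out at the scale
\[
\sqrt{\tfrac{\rho_i\rho_j}{\eta_i\eta_j}}\,\Big(\prod_{n=i+1}^{j-1}\rho_n\Big)\, N^{\frac{j-i-1}{2}}
\,,
\]
which is a factor $\sqrt N$ \emph{larger} than the target $N^{\frac{j-i}{2}-1}$ in \eqref{eq:redbm}. The paper avoids this by using the contour representation \eqref{eq:intrepMbasic} for $\langle\widehat M_{[i,j]}\rangle$ (and \eqref{eq:intrepIM} for $\langle\widehat G_{[i,j]}\rangle$), which does \emph{not} double the chain and never produces $A^2$. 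Where a Schwarz is eventually used (equation \eqref{eq:monotone}), it is applied around a \emph{middle} resolvent $\Im G_r$, so each observable appears exactly twice as a single copy and the factors are $\langle A_m^2\rangle$, not $\langle A_m^4\rangle^{1/2}$.

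Second, and more fundamentally, the \emph{fluctuation gain is lost}. The bound \eqref{eq:redbg} has the crucial extra factor $1/\sqrt{N\hell}$ compared to \eqref{eq:redbm}. But Cauchy--Schwarz at the endpoint bounds the positive quantity $\langle YY^\ast\rangle$ from above, and $\langle YY^\ast\rangle \ge 0$ has a non-vanishing deterministic leading term which is \emph{not} small. Your proposed split $A_m^2 = \langle A_m^2\rangle I + \mathring{A_m^2}$ followed by $\Phi_l\prec\phi_l$ controls only the fluctuating part of the sub-chains; the deterministic approximations $\langle\widehat M\rangle$ of those sub-chains survive intact and are of full size. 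Consequently the resulting bound on $|\langle\widehat G_{[i,j]}\rangle|$ is again of deterministic size, i.e.\ the size of \eqref{eq:redbm}, and there is no mechanism in a Schwarz bound alone that can ``see'' the cancellation in $\widehat G - \widehat M$. (Subtracting $m_i$ before Schwarz does not help either: $\langle(G_i-m_i)(G_i^\ast-\overline{m_i})\rangle$ is still of order $\rho_i/\eta_i$ in the interesting regime $N\eta_i\rho_i\ge N^\epsilon$.) The paper's integral representation \eqref{eq:intrepG-M} is essential precisely because it expresses $\langle\widehat G_{[i,j]}-\widehat M_{[i,j]}\rangle$ as an integral of $\langle\widehat G(x+\ii\zeta,\dots)-\widehat M(x+\ii\zeta,\dots)\rangle$ against a kernel, so the subtraction is preserved under the representation; the local law hypothesis is then applicable on the ``above the scale'' part of the $x$-integration, while the crude (deterministic-size) Schwarz bound is only invoked on the tiny ``below the scale'' part $I_{\mathrm{below}}$ where the integration measure itself contributes the missing smallness.

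Finally, note that for \eqref{eq:redbm} your suggestion to ``run the identical argument with Lemma~\ref{lem:Mbound} in place of $\Phi_l$'' does not parse: $\widehat M_{[i,j]}$ is a fixed deterministic matrix given by the non-crossing partition sum \eqref{eq:Mdefim}, not a factorised expression $G_i\cdot Y$, so the trace Schwarz you use on the random chain is not available for it. The representation \eqref{eq:intrepMbasic} plus Lemma~\ref{lem:xrestore} is the paper's device here.
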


Since all resolvent chains and their $M$-approximations are multi-linear in the $A$'s,  by a simple
scaling we may assume, without loss of generality,  that $\langle |A_j|^2 \rangle = 1$ for all $j \in [k]$. This 
shortens some formulas.

We start our estimates on $\Phi_k(t)$ with  bounding the quadratic variation of the martingale term in \eqref{eq:flowkaim}:
\begin{equation}
	\begin{split}
	\label{eq:expexprqv}
&\frac{1}{N}\sum_{a,b=1}^N \big[\big|\partial_{ab}  \langle \widehat{{G}}_{[\hat{1}, \hat{k}]}A_k\rangle\big|^2 +\sigma \partial_{ab}  \langle \widehat{{G}}_{[\hat{1}, \hat{k}]}A_k\rangle\overline{\partial_{ba}  \langle \widehat{{G}}_{[\hat{1}, \hat{k}]}A_k\rangle}\big] \\
&\qquad\qquad\qquad\qquad\quad\lesssim \frac{1}{N^2} \sum_{i=1}^{k} \big\langle \big(G_i A_i \Im G_{i+1} ... A_{i-1} G_i \big) \big(G_i A_i \Im G_{i+1} ... A_{i-1} G_i \big)^*\big\rangle \\
&\qquad\qquad\qquad\qquad\quad=  \sum_{i=1}^{k}\frac{ \big\langle \Im G_i \big(A_i \Im G_{i+1} ... A_{i-1} \big)\Im G_i \big(A_i \Im G_{i+1} ... A_{i-1}\big)^*\big\rangle }{N^2 \eta_i^2} \,,
	\end{split}
\end{equation}
where we omitted the time dependence.  Notice that the quadratic variation in \eqref{eq:expexprqv} naturally contains chains of length $2k$. In order to get a closed system of inequalities containing only chains up to length $k$ we rely on the following 
	\emph{reduction inequality}; its proof is given in Appendix \ref{sec:addtech}.
		\begin{lemma}[Reduction inequality]
		\label{lem:redinphi}
		Fix $k\ge 2$, and assume that $\Phi_l(t)\prec \phi_l$ holds uniformly
		 in $t\in [0,T]$ for $0\le  l\le 2k$. Then, uniformly in $t\in[0, T]$ we have
		 		\begin{equation}
			\label{eq:redinphi}
			\Phi_{2k}(t)\prec \begin{cases}
				(N\hell_t)^{1/2}+\frac{1}{(N \hell_t)^{1/2}}\phi_k^2 \quad & k \,\, \mathrm{even} \\
				(N\hell_t)^{1/2}+\phi_{k-1}+\phi_{k+1}+
				\frac{1}{(N \hell_t)^{1/2}}\phi_{k+1}\phi_{k-1} \quad & k \,\, \mathrm{odd}.
			\end{cases}
		\end{equation}
	\end{lemma}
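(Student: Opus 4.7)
The strategy is to bound $\Phi_{2k}$ by applying Cauchy--Schwarz in Hilbert--Schmidt norm after splitting the length-$2k$ chain into two halves. Using $\langle XY\rangle = \tfrac{1}{N}\Tr(XY)$ and the Cauchy--Schwarz inequality $|\Tr(XY)|\le \|X\|_{\mathrm{HS}}\|Y\|_{\mathrm{HS}}$, one obtains
\[
|\langle \widehat G_{[\hat 1,\widehat{2k}],t}A_{2k}\rangle| \le \sqrt{\langle PP^*\rangle\,\langle Q^*Q\rangle},
\]
where $P,Q$ are obtained by cutting the chain at an appropriately chosen midpoint. For even $k$ I would use the symmetric split
\[
P = \Im G_{1,t}A_1\cdots A_{k-1}\Im G_{k,t},\qquad Q = A_k\Im G_{k+1,t}\cdots\Im G_{2k,t}A_{2k},
\]
giving two half-chains each of length $k$ in the imaginary parts. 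For odd $k$ the symmetric split cannot be collapsed back to length $k$, so I would instead split asymmetrically into halves of length $k-1$ and $k+1$, both of which have even length and collapse cleanly.

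The core of the argument is a systematic collapse of each palindromic product $PP^*$ (of length $2m$) back to a genuine length-$m$ chain, so that the hypothesis $\Phi_m\prec \phi_m$ applies. Two elementary reductions are used alternatingly with cyclic rotations inside the trace:
\begin{enumerate}
\item[(R1)] The pair $(\Im G_j)^2$ appearing at the ``fold'' is collapsed via the operator inequality $(\Im G_j)^2\le\Im G_j/\eta_{j,t}$, justified when sandwiched by a positive operator in the trace; this costs a factor $1/\eta_{j,t}$ and lowers the chain length by one.
\item[(R2)] Each subsequent pair of adjacent identical resolvents separated only by an $A_\ell A_\ell^*$ or $A_\ell^*A_\ell$ block is handled by the decomposition $A_\ell A_\ell^* = \langle|A_\ell|^2\rangle I + \mathring{(A_\ell A_\ell^*)}$; the scalar identity piece allows one to reapply (R1), while the traceless remainder yields a chain of the same length with a ``large'' middle observable whose contribution must be reabsorbed.
\end{enumerate}
After $m$ such reductions, each half is bounded by $\bigl(\prod_j\rho_{j,t}/\prod_j\eta_{j,t}\bigr)\prod_j\langle|A_j|^2\rangle\cdot(1+\phi_m/\sqrt{N\hell_t})$, where the length-$m$ chain at the bottom is estimated via the hypothesis together with Lemma~\ref{lem:Mbound}. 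Taking the geometric mean of the two halves' bounds produces the announced $\phi_k^2/\sqrt{N\hell_t}$ for even $k$ and $\phi_{k-1}\phi_{k+1}/\sqrt{N\hell_t}$ for odd $k$, after canceling against the normalisation factor in \eqref{eq:defphi}.

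The additive $(N\hell_t)^{1/2}$ piece in both branches corresponds to the trivial bound on the subtracted deterministic approximation $|\langle\widehat M_{[\hat 1,\widehat{2k}],t}A_{2k}\rangle|\lesssim N^{k-1}\prod_i\rho_{i,t}\prod_j\langle|A_j|^2\rangle^{1/2}$ from Lemma~\ref{lem:Mbound}, which after dividing by the normalising factor in \eqref{eq:defphi} contributes exactly $(N\hell_t)^{1/2}$. The isolated $\phi_{k-1}+\phi_{k+1}$ terms in the odd case arise once one replaces $\langle\widehat G\cdots\rangle$ by $\langle(\widehat G-\widehat M)\cdots\rangle$ before the Cauchy--Schwarz split: the cross terms $\langle(P-M_P)M_Q\rangle$ and $\langle M_P(Q-M_Q)\rangle$ contribute single powers of $\phi_{k\pm 1}$ via the hypothesis applied to one factor and the deterministic $M$-bound to the other.

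The main technical obstacle is the control of the traceless residual $\mathring{(A_\ell A_\ell^*)}$ arising in (R2). Its Hilbert--Schmidt norm satisfies only $\langle|\mathring{(A_\ell A_\ell^*)}|^2\rangle\le\langle(A_\ell A_\ell^*)^2\rangle\le\|A_\ell\|^2\langle|A_\ell|^2\rangle$, which can be much larger than $\langle|A_\ell|^2\rangle^2$ when $A_\ell$ is low-rank. To keep the final bound phrased purely in the Hilbert--Schmidt norm $\langle|A_\ell|^2\rangle^{1/2}$, one must either perform a secondary Cauchy--Schwarz on the residual chain, invoke the $G^2$-Lemma~\ref{lem:G^2lemma} to trade the large observable against a density factor, or verify that the $\mathring{(A_\ell A_\ell^*)}$-terms enter with extra factors of $(N\hell_t)^{-1/2}$ and so remain subleading. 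Checking that all such contributions fit into the claimed bound is the combinatorial heart of the argument.
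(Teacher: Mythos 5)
Your reduction mechanism has a genuine gap, and it is not the route the paper takes. The paper's proof is a single application of the inequality $|\langle B_1B_2B_3B_4\rangle|\le N\prod_{i=1}^4\langle|B_i|^2\rangle^{1/2}$ with four \emph{quarter}-blocks $B_i=\sqrt{|\Im G|}\,A(\Im G A)^{k/2-1}\sqrt{|\Im G|}$ (blocks of lengths $(k\pm1)/2$ when $k$ is odd), so that each $\langle|B_i|^2\rangle$ is itself a genuine pure-$\Im G$ chain of length exactly $k$ (resp.\ $k\pm1$), to which the hypothesis $\Phi_k\prec\phi_k$ and the $M$-bound of Lemma~\ref{lem:Mbound} apply directly; this yields $N|\langle(\Im GA)^k\rangle|^2\lesssim N^{k-1}\rho^{2k}(1+\phi_k/\sqrt{N\hell})^2$ with no loss, exactly as in Lemma~3.3 of \cite{A2}. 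Your two-half Cauchy--Schwarz followed by a palindromic collapse cannot reproduce this. The collapse $(\Im G)^2\le \Im G/\eta$ applied at the two fold points of $\langle PP^*\rangle$ produces a chain of length $2(k-1)$, not $k$, at a cost $1/(\eta_1\eta_k)$; comparing deterministic sizes, $N^{k-2}\rho^{2k-2}/(\eta_1\eta_k)$ exceeds the actual size $N^{k-1}\rho^{2k}$ of $\langle PP^*\rangle$ by a factor of order $\big((N\hell)\,\hell\big)^{-1}$, which is as large as $N^{1-2\epsilon}$ when $\hell\sim N^{-1+\epsilon}$. The operator-norm bound $\|\Im G\|\le 1/\eta$ throws away precisely the smallness $\rho\ll 1/\eta$ that the lemma is designed to preserve, so the claimed cancellation against the normalisation in \eqref{eq:defphi} does not occur.

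Moreover, to push from length $2(k-1)$ down to $k$ you must invoke your step (R2), and the traceless residuals $A_\ell A_\ell^*-\langle|A_\ell|^2\rangle$ have Hilbert--Schmidt norm controlled only by $\|A_\ell\|\langle|A_\ell|^2\rangle^{1/2}\le\sqrt N\,\langle|A_\ell|^2\rangle$; this is not a combinatorial detail to be checked at the end but the structural obstruction that the quarter-block inequality is there to avoid, and your proposal leaves it unresolved. What you do have right is the surrounding bookkeeping: the additive $(N\hell_t)^{1/2}$ indeed comes from the deterministic $M$-term, the asymmetric $(k-1,k+1)$ split is the correct device for odd $k$, and expanding $(1+\phi_{k-1}/\sqrt{N\hell})(1+\phi_{k+1}/\sqrt{N\hell})$ accounts for the isolated $\phi_{k\pm1}$ terms. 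But the core reduction must be replaced: split the length-$2k$ chain into four blocks sandwiched by $\sqrt{|\Im G|}$ so that the squared Hilbert--Schmidt norms are again honest length-$k$ (or $k\pm1$) chains, rather than trying to collapse palindromic length-$2k$ chains by operator-norm bounds.
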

	In the remainder of the proof we will always use \eqref{eq:redinphi} in the following simplified form
	\begin{equation}
	\label{eq:newredin}
	\Phi_{2k}(t)\prec (N\widehat{\ell}_t)^{1/2}+\tilde{\phi}_k^2,
	\end{equation}
with $\tilde{\phi}_k$ being defined in \eqref{tildephi}. Note that \eqref{eq:newredin} follows from \eqref{eq:redinphi} using that $\phi_l\ge 1$. Furthermore, we remark that \eqref{eq:newredin} holds for $k$ being even and odd.

\nc

Adding the prefactor from the definition of $\Phi_{2k}(s)$, 
we find that
\begin{equation}
\label{eq:boundneediter}
\begin{split}
&\frac{\sqrt{N \hell_{t}}}{N^{k/2 - 1} \, \Big(\prod_{i\in [k]} \rho_{i,t}\Big) 
}\left(\int_0^t  \frac{ \big\langle \Im G_{i,s} \big(... \big)\Im G_{i,s} \big(...\big)^*\big\rangle }{N^2 \eta_{i,s}^2} \,\dd s\right)^{1/2} \\
\prec&\, \frac{\sqrt{N \hell_{t}}}{N^{k/2 - 1} \, \Big(\prod_{i\in [k]} \rho_{i,t}\Big) } \left(\int_0^t  \frac{ N^{k-2}\Big(\prod_{i\in [k]} \rho_{i,s}\Big)^2 }{N\eta_s^2}\left(1+\frac{\Phi_{2k}(s)\nc}{(N\hell_s)^{1/2}}\right)  \,\dd s\right)^{1/2} 
\lesssim  \, 1+\frac{\tilde{\phi_k}\nc}{(N\hell_t)^{1/4}}\,,
\end{split}
\end{equation}
analogously to \eqref{eq:qv1}, where we again used that along the characteristics $\hell_s\gtrsim \hell_t$ for $0\le s\le t$ and the integration rule~\eqref{eq:newintrule}. Additionally, in the last inequality we used \eqref{eq:newredin} for $\Phi_{2k}(s)$. \nc
Then, using the BDG inequality we conclude the same
 estimate in high probability for the martingale term in \eqref{eq:flowkaim}.

Next, we bound the first two terms in the last line of  \eqref{eq:flowkaim}. We have
\begin{equation} \label{eq:boundlastline}
	\begin{split}
	& \qquad \frac{\sqrt{N \hell_{t}}}{N^{k/2 - 1} \, \Big(\prod_{i\in [k]} \rho_{i,t}\Big) }\int_0^t \big| \langle G_{i,s}-m_{i,s}\rangle \langle \widehat{{G}}^{(i)}_{[\hat{1},\hat{k}],s}A_k\rangle \big|  \, \dd s \\
	&\prec \frac{\sqrt{N \hell_{t}}}{N^{k/2 - 1} \, \Big(\prod_{i\in [k]} \rho_{i,t}\Big) }\int_0^t \frac{\rho_{i,s}^{1/2}}{N\eta_{i,s}^{3/2}} N^{(k-1)/2}\Big(\prod_{i\in [k]} \rho_{i,s}\Big)\left(1+\frac{\Phi_{2k}(s)\nc}{(N\hell_s)^{1/2}}\right)^{1/2}  \, \dd s\lesssim 1+\frac{\tilde{\phi}_k\nc}{(N\hell_{t})^{1/4}}\,,
	\end{split}
\end{equation}
where we used the bound in \eqref{eq:schwarzeasy} together with a usual single resolvent local law $| \langle G_{i,s}-m_{i,s}\rangle | \prec (N \eta_{i,s})^{-1}$ and applied a similar reasoning as for \eqref{eq:boundneediter}, and in the last inequality we used \eqref{eq:newredin}. \nc

Then, we estimate the terms in $\Omega_\sigma$ of \eqref{eq:flowkaim}. For $j\ne i$ we have
\begin{equation}
\begin{split}
	& \qquad \frac{\sqrt{N \hell_{t}}}{N^{k/2 - 1} \, \Big(\prod_{i\in [k]} \rho_{i,t}\Big)}\int_0^t \frac{1}{N}\big| \langle G_{[\widehat{i},j],s}G_{[\widehat{j},i],s}^\mathfrak{t}\rangle \big|  \, \dd s \\
		&\le \frac{\sqrt{N \hell_{t}}}{N^{k/2 - 1} \, \Big(\prod_{i\in [k]} \rho_{i,t}\Big) }\int_0^t \frac{1}{N} \langle G_{[\widehat{i},j],s}G_{[\widehat{i},j],s}^*\rangle^{1/2} \langle G_{[\widehat{j},i],s}^*G_{[\widehat{j},i],s}\rangle^{1/2} \, \dd s \\
	&\prec \sqrt{N \hell_{t}}\int_0^t \frac{1}{N\eta_{i,s}\eta_{j,s}} \left(1+\frac{\Phi_{2(j-i)}(s)\nc}{\sqrt{N\hell_t}}\right)^{1/2}\left(1+\frac{\Phi_{2(k-j+i)}(s)\nc}{\sqrt{N\hell_t}}\right)^{1/2} \, \dd s \\
	&\lesssim \frac{1}{\sqrt{N\hell_t}}+\frac{\tilde{\phi}_{j-i}\nc}{(N\hell_{t})^{3/4}}+\frac{\tilde{\phi}_{k-j+i} \nc}{(N\hell_{t})^{3/4}}+\frac{\tilde{\phi}_{j-i}\tilde{\phi}_{k-j+i} \nc}{N\hell_{t}}\,, \\
	&\lesssim \frac{1}{\sqrt{N\hell_t}}+\frac{\tilde{\phi}_{j-i}\tilde{\phi}_{k-j+i} \nc}{(N\hell_{t})^{3/4}},
	\end{split}
	\end{equation}
	where in the first inequality we used Schwarz,in the second inequality the Ward identity (see \eqref{eq:newk=1} for similar computations in a simpler case), in the third inequality we used the reduction inequality \eqref{eq:newredin} for $j-i\ge 2$ and that $\tilde{\phi}_1=\phi_1+\sqrt{\phi_2}$ for $j-i=1$, and in the last inequality we used that $\tilde{\phi}_l\ge 1$ for any $l\ge 0$. \nc Similarly, for $j=i$ we get a bound $1+\tilde{\phi}_k\nc/(N\hell_t)^{1/4}$. To combine these two cases in a simpler bound we just estimate
	\begin{equation} \label{eq:sigmafinal}
\frac{\sqrt{N \hell_{t}}}{N^{k/2 - 1} \, \Big(\prod_{i\in [k]} \rho_{i,t}\Big)}\int_0^t \frac{1}{N}\big| \langle G_{[\widehat{i},j],s}G_{[\widehat{j},i],s}^\mathfrak{t}\rangle \big|  \, \dd s\lesssim 1+\frac{\tilde{\phi}_{j-i}\tilde{\phi}_{k-j+i} \nc}{(N\hell_{t})^{1/4}} \,.
	\end{equation}

We are now left with the terms $\Omega_1,\Omega_2,\Omega_3,\Omega_4$ of \eqref{eq:flowkaim}. We write out 
the estimates for $\Omega_1$ as all the other $\Omega_a$, $a=2,3,4$, are completely analogous.
 Using \eqref{eq:redbm}--\eqref{eq:redbg} for $i<j$ we estimate
\begin{equation}
	\begin{split}
		\label{eq:critbound}
		&\frac{\sqrt{N \hell_{t}}}{N^{k/2 - 1} \, \Big(\prod_{i\in [k]} \rho_{i,t}\Big) }\int_0^t \big| \langle \widehat{{G}}_{[\hat{i}, j],s}-\widehat{M}_{[\hat{i},j],s}\rangle\langle \widehat{{M}}_{[\hat{j},i],s}\rangle \big| \, \dd s \\
		&\prec\frac{\sqrt{N \hell_{t}}}{N^{k/2 - 1} \, \Big(\prod_{i\in [k]} \rho_{i,t}\Big) } \int_0^t \frac{N^{(j-i)/2-1}}{\sqrt{N\hell_s}}  \Big(\prod_{n\in [i+1,j-1]} \rho_{n,s}\Big) \, \frac{\rho_{i,s} \rho_{j,s}}{\eta_{i,s} \eta_{j,s}} \,  \Big(\prod_{n\in [i,j]^c} \rho_{n,s}\Big) N^{(k-j+i)/2-1} \tilde{\phi}_{j-i}\,\dd s \\
		&\lesssim \frac{\sqrt{N \hell_{t}}}{N^{k/2 - 1} \, \Big(\prod_{i\in [k]} \rho_{i,t}\Big) } \int_0^t \frac{\tilde{\phi}_{j-i}}{N \eta_s^2} \frac{N^{k/2-1}}{\sqrt{N\hell_s}}  \Big(\prod_{i\in [k]} \rho_{i,s}\Big)\,\dd s \lesssim \frac{\tilde{\phi}_{j-i}}{N \hell_t},
	\end{split}
\end{equation}
where $[i,j]^c:= [1,i-1]\cup [j+1,k]$. Similarly, we bound
\begin{equation} \label{eq:critbound2}
\begin{split}
\frac{\sqrt{N \hell_{t}}}{N^{k/2 - 1} \, \Big(\prod_{i\in [k]} \rho_{i,t}\Big) }  \int_0^t \big| \langle \widehat{{G}}_{[\hat{i}, j],s}-\widehat{M}_{[\hat{i},j],s}\rangle\langle \widehat{{G}}_{[\hat{j},i],s}-\widehat{M}_{[\hat{j}, i],s}\rangle \big| \,\dd s \prec \frac{ \tilde{\phi}_{j-i} \tilde{\phi}_{k-j+i}}{(N\hell_t)^{3/2}}\,.
\end{split}
\end{equation}

Finally, we estimate the last term in the last line of the rhs.~of \eqref{eq:flowkaim} as 
\begin{equation} \label{eq:2ndtermlhs}
\begin{split}
\frac{\sqrt{N \hell_{t}}}{N^{k/2 - 1} \, \Big(\prod_{i\in [k]} \rho_{i,t}\Big) } \int_{0}^{t} \left| \langle \widehat{{G}}_{[\hat{1}, \hat{k}],s}A_k\rangle  \frac{\langle \Im G_{i,s}-\Im m_{i,s}\rangle}{\eta_{i,s}} \right| \, \dd s \prec \frac{1}{\sqrt{N \hell_t}} + \frac{\phi_k}{N \hell_t}\,, 
\end{split}
\end{equation}
where we again used the usual single resolvent local law, the integration rule \eqref{eq:newintrule} and
$$
\big| \langle \widehat{{G}}_{[\hat{1}, \hat{k}],s}A_k\rangle \big| \prec N^{k/2 - 1} \, \Big(\prod_{i\in [k]} \rho_{i,s}\Big)  \left(  1 + \frac{\phi_k}{\sqrt{N\hell_s}}\right)\,. 
$$

Putting all these estimates \eqref{eq:boundneediter}--\eqref{eq:2ndtermlhs} together, we thus obtain

\begin{equation}
	\Phi_k(t)\prec 1+ \frac{1}{N \hat{\ell}_t}\sum_{l=1}^{k} \tilde{\phi}_l +\frac{1}{(N\hat{\ell}_{t})^{3/2}}\sum_{l=1}^{k-1}  \tilde{\phi}_l  \tilde{\phi}_{k-l}+\frac{|\sigma|}{(N\hat{\ell}_t)^{1/4}}\sum_{l=1}^k \tilde{\phi}_l\tilde{\phi}_{k-l}.
	\end{equation}
	Finally, using that $\tilde{\phi}_l\ge 1$, $|\sigma|\le 1$, and that $N\widehat{\ell}_t>1$, $\hell_T\lesssim \hell_t$, we thus conclude \nc \eqref{eq:AVmasterITERATE}.  This finishes the proof of Proposition \ref{pro:masterinIM}, modulo
the proof  of Lemma \ref{lem:G^2lemma} that will be done next. 
\end{proof}

 \subsubsection{Proof of Lemma \ref{lem:G^2lemma}} \label{subsec:proofG^2}

As a preparation for our proof, we observe that the estimate \eqref{eq:Mbound} (modulo logarithmic corrections in $\ell$) even holds true if the condition $N \ell \ge 1$ with 
$$
\ell =  \min_i [\eta_i (\rho_i + \mathbf{1}(i \notin \mathfrak{I}_k))]= \eta_{i_{\min}} (\rho_{i_{\min}} + \mathbf{1}({i_{\min}} \notin \mathfrak{I}_k))]
$$ is violated, but the \emph{second smallest} 
$$
\ell_2   := \min_{i \neq i_{\min}} [\eta_i (\rho_i + \mathbf{1}(i \notin \mathfrak{I}_k))]
$$
 satisfies $N \ell_2 \ge 1$. More precisely, under this weaker assumption, we still have that
\begin{equation} \label{eq:Mbound2ndsmallest}
	\left| \langle \mathcal{M}(z_1, A_1, ... , A_{k-1}, z_k; \mathfrak{I}_k) A_k \rangle \right| \lesssim \big(1 + \mathbf{1}(i_{\min} \notin \mathfrak{I}_k) |\log \ell|\big) \left(\prod_{i \in \mathfrak{I}_k} \rho_i\right) 
	N^{k/2 - 1}\prod_{j \in [k]} \langle |A_j|^2 \rangle^{1/2}\,. 
\end{equation}
This simply follows by realizing that 
the key estimate within the proof of \eqref{eq:Mbound}, namely 
\eqref{eq:intrepbound} in Appendix~\ref{sec:addtech}, can alternatively be estimated as
\begin{equation*} 
	\left| m^{(\mathfrak{I}_k)}[S] \right| \lesssim \big( 1 + \mathbf{1}(i_{\min} \notin \mathfrak{I}_k, i_{\min} \in S) |\log \ell| \big)\frac{\prod_{i \in S \cap \mathfrak{I}_k} \rho_i}{\ell_2^{|S| - 1}},
\end{equation*}
and following the steps leading to the proof of Lemma \ref{lem:Mbound}~(a).\footnote{\label{log} The logarithmic corrections are stemming from the estimate $\int_\R \frac{\rho(x)}{|x-z|} \dd x \lesssim 1+\big|  \log |\Im z| \big|$ (cf.~\eqref{eq:Mdivdiff}).} 
		We now turn to the actual proof of Lemma~\ref{lem:G^2lemma} and 
		again assume that, by simple scaling, $\langle |A_m|^2 \rangle = 1$ for all $m \in [k]$.

We start with the proof of \eqref{eq:redbm} for both $\#$'s indicating no decoration and 
assuming, for definiteness,  that $\eta_i=\Im z_i >0$ and  $\eta_j=\Im z_j > 0$; all other cases can be treated similarly and are hence omitted. 
 In this case, we use the integral representation \cite[Eq.~(3.15)]{multiG} (which simply follows from \eqref{eq:Mdefim}--\eqref{eq:Mdivdiff} using multilinearity)\footnote{Alternatively, this can also be obtained using
  \eqref{eq:intrepIM} for $m=2$: The resolvent chain, which is approximated by $\langle \widehat{M}_{[i,j]} \rangle$ contains a $G_j G_i$-factor after cyclicity of the trace.
   Applying \eqref{eq:intrepIM} for $m=2$ to this part of the chain and using a \emph{meta argument} like in Appendix~\ref{sec:addproofsec4}, we can also conclude \eqref{eq:intrepMbasic}.} 
\begin{equation} \label{eq:intrepMbasic}
	\big\langle \widehat{M}_{[i, j]} \big\rangle = \frac{1}{\pi } \int_\R \frac{\big\langle\widehat{M}(x+\ii \zeta, A_i, z_{i+1}, ... , z_{j-1}) A_{j-1}\big\rangle}{(x-z_i + \ii \zeta)(x - z_j + \ii \zeta)} \dd x
\end{equation}
with $\zeta := (\eta_i \wedge \eta_j)/2$.  To estimate the $x$-integration in~\eqref{eq:intrepMbasic},
we will apply the following basic lemma, which shall frequently be used in the sequel. 
Its proof is omitted as it is a simple Hölder's inequality and elementary calculus using
basic properties of $\rho(z)$. 
\begin{lemma} \label{lem:xrestore}
	Under the setting and assumptions described above, for any $\alpha \in [0,1]$, we have that  
	\begin{equation} \label{eq:xdeprestore}
		\frac{1}{\zeta^{ \alpha}}\int_{\R } 
		\frac{\big(\rho(x + \ii \zeta)\big)^{1 - \alpha} }{| x-z_i + \ii \zeta| |x - z_j + \ii \zeta|} \dd x \prec \frac{1}{(\eta_i \eta_j)^{1/2}}\left(\frac{\rho_i \rho_j}{\big(({\eta}_i \rho_i)
		 ({\eta}_j \rho_j)\big)^{\alpha}}\right)^{1/2} \,. 
	\end{equation}
\end{lemma}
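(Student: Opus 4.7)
The plan is to prove \eqref{eq:xdeprestore} by interpolating between the two endpoint cases $\alpha=0$ and $\alpha=1$ via Hölder's inequality applied to the finite positive measure
\[
\dd\mu(x) \;:=\; \frac{\dd x}{|x-z_i+\ii\zeta|\,|x-z_j+\ii\zeta|}.
\]
Rewriting the left-hand side of \eqref{eq:xdeprestore} as $\zeta^{-\alpha}\int \rho(x+\ii\zeta)^{1-\alpha}\dd\mu(x)$ and using Hölder with conjugate exponents $p=1/(1-\alpha)$ and $q=1/\alpha$ gives
\[
\int \rho^{1-\alpha}\,\dd\mu \;\leq\; \Big(\int \rho\,\dd\mu\Big)^{1-\alpha} \mu(\R)^{\alpha}.
\]
It therefore suffices to bound the two factors separately.

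For the endpoint $\alpha=0$, I would use Cauchy--Schwarz to split $\int \rho\,\dd\mu \leq \prod_{k\in\{i,j\}} \bigl(\int \rho(x+\ii\zeta)/|x-z_k+\ii\zeta|^2\dd x\bigr)^{1/2}$ and evaluate each one-dimensional integral by the Poisson-kernel semigroup identity: since $\rho(\cdot+\ii\zeta)$ is the Poisson extension of $\rho_\mathrm{sc}$ at height $\zeta$ and $((x-E_k)^2+(\eta_k-\zeta)^2)^{-1}$ is proportional to the Poisson kernel at height $\eta_k-\zeta$, the convolution telescopes to $\pi\rho_k/(\eta_k-\zeta)\sim\rho_k/\eta_k$ (using $\zeta\leq \eta_k/2$). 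This yields $\int \rho\,\dd\mu \lesssim \sqrt{\rho_i\rho_j/(\eta_i\eta_j)}$. For the endpoint $\alpha=1$, I would establish the sharp bound $\mu(\R)\prec 1/\max(\eta_i,\eta_j)$ by direct calculation: assuming (WLOG) $\eta_i\geq \eta_j$ and changing variables $x\mapsto \zeta u$, the integral reduces to $\int \dd u/\sqrt{(u^2+r^2)(u^2+1)} \lesssim (\log r)/r$ with $r\sim \eta_{\max}/\eta_{\min}$, the logarithm being absorbed by~$\prec$. A nonzero displacement $E_i\neq E_j$ only improves the bound, so one may set $E_i=E_j$ for the worst case.

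Combining the two ingredients with the interpolation and multiplying by the prefactor $\zeta^{-\alpha}$, the claimed estimate follows from the key identity $\zeta\,\eta_{\max} = \eta_{\min}\eta_{\max}/2 \sim \eta_i\eta_j$ (since $\zeta=\eta_{\min}/2$), which turns $\zeta^{-\alpha}\cdot \eta_{\max}^{-\alpha}$ into $(\eta_i\eta_j)^{-\alpha}$ and combines with the Cauchy--Schwarz contribution to give
\[
\zeta^{-\alpha}\int\rho^{1-\alpha}\,\dd\mu \;\lesssim\; \frac{(\rho_i\rho_j)^{(1-\alpha)/2}}{(\eta_i\eta_j)^{(1+\alpha)/2}},
\]
which is precisely the right-hand side of \eqref{eq:xdeprestore} after rewriting the prefactor $(\eta_i\eta_j)^{-1/2}\bigl(\rho_i\rho_j/((\eta_i\rho_i)(\eta_j\rho_j))^{\alpha}\bigr)^{1/2}$ in the form $(\rho_i\rho_j)^{(1-\alpha)/2}(\eta_i\eta_j)^{-(1+\alpha)/2}$.

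The main subtlety is that sharpness of the $L^1$-bound $\mu(\R)\prec 1/\eta_{\max}$ is essential: the crude Cauchy--Schwarz estimate $\mu(\R)\lesssim 1/\sqrt{\eta_i\eta_j}$ loses a factor $(\eta_{\max}/\eta_{\min})^{\alpha/2}$ and would \emph{not} suffice. One genuinely has to exploit the separation of the two Cauchy-type peaks (which, for $E_i=E_j$ but $\eta_i\neq\eta_j$, amounts to the fact that the two points $E_j+\ii(\eta_k-\zeta)$ lie at different heights), and this near-orthogonality produces the $1/\eta_{\max}$ factor that precisely cancels the $\zeta^{-\alpha}$ prefactor via $\zeta\eta_{\max}\sim\eta_i\eta_j$. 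A minor point is the treatment of the signs of $\Im z_i, \Im z_j$: there $|x-z_k+\ii\zeta|$ acquires imaginary part $\eta_k\pm\zeta$, both $\sim\eta_k$ under $\zeta\leq|\Im z_k|/2$, so the estimates go through uniformly in all sign combinations.
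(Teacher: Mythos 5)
Your proof is correct and matches the paper's own (omitted) proof, which the authors describe simply as ``a simple Hölder's inequality and elementary calculus using basic properties of $\rho(z)$''; your Hölder interpolation between the $\alpha=0$ endpoint (Cauchy--Schwarz plus the Poisson-semigroup identity for $\rho$) and the $\alpha=1$ endpoint (direct evaluation of $\mu(\R)\prec 1/\eta_{\max}$, with the $\log$ absorbed by $\prec$) is precisely that, and the observation that $\zeta\,\eta_{\max}\sim\eta_i\eta_j$ correctly closes the power counting.
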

Therefore, plugging in \eqref{eq:Mbound2ndsmallest} with $\mathbf{1}(...) = 0$ for the numerator in \eqref{eq:intrepMbasic} and then 
using \eqref{eq:xdeprestore},
we obtain 
\begin{equation} \label{eq:intrepM}
	\begin{split}
		\left| \big\langle \widehat{M}_{[i, j]} \big\rangle \right| \lesssim \left(\prod_{n = i+1}^{j-1} \rho_n\right) 
		N^{(j-i)/2 - 1} \int_\R \frac{\rho(x + \ii \zeta)}{| x-z_i + \ii \zeta| |x - z_j + \ii \zeta|} \dd x \prec \left(\frac{\rho_i \rho_j }{\eta_i \eta_j}\right)^{1/2} \left(\prod_{n = i+1}^{j-1} \rho_n\right) 
		N^{(j-i)/2 - 1} \,,
	\end{split}
\end{equation}
completing the proof of \eqref{eq:redbm}.

We now turn to the proof of \eqref{eq:redbg}, again focusing on the case where both $\#$'s indicate no decoration and assuming that $ \eta_i=\nc \Im z_i >0$ and  $\eta_j=\Im z_j > 0$.  
As the first step, we apply the integral representations \eqref{eq:intrepIM} and \eqref{eq:intrepMbasic} (see  \cite[Eqs.~(3.14) and (3.15)]{multiG}) to find
\begin{equation} \label{eq:intrepG-M}
	\begin{split}
		\left| \big\langle \widehat{G}_{[i, j]} - \widehat{M}_{[i, j]} \big\rangle \right| \lesssim  \int_\R \frac{\big| \big\langle \big(\Im G(x + \ii \zeta) A_i \widehat{G}_{[\, \widehat{i+1}, \, \widehat{j-1}\, ]} - \widehat{M}(x+\ii \zeta, A_i, ...)\big) A_{j-1}\big\rangle\big|}{| x-z_i + \ii \zeta| |x - z_j + \ii \zeta|} \dd x 
	\end{split}
\end{equation}
with $\zeta = (\eta_i \wedge \eta_j)/2$ and split the integral into an \emph{above the scale} and a \emph{below the scale} part. This concept refers to spectral regimes  $x\in \R$ where the typical eigenvalue spacing $\rho(x + \ii \zeta)/N$ is larger or smaller
than the given $\zeta$. 
 More precisely, we fix an arbitrarily small $\xi > 0$ and decompose 
 $\R$ into\footnote{To be precise, in the integral \eqref{eq:intrepG-M}
 we first need to cut-off the regime where $|x| \ge N^{100}$, say, and estimate this contribution by a simple norm bound using that the spectrum of the Wigner matrix is contained in $[-2-\epsilon, 2+ \epsilon]$ with very high probability \cite{EYY2012}. Such technicality about the irrelevant, very far out $x$-regime will henceforth be ignored.}
\begin{equation} \label{eq:abovebelow}
\big\{ x: N \rho(x+\ii \zeta) \zeta \ge N^\xi \big\}\, \dot{\cup}  \, \big\{ x:   N \rho(x+\ii \zeta) \zeta < N^\xi \big\} =: I_{\mathrm{above}} \, \dot{\cup}\, I_{\mathrm{below}}\,.
\end{equation}

For the \emph{above the scale} part, we use that $\Phi_{j-i} \prec \phi_{j-i}$ and estimate this part of the integral 
\eqref{eq:intrepG-M} by
\begin{equation} \label{eq:above1}
	\int_{I_{\rm above}} \frac{\rho(x + \ii \zeta)}{| x-z_i + \ii \zeta| |x - z_j + \ii \zeta| }\left(\prod_{n = i+1}^{j-1} \rho_n\right) 
	\frac{N^{(k-i)/2 - 1}}{\sqrt{N \hell(x)}}  \phi_{j-i}\dd x \,,
\end{equation}
where we emphasized that now $\hell(x) = \zeta \rho(x+\ii \zeta) \wedge \min_{n \in [i+1,j-1]} \eta_n \rho_n$  depends on the integration variable $x$ since the integrated chain in \eqref{eq:intrepG-M} contains a resolvent 
at spectral parameter $x+\ii\zeta$.
Next, we further split $I_{\rm above} $ into two parts $I_{\rm above} =  I_{{\rm above}, =} \, \dot{\cup} \,  I_{{\rm above}, <}$ with
\begin{equation} \label{eq:above=<}
	I_{{\rm above}, =} := \big\{ x :  \hell(x) = \rho(x + \ii \zeta) \zeta\big\} \quad \text{and} \quad  I_{{\rm above}, <} := \big\{ x :  \hell(x) < \rho(x + \ii \zeta) \zeta\big\},
\end{equation} 
depending on whether the minimum is attained at the special spectral argument $x+\ii\zeta$ or not, 
and estimate each of them separately. In this way, we obtain the contribution from $I_{{\rm above}, =}$ to \eqref{eq:above1} to equal 
\begin{equation} \label{eq:above2}
	\frac{1}{\sqrt{N}} \left[\frac{1}{\zeta^{1/2}}\int_{I_{{\rm above},=}} \frac{\big(\rho(x + \ii \zeta)\big)^{1/2}}{| x-z_i + \ii \zeta| |x - z_j + \ii \zeta| }  \dd x\right]\rho_{i+1} \ldots\rho_{j-1} N^{(j-i)/2 - 1}  \phi_{j-i} \,. 
\end{equation}
By means of Lemma \ref{lem:xrestore} with $\alpha = 1/2$ applied to the integral in $\big[\cdots\big]$, this can be bounded as 
\begin{equation} \label{eq:above3}
	\begin{split}
		\frac{1}{(\eta_i \eta_j)^{1/2}}	\frac{1}{\sqrt{N}} \sum_{s=1}^{j-i} \frac{ \sqrt{\rho_i} \,  \rho_{i+1}
		 \ldots  \rho_{j-1} \sqrt{\rho_j}}{\sqrt{({\eta}_i \rho_i)^{1/2}} \sqrt{({\eta}_j \rho_j )^{1/2}}}N^{(j-i)/2 - 1}  \phi_{j-i} \le  \left(\frac{\rho_i \rho_j }{\eta_i\eta_j}\right)^{1/2}\left(\prod_{n = i+1}^{j-1} \rho_n\right) 
		\frac{N^{(j-i)/2 - 1}}{\sqrt{N \hell}} \phi_{j-i}   \,. 
	\end{split}
\end{equation}
For $I_{{\rm above}, <}$ the argument is completely analogous, yielding exactly the same bound as in \eqref{eq:above3}. 
This completes the bound for the  \emph{above the scale} part.

For the \emph{below the scale} part, we estimate the two terms in the numerator 
 in \eqref{eq:intrepG-M} separately; in this regime the local law is anyway not effective 
 in the sense that $G-M$ is not smaller than $G$. 
  For the $\widehat{M}$-term, we recall the bound \eqref{eq:Mbound2ndsmallest}, and estimate
\begin{equation} \label{eq:belowM}
	\begin{split}
		&\int_{I_{\rm below}} \frac{\big| \big\langle  \widehat{M}(x+\ii \zeta, A_i, ...)A_{j-1}\big\rangle\big|}{| x-z_i + \ii \zeta| |x - z_j + \ii \zeta|} \dd x 
		\lesssim \, N^{(j-i)/2 - 1}\rho_{i+1}\ldots  \rho_{j-1}\left[ \int_{I_{\rm below}}\frac{\rho(x + \ii \zeta) }{| x-z_i + \ii \zeta| |x - z_j + \ii \zeta|} \dd x\right]  \\[2mm]
		\prec &\frac{1}{(\eta_i \eta_j)^{1/2}}\frac{1}{N} \frac{\sqrt{\rho_i} \, \rho_{i+1}\ldots \rho_{j-1} \, \sqrt{\rho_j}}{({\eta}_i \rho_i)^{1/2} ({\eta}_j \rho_j)^{1/2}}N^{(j-i)/2 - 1}
		\lesssim \left(\frac{\rho_i \rho_j }{\eta_i\eta_j}\right)^{1/2}\left(\prod_{n = i+1}^{j-1} \rho_n\right) 
		\frac{N^{(j-i)/2 - 1}}{N \hell} \,. 
	\end{split}
\end{equation}
To go from the second to the third line, we used that $\rho(x + \ii \zeta) \zeta \prec N^{-1}$ for $x \in I_{\rm below}$ (recall that $\xi> 0$ in the definition~\eqref{eq:abovebelow}
 may be chosen arbitrarily small) and employed Lemma \ref{lem:xrestore} with $\alpha = 1$. In the ultimate step, we utilized 
$\eta_i \rho_i \wedge \eta_j \rho_j \ge \hell$ together with $N \hell \gtrsim 1$.
 This concludes the discussion of the $\widehat{{M}}$-term.

Next, we turn to the $\widehat{{G}}$-term in \eqref{eq:intrepG-M} in the regime $x \in I_{\rm below}$ and
first  focus on the case where $j-i$ is even. Here, we employ a Schwarz inequality in order to be able to exploit 
\begin{equation} \label{eq:monotone}
	\begin{split}
		\big| \big\langle \Im G(x + \ii \zeta) &A_i \widehat{G}_{[\, \widehat{i+1}, \, \widehat{j-1}\, ]} A_{j-1}\big\rangle\big| \\
		\le \, & \frac{\zeta_x}{\zeta} \big| \big\langle \Im G(x + \ii \zeta_x) (A_i ... \Im G_{r-1} A_{r-1}) \Im G_{r} (A_i ... \Im G_{r-1} A_{r-1})^*\big\rangle \big|^{1/2} \\
		& \quad \times\big| \big\langle \Im G_{r} (A_{r} ... \Im G_{j-1} A_{j-1}) \Im G(x + \ii \zeta_x) (A_{r} ... \Im G_{j-1} A_{j-1}) ^*\big\rangle \big|^{1/2} 
	\end{split}
\end{equation}
where $\zeta_x > \zeta $ is implicitly defined via $N \rho(x + \ii \zeta_x) \zeta_x = N^\xi$ and we denoted $r := (i+j)/2$. After application of a Schwarz inequality, we find this part of \eqref{eq:intrepG-M} to be bounded by 
\begin{equation} \label{eq:below1}
	\begin{split}
		\left(\int_{I_{\rm below}} \frac{\zeta_x}{\zeta}\frac{\big| \big\langle \Im G(x + \ii \zeta_x) (A_i ... \Im G_{r-1} A_{r-1}) \Im G_{\frac{j+i}{2}} (A_i ... \Im G_{r-1} A_{r-1})^*\big\rangle \big|}{| x-z_i + \ii \zeta| |x - z_j + \ii \zeta|} \dd x\right)^{1/2} \\
		\times \left(\int_{I_{\rm below}} \frac{\zeta_x}{\zeta}\frac{ \big| \big\langle \Im G_{r} (A_{r} ... \Im G_{j-1} A_{j-1}) \Im G(x + \ii \zeta_x) (A_{r} ... \Im G_{j-1} A_{j-1})^*\big\rangle \big| }{| x-z_i + \ii \zeta| |x - z_j + \ii \zeta|} \dd x\right)^{1/2}
	\end{split}
\end{equation}

Adding and subtracting the respective $\widehat{{M}}$-terms for both resolvent chains in \eqref{eq:below1}, we are left with two terms for each integral. For concreteness, we estimate the one in the first line in \eqref{eq:below1}, the second line the same. The first line in \eqref{eq:below1} is bounded by (the square root of)
\begin{equation*}
	\begin{split}
		\frac{1}{\zeta}\int_{I_{\rm below}} \dd x\frac{\zeta_x \rho(x + \ii \zeta_x)}{| x-z_i + \ii \zeta| |x - z_j + \ii \zeta| } &\left(\prod_{n = i+1}^{r-1} \rho_n\right)^2 \rho_{r} \, N^{\frac{j-i}{2}-1}( 1+\phi_{j-i})\\
		&\prec( 1+\phi_{j-i})\left(\frac{ \rho_i \rho_j}{\eta_i\eta_j}\right)^{1/2}  \left(\prod_{n = i+1}^{r-1} \rho_n\right)^2 \rho_{r} \, \frac{N^{\frac{j-i}{2}-1}}{N \hell}  \,. 
	\end{split}
\end{equation*}
Here, we used that $N \rho(x + \ii \zeta_x) \zeta_x = N^\xi$ for arbitrarily small $\xi > 0$ and employed
 Lemma \ref{lem:xrestore} (with $\alpha=1$) in estimates analogous to \eqref{eq:above3} and \eqref{eq:belowM}.
  Combining this with the identical estimate for the second line of \eqref{eq:below1} and using $N \hell \ge 1$ and $\phi_{j-i} \ge 1$, we finally deduce that 
\begin{equation} \label{eq:below2}
	\eqref{eq:below1} \prec \phi_{j-i}\left(\frac{ \rho_i \rho_j}{\eta_i\eta_j}\right)^{1/2} \left(\prod_{n = i+1}^{j-1} \rho_n\right) 
	\frac{N^{(j-i)/2 - 1}}{\sqrt{N \hell}} \,. 
\end{equation}

For $j-i$ being odd, only the monotonicity argument \eqref{eq:monotone} is different: 
\begin{equation*} 
	\begin{split}
		\big| \big\langle \Im G(x + \ii \zeta) &A_i \widehat{G}_{[\, \widehat{i+1}, \, \widehat{j-1}\, ]} A_{j-1}\big\rangle\big| \\
		\le \, & \frac{\zeta_x}{\zeta} \big| \big\langle \Im G(x + \ii \zeta_x) (A_i ... \Im G_{r-1} A_{r-1}) \Im G_{r} (A_i ... \Im G_{r-1} A_{r-1})^*\big\rangle \big|^{1/2} \\
		& \quad \times\big| \big\langle \Im G_{r} (A_{r+1} ... \Im G_{j-1} A_{j-1}) \Im G(x + \ii \zeta_x) (A_{r+1} ... \Im G_{j-1} A_{j-1})^*\big\rangle \big|^{1/2} \,,
	\end{split}
\end{equation*}
where we now denoted $r:= (i+j+1)/2$. 
This asymmetry in the lengths of the resolvent chains now leads to the term $\sqrt{\phi_{j-i+1} \phi_{j-i-1}}$ in \eqref{eq:redbg}, the rest of the argument is identical.

Finally, we turn to the proof of \eqref{eq:schwarzeasy}. Again, we focus on the case where $\#$ indicates no decoration. By application of a Schwarz inequality, we find
\begin{equation} \label{eq:schwarzeasyproof}
	\begin{split}
		\left| \big\langle \widehat{{G}}_{[\hat{1}, \hat{k}]}^{(i)}A_k\big\rangle \right| &= \left|\big\langle \Im G_1 A_1 ... \Im G_{i-1} A_{i-1} \Im G_i G_i A_i ... \Im G_k A_k\big\rangle 	\right| \\
		&\le |\langle G_i G_i^* \rangle|^{1/2} \left| \big\langle \Im G_i(A_i\Im G_{i+1}\dots A_{i-1})\Im G_i(A_i\Im G_{i+1}\dots A_{i-1})^* \big\rangle \right|^{1/2}	\\
		&\prec\left( \frac{\rho_i}{\eta_i}\right)^{1/2} \left| \big\langle \Im G_i(A_i\Im G_{i+1}\dots A_{i-1})\Im G_i(A_i\Im G_{i+1}\dots A_{i-1})^* \big\rangle \right|^{1/2}\,,
	\end{split}
\end{equation}
where in the last step we used the Ward identity $GG^* = \Im G/\eta$ together with the usual single resolvent local law applied to $\Im G_i$. This concludes the proof of Lemma \ref{lem:G^2lemma}
which was the last missing piece for the proof of Proposition \ref{prop:zig}~(a) for pure $\Im G$ chains. \qed

\subsection{Proof of Proposition \ref{prop:zig}~(b) for pure $\Im G$-chains} \label{subsec:pureIMISO}
In this section, we briefly explain how to derive Proposition \ref{prop:zig}~(b) from Proposition \ref{prop:zig}~(a). For fixed spectral parameters and bounded deterministic vectors $\Vert \bm x \Vert \,, \Vert \bm y \Vert \lesssim 1$, we have 
\begin{equation} \label{eq:isofromav}
\left|\langle \bm x, (\widehat{{G}}_{[\hat{1}, \widehat{k+1}]}-\widehat{M}_{[\hat{1},\widehat{k+1}]})\bm y\rangle\right| \lesssim \left|\big\langle  (\widehat{{G}}_{[\hat{1}, \widehat{k+1}]}-\widehat{M}_{[\hat{1},\widehat{k+1}]})A_{k+1}\big\rangle\right| + \left|\big\langle  \widehat{{G}}_{[\hat{1}, \widehat{k+1}]}-\widehat{M}_{[\hat{1},\widehat{k+1}]}\big\rangle\right| 
\end{equation}
with the special choice $A_{k+1} := N \bm y \bm x^* - \langle \bm x , \bm y \rangle$. Next, using that $\langle |A_{k+1}|^2 \rangle^{1/2} \lesssim N^{1/2}$ we find from Proposition \ref{prop:zig}~(a) for pure $\Im G$ chains the first term in \eqref{eq:isofromav} to be bounded as 
\begin{equation*}
\left|\big\langle  (\widehat{{G}}_{[\hat{1}, \widehat{k+1}]}-\widehat{M}_{[\hat{1},\widehat{k+1}]})A_{k+1}\big\rangle\right| \prec \Big(\prod_{i\in [k+1]} \rho_{i}\Big)\frac{N^{k/2 }}{\sqrt{N \hell}} \prod_{j \in [k]} \langle |A_j|^2 \rangle^{1/2}\,. 
\end{equation*}
For the second term, we apply \eqref{eq:redbg} from Lemma \ref{lem:G^2lemma} (note that by Proposition \ref{prop:zig}~(a) for pure $\Im G$ chains, we have $\Phi_k\prec \phi_k:= 1$ and hence also $\tilde\phi_k=1$) and obtain
\begin{equation*}
\left|\big\langle  \widehat{{G}}_{[\hat{1}, \widehat{k+1}]}-\widehat{M}_{[\hat{1},\widehat{k+1}]}\big\rangle\right|  \prec \left(\frac{\rho_1 \rho_{k+1}}{\eta_1 \eta_{k+1}}\right)^{1/2}\Big(\prod_{i = 2}^{k} \rho_{i}\Big)\frac{N^{k/2-1 }}{\sqrt{N \hell}} \prod_{j \in [k]} \langle |A_j|^2 \rangle^{1/2} \le  \Big(\prod_{i\in [k+1]} \rho_{i}\Big)\frac{N^{k/2 }}{\sqrt{N \hell}} \prod_{j \in [k]} \langle |A_j|^2 \rangle^{1/2}\,,
\end{equation*}
where in the last step we used $\eta_1 \rho_1 \wedge \eta_{k+1} \rho_{k+1} \ge \hell$ and $N \hell \ge 1$. This concludes the proof of Proposition~\ref{prop:zig}~(b) for pure $\Im G$ chains. 

\subsection{Proof of Proposition \ref{prop:zig}~(b)  for mixed chains} \label{subsec:mixed}
We consider mixed resolvent chains
$$
\mathcal{G}_1 A_1 ... \mathcal{G}_k A_k
$$
with  $\mathcal{G}_j \in \{ G_j, \Im G_j\}$ and traceless matrices $A_1, ... , A_k \in \C^{N \times N}$, and explain how the respective bounds in \eqref{eq:mainAV}--\eqref{eq:mainISO} are obtained from the multi-resolvent local law for pure $\Im G$-chains derived in Sections~\ref{subsec:pureIM}--\ref{subsec:pureIMISO}. We will henceforth focus on the average case, the isotropic bounds can immediately be obtained from those by following Section \ref{subsec:pureIMISO}. 

 Recalling 
$$
 \ell = \min_{j \in [k]}\big[ \eta_j (\rho_j + \mathbf{1}(j \notin \mathfrak{I}_k))\big]
 $$
 where $\mathfrak{I}_k$  denotes the
 set of indices $j \in [k]$ where $\mathcal{G}_j = \Im G_j$, the goal of this section is to prove that
	\begin{equation} \label{eq:mainAVproof}
		\left| \langle \mathcal{G}_1 A_1 ... \mathcal{G}_k A_k \rangle - \langle \mathcal{M}_{[1,k]}A_k \rangle \right| \prec \left[\left(\prod_{i \in \mathfrak{I}_k} \rho_i \right) \wedge \max_{i \in [k]} \sqrt{\rho_i}\right] \, 
		\frac{N^{k/2 - 1}}{\sqrt{N \ell}} \,  \prod_{j \in [k]} \langle |A_j|^2 \rangle^{1/2}\,.
	\end{equation}

In order to do so, we iteratively apply the integral representation \eqref{eq:intrepIM} with $m=1$ for every $\mathcal{G}_j$ such that $j \notin \mathfrak{I}_k$. In Section \ref{subsec:I=notempty}, this procedure will immediately yield the claimed bound \eqref{eq:mainAVproof} for $\mathfrak{I}_k \neq \emptyset$ (recall from Remark~\ref{rmk:MHS}~(ii), that in this case the minimum in \eqref{eq:mainAVproof} is always realized by the product). In the complementary case, $\mathfrak{I}_k = \emptyset$, which has already been studied in \cite{A2}, the outcome of iteratively applying \eqref{eq:intrepIM} is the natural continuation of the pattern obtained for $\mathfrak{I}_k \neq \emptyset$.
However,  in this way we only find the weaker bound, where in \eqref{eq:mainAVproof} the minimum $\big[... \wedge ...\big]$ replaced by one. The improvement to include the small factor  $\max_{i \in [k]} \sqrt{\rho_i}$ requires a short separate argument, which we provide in Section~\ref{subsec:I=empty}. 

\subsubsection{The case $\mathfrak{I}_k \neq \emptyset$} \label{subsec:I=notempty}For concreteness, we consider the case
 where $\mathfrak{I}_k = [k-1]$, i.e.~$\mathcal{G}_k = G_k$ with $\Im z_k > 0$ w.l.o.g.~and all other $\mathcal{G}$'s are $\Im G$'s. Then, using the integral representation \eqref{eq:intrepIM} with $m=1$ and $\eta = \zeta = \Im z_k/2$, and its analog for the deterministic approximation (see \cite[Eqs.~(3.14) and (3.15)]{multiG}
 and \eqref{eq:intrepG-M} above), we find that
	\begin{equation*} 
		\begin{split}
	&\left| \langle \Im{G}_1 A_1 ... G_k A_k \rangle - \langle \mathcal{M}(z_1, A_1, ... , z_k; [k-1])A_k \rangle \right| \\
	& \qquad \lesssim \int_\R \frac{	\left| \langle \Im{G}_1 A_1 ... \Im G(x + \ii \zeta) A_k \rangle - \langle \mathcal{M}(z_1, A_1, ... , x+ \ii \zeta; [k])A_k \rangle \right|}{|x - z_k + \ii \zeta|} \, \dd x 
		\end{split}
\end{equation*}

We then follow the steps in the proof of Lemma \ref{lem:G^2lemma} starting from \eqref{eq:intrepG-M} in order to estimate the integral. In particular, we split the integration region into $I_{\rm above}$ and $I_{\rm below}$, just as in \eqref{eq:abovebelow}. In the treatment of these regimes, the two main differences compared to the proof of Lemma \ref{lem:G^2lemma} are the following: 
\begin{itemize}
\item[(i)] We use the $M$-bound in \eqref{eq:Mbound2ndsmallest} with logarithmic corrections, which can be absorbed into $\prec$.
\item[(ii)] Lemma \ref{lem:xrestore} gets replaced by the bound
$$
\int_\R \frac{\big(\rho(x+\ii \zeta)\big)^\alpha}{|x-z_k + \ii \zeta|} \, \dd x \prec 1 \qquad \text{for all} \qquad \alpha > 0\,,
$$
which can easily be seen using that $\Im z_k \ge N^{-1}$ and $\rho(w)$ decays polynomially as $|w| \to \infty$.
\end{itemize}
 For example, instead of \eqref{eq:above2} we estimate (recall that $I_{\rm above}$ is further split into $I_{\rm above, =}$ and $I_{\rm above, <}$ in \eqref{eq:above=<})
\begin{equation*}
	\begin{split}
	\frac{1}{\sqrt{N}} \left[\frac{1}{\zeta^{1/2}}\int_{I_{{\rm above},=}} \hspace{-2mm}\frac{\big(\rho(x + \ii \zeta)\big)^{1/2}}{| x-z_k + \ii \zeta|}  \dd x\right]\rho_{1} \ldots \rho_{k-1} N^{k/2 - 1} \prec \left(\prod_{i \in [k-1]} \rho_i \right) \, 
\frac{N^{k/2 - 1}}{\sqrt{N \ell}} \,,
	\end{split}
\end{equation*}
neglecting the product of Hilbert-Schmidt norms. We point out that, compared to the estimates in the pure $\Im G$-case, now $\ell := \min_{j \in [k]}\big[ \eta_j (\rho_j + \mathbf{1}(j \neq k))\big]$ and $\rho_k$ disappeared from the rhs. Therefore, as a result, we find the claimed bound \eqref{eq:mainAVproof} for $\mathfrak{I}_k = [k-1]$. All other cases with $\mathfrak{I}_k \neq \emptyset$ follow by iteratively applying this strategy.  This completes the proof of Proposition \ref{prop:zig}~(b) if
$\mathfrak{I}_k \ne \emptyset$.

\subsubsection{The case $\mathfrak{I}_k = \emptyset$} \label{subsec:I=empty}As mentioned above,  
in order to obtain the improvement by $\max_{i \in [k]} \sqrt{\rho_i}$, we now give a separate argument. We thereby closely follow the steps in Section \ref{subsec:pureIM} and point out only the main differences. In particular, we now use the flow \eqref{eq:flowka}, together with the following lemma proven in Appendix~\ref{sec:addproofsec4}, instead of \eqref{eq:flowkaim}. Here, similarly to \eqref{eq:flowka}, the absence of hats indicates that \emph{none} of the resolvents $\mathcal{G}$ in the chain approximated by $M$ is an
$\Im G$. 
	\begin{lemma}
		\label{lem:cancM}
		We have
		\begin{equation*}
			\partial_t \langle M_{[1,k],t}A_k\rangle=\frac{k}{2}\langle M_{[1,k],t}A_k\rangle+\sum_{i,j=1, \atop i<j}^k\langle M_{[i,j],t}\rangle \langle M_{[j,i],t}\rangle. 
		\end{equation*}
	\end{lemma}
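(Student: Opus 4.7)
The plan is to deduce the identity by reading off the leading deterministic behaviour of the stochastic flow \eqref{eq:flowka} for $\langle G_{[1,k],t}A_k\rangle$, in the spirit of the meta argument used for Lemma~\ref{lem:Mcancel}. Fix $\delta>0$ and consider any family of characteristic trajectories $z_{j,t}$, $t\in[0,T]$, that stay in the \emph{global regime} $\mathrm{dist}(z_{j,t},[-2,2])\ge\delta$. There $\eta_{j,t}\gtrsim 1$ and $\rho_{j,t}\lesssim 1$, so Proposition~\ref{prop:initial} (with trivial weights) yields
\begin{equation*}
\langle G_{[i,j],t}\rangle=\langle M_{[i,j],t}\rangle+O_\prec(N^{-1}),\qquad \langle G_{[1,k],t}A_k\rangle=\langle M_{[1,k],t}A_k\rangle+O_\prec(N^{-1}),
\end{equation*}
uniformly in $i\le j$ and in $t\in[0,T]$.

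Next, I take expectations in \eqref{eq:flowka}. The It\^o martingale term drops. In the global regime the $\sigma/N$-term is deterministically $O(N^{-1})$ since $\|G_{j,t}\|=O(1)$; likewise the correction sum $\sum_i\mathbb{E}[\langle G_{i,t}-m_{i,t}\rangle\langle G^{(i)}_{[1,k],t}A_k\rangle]$ is $O(N^{-1})$, by the single-resolvent local law paired with a norm bound on the chain $G^{(i)}_{[1,k],t}A_k$. Using the concentration estimates of the previous paragraph to replace each remaining $\mathbb{E}[\langle G_{\cdot}\rangle]$ by the corresponding $\langle M_{\cdot}\rangle$, and noting that $\partial_t\mathbb{E}\langle G_{[1,k],t}A_k\rangle=\partial_t\langle M_{[1,k],t}A_k\rangle+O(N^{-1})$ under this uniform control, I obtain
\begin{equation*}
\partial_t\langle M_{[1,k],t}A_k\rangle=\frac{k}{2}\,\langle M_{[1,k],t}A_k\rangle+\sum_{\substack{i,j\in[k]\\i<j}}\langle M_{[i,j],t}\rangle\langle M_{[j,i],t}\rangle+O(N^{-1}).
\end{equation*}

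To upgrade this to an exact equality I use that both sides are fixed algebraic expressions in (i) the divided differences $m[\bm z_S]$ and their $t$-derivatives (dictated by the characteristics) and (ii) the normalized partial traces $\mathrm{pTr}$ of products of the $B_i$'s and $A_k$, whose form is independent of $N$. Hence the difference of the two sides is a \emph{fixed} multilinear function of $B_1,\dots,B_{k-1},A_k$ with coefficients rational in the $m[\bm z_S]$; since this difference is $O(N^{-1})$ uniformly in admissible $B_i$'s (which may be rescaled so that all relevant parameters stay bounded while $N\to\infty$), the $O(N^{-1})$ correction must vanish identically.

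The delicate point is precisely this last step of upgrading an approximate relation to an exact one. A fully self-contained alternative is to differentiate the expansion \eqref{eq:Mdef} directly along the characteristic flow, using $\partial_t m[S]=\sum_{i\in S}(-m_{i,t}-z_{i,t}/2)\,\partial_{z_i}m[S]$ together with the recursive definition \eqref{eq:freecumulant} of $m_\circ$ and the combinatorics of Kreweras complements; this reduces Lemma~\ref{lem:cancM} to a purely combinatorial identity on $\mathrm{NC}([k])$, whose base case $k=2$ amounts to $\partial_t m[z_1,z_2]=m[z_1,z_2]+m[z_1,z_2]^2$ (readily checked from $m[z_1,z_2]=(m_1-m_2)/(z_1-z_2)$ and $m_i^2+z_im_i+1=0$). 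I expect this combinatorial bookkeeping to be the main technical burden of such a fully deterministic proof.
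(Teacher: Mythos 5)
Your first route is the same high-level strategy as the paper's proof: integrate the flow \eqref{eq:flowka}, take expectations so that the martingale term drops out, use a global law to replace each resolvent chain by its deterministic approximation with a small error, and then argue that the resulting approximate identity must in fact be exact. You also correctly flag that the last step is where the difficulty lies. However, your justification for it does not work as written. Scalar rescaling of the $B_i$'s cannot help, since the putative identity is multilinear and rescaling scales both sides identically; and the normalized partial traces $\langle B_{i_1}\cdots B_{i_l}\rangle$ appearing in the difference are constrained to be the traces of actual $N\times N$ matrices, so for a \emph{fixed} $N$ you cannot decouple ``vary the traces'' from ``send $N\to\infty$''. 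The device that makes this rigorous --- and is the paper's actual argument --- is a tensorization: replace $W$ by an $(Nd)\times(Nd)$ Wigner matrix and each $A_i$ by $A_i\otimes I_d$. The crucial observation \eqref{eq:usefulrel} is that this preserves all normalized partial traces and hence the entire $M$-structure, ${\bm M}^{(d)}_{[1,k],t}=M_{[1,k],t}\otimes I_d$, while the error in the global law becomes $O((Nd)^{-1})$; sending $d\to\infty$ at fixed $N$ and fixed matrices then kills the error and yields the exact identity. Your remark about ``rescaling so that all relevant parameters stay bounded while $N\to\infty$'' gestures at exactly this, but it does not supply the trace-preserving, $N$-enlarging operation that is actually needed, so as written there is a genuine gap.

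Your second alternative --- differentiating the non-crossing-partition expansion \eqref{eq:Mdef} directly along the characteristics and matching terms via Kreweras-complement combinatorics --- is a genuinely different, purely algebraic route not taken in the paper, and your $k=2$ base-case check $\partial_t m[z_1,z_2]=m[z_1,z_2]+m[z_1,z_2]^2$ is correct. This approach would avoid all probabilistic input, which is appealing, but as you acknowledge the general-$k$ bookkeeping over $\mathrm{NC}([k])$ and its Kreweras complements is the substantive part, and you have not carried it out, so this remains a sketch rather than a proof.
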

 Moreover, using the shorthand notations 
$$\eta_t := \min_{i \in [k]} \eta_{i,t} \quad \text{and} \quad \rho_t:= \max_{i \in [k]} \rho_{i,t}\,,$$
 we introduce the new normalized differences
\begin{equation}
	{\Psi}_k(t):= \frac{\sqrt{N \eta_{t}}}{N^{k/2 - 1} \, \sqrt{\rho_t} \prod_{j \in [k]} \langle |A_j|^2 \rangle^{1/2}} \big| \langle ({G}_{[{1},{k}],t}- {M}_{[{1},{k}],t})A_k \rangle\big|
\end{equation}
for every $k \in \N$. The $\Psi_k$'s introduced here are the no-$\Im G$-analogs of the $\Phi_k$'s defined in \eqref{eq:defphi}, i.e.~all hats are removed and we replaced $\hat{\ell}_t \to \eta_t$ as well as $\prod_i \rho_{i,t} \to \sqrt{\rho_t}$. 

In the following, we will derive master inequalities for the $\Psi_k$'s, analogously to Proposition \ref{pro:masterinIM}. However, compared to the proof in Section \ref{subsec:pureIM}, we now have two major simplifications: 
\begin{itemize}
\item[(i)] Since the bound \eqref{eq:mainAVproof} for $\mathfrak{I}_k \neq \emptyset$ is already proven, the 
contribution of the
quadratic variation term in \eqref{eq:flowka}, which automatically carries two $\Im G$'s, is easily estimated as (again assuming $\langle |A_j|^2 \rangle = 1$ for all $j \in [k]$ henceforth)
\begin{equation*}
	\begin{split}
		\frac{\sqrt{N \eta_{t}}}{N^{k/2 - 1} \,  \sqrt{\rho_{t}} }&\left(\int_0^t  \frac{ \big\langle \Im G_{i,s} \big(A_i G_{i+1,s}... A_{i-1} \big)\Im G_{i,s}  \big(A_i G_{i+1,s}... A_{i-1} \big)^*\big\rangle }{N^2 \eta_{i,s}^2} \,\dd s\right)^{1/2} \\
		\prec&\, \frac{\sqrt{N \eta_{t}}}{N^{k/2 - 1} \,  \sqrt{\rho_{t}} } \left(\int_0^t  \frac{ N^{k-2} \rho_{i,s}^2 }{N\eta_{i,s}^2}  \,\dd s\right)^{1/2} 
		\lesssim  \, \sqrt{\frac{\rho_{i,t} \eta_t}{\rho_t \eta_{i,t}}} \le 1\,,
	\end{split}
\end{equation*}
analogously to \eqref{eq:boundneediter}. Note that in the first step, we did not use the overestimate $1/\eta_{i,s} \le 1/ \eta_s$ inside the integral as done in \eqref{eq:boundneediter}. The same reasoning applies to the analog of the first two terms in the \nc last line of \eqref{eq:flowkaim} and the terms contained in $\Omega_\sigma$ (cf.~the estimates in \eqref{eq:boundlastline}--\eqref{eq:sigmafinal})\nc. We point out that, in this section, the already proven bounds for resolvent chains containing at least one $\Im G$ make the usage of reduction inequalities as in Lemma \ref{lem:redinphi} obsolete. 

\item[(ii)] For treating the analogues of $\Omega_1,\Omega_2,\Omega_3,\Omega_4$ in \eqref{eq:flowkaim}, it is not necessary to "restore" $\Im G$'s via the integral representation \eqref{eq:intrepIM} as in the proof of the $G^2$-Lemma \ref{lem:G^2lemma}. Instead, in the course of proving an analog of Lemma \ref{lem:G^2lemma} (again suppressing the time dependence of the $z$'s as well as $\eta$ and $\rho$) it is sufficient to apply resolvent identities for $|z_i- z_j| \ge \eta$ and the integral representation 
\begin{equation*} 
	G(z_i) G(z_j) = \frac{1}{2 \pi \ii} \int_\Gamma \frac{G(w)}{(w-z_i)(w-z_j)} \dd w\,,
\end{equation*}
for $|z_i - z_j| \le \eta$. In this case $z_i$ and $z_j$ are necessarily on the same halfplane ($\Im z_i \Im z_j > 0$) and, just as in \eqref{eq:niceintrep}, $\Gamma$ is a tiny contour encircling $z_i, z_j \in \C \setminus \R$ in such a way that $\mathrm{dist}(\Gamma, \{z_i,z_j\}) \sim \eta$, which ensures that $ |\Im m(w)| \lesssim  \max_{i \in [k]} \rho_i$ 
on $\Gamma$ as follows by elementary continuity properties of $m(w)$. 

As a consequence, for fixed $k \in \N$, we find, assuming $\Psi_l\prec \psi_l$ for some control parameters $\psi_l \ge 1$ for $l=1,2,\ldots , k$ in
 the usual sense of uniformity 
explained below \eqref{phiphi}, that 
\begin{equation*}
	\left| \big\langle M_{[i, j]} \big\rangle \right| \prec \frac{1}{\eta} \, N^{\frac{j-i}{2}-1} \, \Big(\prod_{m = i}^{j-1} \langle |A_m|^2 \rangle^{1/2}\Big)\,,
\end{equation*}
as an analog of \eqref{eq:redbm} and 
\begin{equation*}
	\begin{split}
		\left| \big\langle {G}_{[i,j]} - {M}_{[i,j]}\big\rangle \right| 
		\prec  \, \frac{1}{\eta} \,  N^{\frac{j-i}{2}-1} \sqrt{\frac{\rho  }{{N \eta}}} \, \Big(\prod_{m = i}^{j-1} \langle |A_m|^2 \rangle^{1/2}\Big)  {\psi}_{j-i}  \,,
	\end{split}
\end{equation*}
as an analog of \eqref{eq:redbg}, for all $i,j \in [k]$ with $j-i \ge 1$. 
\end{itemize}
Overall, using the above two simplifications and following the arguments in \eqref{eq:boundneediter}--\eqref{eq:2ndtermlhs}, we arrive at the following new set of master inequalities. 
 \begin{proposition}[Master inequalities II]
	\label{pro:masterin} 
	Fix $k\in \N$. Assume that $\Psi_l(t)\prec \psi_l$ for any $1\le l\le k$ uniformly in $t\in [0,T]$, in spectral parameters with $N\hell_t\ge N^\epsilon$ (recall $\hell_t = \min_{i \in [k]} \eta_{i,t} \rho_{i,t}$ from \eqref{eq:newintrule}; not to be confused with the $\ell$ used around \eqref{eq:mainAVproof}!) and in traceless deterministic matrices $A_j$ for some $\psi_l\ge 1$ and set $\psi_0 := 1$. Then we have the \emph{master inequalities} 
		\begin{equation}
		\begin{split}
			\label{eq:AVmasterITERATE2}
			\Psi_k(t)\prec 1+\frac{1}{(N\hell_{T})^{1/4}}\sum_{l=1}^{k}  {\psi}_l  {\psi}_{k-l} 
			\end{split}
	\end{equation}
	uniformly (in the sense explained below \eqref{phiphi}) 
	in $t\in [0,T]$.
\end{proposition}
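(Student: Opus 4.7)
The proof of Proposition~\ref{pro:masterin} follows the same template as Proposition~\ref{pro:masterinIM}, but is substantially simpler. I would differentiate $\langle(G_{[1,k],t}-M_{[1,k],t})A_k\rangle$ along the characteristic flow using \eqref{eq:flowka}, and then invoke Lemma~\ref{lem:cancM} to cancel the $\frac{k}{2}\langle M_{[1,k],t}A_k\rangle$ and $\sum_{i<j}\langle M_{[i,j],t}\rangle\langle M_{[j,i],t}\rangle$ pieces. What remains is: a martingale differential; the single-resolvent correction terms $\langle G_{i,t}-m_{i,t}\rangle\langle G^{(i)}_{[1,k],t}A_k\rangle$; the quadratic nonlinearities $\langle G_{[i,j],t}-M_{[i,j],t}\rangle\langle G_{[j,i],t}-M_{[j,i],t}\rangle$ together with the mixed linear-quadratic partners $\langle G_{[i,j],t}-M_{[i,j],t}\rangle\langle M_{[j,i],t}\rangle$ and its symmetric counterpart; and the $\sigma$-term $\frac{\sigma}{N}\langle G_{[i,j],t}G_{[j,i],t}^{\mathfrak{t}}\rangle$. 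After time integration from $0$ to $t$ and multiplication by the normalizing prefactor $\sqrt{N\eta_t}/(N^{k/2-1}\sqrt{\rho_t})$, each contribution must be shown to be at most $1+(N\hell_T)^{-1/4}\sum_{l=1}^{k}\psi_l\psi_{k-l}$.

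For the martingale, the single-resolvent correction, and the $\sigma$-term I would invoke simplification~(i) of Section~\ref{subsec:I=empty}: each of these expressions, after a Schwarz inequality and one Ward identity $GG^{*}=\Im G/\eta$, involves a chain containing at least one $\Im G$. Such chains are already controlled by the case $\mathfrak{I}_k\ne\emptyset$ proved in Section~\ref{subsec:I=notempty}, without needing the still-unknown $\Psi_l$'s. Concretely, the quadratic variation of the martingale reduces, after Ward, to expressions of size $N^{k-2}\rho_{i,s}^{2}/(N\eta_{i,s}^{2})$, which upon BDG and the integration rule \eqref{eq:newintrule} contribute $\mathcal{O}(1)$ against the prefactor (no overestimate $\eta_{i,s}^{-1}\le\eta_s^{-1}$ is used inside the integral, so that the $\sqrt{\rho_t/\rho_{i,t}}\cdot\sqrt{\eta_t/\eta_{i,t}}$ balance closes). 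The single-resolvent correction and the $\sigma$-term are handled in the same spirit, using $|\langle G_{i,s}-m_{i,s}\rangle|\prec(N\eta_{i,s})^{-1}$ for the former.

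The crux lies in the quadratic and mixed terms, where I employ simplification~(ii). In lieu of the integral representation \eqref{eq:intrepIM} for $\Im G$, I would use the ordinary resolvent identity $G(z_i)G(z_j)=(G(z_i)-G(z_j))/(z_j-z_i)$ for $|z_i-z_j|\ge\eta$ and the analytic contour representation
\begin{equation*}
G(z_i)G(z_j)=\frac{1}{2\pi\ii}\oint_\Gamma\frac{G(w)\,\dd w}{(w-z_i)(w-z_j)}
\end{equation*}
for $|z_i-z_j|\le\eta$ (where necessarily $\Im z_i\Im z_j>0$), with $\mathrm{dist}(\Gamma,\{z_i,z_j\})\sim\eta$ so that $|\Im m(w)|\lesssim\max_l\rho_l$ on $\Gamma$. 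Combined with the inductive hypothesis $\Psi_l\prec\psi_l$ for $l<k$, this produces the bounds $|\langle M_{[i,j]}\rangle|\prec N^{(j-i)/2-1}/\eta$ and $|\langle G_{[i,j]}-M_{[i,j]}\rangle|\prec(N^{(j-i)/2-1}/\eta)\sqrt{\rho/(N\eta)}\,\psi_{j-i}$ announced in item~(ii). Plugging these into the time-integrated quadratic and mixed terms and using \eqref{eq:newintrule} yields the critical contribution $\psi_l\psi_{k-l}/(N\hell_T)^{1/4}$, while the $M\cdot(G-M)$ pieces carry an extra factor $(N\hell_T)^{-1/4}$ and are absorbed.

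The main obstacle is bookkeeping the single factor of $\sqrt{\rho_t}$ present in the denominator of $\Psi_k$, as opposed to the full product $\prod_i\rho_{i,t}$ in the $\Phi_k$-normalization. This asymmetry is exactly compensated by the single $\sqrt{\rho_t/(N\eta_t)}$-improvement produced by the contour estimate $|\Im m(w)|\lesssim\max_l\rho_l$ on $\Gamma$, and this matching is what allows the master inequality to close with exponent $1/4$ on $N\hell_T$ without ever generating chains of length $2k$; in particular, no reduction inequality of the type of Lemma~\ref{lem:redinphi} is needed in the pure-$G$ setting.
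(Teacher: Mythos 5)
Your proposal follows the paper's argument quite closely: differentiate $\langle(G_{[1,k],t}-M_{[1,k],t})A_k\rangle$ along the characteristic flow via \eqref{eq:flowka}, cancel the deterministic drift with Lemma~\ref{lem:cancM}, handle the martingale, $\langle G-m\rangle$ corrections and $\sigma$-term by Ward identity plus the already-established $\mathfrak{I}\neq\emptyset$ local law (simplification~(i)), and handle the $\Omega$-type terms by resolvent identity/contour integral (simplification~(ii)). Both key observations of Section~\ref{subsec:I=empty} — that the reduction inequality is obsolete and that $G^2$ can be disassembled without the $\Im G$-preserving representation — are correctly reproduced, and the $\sqrt{\rho_t}$ vs.\ $\prod_i\rho_{i,t}$ bookkeeping remark is accurate.

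Two small imprecisions are worth flagging, though neither invalidates the argument. First, you write that the chains of length $2k$ are never generated; in fact they do appear in the quadratic variation and the $\sigma$-term, but since they automatically carry two $\Im G$'s (via Ward), they are controlled by the already-proven $\mathfrak{I}\neq\emptyset$ local law with no $\psi$-dependence — that is the precise reason Lemma~\ref{lem:redinphi} becomes unnecessary. Second, you describe the quadratic/mixed $\Omega$-terms as yielding "the critical contribution $\psi_l\psi_{k-l}/(N\hell_T)^{1/4}$". In fact a direct computation using the bounds of simplification~(ii) and the integration rule \eqref{eq:newintrule} gives $\psi_{j-i}/(N\hell_t)$ for the mixed $M\cdot(G-M)$ pieces and $\psi_{j-i}\psi_{k-j+i}/(N\hell_t)^{3/2}$ for the genuinely quadratic ones — both strictly smaller than the stated $1/4$-power. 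The exponent $1/4$ in \eqref{eq:AVmasterITERATE2} is merely a convenient loose packaging (kept uniform with \eqref{eq:AVmasterITERATE}), valid because $\psi_l\ge1$ and $\hell_T\lesssim\hell_t$; it is not saturated by any individual term in the pure-$G$ flow. With these minor corrections of emphasis, your proof closes.
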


Using that $N \hell_T \ge N^\epsilon \nc$ and iteration (Lemma \ref{lem:iteration}), analogously to Section \ref{subsec:master}, we can immediately deduce that 
$
\Psi_k(T) \prec 1
$
where $T$ is the time defined in the statement of Proposition \ref{prop:zig}. This concludes the 
proof of Proposition \ref{prop:zig}~(b) for the remaining  case
$\mathfrak{I}_k = \emptyset$. 



\subsection{Modifications for general $\sigma=\E \chi_{\mathrm{od}}^2$. }\label{sec:general}
 The proof of  Proposition~\ref{prop:zig} presented so far  assumed for simplicity that 
 $\sigma =  \E \chi^2_{\mathrm{od}} $ is real  and $\E \chi^2_{\mathrm{d}} = 1+\sigma$.
 We now explain how to prove the general case, when these two restrictions are lifted. 
 The only changes concern the choice of the initial condition and of the evolution $B_t$ in the flow \eqref{eq:OUOUOU}.

 If $\sigma$ is not real,  we modify the evolution in  \eqref{eq:OUOUOU} in such a way the entries of $B_t$ are $\sqrt{t}$ times a standard complex Gaussian, and we modify the initial condition in \eqref{eq:OUOUOU} 
 from $W_0=W$ to $W_0=\widetilde{W}_T$, with another Wigner matrix $\widetilde{W}_T$ prepared such that
\begin{equation}
\label{eq:simpl1}
e^{-T/2}\widetilde{W}_T+\sqrt{1-e^{-T}}U\stackrel{\dd}{=}W. 
\end{equation}
Here  $U$ is a GUE matrix,
 which is independent of $\widetilde{W}_T$ (here we used that $|\sigma|< 1$). 
 We point out that the limiting eigenvalue density of $\widetilde{W}_T$ does not change along the flow \eqref{eq:OUOUOU} as a consequence of the fact that $\E |(W_t)_{ab}|^2$, for $a>b$, is preserved, and only
\[
\E (W_t)_{ab}^2=e^{-t} \E (\widetilde{W}_T)_{ab}^2, \qquad\quad \E (W_t)_{aa}^2=e^{-t/2}\E (\widetilde{W}_T)_{aa}^2
+ \frac{1}{N} \sqrt{1-e^{-t}}\,, \qquad t\in [0, T]\,,
\]
change. The fact that  $\E (W_t)_{ab}^2$ and $\E (W_t)_{aa}^2$ 
do change along the flow contributes to a change of order $1/N$
in the averaged Stieltjes transform of $W_t$; such change is easily seen to 
be negligible for the precision of the local laws we are considering here. If $\sigma\in\R$ but
$\E \chi_{\mathrm{d}}^2 \ne 1+\sigma$,
 similarly to \eqref{eq:simpl1}, we choose $B_t$ so that its entries have variance $t$ times the variance of $W$ for the off--diagonal entries and $\E (B_t)_{aa}^2=(1+\sigma)t$, and we can prepare yet another Wigner matrix $\widehat{W}_T$ such that
\begin{equation}
\label{eq:simpl2}
e^{-T/2}\widehat{W}_T+\sqrt{1-e^{-T}}\widehat{U}\stackrel{\dd}{=}W,
\end{equation}
with $\widehat{U}$ being independent of $\widehat{W}_T$ and having the same entries distribution as $W$ except for the diagonal entries having variance $\E \widehat{U}_{aa}^2=\frac{1}{N}(1+\sigma)$. The second moments of $(\widehat{W}_t)_{ab}$ are preserved and only the diagonal changes 
\[
\E(\widehat{W}_t)_{aa}^2=e^{-t/2}\E (\widehat{W}_T)_{aa}^2+\frac{1}{N}\sqrt{1-e^{-t}}(1+\sigma);
\]
hence the limiting eigenvalue distribution is still given by the semicircular law.

\nc

\section{Green function comparison: Proof of Proposition \ref{prop:zag}} \label{sec:GFT}

In this section, we remove the Gaussian component introduced in Propositions \ref{prop:zig} by a Green function comparison (GFT) argument, i.e.~we prove Proposition \ref{prop:zag}. For simplicity, we will write the detailed proof only in the case of no imaginary parts, i.e.~$\mathfrak{I}_k = \emptyset$ and $\mathfrak{I}_{k+1} = \emptyset$ in the average and isotropic case, respectively. The minor modifications needed for handling the other cases will be briefly discussed in Section \ref{subsec:withIM} below.

Before entering the proof, we point out that  typical GFT arguments (starting from \cite{TVActa}) are used to
compare the distribution of a genuinely fluctuating observable under two different matrix ensembles
whose single entry distributions have matching  first few moments. Technically, a family of 
interpolating ensembles is constructed which may be finite (e.g. Lindeberg replacement strategy)
or continuous (e.g. along an Ornstein-Uhlenbeck flow) and the change of the  distribution 
in question is closely monitored along the interpolation.
In this standard setup for GFT, however, local laws 
 serve as \emph{a priori} bounds 
obtained by independent methods and they assumed to  hold for all interpolating ensembles in between.
In other words,
concentration--type information about  resolvents $G(z)$ with $\Im z$ well above the eigenvalue spacing
are turned into information on the distribution of $G(z)$ with $\Im z$ at, or even slightly below
the eigenvalue spacing. 
Our application of GFT is different in spirit, since we aim to prove local laws
for one ensemble knowing them for the other one. Thus GFT needs to be done   \emph{self-consistently}
with monitoring a carefully designed quantity that satisfies
a Gronwall-type inequality along the interpolation. 

We remark that more than ten years ago Knowles and Yin  in \cite{KnowYin2}  used GFT in 
a similar spirit to prove  single resolvent local law for ensembles
where the deterministic approximation $M$ to $G$ is not a multiple of identity matrix
(for example deformed Wigner matrices). Later a much more direct and generally applicable
alternative method based
upon the matrix Dyson equation \cite{AEK1, AEKS} has been developed
to prove such local laws without GFT.  Our current dynamical approach revives
the idea of a self-consistent  GFT, since it naturally serves as a counterpart of the characteristic flow
to remove the Gaussian component added along that flow. In fact, the approach of \cite{KnowYin2} 
also used a tandem of gradual reduction of $\eta=\Im z$ (called \emph{bootstrapping} steps) and a self-consistent GFT 
(called \emph{interpolation} steps),  see Fig.~1.1 in~\cite{KnowYin2}. 
However, the bootstrapping step
in~\cite{KnowYin2} was much less effective than the characteristic flow which does the $\eta$-reduction in 
one step even for a much more complex multi-resolvent chain.
   In the GFT step, we  use the  simple entry-by-entry Lindeberg replacement strategy that is better adjustable to 
   our complicated resolvent chains
instead  of a special continuous interpolation as in~\cite{KnowYin2}, but  the core of both techniques  is
a self-consistent Gronwall argument.
The main technical  challenge in our proof  is that the error in one step of the Lindeberg replacement
is not always sufficiently small, but by carefully monitoring the 
errors in each step,  we  gain from  summing them up explicitly. We will explain this 
mechanism in Example~\ref{ex:ave}.

Now we turn to the actual proof. 
 Recalling the notations 
 \begin{equation} \label{eq:parameters}
 	\eta := \min_i |\Im z_i| \quad \text{and} \quad \rho:= \pi^{-1} \max_i |\Im m_i|\,,
 \end{equation}
 we begin by  distinguishing the \emph{averaged} and \emph{isotropic} control quantities 
 \begin{align} \label{eq:Psikav}
 	\Psi_k^{\rm av} &  := \frac{\sqrt{N \eta}}{N^{k/2-1} \sqrt{\rho}} \big| \big\langle \big(G_1 A_1 ... G_k - M_{[1,k]}\big)A_k \big\rangle\big| \\ \label{eq:Psikiso}
 	\Psi_k^{\rm iso} (\bm x, \bm y) &:= \frac{\sqrt{N\eta}}{N^{k/2} \sqrt{\rho}}  \big|  \big(G_1 A_1 ...  A_k G_{k+1} - M_{[1,k+1]}\big)_{\bm{x}\bm{y}} \big|\,, 
 \end{align}
 where $\bm x , \bm y \in \C^N$ are unit deterministic vectors and the traceless matrices $A_i \in \C^{N \times N}$ are assumed to have normalized Hilbert-Schmidt norms, $\langle |A_i|^2 \rangle^{1/2} = 1$. Recall that, in \eqref{eq:Psikav}--\eqref{eq:Psikiso}, we only consider chains without $\Im G$'s, the more general cases will be discussed later in Section~\ref{subsec:withIM}. Finally, we point out that our notation in \eqref{eq:Psikav}--\eqref{eq:Psikiso} already suppressed the dependence on the spectral parameters and deterministic matrices, since the sets of these are considered fixed along the argument. In the following, we will often say that an estimate on $\Psi$ holds \emph{uniformly}, by which we will always mean uniformity in all unit deterministic vectors and all choices of subsets of spectral parameters and deterministic matrices as explained in Proposition \ref{prop:zag}~(a). 
 
 Now, the goal of this section is to prove Proposition \ref{prop:zag}. More precisely, we will show that, if the optimal multi-resolvent local laws  
 \begin{equation} \label{eq:multiG}
 	\Psi_k^{\rm av} + \Psi_k^{\rm iso} \prec 1, \quad \text{for all fixed} \quad k \in \N,
 \end{equation}
 hold \emph{uniformly} for a Wigner matrix with some given single entry distributions, then they also hold
  for every other Wigner matrix with different single entry distributions, again \emph{uniformly}, provided that 
  the \emph{first three moments} of the  entries of these two ensembles 
 \emph{match}. A fundamental input for our proof is that the corresponding single resolvent local laws hold for \emph{every} Wigner matrix ensemble~\cite{EYY2012, KnowYin, BEKYY}, i.e.~the following Green function comparison argument is not needed for them. 
 \begin{theorem}
 For fixed $\epsilon > 0$, we have
\begin{equation} \label{eq:singleGoptimal}
	|\langle G -m \rangle| \prec \frac{1}{N \eta} \,, \qquad \big|\big(G-m\big)_{\bm{x} \bm{y}}\big| \prec \sqrt{\frac{\rho}{N \eta}} + \frac{1}{N \eta}
\end{equation}
uniformly in unit deterministic vectors $\bm x, \bm y$ and at spectral parameter $z \in \C \setminus \R$ with $\eta=|\Im z| \ge N^{-1+\epsilon}$ and $\Re z \in \R$, where $\rho = \pi^{-1} |\Im m(z)|$. 
 \end{theorem}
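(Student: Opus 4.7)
The statement is the classical optimal single-resolvent local law for Wigner matrices; the plan is to reconstruct it via the standard self-consistent Dyson-equation strategy, as in the cited works~\cite{EYY2012, KnowYin, BEKYY}. The starting point is the Schur complement formula, which yields for each diagonal entry
\begin{equation*}
\frac{1}{G_{ii}} = -z - W_{ii} - \sum_{a,b \neq i} W_{ia} G^{(i)}_{ab} W_{bi},
\end{equation*}
where $G^{(i)}$ is the resolvent of the minor with the $i$-th row and column removed. The quadratic form on the right can be written as $\langle G^{(i)} \rangle$ plus a fluctuation; one controls the fluctuation by a large deviation estimate for quadratic forms in the independent entries of row $i$ (a Hanson-Wright type bound, or equivalently a moment / cumulant expansion). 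Combined with the minor-resolvent identity $G^{(i)}_{ab} = G_{ab} - G_{ai}G_{ib}/G_{ii}$, this produces the approximate self-consistent equation $1 + zG_{ii} + \langle G \rangle G_{ii} = O_\prec(\Psi)$, where $\Psi$ is a small error that depends on the off-diagonal entries and on the averaged quantity $\langle G - m \rangle$ itself.

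The second step is the stability analysis of the Wigner Dyson equation $m^2 + zm + 1 = 0$. Linearizing around $m_{\rm sc}(z)$, the stability operator has norm comparable to $(\kappa + \eta)^{-1/2}$, where $\kappa = \mathrm{dist}(\mathrm{Re}\,z, [-2,2])$, and this loses exactly the factor that the density $\rho \sim \sqrt{\kappa + \eta}$ gains in the error estimate. Thus the overall bound on $G_{ii} - m$ and on $\langle G - m\rangle$, after inverting stability, comes out as $\sqrt{\rho/(N\eta)} + 1/(N\eta)$ for the individual entries, which is already the isotropic bound in the special case $\bm x = \bm y = \bm e_i$; general isotropic test vectors $\bm x, \bm y$ are handled by polarisation and the same Hanson-Wright estimate for $\sum_{a,b} x_a G^{(i)}_{ab} y_b$. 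The main obstacle at this stage is to keep track of the edge factor $\rho$ in a way that is not wasted in the stability inversion; this is the classical edge-sensitive argument of~\cite{EYY2012} using the careful choice of the control parameter $\Phi(z) := \sqrt{\rho/(N\eta)} + 1/(N\eta)$.

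The averaged bound, which is the smaller of the two and does not carry the $\sqrt{\rho}$ factor, requires an additional input, namely fluctuation averaging: the key observation is that the error terms in the self-consistent equation for different $i$'s are weakly correlated, so that the normalised sum $\frac{1}{N}\sum_i (G_{ii} - m)$ is smaller than each individual $|G_{ii} - m|$ by an extra factor $\sqrt{N\eta \rho}$. The cleanest way to see this is via a high-moment estimate of $\langle G - m\rangle$ using cumulant expansion and carefully bookkeeping the combinatorics of coinciding indices, which gives the optimal $1/(N\eta)$ bound uniformly in the spectrum. The argument is bootstrapped from the global scale $\eta \sim 1$ (where a norm bound suffices) down to $\eta \ge N^{-1+\epsilon}$ via a standard continuity-in-$\eta$ argument, exploiting the Lipschitz continuity of the resolvent on the scale $1/N$.

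Since this is a known result taken as black box in the paper (it is stated precisely so that it can be invoked in the sequel, notably to bound $|\langle G - m\rangle|$ inside the characteristic-flow estimates of Section~\ref{sec:opAedge} and the GFT scheme of Section~\ref{sec:GFT}), one could equally well simply cite \cite[Theorem~2.6]{EYY2012} and \cite[Theorem~2.3]{KnowYin} (whose optimal edge formulation was later recorded in \cite{BEKYY}) and reduce the work to verifying that Assumption~\ref{ass:entries} matches the hypotheses of those references, which it does.
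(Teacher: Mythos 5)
Your proposal is correct and aligned with the paper: the paper states this theorem without proof, citing \cite{EYY2012, KnowYin, BEKYY} as established input (see the sentence introducing it, which explicitly notes the GFT argument is not needed for single-resolvent laws), exactly as you observe in your final paragraph. The sketch you give of the classical Schur-complement/self-consistent-equation/fluctuation-averaging route is an accurate summary of how those cited works establish it.
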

 For convenience, these single resolvent laws will be expressed in the compact form
 \begin{equation*}
 	\Psi_0^{\rm av} + \Psi_0^{\rm iso} \prec 1\,,
 \end{equation*}
 which extends \eqref{eq:Psikav}--\eqref{eq:Psikiso} when no  traceless matrices $A$ are
 present (see, e.g., \cite{multiG, A2}). 
 
 Before starting the proof, we recall some notation which has already been used in the statement of Proposition~\ref{prop:zag}. 
 We will distinguish between the two ensembles compared in the GFT argument by using different letters, $v_{ab}$ and $w_{ab}$, for their matrix elements, and we shall occasionally use the notation $H^{(\bm{v})}$ and $H^{(\bm{w})}$ to indicate the difference. Alternatively, one could denote the matrix elements by a universal letter $h_{ab}$ and distinguish the two ensembles in the underlying measure, especially in the expectations $\E_{\bm{v}}$ and $\E_{\bm{w}}$. However, since the proof of Proposition \ref{prop:zag} works by replacing the matrix elements one-by-one in $N(N+1)/2$ steps, we use the first notation, analogously to \cite[Section 16]{EYbook}.

 \subsection{Preliminaries}
 The principal idea of the proof is as follows: First, we fix a bijective ordering 
 \begin{equation}
 	\phi : \{(i,j) \in[N]^2 : i \le j \} \to [\gamma(N)]\,, \qquad \gamma(N):= \frac{N(N+1)}{2}
 \end{equation}
 on the index set of independent entries of a Wigner matrix. Then, according to the induced ordering, 
 the matrix elements are swapped one-by-one from the distribution $v_{ab}$ to $w_{ab}$ in $\gamma(N) \sim N^2$ steps. 
 In particular, at step $\gamma \in \{0\}\cup[\gamma(N)]$ in this replacement procedure, the resulting
  matrix $H^{(\gamma)}$ has entries which are distributed according to $w_{ij}$ whenever
   $\phi\big((i,j)\big) \le \gamma$ and according to $v_{ij}$ whenever $\phi\big((i,j)\big) > \gamma$, 
   i.e.~$H^{(0)} = H^{(\bm{v})}$ and $H^{(\gamma(N))} = H^{(\bm{w})}$. This one-by-one replacement 
   of the matrix elements naturally requires understanding the \emph{isotropic} law \eqref{eq:zagmultiGISO}, 
   as already indicated in \eqref{eq:Psikiso}. 
 
 In order to derive \eqref{eq:multiG} also for $H^{(\bm{w})}$, we compute high moments of $\Psi_k^{\rm av/iso}$ for $H^{(\gamma)}$ and $H^{(\gamma-1)}$ for general $\gamma \in [\gamma(N)]$ and compare the results. Given sufficiently good one-step bounds, a telescopic argument will yield the estimate \eqref{eq:multiG} also for $H^{(\bm{w})}$. These "sufficiently good" one-step bounds are essentially required to accommodate the large number $O(N^2)$ of necessary replacements in order to arrive at $H^{(\gamma(N))}$. 
 A key feature of our proof, in contrast to previous applications of the replacement strategy, is 
 that the error will not always  be $o(N^{-2})$ in each step, but their cumulative size after summation 
  is still $o(1)$.
 
 The proof of Proposition \ref{prop:zag} is divided in two main parts: At first, in \textbf{Part (a)}, Section \ref{subsec:iso}, we show the isotropic part of \eqref{eq:multiG}, that is  $\Psi_k^{\rm iso} \prec 1$, 
 via a double induction on the number $k \in \N$ of traceless matrices and the moment 
 $p \in \N$ taken of $\Psi_k^{\rm iso}$, i.e.~$\E|\Psi_k^{\rm iso}|^p$. Thereby, we crucially use
  that the $\prec$-bound is (essentially) equivalent to controlling arbitrarily high moments up to 
  an $N^\xi$-error with arbitrarily small $\xi> 0$. Afterwards, in \textbf{Part~(b)}, Section \ref{subsec:av},
   using Part~(a) as an input, we will demonstrate $\Psi_k^{\rm av} \prec 1$ (and thus conclude the 
   proof of Proposition \ref{prop:zag} for $\mathfrak{I}_{k+1}=\emptyset$~resp.~$\mathfrak{I}_k=\emptyset$) 
   for every fixed $k$ via a single induction on the moment $p$. The main reason for this order of the 
   argument is  that the one-by-one replacement in step $\gamma$ is conducted
    via resolvent expansion focusing on the differing matrix entries at positions $(i,j) = \phi^{-1}(\gamma)$ 
    and $(j,i)$, and thereby it naturally produces isotropic quantities (see Lemma \ref{lem:resolexp} below). 
    Hence, the argument for $\Psi_k^{\rm av}$ cannot be self-contained and must rely on $\Psi_k^{\rm iso}$, 
    which in fact will not involve the averaged local laws at all. 
 
We fix some further notation. We have an initial Wigner matrix $H^{(0)}:= H^{(\bm{v})}$ and iteratively define 
 \begin{equation}
 	H^{(\gamma)} := H^{(\gamma-1)} - \frac{1}{\sqrt{N}}\Delta_V^{(\gamma)} + \frac{1}{\sqrt{N}}\Delta_W^{(\gamma)},
 \end{equation}
 a sequence of Wigner matrices for $\gamma \in [\gamma(N)]$, where we denoted\footnote{Observe that in this normalization, the non-zero entries of $\Delta_V^{(\gamma)}$ and $\Delta_W^{(\gamma)}$ are of order one random variables.}
 \begin{align} \label{eq:Deltas}
 	\Delta_V^{(\gamma)}:= \sqrt{N}\frac{E^{(ij)} (H^{(\bm{v})})_{ij} + E^{(ji)}  (H^{(\bm{v})})_{ji} }{1 + \delta_{ij}} \quad \text{and} \quad \Delta_W^{(\gamma)}:= \sqrt{N}\frac{E^{(ij)}(H^{(\bm{w})})_{ij} + E^{(ji)} (H^{(\bm{w})})_{ji} }{1 + \delta_{ij}} \,. 
 \end{align}
 Here, $\phi\big((i,j)\big) = \gamma$ and $E^{(ij)}$ denotes the matrix whose matrix elements are zero everywhere except at position $(i,j)$, i.e. $(E^{(ij)})_{k\ell} = \delta_{i k} \delta_{j \ell}$. The denominator $1 + \delta_{ij}$ is introduced to 
 account for the factor of two in the numerator occurring for diagonal indices. Note that $H^{(\gamma)}$ and $H^{(\gamma-1)}$ differ only in the $(i,j)$ and $(j,i)$ matrix elements, and they can be written as
 \begin{align}
 	H^{(\gamma-1)} = \widecheck{H}^{(\gamma)} + \frac{1}{\sqrt{N}} \Delta_V^{(\gamma)} \quad \text{and} \quad H^{(\gamma)} = \widecheck{H}^{(\gamma)} + \frac{1}{\sqrt{N}} \Delta_W^{(\gamma)}
 \end{align}
 with a matrix $\widecheck{H}^{(\gamma)}$ whose matrix element is zero at the $(i,j)$ and $(j,i)$ positions. Similarly, we denote the corresponding resolvents at spectral parameter $z_j \in \C \setminus \R$ by
 \begin{equation}\label{GG}
 	G_j^{(\gamma)} := (H^{(\gamma)} - z_j)^{-1}\,, \quad G_j^{(\gamma-1)} := (H^{(\gamma-1)} - z_j)^{-1}\,, \quad \text{and} \quad  \widecheck{G}_j^{(\gamma)} := (\widecheck{H}^{(\gamma)} - z_j)^{-1}\,.
 \end{equation}
 
 Observe that, at each step $\gamma$ in the replacement procedure, the deterministic approximation to a resolvent chain involving $G^{(\gamma)}$ is the same. This is because only the first two moments of the matrix elements of $H^{(\gamma)}$ determine this approximation, symbolically denoted by $M$, via the \emph{Matrix Dyson Equation (MDE)}, see, e.g., \cite{MDEreview}. For a chain in the checked resolvents $\widecheck{G}$, the approximating $M$ is \emph{in principle} differing from the non-checked ones, simply because the self-energy operator $\widecheck{\mathcal{S}}^{(\gamma)}[R] = \E[\widecheck{H}^{(\gamma)} R \widecheck{H}^{(\gamma)}]$ associated with $\widecheck{H}^{(\gamma)}$ 
 is no longer exactly the averaged trace $\langle \cdot \rangle$. However, since this discrepancy introduces an error of size $1/N$ in the MDE, which is a stable equation, this will not be visible in the local laws \eqref{eq:multiG}. Therefore, we shall henceforth ignore this minor point
  and shall just 
  define the normalized differences 
 \begin{equation*}
 	\Psi_k^{{\rm av}, (\gamma)}\,,  \quad \widecheck{\Psi}_k^{{\rm av}, (\gamma)} \,, \quad \Psi_k^{{\rm iso}, (\gamma)} (\bm x, \bm y)\,,  \quad \text{and} \quad \widecheck{\Psi}_k^{{\rm iso}, (\gamma)} (\bm x, \bm y) \,,
 \end{equation*}
 exactly as in \eqref{eq:Psikav}--\eqref{eq:Psikiso}, but with $G_j$ replaced by $G_j^{(\gamma)}$ and $\widecheck{G}_j^{(\gamma)}$, respectively. We emphasize again that the deterministic counterparts in all of the normalized differences are the \emph{same}. 
 
 We can now turn to the actual proof.

 \subsection{Part (a): Proof of the isotropic law} \label{subsec:iso} In this first part, we exclusively work with isotropic quantities and we shall hence drop the superscript $^{\rm iso}$ in the entire  Section~\ref{subsec:iso}.
 As already mentioned above, we shall prove the claim by a \emph{double induction} on $k$ and the moment $p$ taken of $\Psi_k$, i.e.~$\E |\Psi_k|^p$.  
 
 Thereby, the primary induction parameter is $k$ and our goal is to show that, if for some $k \in \N$ we have 
 \begin{equation}
 	\label{eq:ind0}
 \max_{\gamma \le \gamma(N)}\Psi^{(\gamma)}_{k'} + \max_{\gamma \le \gamma(N)}\widecheck{\Psi}^{(\gamma)}_{k'} \prec 1\,,
 \qquad \forall \, k' \in \{0, ... , k-1\}\,,
  \end{equation}
 then also
 \begin{equation}
 	\label{eq:ind1}
 	\max_{\gamma \le \gamma(N)}\Psi^{(\gamma)}_{k} + \max_{\gamma \le \gamma(N)}\widecheck{\Psi}^{(\gamma)}_{k} \prec 1\,.
 \end{equation} 
 Within the proof of \eqref{eq:ind1}, for a fixed $k$, we will then crucially use that the $\prec$-bound is equivalent to controlling arbitrarily high moments $\E |\Psi_k|^p$ up to an $N^\xi$-error for an arbitrarily small $\xi > 0$. Therefore, we use another secondary induction on the moment $p$. More precisely, in order to establish \eqref{eq:ind1}
 from~\eqref{eq:ind0},
  our goal is to show that,  for any  fixed  $k\in \N$, \nc  if for some $p \in\N $ we have that
  $$
 \max_{\gamma \le \gamma(N)}\big\Vert \Psi^{(\gamma)}_{k} \big\Vert_{p-1} +  \max_{\gamma \le \gamma(N)}\big\Vert \widecheck{\Psi}^{(\gamma)}_{k} \big\Vert_{p-1} \lesssim N^\xi 
 $$
 for any $\xi>0$, 
 then also  
  \begin{equation}\label{eq:conclusion}
 \max_{\gamma \le \gamma(N)}\big\Vert \Psi^{(\gamma)}_{k} \big\Vert_{p} +  \max_{\gamma \le \gamma(N)}\big\Vert \widecheck{\Psi}^{(\gamma)}_{k} \big\Vert_{p} \lesssim N^\xi \,
 \end{equation}
 holds for any $\xi>0$, 
 where implicit constants  depend on $k,p$ and  $\xi$.
   Here for a random variable $X$ we used the definition 
 $\Vert X\big\Vert_{p}:=[\E |X|^p]^{1/p}$.

 To summarize, as the \emph{induction hypothesis},
  given some arbitrary fixed $p,k \in \N$, we will assume that 
 \begin{equation} \label{eq:inductionhypo}
  	\max_{\gamma \le \gamma(N)}\Psi^{(\gamma)}_{k'} + \max_{\gamma \le \gamma(N)}\widecheck{\Psi}^{(\gamma)}_{k'} \prec 1 \quad \text{and} \quad  \max_{\gamma \le \gamma(N)}\big\Vert \Psi^{(\gamma)}_{k} \big\Vert_{p-1} + 
	 \max_{\gamma \le \gamma(N)}\big\Vert \widecheck{\Psi}^{(\gamma)}_{k} \big\Vert_{p-1} \le C_{k,p,\xi} N^\xi 
 \end{equation}
 hold uniformly for all ${k' \in \{0,...,k-1\}}$ and $\xi > 0$ with an appropriate $N$-independent constant.  Then we will conclude~\eqref{eq:conclusion}.
 
 The overall \emph{base case} ($k=1$, $p=1$)  is easy to verify: it
 solely consists of the usual isotropic law (the first estimate in \eqref{eq:inductionhypo} for $k'=0$) and the trivial bound $\E | \Psi_{1}|^0 = 1$ (the second estimate in \eqref{eq:inductionhypo} for $k=1$ and $p=1$).

 We start with two arbitrary but fixed bounded deterministic vectors $\Vert \bm x \Vert, \Vert \bm y \Vert \lesssim 1$ and introduce the set 
 \begin{equation} \label{eq:Ixy}
 	I_{\bm x \bm y} := \{\bm x, \bm y\} \cup \{{\bm e}_a : a \in [N]\} \subset \C^N
 \end{equation}
 of vectors, which will naturally arise along the argument (see \eqref{eq:vectormax} below), where ${\bm e}_a$ denotes the standard basis vector in the coordinate direction $a$. Note that the cardinality of $I_{\bm x \bm y}$ is 
 $N+2$. After defining\footnote{Here, $p$ is a superscript, not a power.}  
 \begin{equation}
 	\Omega_k^p(\gamma) := \max_{\bm u, \bm v \in I_{\bm x \bm y}} \Vert {\Psi}_k^{(\gamma)}(\bm u, \bm v) \Vert_p^p\,
 \end{equation}
 (we omitted the dependence on $\bm x, \bm y$ in the notation, as they are considered fixed along the whole
 argument), 
 the principal goal of the induction step is to prove the following proposition. 
 \begin{proposition}[Gronwall estimate] \label{prop:gronwall}  Fix $p, k \in \N$ and assume~\eqref{eq:inductionhypo}
 holds.
 	Then, for any $\xi > 0$, there exist some constants $C_1, C_2 >0 $ (depending on $p$, $k$, and $\xi$, but independent of $N$, $\bm x$, and $\bm y$)  such that 
 	\begin{equation} \label{eq:gronwall}
 		\Omega_k^p(\gamma_0) \le C_1 \frac{1}{N^2} \sum_{\gamma < \gamma_0} \Omega_k^p(\gamma) + C_2 N^\xi 
 	\end{equation}
 	for every $\gamma_0 \in [\gamma(N)]$. 
 \end{proposition}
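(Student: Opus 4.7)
The strategy is to prove an effective one-step comparison bound for $\E|\Psi_k^{(\gamma)}|^p - \E|\Psi_k^{(\gamma-1)}|^p$ and then telescope in $\gamma$. For each fixed pair $(\bm u, \bm v) \in I_{\bm x \bm y}$, I would write
\begin{equation*}
\Vert \Psi_k^{(\gamma_0)}(\bm u, \bm v) \Vert_p^p = \Vert \Psi_k^{(0)}(\bm u, \bm v) \Vert_p^p + \sum_{\gamma=1}^{\gamma_0} \Big(\E|\Psi_k^{(\gamma)}(\bm u, \bm v)|^p - \E|\Psi_k^{(\gamma-1)}(\bm u, \bm v)|^p\Big).
\end{equation*}
The initial term is bounded by $C_2 N^\xi$ since $H^{(0)} = H^{(\bm v)}$ satisfies the assumed multi-resolvent local laws by Proposition~\ref{prop:zag}'s hypothesis. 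The whole task is thus reduced to an effective bound on each summand.

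For the one-step comparison, I would expand $G_j^{(\gamma)}$ and $G_j^{(\gamma-1)}$ around the common matrix $\widecheck G_j^{(\gamma)}$ via the resolvent identity
\begin{equation*}
G_j^{(\gamma)} = \sum_{n=0}^{N_0} (-N^{-1/2})^n \big(\widecheck G_j^{(\gamma)} \Delta_W^{(\gamma)}\big)^n \widecheck G_j^{(\gamma)} + \mathrm{Rem}_{N_0},
\end{equation*}
with an analogous formula using $\Delta_V^{(\gamma)}$ for $G_j^{(\gamma-1)}$, choosing the truncation order $N_0 = N_0(k,p)$ large enough (e.g.~depending on the moment of $\chi$ via $\eta \ge N^{-1}$ and the growth of $\Delta$) that the remainder is negligible. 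Substituting these expansions into $|\Psi_k^{(\gamma)}(\bm u, \bm v)|^p$ and $|\Psi_k^{(\gamma-1)}(\bm u, \bm v)|^p$, taking expectation, and exploiting that $\Delta_W^{(\gamma)}$ and $\Delta_V^{(\gamma)}$ are independent of $\widecheck G_j^{(\gamma)}$ with matching first three moments by \eqref{eq:momentmatch}, all contributions of total degree $\le 3$ in $\Delta$ cancel exactly in the difference. The remaining terms carry an explicit factor $N^{-n/2}$ with $n \ge 4$, i.e.~at most $N^{-2}$ per step.

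Each such term is a sum over the single index pair $(a,b) := \phi^{-1}(\gamma)$ of products of $\widecheck G_j^{(\gamma)}$-chains sandwiched between vectors from $I_{\bm x \bm y}$ (which is precisely why $I_{\bm x\bm y}$ includes all basis vectors $\bm e_a$). When several derivatives coalesce on the same resolvent, the resulting factors $\widecheck G_j^m$ are resolved by the Cauchy representation $\widecheck G_j^m = \frac{1}{2\pi \ii (m-1)!} \oint \widecheck G(w)(w-z_j)^{-m} \dd w$ on a tiny contour of radius $\sim \eta_j/2$ preserving the $(\eta, \rho)$-scaling, just as in \eqref{eq:niceintrep}. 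Every isotropic quantity that arises is thus of the form $\widecheck\Psi_{k'}^{(\gamma)}(\bm u', \bm v')$ with $k' \le k$ and $\bm u', \bm v' \in I_{\bm x\bm y}$. For $k' < k$, the primary induction hypothesis in \eqref{eq:inductionhypo} gives $\widecheck \Psi_{k'}^{(\gamma)} \prec 1$ and the corresponding sub-chain is bounded by an $N^\xi$-multiple of the $M$-bound from Lemma~\ref{lem:Mbound}. The unique top-level chain of length $k$ is turned from $\widecheck G^{(\gamma)}$ back into $G^{(\gamma)}$ by one more resolvent step (cost $O(N^{-1/2})$, negligible here) and its $p$-th moment is dominated by $\Omega_k^p(\gamma)$; the inherent $|\Psi_k|^{p-1}$-factors produced by differentiating $|\Psi_k|^p$ are balanced against $|\Psi_k|^p$ via Young's inequality $|X|^{p-1}|Y| \le \frac{p-1}{p}|X|^p + \frac{1}{p}|Y|^p$ together with the induction hypothesis on $\Vert \Psi_k\Vert_{p-1}$.

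Summing the resulting one-step bound
\begin{equation*}
\big|\E|\Psi_k^{(\gamma)}(\bm u, \bm v)|^p - \E|\Psi_k^{(\gamma-1)}(\bm u, \bm v)|^p\big| \le \frac{C_1'}{N^2} \Omega_k^p(\gamma) + \frac{C N^\xi}{N^2}
\end{equation*}
over $\gamma \le \gamma_0 \le \gamma(N) \sim N^2$, taking the maximum over $(\bm u, \bm v) \in I_{\bm x\bm y}$, and absorbing the $\gamma = \gamma_0$ contribution into the left-hand side (which costs only a factor of $2$ once $C_1' N^{-2} \le 1/2$ for large $N$) yields the claim~\eqref{eq:gronwall}. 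The main technical obstacle will be the combinatorial accounting of the 4th (and higher) derivatives of $|\Psi_k|^p$: one must check that every sub-chain produced falls into one of the two categories above, i.e.~either has strictly fewer than $k$ traceless matrices (covered by the primary induction) or is a genuine $k$-chain (absorbed into $\Omega_k^p(\gamma)$). The trickiest sub-cases are those in which several derivatives pile up on the same resolvent — requiring the Cauchy trick — and those in which the rank-one insertions $|\bm e_a\rangle\langle \bm e_b|$ create isotropic quantities with basis vectors, where the preparation of the control set $I_{\bm x\bm y}$ is crucial.
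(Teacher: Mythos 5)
Your high-level architecture agrees with the paper's: telescoping, Lindeberg replacement with a resolvent expansion around $\widecheck{G}^{(\gamma)}$, three-moment cancellation of the first four orders, truncation, and then a Gronwall argument. But your claimed one-step bound
\begin{equation*}
\big|\E|\Psi_k^{(\gamma)}|^p - \E|\Psi_k^{(\gamma-1)}|^p\big| \le \frac{C_1'}{N^2}\,\Omega_k^p(\gamma) + \frac{C N^\xi}{N^2}
\end{equation*}
is \emph{false} in general, and the gap is exactly where the hard work of the paper resides. When the fourth-order (or higher) term in the expansion destroys $d$ chains and you replace \emph{all} of the resulting sub-chains by their deterministic approximations $M$, you have nothing carrying the small factor $(\rho/N\eta)^{1/2}$ to cancel the leftover $(N\eta/\rho)^{d/2}$ coming from the $\Psi_k$ normalization \eqref{eq:Psikiso}. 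Estimating those $M$'s trivially via $|(M_{j+1})_{\bm u\bm v}|\lesssim N^{j/2}$ leaves a one-step error as large as $N^\xi (N\eta/\rho)^{u/2} / N^2$ for some $u\in[4]$, as computed explicitly in \eqref{eq:d=1exampleCase2}. Summed naively over $\gamma\le N^2$ this gives $N^\xi (N\eta/\rho)^{u/2}$, which can be polynomially large. So you cannot hope for an $O(N^{\xi-2})$ per-step bound; the Gronwall inequality as you state it would not close.

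What is missing is the paper's \emph{gain-from-summation} mechanism (Lemma \ref{lem:sumMs}, eq.~\eqref{eq:Mestimate}): the purely deterministic terms are $\gamma$-independent apart from the positions $\bm e_i, \bm e_j$ (where $\phi(i,j)=\gamma$) appearing in isotropic entries like $(M_{k_l+1})_{\bm x\bm e_i}$. Summing over all positions $(i,j)\in[N]^2$ and applying Cauchy--Schwarz in $i$ and $j$ to the off-diagonal $M$-factors, one gains exactly $N^{-u/2}$ relative to the trivial $N^2$ count, because $\sum_i |(M_{k_1+1})_{\bm x\bm e_i}|^2 = (|M_{k_1+1}|^2)_{\bm x\bm x}\lesssim N^{k_1}$. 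This is why the one-step estimate in the paper (Lemma \ref{lem:onestep}) is \emph{not} $O(N^{\xi-2})$ term-by-term; the $\sum_{u=1}^4 (N\eta/\rho)^{u/2}|[u\ \text{sum. bdd. }M\text{-terms}]^{(\gamma)}_{\bm x,\bm y}|$ term is only controlled \emph{after} summation over $\gamma$. Your proof proposal does not identify these terms and has no replacement for this cancellation; this is a genuine missing idea, not merely a presentation issue.

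Two smaller remarks. First, your appeal to the Cauchy contour representation for $\widecheck{G}_j^m$ is not needed here: the resolvent expansion \eqref{eq:backwardexp} always inserts a $\Delta$ between consecutive $\widecheck{G}_j$'s, so one never encounters raw powers $\widecheck{G}_j^m$ inside the chain (the contour trick is used in the characteristic-flow section, not in the GFT). Second, your passage from the ``checked'' chain at the top level back to a $G^{(\gamma)}$-chain is better handled by a separate comparability statement (Lemma \ref{lem:checknochecksim}) than by a direct ``one more resolvent step''; the paper's lemma makes the additive $N^\xi$ error explicit and keeps the argument self-consistent.
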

 Note that \eqref{eq:gronwall} is a discrete Gronwall inequality for $\Omega_k^p(\gamma)$. Hence, having Proposition \ref{prop:gronwall} at hand (note that, in particular, $\Omega_k^p(0) \le C_2 N^\xi$), we obtain
 \begin{equation} \label{eq:gronwallconclude}
 	\max_{\gamma \le \gamma(N)} \Omega_k^p(\gamma) \le C_2 \mathrm{e}^{C_1} N^\xi \le C_3(k,p,\xi) N^\xi\,, 
 \end{equation}
uniformly in $\bm x$ and $\bm y$ and all choices of spectral parameters and traceless deterministic matrices, which then  implies the $\Psi$-part of~\eqref{eq:conclusion}. 
    In the next subsections we present auxiliary results necessary for the proof of Proposition \ref{prop:gronwall}
    which will then be concluded in Section~\ref{sec:Propgron}. The $\widecheck{\Psi}$-part of \eqref{eq:conclusion} and thus the induction step will finally be
    completed in Section~\ref{sec:complete}.
    
  In order to simplify notation, we shall henceforth drop the subscripts for all resolvents and deterministic matrices, i.e.~write $G_j =G$ and $A_j =A$ instead. 
  
 \subsubsection{Preliminaries}
 The fundamental building block of our proof is the following elementary lemma on resolvent expansion. 
 Note that we need to express  $G^{(\gamma-1)}, G^{(\gamma)}$ in terms of the "unperturbed" resolvent
 $\widecheck{G}^{(\gamma)}$ of $\widecheck{H}^{(\gamma)}$ that has zero elements in the $\gamma$-th position,
 and conversely, we need to express $\widecheck{G}^{(\gamma)}$ in terms of both "perturbed" resolvents
 using $\Delta_V^{(\gamma)}$ and $\Delta_W^{(\gamma)}$ from \eqref{eq:Deltas} as perturbations,
 see~\eqref{GG}.
 We work with finite resolvent expansions up to some order $m$, independent of $N$,
  to be determined later. The last
 term therefore always contains the original resolvent as well and it will have to be estimated deterministically
 by its norm but if $m$ is large enough this will be affordable. 
 
 \begin{lemma}[Resolvent expansions] \label{lem:resolexp}
 	For every fixed $m\in \N$, it holds that 
 	\begin{subequations}
 		\begin{equation} \label{eq:forwardexp}
 			\widecheck{G}^{(\gamma)} =  \sum_{\ell = 0}^{m} N^{-\ell/2} \big( G^{(\gamma)} \Delta_W^{(\gamma)} \big)^\ell G^{(\gamma)} + N^{-(m+1)/2}  \big( G^{(\gamma)} \Delta_W^{(\gamma)} \big)^{m+1} \widecheck{G}^{(\gamma)}
 		\end{equation}
 		and
 		\begin{equation} \label{eq:backwardexp}
 			{G}^{(\gamma)} =  \sum_{\ell = 0}^{m} (-1)^\ell N^{-\ell/2} \big(\widecheck{G}^{(\gamma)} \Delta_W^{(\gamma)} \big)^\ell \widecheck{G}^{(\gamma)} + (-1)^{(m+1)} N^{-(m+1)/2}  \big( \widecheck{G}^{(\gamma)} \Delta_W^{(\gamma)} \big)^{m+1} {G}^{(\gamma)}\,. 
 		\end{equation}
 	\end{subequations}
 	These relations also hold verbatim when replacing $G^{(\gamma)} \to G^{(\gamma-1)}$
	 and $\Delta_W^{(\gamma)} \to \Delta_V^{(\gamma)}$. \qed
 \end{lemma}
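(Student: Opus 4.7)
The proof is a short, entirely algebraic exercise, so I would keep the plan very compact. The starting point is the standard second resolvent identity: whenever $A, B$ are invertible, $A^{-1} - B^{-1} = A^{-1}(B-A) B^{-1}$. Applied to $A = \widecheck{H}^{(\gamma)} - z_j$ and $B = H^{(\gamma)} - z_j$, with $B - A = N^{-1/2} \Delta_W^{(\gamma)}$ by the very definition of $\widecheck{H}^{(\gamma)}$ (cf.~the text around \eqref{GG}), this yields the two one-step identities
\begin{equation*}
\widecheck{G}^{(\gamma)} \;=\; G^{(\gamma)} \;+\; N^{-1/2}\, G^{(\gamma)} \Delta_W^{(\gamma)} \widecheck{G}^{(\gamma)},
\qquad
G^{(\gamma)} \;=\; \widecheck{G}^{(\gamma)} \;-\; N^{-1/2}\, \widecheck{G}^{(\gamma)} \Delta_W^{(\gamma)} G^{(\gamma)}.
\end{equation*}
These are just the two ways of writing the same resolvent identity depending on which factor is left on the right, and at this point the lemma is effectively proved.

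From here, the two finite expansions \eqref{eq:forwardexp} and \eqref{eq:backwardexp} follow by a straightforward induction on $m$: substitute the first (resp.\ second) one-step identity into its own right-most $\widecheck{G}^{(\gamma)}$ (resp.\ $G^{(\gamma)}$) factor $m$ times in succession. Each substitution peels off one explicit $(G^{(\gamma)} \Delta_W^{(\gamma)})$ factor (with a $-1$ for the backward version, accounting for the alternating sign), and produces a remainder in which the chain has grown by exactly one block; after $m$ steps the remainder is precisely the last term displayed in the statement. The claim for the pair $(G^{(\gamma-1)}, \Delta_V^{(\gamma)})$ is proved by the identical argument, simply applying the second resolvent identity to $A = \widecheck{H}^{(\gamma)} - z_j$ and $B = H^{(\gamma-1)} - z_j$, which differ by $N^{-1/2}\Delta_V^{(\gamma)}$. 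There is no analytical obstacle here; the only thing to watch is bookkeeping of the signs (trivial) and the exponent $-\ell/2$ of $N$ produced by each substitution. Consequently, I would present the proof as a one-line derivation of the base case together with an explicit induction step, and omit further details.
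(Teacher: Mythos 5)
Your proof is correct and is precisely the standard argument the authors have in mind: the paper gives no proof at all (the lemma ends with \qed immediately), tacitly treating the finite resolvent expansion as a routine consequence of the second resolvent identity $A^{-1}-B^{-1}=B^{-1}(B-A)A^{-1}$ together with $m$-fold self-substitution. Your derivation of the two one-step identities from $H^{(\gamma)}=\widecheck{H}^{(\gamma)}+N^{-1/2}\Delta_W^{(\gamma)}$ and the subsequent induction on $m$ fill in exactly that omitted bookkeeping, and the observation that the same computation applies verbatim to the pair $(G^{(\gamma-1)},\Delta_V^{(\gamma)})$ is also correct.
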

 
 We now expand each $G^{(\gamma)}$ in 
 \begin{equation} \label{eq:gamma}
 	\big|\Psi_k^{(\gamma)}(\bm x, \bm y)\big|^p = \left(\frac{N \eta}{\rho}\right)^{p/2} N^{-pk/2}  \big| \big( (G^{(\gamma)} A)^k G^{(\gamma)} - M_{[1,k+1]}\big)_{\bm x \bm y} \big|^p
 \end{equation}
 and each $G^{(\gamma-1)}$ in 
 \begin{equation} \label{eq:gamma-1}
 	\big|\Psi_k^{(\gamma-1)}(\bm x, \bm y)\big|^p = \left(\frac{N \eta}{\rho}\right)^{p/2} N^{-pk/2}  \big| \big( (G^{(\gamma-1)} A)^k G^{(\gamma-1)} - M_{[1,k+1]}\big)_{\bm x \bm y} \big|^p
 \end{equation}
 according to \eqref{eq:backwardexp} (for some $m \ge 4$ to be determined below, depending on $p$ and $k$; see \eqref{eq:truncation}) and sort the resulting terms by their power $r = 0,1,2,...$ of $N^{-1/2}$.  
 Then we take the expectation with respect to $w_{ij}$ and $v_{ij}$, respectively (recall that $\phi\big((i,j)\big) = \gamma$),
 and  use the moment matching condition \eqref{eq:momentmatch}.
  As a result, we find that the terms with a prefactor $N^{-r/2}$ for $r = 0, 1,2,3$ are algebraically \emph{exactly the same} for both \eqref{eq:gamma} and~\eqref{eq:gamma-1}.  The conclusion of this argument
  is formalized in the following lemma. 
 \begin{lemma} \label{lem:firstthree}  For any fixed $(i,j) \in [N]^2$ with $i \le j$ and  $\gamma=\phi(i,j)$  we have that
 	\begin{align} \label{eq:gammaexp}
 		\E_{w_{ij}} \big|\Psi_k^{(\gamma)}(\bm x, \bm y)\big|^p = \sum_{r= 0}^3 N^{-r/2} \alpha_{k, r}^{(\gamma)}(\bm x, \bm y)\big|\widecheck{\Psi}_k^{(\gamma)}(\bm x, \bm y)\big|^{p-r} + \text{\rm higher order terms} \\[2mm] \label{eq:gamma-1exp}
 		\E_{v_{ij}} \big|\Psi_k^{(\gamma-1)}(\bm x, \bm y)\big|^p = \sum_{r= 0}^3 N^{-r/2} \alpha_{k, r}^{(\gamma)}(\bm x, \bm y)\big|\widecheck{\Psi}_k^{(\gamma)}(\bm x, \bm y)\big|^{p-r} + \text{\rm higher order terms}
 	\end{align}
 	for some \emph{identical} coefficients $\alpha_{k, r}^{(\gamma)}(\bm x, \bm y)$ independent of $v_{ij}$ and $w_{ij}$
 	whose precise values are (mostly) irrelevant. Here  "higher order terms" denote terms with prefactor $N^{-r/2}$ 
	with $r\ge 4$.  
 \end{lemma}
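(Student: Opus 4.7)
The plan is to substitute the truncated resolvent expansion \eqref{eq:backwardexp} of Lemma \ref{lem:resolexp} for each of the $k+1$ factors of $G^{(\gamma)}$ appearing in $\Psi_k^{(\gamma)}(\bm x, \bm y)$ and, identically, for each of the $k+1$ factors of $G^{(\gamma-1)}$ in $\Psi_k^{(\gamma-1)}(\bm x, \bm y)$, using the same truncation order $m \ge 4$ (to be fixed in \eqref{eq:truncation}). The key structural observation is that $\widecheck{H}^{(\gamma)}$ has zero matrix elements at positions $(i,j)$ and $(j,i)$, hence $\widecheck{G}^{(\gamma)}$ is independent of both $w_{ij}$ and $v_{ij}$; conversely, the perturbation matrices $\Delta_W^{(\gamma)}$ and $\Delta_V^{(\gamma)}$ each contain $w_{ij}, \bar{w}_{ij}$ respectively $v_{ij}, \bar{v}_{ij}$ as their only non-zero entries (up to the factor $1+\delta_{ij}$ in \eqref{eq:Deltas}). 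Since the deterministic subtraction $M_{[1,k+1]}$ is determined by the first two moments of the entries, which are preserved along the replacement as explained before \eqref{eq:Psikav}, it is the same $M$ for both chains, up to a negligible $O(N^{-1})$ self-energy mismatch that we fold into the ``higher order terms''.

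Collecting the outcome of the expansion, both $\Psi_k^{(\gamma)} - \widecheck{\Psi}_k^{(\gamma)}$ and $\Psi_k^{(\gamma-1)} - \widecheck{\Psi}_k^{(\gamma)}$ take the form $\sum_{\ell \ge 1} N^{-\ell/2} Q_\ell^{(\gamma)}(\,\cdot\,, \bar{\,\cdot\,})$, where $Q_\ell^{(\gamma)}(X, \bar X)$ is a homogeneous polynomial of total degree $\ell$ in $(X, \bar X)$ whose coefficients are isotropic quantities built from $\widecheck{G}^{(\gamma)}$, $\bm x, \bm y$ and the $A_i$'s, and are \emph{the same} for both lines, being independent of $w_{ij}$ and $v_{ij}$. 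Raising to the $p$-th absolute power (without loss of generality $p$ is even; the odd case is absorbed by enlarging $p$ by one and applying Jensen) and expanding the product $(\widecheck{\Psi}_k^{(\gamma)} + \delta)^{p/2}\overline{(\widecheck{\Psi}_k^{(\gamma)} + \delta)}{}^{p/2}$, one organizes the result by the total $N^{-1/2}$-degree coming from $\delta$ and $\bar\delta$, which yields
\begin{equation*}
	|\Psi_k^{(\gamma)}(\bm x, \bm y)|^p = \sum_{r \ge 0} N^{-r/2} \, \widetilde\alpha_{k,r}^{(\gamma)}(w_{ij}, \bar w_{ij}) \, |\widecheck{\Psi}_k^{(\gamma)}(\bm x, \bm y)|^{p-r},
\end{equation*}
and the same identity with $w$'s replaced by $v$'s, the polynomial $\widetilde\alpha_{k,r}^{(\gamma)}$ being of degree at most $r$ in its two arguments with coefficients that do not depend on either differing entry. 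Taking the expectation over the single differing entry and invoking the moment matching \eqref{eq:momentmatch} of joint moments up to order three, the resulting coefficients $\alpha_{k,r}^{(\gamma)}(\bm x, \bm y) := \E_{w_{ij}} \widetilde\alpha_{k,r}^{(\gamma)} = \E_{v_{ij}} \widetilde\alpha_{k,r}^{(\gamma)}$ coincide for every $r \in \{0,1,2,3\}$, which is precisely the identity \eqref{eq:gammaexp}--\eqref{eq:gamma-1exp} to be proved.

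The only somewhat delicate point is to confirm that the ``higher order terms'' (those with $r \ge 4$, together with the truncation remainder of \eqref{eq:backwardexp}) can indeed be swept aside in a manner compatible with the subsequent Gronwall argument. For this, one fixes $m$ large enough, depending on $p$ and $k$, so that the product of a single $N^{-(m+1)/2}$ factor with the crude deterministic bound $\Vert G \Vert^{(k+1)p} \le \eta^{-(k+1)p} \le N^{(k+1)p}$ and the finite moments of the entries guaranteed by Assumption \ref{ass:entries} produces a contribution of order $N^{-D}$ for any prescribed $D$. This is standard Lindeberg-style bookkeeping, and the principal mechanism driving the lemma is the algebraic matching of the first four Taylor coefficients of the entry-distribution moments, which is the content of \eqref{eq:momentmatch}.
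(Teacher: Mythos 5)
Your argument reproduces the paper's reasoning: expand both chains via the truncated resolvent expansion of Lemma~\ref{lem:resolexp}, use that $\widecheck{G}^{(\gamma)}$ is independent of the differing entry so that the $w_{ij}$- (resp.~$v_{ij}$-) dependence enters only polynomially through the $\Delta$'s, sort by the power of $N^{-1/2}$, and invoke the third-order moment matching \eqref{eq:momentmatch} to identify the $r\le3$ coefficients, with the truncation order $m$ chosen large enough depending on $p,k$. This is precisely the paper's (quite terse) argument, so the proposal is correct and matches the intended proof; note only that your displayed intermediate identity is, like the lemma statement itself, shorthand — at order $r$ the true expansion carries factors $|\widecheck\Psi_k|^{p-d}$ for all $1\le d\le r$, not just $d=r$, as the paper's own treatment in \eqref{eq:fourthorder} makes explicit.
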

 In the following Sections \ref{subsec:fourthorder}--\ref{subsec:truncation}, preparing the conclusion of the proof of Proposition \ref{prop:gronwall} in Section~\ref{sec:Propgron}, we will 
 discuss the higher order terms in \eqref{eq:gammaexp} and \eqref{eq:gamma-1exp}. These have to be estimated  individually  by size when we will consider the difference
 of \eqref{eq:gammaexp} and \eqref{eq:gamma-1exp}.
 Recall that, we will eventually compare $\Psi_k^{(0)}(\bm x, \bm y)$ and $\Psi_k^{(\gamma(N))}(\bm x, \bm y)$ in $\gamma(N) = O(N^2)$ many steps, which is why the higher order terms must all be bounded by $1/N^2$, roughly said. More precisely, we will use the following telescopic summation: For every $\gamma_0 \in [\gamma(N)]$, it holds that 
 \begin{equation} \label{eq:telescope}
 	\left| \Vert \Psi_k^{(\gamma_0)}(\bm x, \bm y) \Vert_p^p - \Vert \Psi_k^{(0)}(\bm x, \bm y) \Vert_p^p \right| \le \sum_{1 \le \gamma \le  \gamma_0}	\left| \Vert \Psi_k^{(\gamma)}(\bm x, \bm y) \Vert_p^p - \Vert \Psi_k^{(\gamma-1)}(\bm x, \bm y) \Vert_p^p \right|\,. 
 \end{equation}

In the next Section~\ref{subsec:fourthorder},
we will explain the term with $r=4$ in Lemma \ref{lem:firstthree}, i.e.~with $N^{-2}$-prefactor, in detail. 
 All other higher order terms with $r\ge 5$  but still involving only the resolvent $\widecheck{G}^{(\gamma)}$ are completely analogous, in fact easier (see Section \ref{subsec:lowerorder} later for some detail). 
 Afterwards, in Section \ref{subsec:truncation}, we will discuss, how the maximal order $m$ of the resolvent expansion \eqref{eq:backwardexp} has to be chosen in order to accommodate the remainder term involving a non-checked resolvent $G^{(\gamma)}$ (resp.~$G^{(\gamma-1)}$). 
  
  Throughout the following argument we shall focus on the higher order terms in \eqref{eq:gammaexp}, the treatment of \eqref{eq:gamma-1exp} is exactly the~same. Whenever it does not lead to confusion, we shall henceforth drop the superscript $\gamma$.

 \subsubsection{Fourth order terms in Lemma \ref{lem:firstthree}} \label{subsec:fourthorder}
The goal of the current Section \ref{subsec:fourthorder} is to show that the terms of order $r=4$ arising in the telescopic summation \eqref{eq:telescope} can be bounded by the rhs.~of \eqref{eq:gronwall}.

 In the following, we denote (cf.~\eqref{eq:Deltas})
 \begin{equation} \label{eq:Delta}
 	\Delta = \Delta^{(\gamma)} = \frac{E^{(ij)} + E^{(ji)}}{1 + \delta_{ij}}
 \end{equation}
 and find, similarly to \eqref{eq:Deltas}, after taking the full expectation, the $r=4$ (i.e.~$1/N^{2}$) prefactor of the higher order terms in \eqref{eq:gammaexp} to be bounded by (a constant times)
 \begin{equation} \label{eq:fourthorder}
 	\E \sum_{d=1}^{4 \wedge p} \big|\widecheck{\Psi}_k(\bm x, \bm y)\big|^{p-d} \left(\frac{N \eta}{\rho}\right)^{d/2}  N^{-dk/2}  \sum_{4 \Delta \,  \leadsto \, d } \big|  \underbrace{\big(... \Delta... \Delta ...  \big)_{\bm{x} \bm{y}}\ldots \big(... \Delta... \big)_{\bm{x} \bm{y}}}_{\text{four} \,  \Delta\; \text{in a total} \; d \, \text{chains}} \big|\,. 
 \end{equation}
 Here $d$ counts the number of formerly "intact" resolvent chains $\big( (\widecheck{G} A)^k \widecheck{G} \big)_{\bm x \bm y}$, which have been `destroyed' by at least one replacement $\widecheck{G} \to \widecheck{G} \Delta \widecheck{G}$ due to the expansion \eqref{eq:backwardexp}. The symbol 
 \begin{equation} \label{eq:summation}
 	\sum_{4 \Delta \,  \leadsto \, d }
 \end{equation}
 indicates that we sum over all possibilities to destroy exactly $d$ chains by four $\Delta$'s. 
 Note that a chain may be "destroyed" by more than one $\Delta$, therefore $d$ may
 be less than four. After using the explicit form of $\Delta$, altogether we arrive at 
 a finite sum of $4+d$ chains. 
 
 \begin{example}
 	For example, for $d=1$ we have that 
 	\begin{equation} \label{eq:d=1example}
 		\begin{split}
 			&\sum_{4 \Delta \,  \leadsto \, 1 } \big|  \big(... \Delta... \Delta...\Delta...\Delta... \big)_{\bm{x} \bm{y}}\big| \\
 			&= \sum_{\substack{k_1, ... , k_5 \ge 0 : \\
 					\sum_{l} k_l = k}}\big| \big( (\widecheck{G} A)^{k_1} \widecheck{G} \Delta (\widecheck{G} A)^{k_2} \widecheck{G} \Delta (\widecheck{G} A)^{k_3} \widecheck{G} \Delta (\widecheck{G} A)^{k_4} \widecheck{G} \Delta (\widecheck{G} A)^{k_5} \widecheck{G}\big)_{\bm x \bm y} \big| \\
 			&= \sum_{\substack{k_1, ... , k_5 \ge 0 : \\
 					\sum_{l} k_l = k}}\left[\big| \big( (\widecheck{G} A)^{k_1} \widecheck{G}\big)_{\bm x \bm{e}_i}  \big( (\widecheck{G} A)^{k_2} \widecheck{G}\big)_{\bm e_j \bm{e}_j} \big( (\widecheck{G} A)^{k_3} \widecheck{G}\big)_{\bm e_i \bm{e}_i} \big( (\widecheck{G} A)^{k_4} \widecheck{G}\big)_{\bm e_j \bm{e}_j} \big( (\widecheck{G} A)^{k_5} \widecheck{G}\big)_{\bm e_i \bm{y}}  \big| + ... \right]
 		\end{split}
 	\end{equation}
 	with the neglected summands being analogous, only having different distributions of $\bm e_i$ and $\bm e_j$ occurring, which can be produced by the structure of $\Delta$. 
 \end{example}

 For general $d$, in each of the $4+d$ resolvent chains
  in the rhs.~of \eqref{eq:fourthorder}, we now add and subtract the corresponding deterministic $M$-term, $(\widecheck{G}A)^k \widecheck{G}  =  ((\widecheck{G}A)^k \widecheck{G}-M_{k+1}) + M_{k+1}$ (see also \eqref{eq:Mshorthand} below), schematically written as
 $G = (G-M) + M$. 
 In the sequel, we will distinguish the following two complementary cases: 
 \begin{itemize}
 	\item[Case (i):] At least $d$ of the $d+4$ resolvent chains are replaced by their fluctuating part, $G-M$.
 	\item[Case (ii):] At least five of the $d+4$ resolvent chains are replaced by their deterministic counterpart,  $M$. 
 \end{itemize}
 \vspace{2mm}
 \underline{Case (i):} In case (i), we first separate those possibilities from \eqref{eq:summation}, where the destruction of the $d$ chains $\big( (\widecheck{G} A)^k \widecheck{G}\big)_{\bm x \bm y}$ in fact \emph{preserves} $d$ resolvent chains each with $k$ traceless matrices $A$, but with deterministic vectors, which are not $\bm x$ and $\bm{y}$.
This happens when all four $\Delta$'s are placed at the ends of the chains. 
  For example, if $d=1$, we separate these possibilities as 
 \begin{equation} \label{eq:sustainexample}
 	\begin{split}
 		\widecheck{G}_{\bm x \bm e_i} \widecheck{G}_{\bm e_j \bm e_j} \widecheck{G}_{\bm e_i  \bm e_i} \widecheck{G}_{\bm e_j \bm e_j}  \big( (\widecheck{G} A)^k \widecheck{G} \big)_{\bm e_i \bm y} &+ ... \quad \text{or} \\
 		\widecheck{G}_{\bm x  \bm e_i} \widecheck{G}_{\bm e_j  \bm e_j}  \big( (\widecheck{G} A)^k \widecheck{G} \big)_{\bm e_i \bm e_i}  \widecheck{G}_{\bm e_j \bm e_j}  \widecheck{G}_{\bm e_i \bm y}&+ ... \,. 
 	\end{split}
 \end{equation}
 
 In the following, we shall focus on the first exemplary term in \eqref{eq:sustainexample}. Its fluctuating part 
 \begin{equation} \label{eq:G-Mfull}
 	\big( (\widecheck{G} A)^k \widecheck{G} -M_{[1,k+1]}\big)_{\bm e_i \bm y}
 \end{equation}
 can then be paired with the leftover $\big(N \eta /\rho\big)^{1/2} N^{-k/2} $ in  \eqref{eq:fourthorder} and thereby produces a further full $ \big|\widecheck{\Psi}_k^{(\gamma)}(\bm e_j, \bm y)\big|$; the remaining terms coming from a single resolvent in \eqref{eq:sustainexample} are simply estimated by one, 
 \begin{equation} \label{eq:Gbdd1GFT}
|\widecheck{G}_{\bm  u \bm v}|\prec 1\,, \qquad  \bm  u, \bm v \in I_{\bm{x}\bm{y}}   \quad \mbox{cf.~\eqref{eq:Ixy}}\,, 
 \end{equation}
 by the usual isotropic law \eqref{eq:singleGoptimal}. 
 All these terms stemming from \eqref{eq:fourthorder} and constituting a full $ \big|\widecheck{\Psi}_k^{(\gamma)}\big|$ (or $ \big|\widecheck{\Psi}_k^{(\gamma)}\big|^d$ for general $d \in [4 \wedge p]$)
 can then be estimated by 
 \begin{equation} \label{eq:vectormax}
 	\widecheck{\Omega}_k^p(\gamma) := \max_{\bm u, \bm v \in I_{\bm x \bm y}} \Vert \widecheck{\Psi}_k^{(\gamma)}(\bm u, \bm v) \Vert_p^p\,. 
 \end{equation}
 
 Now, after having separated the possibilities from \eqref{eq:summation}, where the destruction preserves $d$ resolvent chains with $k$ deterministic matrices in between, we are left with those which solely create \emph{strictly shorter} chains by the procedure $4 \Delta \leadsto d$. These terms can entirely be treated by our \emph{induction hypothesis} \eqref{eq:inductionhypo}: The power of $\widecheck{\Psi}_k^{(\gamma)}$ has been reduced by (at least) one (cf.~the second estimate in \eqref{eq:inductionhypo}) and $ \widecheck{\Psi}_{k'}^{(\gamma)} + \Psi_{k'}^{(\gamma)} \prec 1$ uniformly in $\gamma$ for \emph{strictly} shorter chains, $k' < k$, has already been shown
 (first estimate in \eqref{eq:inductionhypo}). 
 
 \begin{example}
 	Writing
 	\begin{equation} \label{eq:Mshorthand}
 		M_{j-i+1} \equiv M_{[i,j]} \quad \text{for} \quad 1 \le i < j \le k+1\,, 
 	\end{equation}
 	with a slight abuse of notation, we estimate the $d=1$ term in \eqref{eq:fourthorder} (after having split off the 
 	cases when one of the $k_l$'s equals $k$ and all others are zero in \eqref{eq:sustainexample}) as 
 	\begin{equation} \label{eq:d=1exampleCase1}
 		\begin{split}
 			\E & \big|\widecheck{\Psi}_k(\bm x, \bm y)\big|^{p-1} \left(\frac{N \eta}{\rho}\right)^{1/2} N^{-k/2}\times \\
 			& \ \times \sum_{\substack{0\le k_l \le k-1 : \\
 					\sum_{l} k_l = k}}\Big[\big| \big( (\widecheck{G} A)^{k_1} \widecheck{G} - M_{k_1+1}\big)_{\bm x \bm{e}_i}  \big(M_{k_2+1}\big)_{\bm e_j \bm{e}_j} \big( M_{k_3+1}\big)_{\bm e_i \bm{e}_i} \big( M_{k_4+1}\big)_{\bm e_j \bm{e}_j} \big( M_{k_5+1}\big)_{\bm e_i \bm{y}}  \big|  \\
 			& \quad  + \big| \big( (\widecheck{G} A)^{k_1} \widecheck{G} - M_{k_1+1}\big)_{\bm x \bm{e}_i}  \big((\widecheck{G} A)^{k_2} \widecheck{G} -M_{k_2+1}\big)_{\bm e_j \bm{e}_j} \big( M_{k_3+1}\big)_{\bm e_i \bm{e}_i} \big( M_{k_4+1}\big)_{\bm e_j \bm{e}_j} \big( M_{k_5+1}\big)_{\bm e_i \bm{y}}  \big| + 	... \Big] \\
 			\lesssim \, &N^\xi  \left(\frac{N \eta}{\rho}\right)^{1/2} N^{-k/2}\sum_{\substack{0\le k_l \le k-1 : \\
 					\sum_{l} k_l = k}} \left[\left( \frac{\rho}{N \eta} \right)^{1/2} N^{\sum_l k_l/2}+ \left( \frac{\rho}{N \eta} \right) N^{\sum_l k_l/2} + ... \right]\lesssim N^\xi\,, 
 		\end{split}
 	\end{equation}
 	where analogous summands (i.e.~having further $G-M$ factors instead of $M$, or other arrangements of standard basis vectors $\bm e_i, \bm e_j$ stemming from \eqref{eq:Delta}) are again indicated by dots. In the first estimate, we used that $\big| (M_{j+1})_{\bm u \bm v} \big| \lesssim N^{j/2}$ for all  $\bm u, \bm v \in I_{x,y}$ from Lemma \ref{lem:Mbound}~(b) together with the induction hypothesis \eqref{eq:inductionhypo}. 
 \end{example}

  In the general case, $d \ge 1$, the argument works analogously to the above example: The minimal number of $d$ fluctuating terms carrying an $(\rho/ N \eta)^{1/2}$-factor cancel the leftover $(N\eta/\rho)^{d/2}$-factor in \eqref{eq:fourthorder}. The remaining $N^{k_l/2}$-factors can then be handled by a simple power counting. 
 
 Overall, we find that, all the terms in \eqref{eq:fourthorder} summarized in Case (i), can be bounded by
 \begin{equation} \label{eq:fluctuationfinal}
 	C_1 \widecheck{\Omega}_k^p(\gamma) + C_2 N^\xi 
 \end{equation}
 for some positive constants $C_1, C_2 > 0$, which shall henceforth be used generically, 
 i.e.~their value might change from line to line (but remain uniformly bounded in $\gamma$).
 
 \vskip2mm
  \underline{Case (ii):} For the second case, we recall that all the purely deterministic terms are \emph{independent} of $\gamma$, i.e., as emphasized above, at each replacement step the deterministic approximation to a resolvent chain is the same. However, it is \emph{not} sufficient to just estimate every $M$-term blindly via 
 $\big| (M_{j+1})_{\bm u \bm v} \big| \lesssim N^{j/2}$, as done in \eqref{eq:d=1exampleCase1}. Instead, we need to \emph{gain from the summation} in \eqref{eq:telescope} over all replacement positions. 
  This is the main new element of our proof compared with previous GFT arguments. 
 
 \begin{example}
 	We again look at our $d=1$ example. Using the notation \eqref{eq:Mshorthand}, we find the \emph{trivial estimate} 
 	\begin{equation} \label{eq:d=1exampleCase2}
 		\begin{split}
 			&\E \big|\widecheck{\Psi}_k(\bm x, \bm y)\big|^{p-1} \left(\frac{N \eta}{\rho}\right)^{1/2} N^{-k/2} \sum_{\substack{0\le k_l \le k: \\
 					\sum_{l} k_l = k}}\left[\big| \big( M_{k_1+1}\big)_{\bm x \bm{e}_i}  \big(M_{k_2+1}\big)_{\bm e_j \bm{e}_j} \big( M_{k_3+1}\big)_{\bm e_i \bm{e}_i} \big( M_{k_4+1}\big)_{\bm e_j \bm{e}_j} \big( M_{k_5+1}\big)_{\bm e_i \bm{y}}  \big| + ... \right] \\
 			\lesssim \, &N^\xi  \left(\frac{N \eta}{\rho}\right)^{1/2} N^{-k/2} \sum_{\substack{0\le k_l \le k: \\
 					\sum_{l} k_l = k}} \left[N^{\sum_l k_l/2}+ ... \right]\lesssim N^\xi \left(\frac{N \eta}{\rho}\right)^{1/2}\,, 
 		\end{split}
 	\end{equation}
 	where we again used the induction hypothesis \eqref{eq:inductionhypo} and $\big| (M_{j+1})_{\bm u \bm v} \big| \lesssim N^{j/2}$. This bound is off by a factor $(N \eta/\rho)^{1/2}$, which we will now improve on. 
 	
 	Indeed, the point in \emph{gaining from the summation} is that, although at each individual step $\gamma$, the deterministic terms in \eqref{eq:d=1exampleCase2} might be large, \emph{on average}  over $\gamma$ 
 	their contribution is bounded. More precisely, fixing one constellation of $k_l$'s  in \eqref{eq:d=1exampleCase2} and using $\E \big|\widecheck{\Psi}_k\big|^{p-1} \lesssim N^\xi$ , we find the average of the first line in \eqref{eq:d=1exampleCase2} over all $i,j \in [N]$ to be bounded by (a constant times)
 	\begin{equation} \label{eq:d=1exampleCase2SUM}
 		\begin{split}
 			&N^\xi \left(\frac{N \eta}{\rho}\right)^{1/2} N^{-k/2} \frac{1}{N^2} \sum_{i,j } \left[\big| \big( M_{k_1+1}\big)_{\bm x \bm{e}_i}  \big(M_{k_2+1}\big)_{\bm e_j \bm{e}_j} \big( M_{k_3+1}\big)_{\bm e_i \bm{e}_i} \big( M_{k_4+1}\big)_{\bm e_j \bm{e}_j} \big( M_{k_5+1}\big)_{\bm e_i \bm{y}}  \big| + ... \right] \\
 			\lesssim \, &N^\xi \left(\frac{N \eta}{\rho}\right)^{1/2}  \frac{1}{N^2} \sum_{i,j } \left[ \frac{\big| (M_{k_1+1})_{\bm x \bm e_i} \big|}{N^{k_1/2}}  + ... \right] \\
 			\lesssim \, &N^\xi \left(\frac{N \eta}{\rho}\right)^{1/2}  \frac{1}{N} \sqrt{N} \left[ \frac{\sqrt{\big(|M_{k_1+1}|^2 \big)_{\bm x \bm x} } }{N^{k_1/2}}+ ... \right] 
 			\lesssim \, N^\xi \left(\frac{\eta}{\rho}\right)^{1/2} \lesssim N^\xi\,. 
 		\end{split}
 	\end{equation}
 	To go from the first to the second line, we used $\big| (M_{j+1})_{\bm u \bm v} \big| \lesssim N^{j/2}$ for all but  the first $M$ factor.
	Next, we used a Schwarz inequality for the $i$-summation, 
	which involves the off-diagonal term $(M_{k_1+1})_{\bm x \bm e_i}$:
 	\begin{equation} \label{eq:schwarzfirst}
 		\sum_i \big| (M_{k_1+1})_{\bm x \bm e_i} \big|  \le \sqrt{N} \left(\sum_i \big| (M_{k_1+1})_{\bm x \bm e_i} \big|^2 \right)^{1/2} \le \sqrt{N}\sqrt{ \big(| M_{k_1+1}|^2 \big)_{\bm x \bm x} }\,. 
 	\end{equation}
 	In the penultimate estimate, we used that 
 	\begin{equation} \label{eq:M^2estimate}
 		\sqrt{\big(|M_{j+1}|^2 \big)_{\bm u \bm u} } \lesssim N^{j/2}\,,
 	\end{equation}
 	as follows from the fact that $N^{j/2}$ is in fact the operator norm bound for $M_{j+1}$, and the final estimate in \eqref{eq:d=1exampleCase2SUM} simply used the general fact $\eta/\rho \lesssim 1$.

 	We point out that we even could have gained another $1/\sqrt{N}$-factor from the $i$-summation by not estimating $\big( M_{k_5+1}\big)_{\bm e_i \bm{y}} $ trivially by $N^{k_5/2}$ but using 
 	\begin{equation} \label{eq:schwarzfirst2}
 		\begin{split}
 			\sum_i \big| (M_{k_1+1})_{\bm x \bm e_i} \big( M_{k_5+1}\big)_{\bm e_i \bm{y}}\big| & \le \left(\sum_i \big| (M_{k_1+1})_{\bm x \bm e_i} \big|^2 \right)^{1/2} \left(\sum_i \big| (M_{k_5+1})_{\bm e_i \bm y} \big|^2 \right)^{1/2} \\
 			&\le \sqrt{ \big(| M_{k_1+1}|^2 \big)_{\bm x \bm x} } \sqrt{ \big(| M_{k_5+1}|^2 \big)_{\bm y \bm y}} \,. 
 		\end{split}
 	\end{equation}
 	instead of \eqref{eq:schwarzfirst}. However, we do not need this additional factor $1/\sqrt{N}$ here. 
 	Finally, note that the $j$-summation in \eqref{eq:d=1exampleCase2SUM} would have been useless, since the $j$-terms are diagonal. The summation gain is effective only for off-diagonal terms as in~\eqref{eq:schwarzfirst}.
 \end{example}
 The above example indicates the following general mechanism: After estimating all the $G-M$-type terms with the aid of the induction hypothesis \eqref{eq:inductionhypo}, and estimating the $M$-factors just trivially by their
 size, we are left with an  excess  $(N \eta/\rho)^{u/2}$-factor, for some $u \in [4]$. In order to remove this leftover factor, we need at least $u$ \emph{(collectively) summable bounded $M$-terms} like 
 \begin{equation} \label{eq:Msummable}
 	\frac{\big| (M_{k_1+1})_{\bm x \bm e_i} \big|}{N^{k_1/2}}
 \end{equation}
 in \eqref{eq:d=1exampleCase2SUM} (see also \eqref{eq:M^2estimate}). In fact, each of these collectively summable factors will gain one $1/\sqrt{N}$ compared to the trivial estimate, like the one in \eqref{eq:d=1exampleCase2}. Here, the notion "collective" refers to particular index structures, which allow an effective summation. Denoting terms like \eqref{eq:Msummable} symbolically by $M_{\bm x \bm e_i}$ for brevity, by \emph{(collectively) summable bounded $M$-terms} we mean the following possible index structures
 \begin{equation} \label{eq:Msums}
 	\begin{split}
 		u=1 \, :& \qquad \sum_{i,j} |M_{\bm x\bm e_i}| \quad \text{or} \quad  \sum_{i,j}  |M_{\bm e_j \bm y}| \quad \text{or} \quad ...\\
 		u=2 \, :& \qquad \sum_{i,j} |M_{\bm x\bm e_i}| |M_{\bm e_j \bm y}|\quad \text{or} \quad  \sum_{i,j} |M_{\bm x\bm e_i}| |M_{\bm e_i \bm y}| \quad \text{or} \quad ...\\
 		u=3\, :& \qquad \sum_{i,j} |M_{\bm x\bm e_i}|  |M_{\bm e_i \bm y}||M_{\bm e_j \bm y}|\quad \text{or} \quad  \sum_{i,j} |M_{\bm x\bm e_i}| |M_{\bm e_j \bm y}|^2 \quad \text{or} \quad ... \\
 		u=4\, :& \qquad \sum_{i,j} |M_{\bm x\bm e_i}| |M_{\bm x \bm e_j}| |M_{\bm e_i \bm y}||M_{\bm e_j \bm y}|\quad \text{or} \quad  \sum_{i,j} |M_{\bm x\bm e_i}|^2 |M_{\bm e_j \bm y}|^2 \quad \text{or} \quad ...
 	\end{split}
 \end{equation}
 where dots are always indicating other similar terms, obtained from trivial exchanges $\bm x \leftrightarrow \bm y$ or $i \leftrightarrow j$. 
 
 In principle, every summation over $i$ and $j$ potentially gains a full $1/N$-factor each -- provided that there are enough $M$'s with suitable indices as in  \eqref{eq:Msums}. The existence of $u$ \emph{collectively summable bounded $M$-terms} then ensures that of this potential $1/N^2$-improvement at least
  a $1/N^{u/2}$-gain is effective. 
  More precisely, as an example, for the first column of terms in \eqref{eq:Msums} we have that 
 \begin{equation} \label{eq:MsumsSchwarz} 
 	\begin{split}
 		u=1 \, :& \quad \sum_{i,j} |M_{\bm x\bm e_i}| \le N^{3/2} \left(\sum_i |M_{\bm x\bm e_i}|^2\right)^{1/2}\lesssim N^{2-1/2}\\
 		u=2 \, :& \quad \sum_{i,j} |M_{\bm x\bm e_i}| |M_{\bm e_j \bm y}| \le N \left(\sum_i |M_{\bm x\bm e_i}|^2\right)^{1/2} \left(\sum_j |M_{\bm e_j\bm y}|^2\right)^{1/2}\lesssim N^{2-2/2}\\
 		u=3\, :& \quad \sum_{i,j} |M_{\bm x\bm e_i}|  |M_{\bm e_i \bm y}||M_{\bm e_j \bm y}| \\
 		& \qquad  \quad \le N^{1/2} \left(\sum_i |M_{\bm x\bm e_i}|^2\right)^{1/2} \left(\sum_i |M_{\bm e_i \bm y}|^2\right)^{1/2} \left(\sum_j |M_{\bm e_j\bm y}|^2\right)^{1/2}\lesssim N^{2-3/2} \\
 		u=4\, :& \quad \sum_{i,j} |M_{\bm x\bm e_i}| |M_{\bm x \bm e_j}| |M_{\bm e_i \bm y}||M_{\bm e_j \bm y}| \\
 		&\qquad \quad  \le \left(\sum_i |M_{\bm x\bm e_i}|^2\right)^{1/2} \left(\sum_i |M_{\bm e_i \bm y}|^2\right)^{1/2} \left(\sum_j |M_{\bm e_j\bm y}|^2\right)^{1/2} \left(\sum_j |M_{\bm x \bm e_j}|^2\right)^{1/2}\lesssim N^{2-4/2}
 	\end{split}
 \end{equation}
 by application of Schwarz inequalities like in \eqref{eq:schwarzfirst}--\eqref{eq:schwarzfirst2} and using that $\Vert M \Vert \lesssim 1$. We point out that the $\eta/\rho\le 1$ factor within
  each excess $(N \eta/\rho)^{1/2}$ would not be able to compensate for excess $N$-factors; but 
   the \emph{gains from the summation} are obtained solely on the level of $N$'s.

 It follows from a simple counting \nc argument (or simply by considering all cases directly), that for any $u \in [4]$, we find an appropriately summable index structure within the at least five purely deterministic terms, as in \eqref{eq:Msums}--\eqref{eq:MsumsSchwarz}. Hence, we deduce that
 \begin{equation} \label{eq:fourthorderbound}
 	\eqref{eq:fourthorder} \le C_1 \widecheck{\Omega}_k^p(\gamma) + C_2 N^\xi \bigg( 1 + \sum_{u=1}^{4} \left(\frac{N \eta}{\rho}\right)^{u/2} \left|\big[u \,  \text{sum. bdd.}\, M \text{-terms}\big]^{(\gamma)}_{\bm x, \bm y}\right| \bigg)\,, 
 \end{equation}
 where 
 \begin{equation} \label{eq:Mterms}
 	\big[u\,  \text{sum. bdd.}\, M \text{-terms}\big]^{(\gamma)}_{\bm x, \bm y}
 \end{equation}
 stands symbolically for a product of \emph{$u$ collectively summable bounded deterministic terms}, like \eqref{eq:Msummable}, for which we have just shown the following.
 
  \begin{lemma} \label{lem:sumMs} It holds that
 	\begin{equation} \label{eq:Mestimate}
 		\sum_{\gamma \in [\gamma(N)]}\left|\big[u\,  \text{\rm sum. bdd.}\, M \text{\rm -terms}\big]^{(\gamma)}_{\bm x, \bm y}\right| \lesssim N^{2 -u/2}\,. 
 	\end{equation}
 \end{lemma}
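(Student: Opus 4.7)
The plan is to prove Lemma~\ref{lem:sumMs} by direct computation, applying Cauchy--Schwarz in each summation index and exploiting the Hilbert--Schmidt analogue of the operator-norm bound on $M_{k+1}$ provided by Lemma~\ref{lem:Mbound}~(b). The fundamental input I will use is the identity
\[\sum_{i=1}^N \bigl| (M_{k+1})_{\bm u \bm e_i}\bigr|^2 = \bigl(|M_{k+1}|^2\bigr)_{\bm u \bm u} \lesssim N^k, \qquad \bm u \in I_{\bm x \bm y},\]
which renormalizes each factor $|(M_{k+1})_{\bm u \bm e_i}|/N^{k/2}$ into a bounded ``off-diagonal'' entry whose $\ell^2$-mass in $i$ is of order one. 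Combined with the trivial bound $|(M_{k+1})_{\bm u \bm v}|/N^{k/2} \lesssim 1$ for $\bm u, \bm v \in I_{\bm x \bm y}$, this is essentially everything I need to know about $M$.

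First I would convert $\sum_{\gamma \in [\gamma(N)]}$ into a double sum $\sum_{i,j}$, losing only a constant factor since $\gamma(N) \asymp N^2$ and $\gamma = \phi(i,j)$ is a bijection onto pairs $i\le j$. Then, for each of the finitely many index structures listed in \eqref{eq:Msums}, I would apply Cauchy--Schwarz in $i$ and in $j$ in the canonical way sketched in \eqref{eq:MsumsSchwarz}: if a single factor carries the summation index $i$, then $\sum_i |(M_{k+1})_{\bm u \bm e_i}|/N^{k/2} \le \sqrt{N}\,\bigl(\sum_i |(M_{k+1})_{\bm u \bm e_i}|^2/N^k\bigr)^{1/2} \lesssim \sqrt{N}$, saving a factor $\sqrt{N}$ relative to the trivial bound $N$; if two factors share the same index $i$, say $|(M_{k_1+1})_{\bm u \bm e_i}| \cdot |(M_{k_2+1})_{\bm e_i \bm v}|$, then Cauchy--Schwarz in $i$ directly yields $\sqrt{(|M_{k_1+1}|^2)_{\bm u \bm u}} \sqrt{(|M_{k_2+1}|^2)_{\bm v \bm v}} \lesssim N^{(k_1+k_2)/2}$, saving the full $N$ at once after normalization. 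Performing the analogous estimates on the $j$-sum and multiplying the gains, each of the $u$ summable factors contributes one $\sqrt{N}$ of saving against the $N^2$ that would come from the unrestricted double sum; the total improvement is therefore $N^{u/2}$, giving the advertised $N^{2-u/2}$.

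The main obstacle is purely combinatorial: one must verify that \emph{every} product structure of $u$ collectively summable bounded $M$-terms admits a Cauchy--Schwarz pairing that realizes the full $u/2$-saving. For $u \in \{1,2,3,4\}$ --- the only cases actually needed in the proof of Proposition~\ref{prop:gronwall} --- this is a short case analysis, since the variables $i, j$ can each appear only in $M$-arguments drawn from the four-element list $\{\bm x, \bm y, \bm e_i, \bm e_j\}$, and the number of distinct topologies (up to the obvious symmetries $\bm x \leftrightarrow \bm y$ and $i \leftrightarrow j$) is small. The representative list in \eqref{eq:MsumsSchwarz} already covers all relevant topologies of how the $\bm e_i, \bm e_j$ indices may be distributed among the $M$-factors, and for each arrangement the requisite Schwarz step is immediate from the $L^2$-bound above.
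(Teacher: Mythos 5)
Your proposal is correct and follows essentially the same route as the paper: extend the $\gamma$-sum to a full double sum over $i,j$, normalize each $M$-factor, and apply Cauchy--Schwarz in each summation index using $\sum_i |(M_{k+1})_{\bm u\bm e_i}|^2 \lesssim \|M_{k+1}\|^2 \lesssim N^k$, exactly as in \eqref{eq:schwarzfirst}, \eqref{eq:schwarzfirst2} and \eqref{eq:MsumsSchwarz}. The paper likewise reduces the residual combinatorial check — that every admissible arrangement of $u\in[4]$ summable factors in $\bm x,\bm y,\bm e_i,\bm e_j$ admits a pairing realizing the full $N^{-u/2}$-gain — to a short enumeration, so there is no substantive difference.
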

 Combining \eqref{eq:fourthorderbound} with \eqref{eq:Mestimate}, this concludes the argument for the fourth order terms in \eqref{eq:gammaexp}. 
 
 \subsubsection{Further higher order terms in Lemma \ref{lem:firstthree}} \label{subsec:lowerorder}
 Just as in the previous Section \ref{subsec:fourthorder}, the goal of the current Section \ref{subsec:lowerorder} is to show that the terms of order $r\ge 5$ arising in the telescopic summation \eqref{eq:telescope} can be bounded by the rhs.~of \eqref{eq:gronwall}. 
 
 For these other higher order terms in \eqref{eq:gammaexp} with $r \ge 5$ and involving \emph{only} $\widecheck{G}$ (and not ${G}$), the two cases distinguished above for $r=4$ generalize to the following.

 \begin{itemize}
 	\item[Case (i'):] At least $d$ of the $d+r$ resolvent chains are replaced by their fluctuating part, $G-M$.
 	\item[Case (ii'):] At least $r+1$ of the $d+r$ resolvent chains are replaced by their deterministic counterpart,  $M$. 
 \end{itemize}
 For Case (i'), we separate a $1/N^2$-prefactor and find that the remaining part can be estimated by
 \begin{equation} \label{eq:higherorderboundCase1}
 	C_1 N^{-(r-4)/2}\widecheck{\Omega}_k^p(\gamma) + C_2 N^\xi  N^{-(r-4)/2} \,, 
 \end{equation}
 completely analogously to \eqref{eq:fluctuationfinal}. In fact, we gain an additional $N^{-(r-4)/2}\ll 1$ factor
 in both terms.
 This reflects the idea that more $G-M$ terms are better because their presumed bounds carry 
 a factor $(\rho/ N\eta)^{1/2}$ (encoded in 
 the prefactor $(N\eta/\rho)^{1/2}$ in the definition of $\Psi_k^{\rm iso}$ in \eqref{eq:Psikiso}).

 For Case (ii'), we include the additional $N^{-(r-4)/2}$ (after having separated a $1/N^2$-prefactor) into our counting of the leftover $(N \eta/\rho)^{u/2}$-factor (recall the discussion below \eqref{eq:Msummable}). In this way, we find that the maximal number of such leftover factors is $r-(r-4) = 4$. Hence, for every $u \in [4]$, we find an appropriately summable index structure, completely analogously to \eqref{eq:Msums}, and deduce that (leaving out the separated $1/N^2$-prefactor)
 \begin{equation} \label{eq:higherorderboundfinal}
 	\begin{split}
 		r^{\rm th} \, &\text{order term in} \, \eqref{eq:gammaexp} \\
 		&\le	C_1 N^{-(r-4)/2}\widecheck{\Omega}_k^p(\gamma) + C_2 N^\xi \bigg( N^{-(r-4)/2} + \sum_{u=1}^{4} \left(\frac{N \eta}{\rho}\right)^{u/2} \left|\big[u \,  \text{sum. bdd.}\, M \text{-terms}\big]^{(\gamma)}_{\bm x, \bm y}\right|\bigg)\,, 
 	\end{split}
 \end{equation}
 which can be directly incorporated into \eqref{eq:fourthorderbound} after adjusting the constants. 
 Note that while the contributions form Case (i') improve by larger $r$, the 
 terms from Case (ii') that carry many $M$-factors, do not. 
 
  Combining \eqref{eq:higherorderboundfinal} with \eqref{eq:Mestimate}, this concludes the argument for the higher order terms in \eqref{eq:gammaexp}.

 \subsubsection{Truncation of the resolvent expansion} \label{subsec:truncation}
 It remains to discuss the {\it truncation terms}, which involve \emph{not} only $\widecheck{G}$, but also ${G}$, i.e.~the order $m \in \N$ for the truncation of the resolvent expansion \eqref{eq:backwardexp}. Also here, our goal is to show that the contribution of these terms arising in the telescopic summation \eqref{eq:telescope} can be bounded by the rhs.~of \eqref{eq:gronwall}. After expanding each resolvent in \eqref{eq:gamma} via \eqref{eq:backwardexp}, for every fixed $q \ge 1$, we collect those terms which contain the final summand in \eqref{eq:backwardexp} (the \emph{truncation term}), and hence ${G}$ exactly $q$ times. 
For these terms with $q \ge 1$ fixed, we then proceed as follows: Estimate those chains within the truncation term in which ${G}$ appears trivially by norm, $\Vert G \Vert \le 1/\eta$ (note that there are at most $k+1$ resolvents  in such chains
 and we can afford estimating all of them by $1/\eta$ not just the last one $G$) and use $\Vert A \Vert \le \sqrt{N} \langle |A|^2 \rangle^{1/2}$ (recall that we assumed $\langle |A|^2\rangle^{1/2}=1$ around \eqref{eq:Psikav}--\eqref{eq:Psikiso}), and treat the other factors by our induction hypothesis \eqref{eq:inductionhypo} (resulting in an $N^\xi$ factor). 
 
 In this way, we conclude the estimate 
 \begin{equation} \label{eq:truncation}
 	\big[q\, \text{truncation terms}\big] \lesssim N^\xi  \frac{(N\eta/\rho)^{p/2}}{\big(N^{\frac{m+1}{2}}\big)^q} \left(\frac{N^{k/2}}{\eta^{k+1}} \right)^q =  \frac{N^\xi}{N^{2q}} \frac{1}{N^{p(q-1)/2}} \left(\frac{\eta}{\rho}\right)^{p/2}\frac{1}{(N \eta)^{(k+1)q}} \lesssim \frac{N^\xi}{N^2}
 \end{equation}
 when choosing $m = p+3k+5 $, where in the last step we used that $\eta /\rho \lesssim 1$ and $N \eta \gg 1$. We remark that $(N\eta/\rho)^{p/2}$ in \eqref{eq:truncation} comes from the prefactor of $\Psi_k$, $(N^{\frac{m+1}{2}}\big)^{-q}$ from the cumulant order of the truncation terms 
 and $\big(N^{k/2}/\eta^{k+1}\big)^{q}$ from the trivial bounds.

 \subsubsection{Proof of Proposition~\ref{prop:gronwall} }\label{sec:Propgron} 
 As mentioned above \eqref{eq:Delta}, the treatment of the higher order terms in \eqref{eq:gamma-1exp} is identical to our discussion above. Therefore, summarizing Sections \ref{subsec:fourthorder}--\ref{subsec:truncation}, we have proven the following.
 \begin{lemma} \label{lem:onestep} Fix $p,k \in \N$ and assume that the induction hypothesis \eqref{eq:inductionhypo} holds. Then, for every $\gamma \in [\gamma(N)]$, we have that
 	\begin{equation*}
 		\left| \Vert \Psi_k^{(\gamma)}(\bm x, \bm y) \Vert_p^p - \Vert \Psi_k^{(\gamma-1)}(\bm x, \bm y) \Vert_p^p \right| \le \frac{C_1}{N^2} \widecheck{\Omega}_k^p(\gamma) + C_2 \frac{N^\xi}{N^2} \bigg(1 +  \sum_{u=1}^{4} \left(\frac{N \eta}{\rho}\right)^{u/2}  \left|\big[u\,  \text{\rm sum. bdd.}\, M \text{\rm -terms}\big]^{(\gamma)}_{\bm x, \bm y}\right|\bigg)\,,
 	\end{equation*}
 	where $\big[u \,  \text{\rm sum. bdd.}\, M \text{\rm -terms}\big]^{(\gamma)}_{\bm x, \bm y}$ is understood as explained below \eqref{eq:Mterms}.
 \end{lemma}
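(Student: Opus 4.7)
The plan is to start from the telescoping identity \eqref{eq:telescope}, focus on a single step $\gamma$, and use the resolvent expansion in Lemma \ref{lem:resolexp} to rewrite both $\Psi_k^{(\gamma)}$ and $\Psi_k^{(\gamma-1)}$ as polynomials in the common matrix $\widecheck{G}^{(\gamma)}$ plus a truncation remainder. Taking the expectation with respect to the differing entry $w_{ij}$ (resp.\ $v_{ij}$) at position $(i,j)=\phi^{-1}(\gamma)$, Lemma \ref{lem:firstthree} guarantees that the terms of order $N^{-r/2}$ with $r\in\{0,1,2,3\}$ on the two sides of the telescopic difference are algebraically identical. So the one-step difference is controlled by (i) the $r\ge 4$ terms involving only $\widecheck G$, and (ii) the truncation term containing the original resolvent.

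For the leading remainder (order $r=4$, i.e.\ prefactor $N^{-2}$), I would insert the splitting $\widecheck G A\cdots A\widecheck G=((\widecheck G A)^k\widecheck G - M_{[1,k+1]})+M_{[1,k+1]}$ into each of the $d+4$ resolvent chains created by placing four $\Delta$'s so as to destroy exactly $d\in[4\wedge p]$ of the original chains. Then split into two cases. In Case (i), at least $d$ chains are replaced by their fluctuating part; the $d$ matching prefactors $(N\eta/\rho)^{1/2}$ from the $p$-th moment of $\Psi_k$ are absorbed into $d$ factors of $\widecheck{\Psi}_k$ (each worth $(\rho/N\eta)^{1/2}$), leaving either a full $\widecheck\Omega_k^p(\gamma)$ (when a destruction-free chain appears, so the surviving $\widecheck{\Psi}_k$-factor is at some $\bm u,\bm v\in I_{\bm x\bm y}$) or only strictly shorter chains, which the induction hypothesis \eqref{eq:inductionhypo} estimates by $N^\xi$. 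In Case (ii) at least five chains are replaced by $M$; blindly estimating $|(M_{j+1})_{\bm u\bm v}|\lesssim N^{j/2}$ leaves an excess factor $(N\eta/\rho)^{u/2}$ with $u\in[4]$, to be absorbed by the summation over $\gamma$. Here the key quantitative input is that the off-diagonal entries $(M_{j+1})_{\bm x\bm e_i}$ are bounded only in $\ell^2$ by $\sqrt{(|M_{j+1}|^2)_{\bm x\bm x}}\lesssim N^{j/2}$, so each such factor produces a Schwarz gain of $\sqrt{N}$ upon summing against another index-$i$ (or $j$) factor; a simple case check confirms that for every $u\in[4]$ at least $u$ collectively summable off-diagonal $M$-entries are available, and this yields the announced bound \eqref{eq:Mestimate} after summation over $\gamma$.

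The higher-order terms $r\ge 5$ involving only $\widecheck G$ would be handled by exactly the same two-case split; the extra factor $N^{-(r-4)/2}\le 1$ either improves the Case (i) bound or can be absorbed into the $(N\eta/\rho)^{u/2}$ count in Case (ii), so they all fit into the same final estimate. For the truncation term I would choose the expansion order $m$ large depending on $(p,k)$: a term containing $q\ge 1$ original resolvents $G$ is estimated by the trivial bounds $\|G\|\le 1/\eta$, $\|A\|\le\sqrt{N}\langle|A|^2\rangle^{1/2}$, and the induction hypothesis on the remaining factors; choosing e.g.\ $m=p+3k+5$ yields a contribution $\prec N^\xi/N^2$, as in \eqref{eq:truncation}. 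Summing Cases (i), (ii), the higher orders and the truncation gives the claim.

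The main obstacle is the combinatorial bookkeeping in Case (ii): one must verify, for every pattern of placing four (or $r$) $\Delta$'s so as to destroy $d$ chains, and for every way of declaring at least $r+1$ of the resulting chains to be $M$-chains, that enough off-diagonal $M$-entries with the right distribution of $\bm e_i, \bm e_j$ endpoints are available to saturate the Schwarz bound in all four rows of \eqref{eq:MsumsSchwarz}. The reason this works is that each $\Delta=(E^{(ij)}+E^{(ji)})/(1+\delta_{ij})$ forcibly creates a resolvent boundary with index $\bm e_i$ or $\bm e_j$, so the number of summable indices grows exactly in step with the number of destroyed chains; the delicate point is that when a chain is entirely a $G-M$-chain with external vectors in $I_{\bm x\bm y}\setminus\{\bm e_i,\bm e_j\}$, one loses it for the $\ell^2$-summation mechanism, which is precisely why the splitting into Cases (i) and (ii) is designed to guarantee at least $u$ summable $M$-factors exactly when $u$ excess $(N\eta/\rho)^{1/2}$ factors remain.
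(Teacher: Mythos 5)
Your proposal is correct and follows essentially the same route as the paper: resolvent expansion of both $G^{(\gamma)}$ and $G^{(\gamma-1)}$ into the common $\widecheck G^{(\gamma)}$, moment matching to cancel orders $r\le 3$, the Case~(i)/Case~(ii) split (fluctuating vs.\ deterministic replacements) with the induction hypothesis absorbing shorter chains, the Schwarz-type $\ell^2$-summation gain on off-diagonal $M$-entries to produce the $[u\ \text{sum.\ bdd.}\ M\text{-terms}]$ contribution, the observation that $r\ge 5$ terms carry an extra $N^{-(r-4)/2}$, and the choice $m=p+3k+5$ to kill the truncation remainder. The only cosmetic difference is that you fold the $r\ge 5$ analysis into a single pass rather than treating it as a separate subsection, but the substance — including the recognition that the counting argument for the availability of at least $u$ summable $M$-factors is the delicate combinatorial point — matches the paper.
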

 Next, employing the telescopic summation from \eqref{eq:telescope} we find that
 \begin{equation} \label{eq:telescopic}
 	\Vert \Psi_k^{(\gamma_0)}(\bm x, \bm y) \Vert_p^p \le  C_1 \frac{1}{N^2} \sum_{\gamma < \gamma_0} \widecheck{\Omega}_k^p(\gamma) + C_2 N^\xi + \frac{N^\xi}{N^2} \sum_{\gamma < \gamma_0} \bigg(  \sum_{u=1}^{4} \left(\frac{N \eta}{\rho}\right)^{u/2}  \left|\big[u \,  \text{\rm sum. bdd.}\, M \text{\rm -terms}\big]^{(\gamma)}_{\bm x, \bm y}\right|\bigg)
 \end{equation}
 after having absorbed $\Vert \Psi_k^{(0)}(\bm x, \bm y) \Vert_p^p$ into $C_2 N^\xi$ by our initial assumption that we have multi-resolvent local laws \eqref{eq:multiG} for the Wigner matrix $H^{(\bf v)} = H^{(0)}$. We are left with discussing the first and last term on the rhs.~of \eqref{eq:telescopic}. 
 
 For the first term, we rely on the following lemma, which says that, in particular, we can replace each $\widecheck{\Omega}_k^p(\gamma)$ in \eqref{eq:telescopic} by $\Omega_k^p(\gamma)$, absorbing the additional error into $C_2$. 
 \begin{lemma} \label{lem:checknochecksim}
 Fix $p, k \in \N$. Then, for every fixed $\gamma \in [\gamma(N)]$, the expressions (recall \eqref{eq:vectormax})
 	\begin{equation*}
 		\Omega_k^p(\gamma)\,, \quad  \Omega_k^p(\gamma-1)\,, \quad  \text{and}  \quad \widecheck{\Omega}_k^p(\gamma)
 	\end{equation*}
 	are comparable up to an additive error of order $N^\xi$ for arbitrarily small $\xi > 0$. 
 \end{lemma}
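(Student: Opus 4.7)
The plan is to apply the resolvent expansion from Lemma~\ref{lem:resolexp} to each $\widecheck{G}^{(\gamma)}$ appearing in $(\widecheck{G}A)^k \widecheck{G}$, expanding via \eqref{eq:forwardexp} to some finite order $m = O(p+k)$ chosen so that the truncation remainder (which contains powers of $\Vert \widecheck{G} \Vert \le 1/\eta$) is $O(N^{-p})$, entirely analogously to the argument leading to \eqref{eq:truncation}. This writes $\widecheck\Psi_k^{(\gamma)}(\bm x, \bm y)$ as $\Psi_k^{(\gamma)}(\bm x, \bm y)$ plus correction terms, each containing at least one insertion $N^{-1/2}\Delta = N^{-1/2}(E^{(ij)} + E^{(ji)})/(1+\delta_{ij})$ and therefore factorized at the standard basis vectors $\bm e_i, \bm e_j \in I_{\bm x \bm y}$ into a product of shorter resolvent chains.

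For a single $\Delta$-insertion, two kinds of factorizations arise. In a \emph{middle cut}, both resulting chains carry strictly fewer than $k$ traceless matrices, so by the induction hypothesis \eqref{eq:inductionhypo} the corresponding $\Psi_{k'}$'s for $k' < k$ are $\prec 1$, and combining with the $M$-bounds of Lemma~\ref{lem:Mbound}(b) yields a contribution to $\widecheck\Psi_k^{(\gamma)}(\bm x,\bm y)$ of size at most $\sqrt{\eta/\rho} \cdot N^\xi \lesssim N^\xi$. In a \emph{boundary cut} (first or last resolvent replaced), one chain factor keeps all $k$ traceless matrices while the other reduces to a single entry $G^{(\gamma)}_{\bm u \bm v}$ with $\bm u, \bm v \in I_{\bm x \bm y}$, which is $\prec 1$ by the isotropic single-resolvent law \eqref{eq:singleGoptimal}. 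Using the key cancellation $\sqrt{N\eta/\rho} \cdot N^{-1/2} \cdot \sqrt{\rho/(N\eta)} = N^{-1/2}$, the full-length chain contributes exactly $N^{-1/2} \Psi_k^{(\gamma)}(\bm u, \bm v)$ plus a bounded $\sqrt{\eta/\rho} \lesssim 1$ piece from its deterministic $M$-part. Corrections from $d \ge 2$ simultaneous $\Delta$-insertions carry additional smallness $N^{-(d-1)/2}$ and are handled analogously (and are in fact smaller).

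Collecting these estimates yields the pointwise bound
\begin{equation*}
\widecheck\Psi_k^{(\gamma)}(\bm x,\bm y) \;\prec\; \Psi_k^{(\gamma)}(\bm x,\bm y) + 1 + N^{-1/2}\!\!\!\max_{\bm u,\bm v \in \{\bm x,\bm y,\bm e_i,\bm e_j\}}\!\!\!\Psi_k^{(\gamma)}(\bm u,\bm v),
\end{equation*}
where the maximum ranges over only four vectors, all contained in $I_{\bm x\bm y}$. Raising to the $p$-th power, taking expectation via $(a+b+c)^p \lesssim_p a^p+b^p+c^p$, and maximizing over $\bm x,\bm y \in I_{\bm x\bm y}$, one obtains
\begin{equation*}
\widecheck\Omega_k^p(\gamma) \;\le\; C_p\bigl(1 + N^{-p/2}\bigr)\,\Omega_k^p(\gamma) + C_p\, N^{p\xi},
\end{equation*}
which gives the desired comparability $\widecheck\Omega_k^p(\gamma) \lesssim \Omega_k^p(\gamma) + N^\xi$ after rescaling $\xi$. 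The reverse inequality follows by applying the symmetric expansion \eqref{eq:backwardexp} in place of \eqref{eq:forwardexp}, and the comparison of $\Omega_k^p(\gamma)$ with $\Omega_k^p(\gamma-1)$ is obtained by sandwiching both through $\widecheck\Omega_k^p(\gamma)$, since $\widecheck H^{(\gamma)}$ differs from $H^{(\gamma)}$ and $H^{(\gamma-1)}$ only at the entries indexed by $(i,j)$ and $(j,i)$.

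The principal obstacle is the precise bookkeeping of the four competing prefactors $\sqrt{N\eta/\rho}$ (from the $\Psi_k$-normalization), $N^{-1/2}$ per $\Delta$ (from perturbation), $N^{k/2}$ (from $M$-bounds), and $\sqrt{\rho/(N\eta)}$ (from the fluctuation size of $G-M$). The scheme works because the edge-sensitive ratios $\eta/\rho \le 1$ and $N\eta\rho \ge N^\epsilon$ ensure that every non-leading term is genuinely bounded, rather than merely tolerable; in particular at the spectral edge where $\sqrt{N\eta/\rho}$ may be as large as $N^{1/2}$, the cancellation in the boundary-cut analysis is tight, and the whole argument would fail with a weaker normalization of $\Psi_k$.
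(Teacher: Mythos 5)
Your argument is correct and essentially coincides with the paper's proof: both expand around the checked resolvent via Lemma \ref{lem:resolexp}, treat the strictly shorter chains by the induction hypothesis and the single-resolvent isotropic law, bound the deterministic parts by Lemma \ref{lem:Mbound}, and exploit that each $N^{-1/2}$ coming with a $\Delta$-insertion compensates the excess $(N\eta/\rho)^{1/2}$-factor, so that---unlike in Lemma \ref{lem:onestep}---no gain from the summation over $\gamma$ is needed, with all new vectors staying inside $I_{\bm x \bm y}$. The only immaterial difference is organizational: you derive a pointwise stochastic-domination bound relating $\widecheck\Psi_k^{(\gamma)}$ and $\Psi_k^{(\gamma)}$ and then take $p$-th moments, whereas the paper expands directly at the level of $\Vert \Psi_k^{(\gamma)}\Vert_p^p$ using Lemma \ref{lem:firstthree}.
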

 \begin{proof}
 	We give a sketch of the simple argument based on Lemma \ref{lem:onestep} in combination with Lemma \ref{lem:firstthree}: Similarly to the proof of Lemma \ref{lem:onestep}, we first expand $G^{(\gamma)}$ (resp.~$G^{(\gamma-1)}$) in $\Vert \Psi_k^{(\gamma)}(\bm x, \bm y) \Vert_p^p$ (resp.~$\Vert \Psi_k^{(\gamma-1)}(\bm x, \bm y) \Vert_p^p$) by means of \eqref{eq:backwardexp} and realize that $\alpha_{k, 0}^{(\gamma)}(\bm x, \bm y) = 1$ in \eqref{eq:gammaexp}--\eqref{eq:gamma-1exp}. The various terms arising in the expansion (now for all $r \ge 1$ and not only for $r \ge 4$) are dealt with as explained in Sections \ref{subsec:fourthorder}--\ref{subsec:truncation}. 
 	
 	However, there is a major simplification, since we do not need to gain from the summation as in Case~(ii) in Section \ref{subsec:fourthorder}: The maximal excess power $u$ of the leftover $(N \eta/\rho)^{1/2}$-factor is bounded by the order $r$ of the expansions in \eqref{eq:gammaexp}--\eqref{eq:gamma-1exp} (simply because at order $r$, there are at most $d=r$ destroyed resolvent chains), such that the characteristic $1/N^{r/2}$-factor at order $r$ balances this excess. Finally, we take a maximum over all $\bm u, \bm v \in I_{\bm x, \bm y}$ for all $\Vert \widecheck{\Psi}_k^{(\gamma)}(\bm u, \bm v) \Vert_p^p$ arising through the expansion (see \eqref{eq:vectormax}). 
 	
 	This finishes the sketch of the proof of Lemma \ref{lem:checknochecksim}. 
 \end{proof}
 For the last term in \eqref{eq:telescopic}, we extend the summation $\sum_{\gamma < \gamma_0}$ to all indices $i,j \in [N]$; it is an upper bound as we only sum positive terms. Then, for every fixed $u\in [4]$, we need to gain from this summation of $\big[u \,  \text{\rm sum. bdd.}\, M \text{\rm -terms}\big]^{(\gamma)}_{\bm x, \bm y}$ over all $\gamma \in [\gamma(N)]$ precisely $N^{-u/2}$  compared to the naive $N^2$-size of the summation. This was achieved in Lemma \ref{lem:sumMs} by the index structure \eqref{eq:Msums} of the factors and application of several Schwarz inequalities \eqref{eq:MsumsSchwarz}. 
 
 Hence, combining \eqref{eq:telescopic} with Lemma \ref{lem:checknochecksim} and Lemma \ref{lem:sumMs}, we find that 
 \begin{equation*}
 	\Vert \Psi_k^{(\gamma_0)}(\bm x, \bm y) \Vert_p^p \le  C_1 \frac{1}{N^2} \sum_{\gamma < \gamma_0} {\Omega}_k^p(\gamma) + C_2 N^\xi \,. 
 \end{equation*}
 Since the rhs.~is independent of the elements in $I_{\bm x\bm y}$ (recall \eqref{eq:vectormax}), we can as well maximize over those on the lhs.~and arrive at Proposition \ref{prop:gronwall}.    \qed

 \subsubsection{Conclusion of the induction step}\label{sec:complete}
 Having Proposition \ref{prop:gronwall} and hence \eqref{eq:gronwallconclude} at hand, we can immediately deduce 
  \begin{equation*} 
 	\max_{\gamma \le \gamma(N)} \widecheck{\Omega}_k^p(\gamma) \lesssim N^\xi
 \end{equation*}
from Lemma \ref{lem:checknochecksim} above. This proves the $\widecheck\Psi$-part of \eqref{eq:conclusion} and thus finishes the induction step. 

Therefore, using uniformity of this bound, we conclude the proof of the isotropic multi-resolvent local laws \eqref{eq:zagmultiGISO}. 
 
 \subsection{Part~(b): Proof of the averaged law.} \label{subsec:av}The general idea of the proof of the averaged law is exactly the same as in the previous section: We replace all matrix elements one-by-one in $\gamma(N) \sim N^2$ steps and sum up the changes over all positions $\gamma\in [\gamma(N)]$ (cf.~\eqref{eq:telescope}). However, there are a several (minor) differences in the averaged case compared to Section \ref{subsec:iso}, which we will explain in the following. 
 
 Since both, averaged and isotropic normalized differences, \eqref{eq:Psikav} and \eqref{eq:Psikiso}, appear, we shall henceforth reintroduce the superscripts $^{\rm av}$ and $^{\rm iso}$. 
 Moreover, contrary to the isotropic proof, in this part it is sufficient to consider an arbitrary fixed $k \in \N$ and perform a \emph{single induction} on the moment $p$ taken of $\Psi_k^{\rm av}$, i.e.~$\E |\Psi_k^{\rm av}|^p = \Vert \Psi_k^{\rm av}\Vert_p^p$. We point out that the induction on $k$ used in the previous section is not needed, because the proof of the isotropic laws has already been concluded (see \eqref{eq:fourthorderAV} later). Hence, as the\emph{ induction hypothesis}, we will assume that
 \begin{equation} \label{eq:inductionhypoav}
 	\max_{\gamma \le \gamma(N)} \Vert \Psi_k^{\rm av, (\gamma)} \Vert_{p-1} +  \max_{\gamma \le \gamma(N)} \Vert \widecheck{\Psi}_k^{\rm av, (\gamma)} \Vert_{p-1}  \lesssim N^\xi
 \end{equation} 
 holds uniformly in traceless matrices for all $\xi > 0$, and our goal is to prove the same
 relation with $p$ replacing $p-1$. 
  The base case is thus simply the trivial bound 
 ($p=1$) given by $\E |\Psi_k^{\rm av}|^0 = 1$. 
 To ease notation, just as in Section \ref{subsec:iso}, we will drop the subscripts for all resolvents and deterministic matrices, i.e.~write $G_j =G$ and $A_j =A$ instead. Moreover, whenever it does not lead to confusion, we will drop all further sub- and superscripts.
 
 Completely analogously to Section \ref{subsec:iso}, we use resolvent expansions from Lemma \ref{lem:resolexp} to prove the exact agreement of the orders $r \in \{0,1,2,3\}$ as in Lemma \ref{lem:firstthree}. For the higher order terms (again focusing on the most critical fourth order ones, see Section \ref{subsec:fourthorder}), we argue completely analogously to \eqref{eq:fourthorder}, but now we have an additional effect: Whenever an intact averaged chain gets destroyed by a replacement $G \to G \Delta G$ from a derivative,
  we obtain (a sum of) isotropic chains with a $1/N$ prefactor  from the
 normalization of the trace, i.e.
 \begin{equation} \label{eq:avtoiso}
 	\langle (GA)^k \rangle \longrightarrow \langle G \Delta (GA)^k \rangle = \frac{1}{N} \big( (GA)^kG \big)_{\bm e_i \bm e_j} + \frac{1}{N} \big( (GA)^kG \big)_{\bm e_j \bm e_i}\,. 
 \end{equation}
 In this way, the analogue of \eqref{eq:fourthorder} reads  
 \begin{equation} \label{eq:fourthorderAV}
 	\E \sum_{d=1}^{4 \wedge p} \big|\widecheck{\Psi}_k^{\rm av}(\bm x, \bm y)\big|^{p-d} \left(\frac{N \eta}{\rho}\right)^{d/2}N^{-d(k/2-1)} \frac{1}{N^d} \sum_{(4-d) \Delta \,  \leadsto \, d } \big|  \underbrace{\big(... \Delta... \Delta ... \big)_{\bm{e}_i \bm{e}_j}\cdot ... \cdot \big(... \Delta... \big)_{\bm{e}_j \bm{e}_i}}_{(4-d)\;  \Delta \; \text{in a total} \; d \, \text{iso chains}} \big|\,,
 \end{equation}
 where the isotropic chains referred to in \eqref{eq:fourthorderAV}, are precisely those obtained in \eqref{eq:avtoiso}. In particular, one $\Delta$ has already been "used" for each destroyed averaged chain, hence only $(4-d)$ $\Delta$'s are placed in the isotropic chains (recall \eqref{eq:summation}). Observe that, after writing $N^{-d(k/2-1)} /N^d = N^{-dk/2}$, beside from the unit vectors in the isotropic chains, the structure of \eqref{eq:fourthorderAV} is exactly the same for \eqref{eq:fourthorder}. 
 
 Next, in each of the resulting four resolvent chains in the rhs.~of \eqref{eq:fourthorderAV}, as before we  add and subtract the corresponding $M$-term, again schematically written as $G = (G-M) + M$.
  Exactly as in the previous section, we have to distinguish two cases. 
 \begin{itemize}
 	\item[Case (i):] At least $d$ of the $4$ resolvent chains are replaced by their fluctuating part, $G-M$.
 	\item[Case (ii):] At least $5-d$ of the $4$ resolvent chains are replaced by their deterministic counterpart,  $M$. 
 \end{itemize}
 \vspace{2mm}
 \underline{Case (i):} First, we note that, since there are only strictly lower moments of $\Psi_k^{\rm av}$ appearing in \eqref{eq:fourthorderAV} after the resolvent expansion, we can directly employ the \emph{induction hypothesis} \eqref{eq:inductionhypoav}, i.e.~there is no possibility of preserving the destroyed chains unlike in \eqref{eq:sustainexample}. Therefore, by additionally applying the already established isotropic laws from the previous section in combination with $\big| (M_{j+1})_{\bm u \bm v} \big| \lesssim N^{j/2}$ (recall also \eqref{eq:Mshorthand}), we find that 
 \begin{equation} \label{eq:case1avfinal}
 	\text{Case (i) terms of \eqref{eq:fourthorderAV}} \lesssim N^\xi \sum_{d=1}^{4 \wedge p} \left(\frac{N \eta}{\rho}\right)^{d/2}  N^{-dk/2} \left[ N^{dk/2} \left( \frac{\rho}{N \eta}\right)^{d/2} +...\right]\lesssim N^\xi\,,
 \end{equation}
 indicating terms with more than $d$ factors of $G-M$ by dots. This concludes the discussion of Case (i). 
 \\[2mm]
 \underline{Case (ii):} For the second case, we again recall that all purely deterministic terms are independent of the replacement step $\gamma$. Moreover, completely analogously to Case (ii) in Section \ref{subsec:fourthorder}, it is not sufficient to just estimate every isotropic $M$-term blindly -- instead we again need to \emph{gain from the summation} over all replacement positions. We again illustrate this by an example. 
 \begin{example}\label{ex:ave}
 	We first consider $d=1$ and use the notation \eqref{eq:Mshorthand}. Then, by means of the induction hypothesis \eqref{eq:inductionhypoav}, we have the trivial estimate
 	\begin{equation} \label{eq:d=1exampleCase2AV}
 		\begin{split}
 			&\E \big|\widecheck{\Psi}^{\rm av}_k(\bm x, \bm y)\big|^{p-1} \left(\frac{N \eta}{\rho}\right)^{1/2} N^{-k/2}\sum_{\substack{0\le k_l \le k: \\
 					\sum_{l} k_l= k}}\left[\big| \big( M_{k_1+1}\big)_{\bm e_i \bm{e}_i}  \big(M_{k_2+1}\big)_{\bm e_j \bm{e}_j} \big( M_{k_3+1}\big)_{\bm e_i \bm{e}_i} \big( M_{k_4+1}\big)_{\bm e_j \bm{e}_j}  \big| + ... \right] \\
 			\lesssim \, &N^\xi  \left(\frac{N \eta}{\rho}\right)^{1/2} N^{-k/2}\sum_{\substack{0\le k_l \le k: \\
 					\sum_{l} k_l = k}} \left[ N^{\sum_l k_l/2} + ... \right] \lesssim N^\xi \left(\frac{N \eta}{\rho}\right)^{1/2}\,, 
 		\end{split}
 	\end{equation}
 	analogously to \eqref{eq:d=1exampleCase2}. Again, this bound is off by a factor $(N \eta/\rho)^{1/2}$, which can be improved on by averaging over all replacement position. 
 	
Compared to the isotropic case, we can no longer gain from summing over off-diagonal terms of the form $M_{\bm x \bm e_i}$. Instead, now we sum over squares of terms of the form $M_{\bm e_i \bm e_i}$ and estimate it by
  \begin{equation} \label{eq:schwarzAV}
 	\sum_{i} \big| M_{\bm e_i \bm{e}_i}\big|^2 \le \sum_{i,j} \big|M_{\bm e_i \bm{e}_j}\big|^2 \le \sum_i \big( |M|^2\big)_{\bm e_i \bm{e}_i} =  N \langle  |M|^2 \rangle\,,
 \end{equation}
 similarly to \eqref{eq:schwarzfirst}--\eqref{eq:schwarzfirst2}. 
 Note that \eqref{eq:schwarzAV} is better than the trivial bound, which would give $ N\Vert M \Vert^2$. 
	The key for exploiting this improvement is the following lemma, the proof of which is given in Appendix~\ref{sec:addtech}.  
 	\begin{lemma} \label{lem:gainAV}Using the assumptions and notations from Lemma \ref{lem:Mbound} and the normalization $\langle |A_i|^2 \rangle = 1$, we have that 
 		\begin{equation} \label{eq:M^2estimateAV}
 			\big\langle  \big|\mathcal{M}(z_1, A_1, ... , A_k, z_{k+1}; \mathfrak{I}_{k+1})\big|^2 \big\rangle \lesssim N^{k} \left(\prod_{i \in \mathfrak{I}_{k+1}} \rho_i \right)^2\left[ \left(\frac{ \max_{i\in [k+1]} \big(\rho_i + \mathbf{1}(i \notin \mathfrak{I}_{k+1})\big)}{N \ell}\right)^2 \vee \frac{1}{N} \right]\,.
 		\end{equation}
 	\end{lemma}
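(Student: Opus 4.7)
The plan is to expand $\mathcal{M}$ via the non-crossing partition formula \eqref{eq:Mdefim} and treat the leading singleton-partition term $\pi_* = \{\{1\},\ldots,\{k+1\}\}$ separately from the rest. Since $K(\pi_*)$ is the one-block partition,
\[
\mathcal{M}_{\pi_*} = \Big(\prod_{i \in \mathfrak{I}_{k+1}}\Im m_i\Big)\Big(\prod_{i \notin \mathfrak{I}_{k+1}}m_i\Big)\,A_1 A_2 \cdots A_k,
\]
and using $|m_i|\lesssim 1$, $|\Im m_i|\lesssim \rho_i$ together with the elementary iteration $\langle|BC|^2\rangle \leq \|B\|^2\langle|C|^2\rangle$ and $\|A_j\|\leq \sqrt{N}\langle|A_j|^2\rangle^{1/2}$, one obtains $\langle|\mathcal{M}_{\pi_*}|^2\rangle \lesssim N^{k-1}\prod_{i\in\mathfrak{I}_{k+1}}\rho_i^2$, which is exactly the $1/N$-contribution on the right-hand side of \eqref{eq:M^2estimateAV}.

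For every non-singleton $\pi \in \mathrm{NC}([k+1])$, factor the summand as $\mathcal{M}_\pi = c_\pi\, M_\pi^{\mathrm{mat}}$, where $M_\pi^{\mathrm{mat}} = \prod_{j \in \mathfrak{B}(k+1) \setminus \{k+1\}} A_j$ is the matrix part from the $K(\pi)$-block $\mathfrak{B}(k+1)$ containing $k+1$, and $c_\pi$ collects the cumulants $\prod_{S \in \pi} m_\circ^{(\mathfrak{I}_{k+1})}[S]$ and the tracial factors $\prod_{S' \in K(\pi),\,S'\neq\mathfrak{B}(k+1)}\langle\prod_{j\in S'}A_j\rangle$. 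The crucial consequence of tracelessness is that any partition for which some non-special block of $K(\pi)$ is a singleton contributes zero (because $\langle A_j\rangle=0$), so only partitions whose non-special $K(\pi)$-blocks all have size at least two survive. The three types of factors are bounded by (i) the Hilbert-Schmidt estimate $\langle|M_\pi^{\mathrm{mat}}|^2\rangle \leq N^{(|\mathfrak{B}(k+1)|-2)_+}$ via iteration of $\langle|BC|^2\rangle\leq\|B\|^2\langle|C|^2\rangle$; (ii) $|\langle \prod_{j\in S'} A_j\rangle|^2 \leq N^{|S'|-2}$ for each surviving non-special block of size $|S'|\geq 2$; and (iii) the cumulant bound $|m_\circ^{(\mathfrak{I}_{k+1})}[S]| \lesssim \prod_{i\in S\cap\mathfrak{I}_{k+1}}\rho_i /\ell^{|S|-1}$ already used in the proof of Lemma~\ref{lem:Mbound}. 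Combining these with the Kreweras identity $|K(\pi)|+|\pi|=k+2$ and the elementary $\sum_{S\in\pi}(|S|-1)=k+1-|\pi|$, a direct power-counting yields
\[
|c_\pi|^2 \langle|M_\pi^{\mathrm{mat}}|^2\rangle \lesssim \prod_{i\in\mathfrak{I}_{k+1}}\rho_i^2 \cdot N^{k-1} (N\ell)^{-2(k+1-|\pi|)}.
\]
Since $(N\ell)^{-2(k+1-|\pi|)}\leq 1$ by the ambient hypothesis $N\ell\gtrsim 1$, summing the bounded (Catalan-many) number of non-singleton partitions via Cauchy-Schwarz absorbs each such contribution into the $1/N$-term. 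The refined $\max$-factor in the first bracket of \eqref{eq:M^2estimateAV} is extracted from a sharpened Lorentzian estimate on the pair-block cumulants $|m_\circ^{(\mathfrak{I}_{k+1})}[\{i,j\}]|$, which dominates the $1/N$-term in the regime of very small $\ell$.

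The principal technical obstacle is the careful Lorentzian/Hölder book-keeping required to produce the sharpened pair-block cumulant estimate—bringing in the $\max_l(\rho_l + \mathbf{1}(l \notin \mathfrak{I}_{k+1}))$-factor through $\sup_x \rho_{\mathrm{sc}}(x) \lesssim 1$ combined with $\int \Im(x-z_i)^{-1}\,\mathrm{d}x = \pi$—and its precise combination with the Kreweras exponent bookkeeping so that the $(\max)$-factor enters in the stated form uniformly across all admissible non-singleton partitions, with no additional error terms escaping either piece of the bracket in \eqref{eq:M^2estimateAV}.
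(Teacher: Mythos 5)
Your high-level strategy—expand $\mathcal{M}$ over non-crossing partitions, factor each summand into a cumulant part and a trace/matrix part, bound the trace part by H\"older and the cumulant part by the Lorentzian integral—matches the paper. However, there is a genuine gap in your power-counting.

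Your claimed uniform bound
\[
|c_\pi|^2\,\big\langle|M_\pi^{\mathrm{mat}}|^2\big\rangle \lesssim \Big(\prod_{i\in\mathfrak{I}_{k+1}}\rho_i\Big)^2 N^{k-1}\,(N\ell)^{-2(k+1-|\pi|)}
\]
fails whenever the block $\mathfrak{B}(k+1)\in K(\pi)$ is the singleton $\{k+1\}$ (equivalently, whenever $1$ and $k+1$ lie in the same block of $\pi$). Set $s=k+2-|\pi|$ and write $K(\pi)=\{S_1,\dots,S_s\}$ with $S_1=\mathfrak{B}(k+1)$. Your ingredients (i)--(iii) give, in total for the matrix and trace parts, $N^{(|S_1|-2)_+ + \sum_{i\ge2}(|S_i|-2)}$. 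When $|S_1|\ge2$ this is $N^{(k+1)-2s}$ and one recovers $N^{k-1}(N\ell)^{-2(s-1)}$ as you state. But when $|S_1|=1$ the exponent is $k-2(s-1)=k+2-2s$, one power of $N$ larger, so the combined bound is $N^{k}(N\ell)^{-2(s-1)}$, not $N^{k-1}(N\ell)^{-2(s-1)}$. Since the hypothesis is only $N\ell\gtrsim 1$, this contribution is not absorbed into the $1/N$-term: for $s=2$ it is of order $N^{k}/(N\ell)^2$, exactly the first term in the bracket of \eqref{eq:M^2estimateAV}, and it dominates $N^{k-1}$ when $N\ell\sim 1$.

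This means the relevant case split is not \emph{singleton vs.~non-singleton} for $\pi$ (your decomposition), but whether $\mathfrak{B}(k+1)\in K(\pi)$ is a singleton, which is the paper's Case~(i)/(ii) dichotomy. In Case~(i) the basic cumulant bound \eqref{eq:intrepbound} is insufficient and must be replaced by the Schwarz-type estimate
\[
\int_\R \frac{\rho(x)}{|x - z_1|\,|x - z_{k+1}|}\,\dd x \lesssim \frac{\rho_1\vee\rho_{k+1}}{\ell}\,,
\]
applied to the block of $\pi$ containing both $1$ and $k+1$. You do flag the need for a ``sharpened Lorentzian estimate,'' but you present it as an optional refinement for small $\ell$; in fact it is what repairs the missing factor of $1/N$ in Case~(i), and without it your main claimed power-counting inequality is simply false for those partitions.

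Your treatment of the singleton partition $\pi_*$ producing the $1/N$-term is correct, and the Case~(ii) power counting (where $|\mathfrak{B}(k+1)|\ge 2$) goes through as you wrote it.
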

  Applying \eqref{eq:M^2estimateAV} for $k= k_l$ and $\mathfrak{I}_{k_l+1} = \emptyset$ (recall \eqref{eq:Mdef}, \eqref{eq:Mdefim}, and \eqref{eq:Mshorthand}), we see the bound 
\begin{equation} \label{eq:gainAVapplied}
 \langle  |M_{k_l+1}|^2 \rangle \lesssim N^{k_l} \left[\left(\frac{\rho}{N \eta}\right)^2 \vee \frac{1}{N}\right]\,. 
\end{equation}
We remark that this estimate is better by the factor $\big[\big(N \eta/\rho\big)^{-2} \vee N^{-1}\big] \ll 1$ compared to the naive norm bound $|(M_{k_l+1})_{\bm u \bm v}|^2 \le \Vert M_{k_l+1} \Vert^2 \lesssim N^{k_l}$ from Lemma \ref{lem:Mbound}~(b) employed in \eqref{eq:d=1exampleCase2AV}.
  	Hence, fixing one constellation of $k_l$'s in \eqref{eq:d=1exampleCase2AV}, we find the average of the first line in \eqref{eq:d=1exampleCase2AV} over all $i,j \in [N]$ to be bounded by 
 \begin{equation} \label{eq:d=1exampleCase2AVSUM}
 	\begin{split}
 		&N^\xi \left(\frac{N \eta}{\rho}\right)^{1/2} N^{-k/2} \frac{1}{N^2}\sum_{i,j}\left[\big| \big( M_{k_1+1}\big)_{\bm e_i \bm{e}_i}  \big(M_{k_2+1}\big)_{\bm e_j \bm{e}_j} \big( M_{k_3+1}\big)_{\bm e_i \bm{e}_i} \big( M_{k_4+1}\big)_{\bm e_j \bm{e}_j}  \big| + ... \right] \\
 		\lesssim &N^\xi\left(\frac{N \eta}{\rho}\right)^{1/2} N^{-k/2} \frac{1}{N^2} \left[\prod_{l \in [4]} \left(\sum_{i} \big|\big( M_{k_l+1}\big)_{\bm e_i \bm{e}_i}\big|^2 \right)^{1/2} + ... \right] \\
 		\lesssim &N^\xi \left(\frac{N \eta}{\rho}\right)^{1/2} N^{-k/2}\frac{1}{N^2}\left[\left(\prod_{l \in [4]}  N^{k_l+1} \left[\left(\frac{\rho}{N \eta}\right)^2 \vee \frac{1}{N}\right]\right)^{1/2}+ ... \right] \\
 		\lesssim &N^\xi \left[\left(\frac{\rho}{N\eta}\right)^{7/2} \vee \left(\frac{\eta}{\rho}\right)^{1/2} \frac{1}{N^{3/2}} \right] \lesssim N^\xi\,. 
 	\end{split}
 \end{equation}
 To go from the first to the second line, we employed a trivial Schwarz inequality. To go to the penultimate line, we used \eqref{eq:schwarzAV} with $M = M_{k_l+1}$. For the final estimate, we employed $(\prod_{l \in [4]}  N^{k_l+1})^{1/2} = N^{k/2+2}$.

 Next, we consider one example for $d=4$, where all four resolvent chains are replaced by their deterministic counterpart. In this case, the analog of \eqref{eq:d=1exampleCase2AVSUM} reads
 \begin{equation*}
 	\begin{split}
 		&N^\xi \left(\frac{N \eta}{\rho}\right)^{2} N^{-2k} \frac{1}{N^2}\sum_{i,j}\left[\big| \big( M_{k+1}\big)_{\bm e_i \bm{e}_j}  \big(M_{k+1}\big)_{\bm e_j \bm{e}_i} \big( M_{k+1}\big)_{\bm e_i \bm{e}_j} \big( M_{k+1}\big)_{\bm e_j \bm{e}_i}  \big| + ... \right] \\
 		\lesssim &N^\xi\left(\frac{N \eta}{\rho}\right)^{2} N^{-k} \frac{1}{N^2} \left[\sum_{i,j} \big|\big( M_{k+1}\big)_{\bm e_i \bm{e}_j}\big|^2 + ... \right] \\
 		\lesssim &N^\xi \left(\frac{N \eta}{\rho}\right)^{2} N^{-k}\frac{1}{N^2}\left[ N^{k+1}\left[\left(\frac{\rho}{N \eta}\right)^2 \vee \frac{1}{N}\right]+ ... \right] \lesssim N^\xi \left[\frac{1}{N} \vee \left(\frac{\eta}{\rho}\right)^2\right] \lesssim N^\xi\,. 
 	\end{split}
 \end{equation*}
 To go from the first to the second line, we estimated two factors of $M_{k+1}$ by their norm, $\big| (M_{k+1})_{\bm u \bm v}\big| \lesssim N^{k/2}$. Next, to go to the third line, we employed \eqref{eq:schwarzAV} and Lemma \ref{lem:gainAV}. The final estimate used $\eta/\rho \lesssim 1$. 
 \end{example}
 The above examples showcase the general mechanism for the terms in Case (ii): After estimating all the $(G-M)$-type terms with the aid of the induction hypothesis \eqref{eq:inductionhypoav}, we are left with an excess $(N \eta/\rho)^{u/2}$-factor, for some $u \in [4]$. Analogously to \eqref{eq:Msummable}--\eqref{eq:Msums}, this leftover factor is then controlled by \emph{gaining from the summation} like in \eqref{eq:M^2estimateAV}. We skip the simple counting argument ensuring this gain. 
 
 The treatment of the further higher order terms and the truncation of the resolvent expansion is completely analogous to Sections \ref{subsec:lowerorder} and \ref{subsec:truncation}, respectively. Therefore, by telescopic summation like in \eqref{eq:telescope}, we find that  
 \begin{equation*}
 	\max_{\gamma \le \gamma(N)} \Vert \Psi_k^{\rm av, (\gamma)} \Vert_p^p + \max_{\gamma \le \gamma(N)} \Vert \widecheck{\Psi}_k^{\rm av, (\gamma)}\Vert_p^p \lesssim  \Vert \Psi_k^{\rm av, (0)}\Vert_p^p  +  N^\xi \lesssim N^\xi
 \end{equation*}
 where in the last step
 we absorbed $\Vert \Psi_k^{\rm av, (0)}\Vert_p^p$ into $N^\xi$ by our initial assumption that we have multi-resolvent local laws \eqref{eq:multiG} for the matrix $H^{(\bm v)} = H^{(0)}$. 
 The checked version is obtained completely analogously to Lemma \ref{lem:checknochecksim}. 
 
 This completes the proof of the induction step. We have thus finished the argument for the averaged case and hence the proof of Proposition \ref{prop:zag}. \qed

\subsection{The case $\mathfrak{I}_k \neq \emptyset$ (resp.~$\mathfrak{I}_{k+1} \neq \emptyset$)} \label{subsec:withIM} In this section, we explain how to adjust the above argument for proving Proposition \ref{prop:zag} in the case that at least one of the resolvents in the chains of interests 
$$
\langle \mathcal{G}_1 A_1 ... \mathcal{G}_k A_k \rangle \qquad \text{and} \qquad \big( \mathcal{G}_1 A_1 ... \mathcal{G}_k A_k \mathcal{G}_{k+1}\big)_{\bm x \bm y}
$$
is an imaginary part, i.e.~$\mathcal{G}_i = \Im G_i$ for at least one index $i \in [k]$ (resp.~$i \in [k+1]$). Recall the local laws for the average and isotropic chain from \eqref{eq:zagmultiG} and \eqref{eq:zagmultiGISO}, respectively. Compared to the case of no imaginary parts, handled in the previous Sections \ref{subsec:iso}--\ref{subsec:av}, there are now two changes: First, the bound contains the product $\prod_{i \in \mathfrak{I}} \rho_i$ (instead of one). Second, the smallness factor $(N\eta/\rho)^{-1/2}$ from before is now replaced by $(N \ell)^{-1/2}$

For adjusting the first change, the simple but key insight is, that when applying the resolvent expansion from Lemma~\ref{lem:resolexp} to both $G$ and $G^*$ in $\Im G = \frac{1}{2 \ii }(G-G^*)$, we can always "restore"
exactly one $\Im G$ on the rhs. More precisely, taking \eqref{eq:backwardexp} for concreteness and using $\Delta = \Delta^*$, we have that 
\begin{equation*}
\begin{split}
\Im G = \frac{1}{2 \ii } \big[G - G^*\big] &= \frac{1}{2 \ii} \bigg[ \left(\widecheck{G} - N^{-1/2} \widecheck{G} \Delta \widecheck{G}+ N^{-1} \widecheck{G} \Delta\widecheck{G} \Delta \widecheck{G} + ... \right) \\
& \qquad \qquad - \left(\widecheck{G}^* - N^{-1/2} \widecheck{G}^* \Delta \widecheck{G}^*+ N^{-1} \widecheck{G}^* \Delta\widecheck{G}^* \Delta \widecheck{G} ^*+ ... \right)\bigg] \\
& = \Im \widecheck{G} - N^{-1/2} \big(\Im \widecheck{G} \Delta \widecheck{G} + \widecheck{G}^*\Delta \Im \widecheck{G}\big) \\[1mm]
& \qquad \qquad + N^{-1} \big(  \Im \widecheck{G} \Delta \widecheck{G} \Delta \widecheck{G}+ \widecheck{G}^*\Delta \Im \widecheck{G} \Delta \widecheck{G} + \widecheck{G}^*\Delta \widecheck{G}^*\Delta \Im \widecheck{G}\big) + ... 
\end{split}
\end{equation*}
In this way, the imaginary parts in the original chain are "preserved" by the resolvent expansion. Recall that $|\Im \widecheck{G}_{\bm u \bm v}(z)| \prec \rho(z)$ (as a consequence of \eqref{eq:singleGoptimal} for $N |\Im z| \rho(z) \gg 1$; recall $N \hell \gg 1$), which improves \eqref{eq:Gbdd1GFT}. In particular, using Lemma \ref{lem:Mbound}, we find that the factor $\big(\prod_{i \in \mathfrak{I}} \rho_i\big)^{-d}$ stemming from the correct normalisation of the analog of $\Psi_k^{\rm av/iso}$ in \eqref{eq:Psikav}--\eqref{eq:Psikiso} and thus appearing in the expression analogous to \eqref{eq:fourthorder} is naturally compensated by a product of $\rho$'s stemming from the destroyed chains. 

For adjusting to the second change, it suffices to replace every $\eta/\rho$ appearing in Sections \ref{subsec:iso}--\ref{subsec:av} by $\ell$ and realize that the complement of the interesting regime, i.e.~the regime $\ell\ge 1$ is already proven in Proposition~\ref{prop:initial}.

\appendix

\section{Additional technical results}
\label{sec:addtech}

In this section we prove several additional technical results which are used in the main sections.

\subsection{Bounds on the deterministic approximations}
\begin{proof}[Proofs of Lemma \ref{lem:Mbound} and the claim in Remark \ref{rmk:MHS}~(ii)]
We will first proof the following stronger bound in Lemma \ref{lem:Mboundstrong}, from which we immediately deduce Lemma \ref{lem:Mbound} and the claim in Remark \ref{rmk:MHS}~(ii). 
The proof of the following lemma is given at the end of the current section. 
\begin{lemma}\label{lem:Mboundstrong}
Fix $k \ge 1$. Consider spectral parameters $z_1, ... , z_k \in \C \setminus \R$ and traceless matrices $A_1, ... , A_k \in \C^{N \times N}$, and 
define for every $j \in [k]$
$${\eta}_j := |\Im z_j|, \qquad \rho_j := \frac{1}{\pi}|\Im m_{\rm sc}(z_j)|,  \qquad
\ell :=  \min_j\big[\eta_j (\rho_j + \mathbf{1}(j \notin \mathfrak{I}_k))\big]\,. 
$$
Then, for every $1 \le s \le \lfloor k/2 \rfloor$ and $\pi \in \mathrm{NC}([k])$ with $|\pi| = k+1-s$ \nc, it holds that
\begin{equation} \label{eq:Mboundstrong}
\left| 
\langle \mathrm{pTr}_{K(\pi)}(A_1,\ldots,A_{k-1})A_k \rangle \prod_{S\in\pi} m_\circ^{(\mathfrak{I}_k)}[S] \right| \lesssim \left(\prod_{j \in \mathfrak{I}_k} \rho_j  \right) \frac{1}{\ell^{s-1}} \prod_{\substack{S\in K(\pi) \\ |S| \ge 2}}\prod_{j\in S} \left\langle|A_j|^{|S|}\right\rangle^{\frac{1}{|S|}}   \,. 
\end{equation}
with $m_\circ^{(\mathfrak{I})}[S]$ being defined above \eqref{eq:Mdivdiff}. For $s > \lfloor k/2 \rfloor$
the lhs.~of \eqref{eq:Mboundstrong} equals zero. 
\end{lemma}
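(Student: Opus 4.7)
\medskip
\noindent\textbf{Proof proposal.} The plan is to bound the two factors on the left-hand side of \eqref{eq:Mboundstrong} separately: the combinatorial/trace factor $\langle \mathrm{pTr}_{K(\pi)}(A_1,\ldots,A_{k-1})A_k\rangle$, which will produce the $A$-dependent right-hand side and will be responsible for the vanishing in the regime $s > \lfloor k/2 \rfloor$, and the analytic factor $\prod_{S\in\pi} m_\circ^{(\mathfrak{I}_k)}[S]$, which will produce the $\rho$- and $\ell$-dependent prefactor.

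First, using cyclicity of the trace and the definition \eqref{eq:partrdef} of $\mathrm{pTr}$, the trace factorises over blocks of $K(\pi)$:
\begin{equation*}
\langle \mathrm{pTr}_{K(\pi)}(A_1,\ldots,A_{k-1})A_k\rangle \;=\; \prod_{S\in K(\pi)}\Big\langle \prod_{j\in S}A_j\Big\rangle,
\end{equation*}
where the product inside each block is taken in the cyclic order inherited from $[k]$. Since $|K(\pi)| = (k+1) - |\pi| = s$ (by the Kreweras identity) and the blocks of $K(\pi)$ partition $[k]$, a simple pigeonhole argument shows that whenever $s > \lfloor k/2\rfloor$ at least one block must be a singleton $\{j\}$. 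Since $\langle A_j\rangle = 0$ by assumption, the entire product vanishes, yielding the second claim of the lemma. For $s\le \lfloor k/2\rfloor$, Hölder's inequality applied to each block of size $\ge 2$ gives $|\langle\prod_{j\in S}A_j\rangle|\le \prod_{j\in S}\langle |A_j|^{|S|}\rangle^{1/|S|}$; any singleton block again forces the product to vanish, so the product over $|S|\ge 2$ in \eqref{eq:Mboundstrong} captures all non-vanishing contributions.

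The core analytic step is the bound
\begin{equation}\label{eq:mIestproposal}
\big|m^{(\mathfrak{I}_k)}[S]\big|\;\lesssim\; \frac{\prod_{i\in S\cap \mathfrak{I}_k}\rho_i}{\ell^{|S|-1}}\qquad (S\subset [k]).
\end{equation}
I would derive this directly from the integral representation \eqref{eq:Mdivdiff} by writing $|\Im (x-z_i)^{-1}| \le \eta_i/|x-z_i|^2$ for $i\in\mathfrak{I}_k$ and $|(x-z_i)^{-1}|=1/|x-z_i|$ otherwise, then interpolating the resulting integrand via Hölder's inequality and the two elementary building blocks
\begin{equation*}
\eta_i\int_{-2}^{2}\frac{\rho_{\mathrm{sc}}(x)}{|x-z_i|^2}\,\dd x\;\sim\;\rho_i,\qquad \int_{-2}^{2}\frac{\rho_{\mathrm{sc}}(x)}{|x-z_i|}\,\dd x\;\lesssim\; 1+|\log\eta_i|,
\end{equation*}
which together reproduce a factor $\rho_i$ for each $i\in S\cap\mathfrak{I}_k$ and a factor $\ell^{-1}$ for each remaining index. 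Since $m_\circ^{(\mathfrak{I}_k)}[S]$ is obtained from the $m^{(\mathfrak{I}_k)}[S']$ by Möbius inversion of \eqref{eq:freecumulant} over the non-crossing partition lattice, each summand in the inversion is a product $\prod_{S'\in\pi'}m^{(\mathfrak{I}_k)}[S']$ for some $\pi'\in\mathrm{NC}(S)$, and the bound \eqref{eq:mIestproposal} together with $\sum_{S'\in\pi'}(|S'|-1)=|S|-|\pi'|\le |S|-1$ (and $\ell\lesssim 1$) yields
$
|m_\circ^{(\mathfrak{I}_k)}[S]|\lesssim \prod_{i\in S\cap\mathfrak{I}_k}\rho_i/\ell^{|S|-1},
$
with a combinatorial constant bounded by the Catalan number $C_{|S|}$.

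Finally, multiplying over the blocks of $\pi$ and using $\sum_{S\in\pi}(|S|-1)=k-|\pi|=s-1$,
\begin{equation*}
\prod_{S\in\pi}\big|m_\circ^{(\mathfrak{I}_k)}[S]\big|\;\lesssim\;\bigg(\prod_{i\in\mathfrak{I}_k}\rho_i\bigg)\frac{1}{\ell^{s-1}},
\end{equation*}
and combining with the Hölder bound on the trace factor yields \eqref{eq:Mboundstrong}. The only real technical obstacle is \eqref{eq:mIestproposal}: the edge regime forces us to track $\rho_i$ and $\eta_i$ separately and to distinguish carefully between indices in and outside $\mathfrak{I}_k$ when invoking Hölder, so some case analysis or a recursion on $|S|$ (reducing to $|S|=2$ via divided-difference identities such as $\int \rho_{\mathrm{sc}}(x)/[(x-z_1)(x-z_2)]\,\dd x = (m_{\mathrm{sc}}(z_1)-m_{\mathrm{sc}}(z_2))/(z_1-z_2)$ combined with elementary continuity properties of $m_{\mathrm{sc}}$) will be required.
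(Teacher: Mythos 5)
Your proposal is correct and follows essentially the same route as the paper: factorise the trace over blocks of $K(\pi)$ and apply Hölder's inequality per block, use the pigeonhole count $|K(\pi)|=s$ (forcing a singleton and hence a vanishing $\langle A_j\rangle$ when $s>\lfloor k/2\rfloor$), and handle the analytic factor via the Möbius inversion of the free-cumulant relation \eqref{eq:freecumulant} (the paper quotes \cite[Lemma~2.16]{thermalisation}) together with the pointwise bound $|m^{(\mathfrak{I}_k)}[S]|\lesssim\prod_{i\in S\cap\mathfrak{I}_k}\rho_i/\ell^{|S|-1}$, finishing with $\sum_{S\in\pi}(|S|-1)=s-1$. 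The one place you are more explicit than the paper is the derivation of that integral bound; the paper dispatches it as a "direct consequence" of \eqref{eq:Mdivdiff}, whereas you sketch the Hölder interpolation and flag the edge-case bookkeeping (e.g.\ the potential log from $\int\rho_{\mathrm{sc}}/|x-z|$ when a block has a single non-$\mathfrak{I}_k$ index, where one must use $|m_{\mathrm{sc}}(z)|\lesssim1$ directly rather than the absolute-value integrand, or absorb it via a Schwarz pairing when $|S|\ge2$). This is the same argument, just stated with the technicalities acknowledged rather than suppressed.
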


For the proof of Lemma \ref{lem:Mbound}~(a) and the claim in Remark \ref{rmk:MHS}~(ii) concerning \eqref{eq:mainAV} we use that $\langle |A|^p\rangle^{1/p} \le N^{\frac{p-2}{2p}} \langle |A|^2 \rangle^{1/2}$ for any $p \ge 2$,
and hence
$$
\text{rhs. of} \, \eqref{eq:Mboundstrong} \lesssim N^{k/2 - 1} \left(\prod_{j \in \mathfrak{I}_k} \rho_j  \right) \left(\prod_{j=1}^k \langle |A_j|^2 \rangle^{1/2} \right) \frac{1}{(N \ell)^{s-1}}\,. 
$$
This shows that, in particular, all terms with $s>1$ in \eqref{eq:Mboundstrong} are explicitly smaller than the error term in \eqref{eq:mainAV}, where we used that $N \ell \gg 1$. The $s=1$ term exactly constitutes the deterministic approximation in \eqref{eq:Mreplace}, i.e.~the sum in \eqref{eq:Mboundstrong} contains exactly one term 
$$
\sum_{\substack{\pi\in\mathrm{NC}([k]): \\
		|\pi| = k}} \langle \mathrm{pTr}_{K(\pi)}(A_1,\ldots,A_{k-1})A_k \rangle \prod_{S\in\pi} m_\circ^{(\mathfrak{I}_k)}[S] = \bigg(\prod_{j \in \mathfrak{I}_k} \Im m_j\bigg) \bigg( \prod_{j \notin \mathfrak{I}_k} m_j\bigg)\langle A_1 ... A_k \rangle \,.
$$
Here we used that $|\pi| = k$ implies that the Kreweras complement consists of the full set, $K(\pi) = [k]$. 

Finally, for the proof of Lemma \ref{lem:Mbound}~(b) and the claim in Remark \ref{rmk:MHS}~(ii) concerning \eqref{eq:mainISO} (i.e.~the corresponding isotropic bounds) we argue completely analogously to Section \ref{subsec:pureIMISO}. 
\end{proof}
It remains to  prove  Lemma \ref{lem:Mboundstrong}. 
\begin{proof}[Proof of Lemma \ref{lem:Mboundstrong}]
Fix an arbitrary non-crossing partition $\pi \in \mathrm{NC}([k])$ consisting of $|\pi| = k+1-s$ blocks. 

First, note that, in order to get a non-vanishing partial trace 
$$
\langle \mathrm{pTr}_{K(\pi)}(A_1,\ldots,A_{k-1})A_k \rangle = \prod_{S\in K(\pi)}\left\langle\prod_{j\in S}A_j\right\rangle
$$
the minimal size of a block $S \in K(\pi)$ is two (using that the $A_i$'s are traceless). 
Therefore, by application of Hölder's inequality, 
\begin{equation} \label{eq:Apartfinal}
\begin{split}
\big| \langle \mathrm{pTr}_{K(\pi)}(A_1,\ldots,A_{k-1})A_k \rangle \big| \le  \prod_{\substack{S\in K(\pi) \\ |S| \ge 2}}\prod_{j\in S} \left\langle|A_j|^{|S|}\right\rangle^{\frac{1}{|S|}} \,. 
\end{split}
\end{equation}

In order to estimate $\prod_{S\in\pi} m_\circ^{(\mathfrak{I}_k)}[S]$, we recall the Möbius inversion formula \cite[Lemma~2.16]{thermalisation} 
\begin{equation} \label{eq:Mobius}
m_\circ^{(\mathfrak{I}_k)}[S] = m^{(\mathfrak{I}_k)}[S] + \sum_{\substack{\pi \in \mathrm{NC}(S) \\ |\pi| \ge 2}} (-1)^{|\pi|-1} \left( \prod_{T \in K(\pi)} C_{|T| -1} \right) \prod_{U \in \pi} m^{(\mathfrak{I}_k)}[U]
\end{equation}
where $C_n$ is the $n^{\rm th}$ Catalan number. Hence, it suffices to bound the iterated divided differences $m^{(\mathfrak{I}_k)}[S]$ for a subset $S \subset [k]$ as  
\begin{equation} \label{eq:intrepbound}
\left| m^{(\mathfrak{I}_k)}[S] \right| \lesssim \frac{\prod_{i \in \mathfrak{I}_k \cap S} \rho_i}{\ell^{|S| - 1}}
\end{equation}
which is a direct consequence of the integral representation \eqref{eq:Mdivdiff}. Indeed, combining \eqref{eq:Mobius} with \eqref{eq:intrepbound} and using that the sum in \eqref{eq:Mobius} is restricted to partitions of $S$ with at least two blocks, we obtain
\begin{equation} \label{eq:zpartfinal}
\left| \prod_{S\in\pi} m_\circ^{(\mathfrak{I}_k)}[S] \, \right| \lesssim \left(\prod_{i \in \mathfrak{I}_k} \rho_i\right)\frac{1}{\ell^{s-1}}
\end{equation}
where we additionally used that the original non-crossing partition $\pi \in \mathrm{NC}([k])$ consists of exactly $k+1-s$ blocks. Combining \eqref{eq:zpartfinal} with \eqref{eq:Apartfinal}, we conclude the proof of \eqref{eq:Mboundstrong}. 

For $s > \lfloor k/2 \rfloor$, we note that the Kreweras complement $K(\pi)$ necessary contains singletons, and hence 
the lhs.~of \eqref{eq:Mboundstrong} vanishes since $\langle A_i\rangle=0$.
\end{proof}
We conclude this section by giving the proof of Lemma \ref{lem:gainAV}. 

\begin{proof}[Proof of Lemma \ref{lem:gainAV}]
	The principal idea of the proof is very similar to the previous ones given in this section, hence we provide only a brief argument. 
	
Recalling \eqref{eq:Mdefim}--\eqref{eq:Mdivdiff}, we have that 
\begin{equation} \label{eq:M^2AVestpf}
\big\langle \big| \mathcal{M}(z_1,A_1,\dots,A_{k},z_{k+1};\mathfrak{I}_{k+1})\big|^2 \big\rangle \lesssim \sum_{\pi\in\mathrm{NC}([k+1])} \big\langle \big| \mathrm{pTr}_{K(\pi)}(A_1,\ldots,A_{k})  \big|^2 \big\rangle \left| \prod_{S\in\pi} m_\circ^{(\mathfrak{I}_{k+1})}[S] \right|^2\,.
\end{equation}

Next, analogously to Lemma \ref{lem:Mboundstrong} above, we decompose the summation over all partitions $\pi$ into groups, where $|\pi| = k+2-s$ with $1 \le s \le \lceil (k+1)/2 \rceil$ is fixed (note that $\lfloor \cdot \rfloor$ got replaced by $\lceil \cdot \rceil$ due to the presence of a non-traceless identity matrix). Moreover, for fixed $s$ we distinguish two cases in \eqref{eq:M^2AVestpf} (recall \eqref{eq:partrdef}): For Case (i), we assume that the unique block $\mathfrak{B}(k+1) \in K(\pi)$ containing $k+1$ contains no other elements, i.e.~$\mathfrak{B}(k+1)\setminus \{k+1\} = \emptyset$. For Case (ii), we assume that $\mathfrak{B}(k+1)\setminus \{k+1\} \neq \emptyset$.  
\\[2mm]
\underline{Case~(i).} First, we note that necessarily $s \ge 2$ in this case. Then, we have that 
\begin{equation*}
\big\langle \big| \mathrm{pTr}_{K(\pi)}(A_1,\ldots,A_{k})  \big|^2 \big\rangle \le \left(\prod_{\substack{S\in K(\pi) \setminus \mathfrak{B}(k+1) \\ |S| \ge 2}}\prod_{j\in S} \left\langle|A_j|^{|S|}\right\rangle^{\frac{1}{|S|}} \right)^2 \le \left(\frac{N^{k/2}}{N^{s-1}}\right)^2\,, 
\end{equation*}
analogously to \eqref{eq:Apartfinal}. Since in Case (i), $z_1$ and $z_{k+1}$ are always together in one block $S \in \pi$ with $|\pi| = k+2-s$, we obtain 
\begin{equation*} 
	\left| \prod_{S\in\pi} m_\circ^{(\mathfrak{I}_{k+1})}[S] \, \right|^2 \lesssim \left[\frac{\left(\prod_{i \in \mathfrak{I}_{k+1}} \rho_i\right) \wedge \max_{i \in [k+1]} \rho_i}{\ell^{s-1}} \right]^2
\end{equation*}
analogously to \eqref{eq:zpartfinal} by means of \eqref{eq:Mobius} and the integral representation \eqref{eq:Mdivdiff}. The additional $\wedge \max_{i \in [k+1]} \rho_i$, which is effective only for $\mathfrak{I}_{k+1} = \emptyset$, comes from the estimate
\begin{equation*}
\int_\R \frac{\rho(x)}{|x - z_1| \, |x - z_{k+1}|} \dd x\lesssim \frac{\rho_1 \vee \rho_{k+1}}{\ell}\,, 
\end{equation*}
easily obtained by a Schwarz inequality. 
\\[2mm]
\underline{Case~(ii).} In this case, the above estimates of the two factors in \eqref{eq:M^2AVestpf} modify to
\begin{equation*}
	\big\langle \big| \mathrm{pTr}_{K(\pi)}(A_1,\ldots,A_{k})  \big|^2 \big\rangle \le \left(\prod_{\substack{S\in K(\pi)  \\ |S| \ge  2}} \ \left(\prod_{j\in S_1} \left\langle|A_j|^{2(|S_1|-1)}\right\rangle^{\frac{1}{2(|S_1|-1)}}\right) \left(\prod_{i = 2}^s \prod_{j\in S_i} \left\langle|A_j|^{|S_i|}\right\rangle^{\frac{1}{|S_i|}}\right) \right)^2\,, 
\end{equation*}
assuming that $S_1 = \mathfrak{B}(k+1)$, and 
\begin{equation*} 
	\left| \prod_{S\in\pi} m_\circ^{(\mathfrak{I}_{k+1})}[S] \, \right|^2 \lesssim \left[\frac{\prod_{i \in \mathfrak{I}_{k+1}} \rho_i}{\ell^{s-1}} \right]^2\,. 
\end{equation*}
Putting the two cases together and using $\langle |A|^p\rangle^{1/p} \le N^{\frac{p-2}{2p}} \langle |A|^2 \rangle^{1/2}$ for any $p \ge 2$ together with $N \ell > 1$ and the normalization $\langle |A_j|^2 \rangle = 1$, we find that
\begin{equation*} 
	\big\langle \big| \mathcal{M}(z_1,A_1,\dots,A_{k},z_{k+1};\mathfrak{I}_{k+1})\big|^2 \big\rangle \lesssim N^k \, \left(\prod_{i \in \mathfrak{I}_{k+1}} \rho_i\right)^2 \left[ \left(\frac{ \max_{i\in [k+1]} \big(\rho_i + \mathbf{1}(i \notin \mathfrak{I}_{k+1})\big)}{N \ell}\right)^2 + \frac{1}{N} \right]\,. 
\end{equation*}
\end{proof}
\subsection{Proof of the global law in Proposition \ref{prop:initial}} \label{app:globallaw}
We only discuss the proof of the average case \eqref{eq:maininAV}, the isotropic case \eqref{eq:maininISO} 
is analogous and hence omitted. 
Set  $d := \min_i \mathrm{dist}(z_i, [-2,2])$ and recall that $d \ge \delta \gtrsim 1$. 

The case of no $\Im G$'s, i.e.~$\mathfrak{I}_k = \emptyset$, has already been dealt with
 in \cite[Appendix~A]{A2} and yielded the bound \eqref{eq:maininAV} with a factor $d^{-(k+1)}$
  instead of $\sqrt{\max_i \rho_i / \ell}$. 
  In the $d \gtrsim 1$ regime, this bound  is in fact stronger,
  $d^{-(k+1)} \lesssim d^{-1} \lesssim\sqrt{\max_i \rho_i / \ell} $,
  since $|\rho(z)| \sim |\Im z|/\mathrm{dist}(z, [-2,2])^2$ and $\ell \sim \min_i |\Im z_i|$.

In case of $\mathfrak{I}_k \neq \emptyset$ 
we need to gain from the fact that the original chain contained $\Im G$'s. The principal idea is analogous to \cite[Appendix B]{multiG} and \cite[Appendix A]{A2}, as we employ a cumulant expansion and argue by induction on the length $k$ of the initial chain. However, in order to gain from the imaginary parts, the key observation is that within the cumulant expansion, the total number of $\Im$'s is preserved, as becomes apparent from the formula
$$\partial_{ab} \Im G=G\Delta^{ab}\Im G+\Im G\Delta^{ab}G^*$$
for the derivative of an $\Im G$ factor.
 Here, $\partial_{ab}$ denotes the partial derivative w.r.t.~the matrix entry $w_{ab}$ of the Wigner matrix $W$ and $\Delta^{ab}$ is a matrix consisting of all zeroes except for the $(a,b)$--entry which is equal to one. 
 Using the norm bounds  $\Vert \Im G_j \Vert \le |\Im z_j|/\mathrm{dist}(z_j, [-2,2])^2 \sim  \rho_j$ 
 and $\Vert G_j \Vert \le 1/d$ 
 by spectral decomposition, 
 we obtain \eqref{eq:maininAV} but with a factor $d^{k+1-|\mathfrak{I}_k|} $ instead of
  $\sqrt{\ell}$, 
  analogously to \cite[Eq.~(A.2)]{A2}. Finally, since $\sqrt{\ell} \lesssim d\lesssim d^{k+1-|\mathfrak{I}_k|} $,  this concludes the proof. \qed

\nc

\subsection{Complex moment matching} \label{app:moma}
	In order to conduct the third step of our proof, the Green function comparison (GFT) of Proposition \ref{prop:zag}, we need to guarantee the moment matching condition \eqref{eq:momentmatch} of the single entry distributions. For real random variables (or complex ones with independent real and imaginary parts), the argument ensuring this (and even an approximately matching fourth moment) is standard (see, e.g., \cite[Lemma~16.2]{EYbook}) and based on an explicit construction of a distribution supported on three points in $\R$. 	However, for general complex random variables,
this construction is not sufficient; we now present its  complex variant.

Let $Z$ be a complex random variable and denote its moments by
\begin{equation} \label{eq:moment}
	m_{i,j}=m_{i,j}(Z) := \E \big[\overline{Z}^i Z^j\big] \qquad \text{for} \qquad i,j \in \N_0\,, 
\end{equation}
and call $i+j$ the \emph{order} of $m_{i,j}$. 
Clearly $m_{0,0} = 1$ and $m_{i,j} = \overline{m}_{j,i}$, so we can focus on $m_{i,j}$ with $i\le j$. 
\begin{lemma} \label{lem:momentmatch}
	Let $m_{0,2}, m_{0,3}, m_{1,2} \in \C$ with $|m_{0,2}| \le 1$. 
	Then there exists a complex random variable $Z$ supported on at most eleven points $z_1, ... , z_{11} \in \C$, such that its moments \eqref{eq:moment} are given by 
	\begin{equation} \label{eq:momentmatch2} 
		m_{0,1}(Z) = 0\,, \quad m_{1,1}(Z) = 1\,, \quad m_{0,2}(Z) = m_{0,2}\,, \quad m_{0,3}(Z) = m_{0,3}\,, \quad \text{and} \quad m_{1,2}(Z) = m_{1,2}\,. 
	\end{equation}
\end{lemma}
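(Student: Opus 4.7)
Proof plan: I would reduce the complex moment matching problem to a real two-dimensional moment problem and then invoke a finite atomic representation theorem. Writing $Z = X + iY$, the five complex (equivalently nine real) moment conditions of \eqref{eq:momentmatch2}, together with the normalization $\E 1 = 1$, translate into a complete prescription of all joint moments $\E X^\alpha Y^\beta$ for $\alpha + \beta \le 3$. This is a total of $1 + 2 + 3 + 4 = 10$ real conditions, matching exactly $\dim \mathcal{P}_3(\R^2) = \binom{2+3}{3}$, the dimension of the space of bivariate polynomials of degree at most three. Under the hypothesis $|m_{0,2}| \le 1$, the prescribed second moments form a valid (positive semidefinite) covariance matrix with trace equal to $m_{1,1} = 1$; the third-order prescriptions carry no further constraint.

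The next step is to verify that some Borel probability measure on $\R^2$ realizes these ten prescribed moments. In the nondegenerate case $|m_{0,2}|<1$, I would start from the centered Gaussian on $\R^2$ with the correct covariance, whose third moments all vanish by symmetry, and then add a small signed perturbation consisting of finitely many point masses with weights of order $\epsilon$ placed at positions of order $\epsilon^{-1/2}$; a direct calculation shows that such a perturbation produces arbitrarily prescribable third moments of order $1$ while altering the second moments only by $O(\epsilon)$, a change that can be reabsorbed by a slight rescaling of the Gaussian part (one may then truncate the Gaussian to a bounded region at negligible cost in the moments). In the degenerate case $|m_{0,2}| = 1$, the prescribed covariance is rank one, forcing the measure onto a line through the origin; the problem then reduces to the classical one-dimensional four-moment matching problem, which is handled by the standard three-point construction of \cite[Lemma~16.2]{EYbook}.

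Given feasibility, I would apply the Tchakaloff–Bayer–Teichmann finite atomic representation theorem: any Borel probability measure on $\R^d$ with finite moments of order $\le k$ admits an atomic probability measure with the same moments of order $\le k$, supported on at most $\dim \mathcal{P}_k(\R^d)$ points. For $d=2$, $k=3$ this yields at most ten atoms; a dummy atom at the origin with weight zero then brings the total up to the eleven allowed by the statement. The atomic measure so produced is the law of the desired complex random variable $Z$.

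The principal obstacle is the feasibility step, since for a general two-dimensional distribution the third moments \emph{do} satisfy nontrivial positivity constraints relative to the second moments when one insists on small or rigid support (this is governed by positive semidefiniteness of higher moment matrices). These constraints disappear, however, once one is free to push the support of the measure arbitrarily far from the origin, which is the crucial freedom exploited in the Gaussian-plus-small-perturbation recipe above; this is why no additional compatibility hypothesis on $m_{0,3}$ and $m_{1,2}$ is needed.
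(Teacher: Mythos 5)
Your approach is genuinely different from the paper's. The paper gives a fully explicit ansatz: eleven points, ten of them paired on five rays through the origin with four of the angles fixed at multiples of $\pi/4$, reducing the problem to a small explicit linear system whose determinant is then checked to be non-vanishing. You instead separate the problem into a feasibility question (does \emph{some} Borel probability measure on $\R^2 \cong \C$ realize the prescribed moments of order $\le 3$?) and then invoke the Tchakaloff--Bayer--Teichmann theorem to atomize; this is conceptually cleaner and makes the point count transparent ($\dim \mathcal{P}_3(\R^2)=10$, plus a dummy atom). It trades elementariness and self-containedness for modularity and a cited black box.

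However, the feasibility sketch has two concrete defects as written. First, the scaling is wrong: an atom of weight $\epsilon$ at distance $\epsilon^{-1/2}$ contributes $\epsilon^{1-k/2}$ to the $k$-th moment, so the second moments are perturbed by $O(1)$, not $O(\epsilon)$, and the third moments diverge like $\epsilon^{-1/2}$ rather than land at a prescribed $O(1)$ value. The correct hierarchy is, e.g., weights $\sim \epsilon^3$ at distance $\sim \epsilon^{-1}$, giving $O(\epsilon^2)$, $O(\epsilon)$, $O(1)$ for the first, second, and third moments respectively. Second, a \emph{signed} perturbation is not admissible: the atoms sit far out where the Gaussian density is negligible, so negative weights would make the total measure fail to be nonnegative. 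One should instead write $\mu=(1-s)\nu_G + s\,\nu_{\rm atoms}$ with $\nu_{\rm atoms}$ a genuine probability measure on a few far-out points, and verify that arbitrary $(m_{0,3},m_{1,2})\in\C^2$ can be hit with nonnegative weights (a small extra argument about angle placement). Finally, in the boundary case $|m_{0,2}|=1$ the rank-one covariance forces $Z=e^{\ii\phi}T$ for a real $T$, and then necessarily $m_{0,3}=m_{0,2}\,m_{1,2}$ with $m_{1,2}\in e^{\ii\phi}\R$; your reduction to the one-dimensional problem silently assumes these compatibility constraints, which need not hold for arbitrary prescribed $m_{0,3}, m_{1,2}$. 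The paper's application always has $|\sigma|<1$, so this boundary never arises, but a self-contained proof should either impose $|m_{0,2}|<1$ or state the compatibility condition.
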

\begin{remark}
	A generalized version of this problem (constructing an atomic measure with arbitrary number
	of prescribed moments), known as the \emph{truncated complex $K$-moment problem}, has been solved by Curto and Fialkow in \cite{CuFi}. To keep our result self-contained, we give a simple independent proof for the 
	special case of three moments that we need here. 
\end{remark}

Having Lemma \ref{lem:momentmatch} at hand, one can easily see that there exists a random variable that has the prescribed first three moments and it has an
 independent  Gaussian component of given variance $\gamma>0$. More precisely, given 
$m_{0,1}= 0$, $m_{1,1} = 1$, $m_{0,2}$,  $m_{0,3}$, and  $m_{1,2}$ with $|m_{0,2}| \le 1$ as the 
set of moments of $\chi_{\mathrm{od}}$,  we look for a representation of $Z$ in the form 
\begin{equation*}
	Z := (1 - \gamma)^{1/2} Z' + \gamma^{1/2} \xi_G  \quad \text{with} \quad \gamma \in (0,1) \quad \text{fixed}
\end{equation*}
with some random variable $Z'$ to be constructed, where
$\xi_G$ is a centered complex Gaussian random variable having second moments $m_{0,2}(\xi_G) = m_{0,2}$ and $m_{1,1}(\xi_G) = 1$.  The moments of $Z'$ thus satisfy the relations
\begin{equation}\label{seq}
m_{i,j} = (1 - \gamma)^{(i+j)/2} m_{i,j}(Z') + \gamma^{(i+j)/2} m_{i,j}(\xi_G) \quad \text{with} \quad 1 \le i+j \le 3.
\end{equation}
In particular, $|m_{0,2}(Z')| = |m_{0,2}|\le 1$, so the moment sequence $m_{i,j}(Z')$ from \eqref{seq} satisfy
the only nontrivial condition of Lemma \ref{lem:momentmatch}. Therefore, by Lemma \ref{lem:momentmatch}, we can construct the random variable $Z'$.
Finally, we remark that all random variables involved have arbitrarily high moments (cf.~Assumption~\ref{ass:entries}).
This moment matching argument shows how to choose the distribution of the
 initial condition $W_0$ of the Ornstein-Uhlenbeck flow
\eqref{eq:OUOUOU} so that after time $T=\gamma$ it will match with the distribution of the original matrix $W$ 
up to three moments. 

\begin{proof}[Proof of Lemma \ref{lem:momentmatch}]
	We only outline the construction of the points $z_1, ... , z_{11} \in \C$, the precise computations are a simple exercise in calculus and linear algebra and hence omitted. 
	
	We set  $z_{11} = 0$ to be the origin. The remaining ten points are then placed on five lines through the origin, carrying two points each, i.e.~we put
	\begin{equation*}
		z_j = r_j \ee^{\ii \varphi_j} \quad \text{and} \quad z_{11-j} = \hat{z}_j := - \hat{r}_j \ee^{\ii \varphi_j} \quad \text{with} \quad r_j, \hat{r}_j \ge 0\,, \varphi_j \in [0, 2 \pi) \quad \text{for} \quad j \in [5]\,. 
	\end{equation*}
	For simplicity, we can even prescribe four of the five angular variables in such a way that the corresponding points lie
	on the real and imaginary axis and the two diagonals, i.e. set~$\varphi_j := j \pi/4$ for $j \in [4]$. 
	
	We then take the law of $Z$ to be of the form 
	\begin{equation*}
		\sum_{j \in [5]} \big( p_j \delta_{z_j} + \hat{p}_j \delta_{\hat{z}_j} \big) + \bigg(1 - \sum_{j \in [5]} (p_j + \hat{p}_j)\bigg) \delta_0
	\end{equation*}
	for weights $p_j, \hat{p}_j \ge 0$ satisfying $\sum_{j \in [5]}(p_j + \hat{p}_j) \le 1$. As mentioned above, it is a simple exercise to show that the remaining parameters $r_j, \hat{r}_j, p_j, \hat{p}_j \ge 0$ for $j \in [5]$ and $\varphi_5 \in [0, 2 \pi)$ can be chosen in such a way to accommodate \eqref{eq:momentmatch2}. More precisely, taking $A_j := p_j r_j = \hat{p}_j \hat{r}_j \ge 0$ for $j \in [5]$ (this ensures $m_{0,1}(Z) = 0$), $r_5 = \hat{r}_5$, and using our choices of $\varphi_j = j \pi/4$ for $j \in [4]$, the two complex conditions $m_{0,3}(Z) = m_{0,3}$ and $m_{1,2}(Z) = m_{1,2}$ turn into four real linear equations for the variables $C_j := B_j (r_j - \hat{r}_j) \in \R$ for $j \in [4]$ with $B_j := A_j (r_j + \hat{r}_j) \ge 0$. The determinant of this linear systems can easily seen to be non-vanishing and it thus determines the difference variables $r_j - \hat{r}_j \in \R$ for $j \in [4]$. Finally, the independent variables $\varphi_5 \in [0, 2 \pi)$ and $B_j := A_j (r_j + \hat{r}_j) \ge 0$ for $j \in [5]$ can easily be chosen to satisfy $m_{1,1}(Z) = 1$ and  $m_{0,2}(Z) = m_{0,2}$. 
\end{proof}

\subsection{Additional proofs for Section \ref{sec:opAedge}}
\label{sec:addproofsec4}

\begin{proof}[Proofs of Lemmas~\ref{lem:Mcancel} and \ref{lem:cancM}]
	 
	The claim of Lemma \ref{lem:Mcancel} follows by multi-linearity from Lemma~\ref{lem:cancM}. 
	
	For the proof of Lemma \ref{lem:cancM}, we will use a \emph{tensorization argument} (or \emph{meta argument}) similar to \cite{metaargument} and \cite[Proof of Lemma~D.1]{iidpaper}. Throughout this proof the size $N$ of $W$ is fixed. For $d\in\N$ consider the $(Nd)\times (Nd)$ Wigner matrix ${\bm W}^{(d)}$, i.e. the entries of ${\bm W}^{(d)}$ have variance $1/(Nd)$. Let ${\bm W}_t^{(d)}$ be the Ornstein-Uhlenbeck flow as in \eqref{eq:OUOUOU} with initial condition ${\bm W}_0^{(d)}={\bm W}^{(d)}$, and define its resolvent ${\bm G}_{i,t}^{(d)}:=({\bm W}_t^{(d)}-z_{i,t})^{-1}$, then the deterministic approximation of the resolvent is still given by $m_1$, the Stieltjes transform of the semicircular law.
	
	We now explain that also the deterministic approximation of products of resolvents and deterministic matrices is unchanged. For $1\le i\le k$, define ${\bm A}_i^{(d)}:=A_i\otimes I_d$, with $I_d$ denoting the $d$--dimensional identity, then for ${\bm M}_{[1,k],t}^{(d)}$ defined as in \eqref{eq:Mdef} with ${\bm M}_{i,t}^{(d)}$ and ${\bm A}_i^{(d)}$ we have
	\begin{equation}
		\label{eq:usefulrel}
		{\bm M}_{[1,k],t}^{(d)}:={\bm M}^{(d)}(z_{1,t},{\bm A}_1^{(d)}, \dots, {\bm A}_{k-1}^{(d)},z_{k,t})=M(z_{1,t},A_1,\dots,A_{k-1},z_{k,t})\otimes I_d.
	\end{equation}
	
	Fix $0<s<t$, then integrating \eqref{eq:flowka}
	 for the bold faced resolvent and deterministic matrices,  in time from $s$ to $t$ and taking the expectation we obtain
	\begin{equation}
		\begin{split}
			&\langle {\bm M}_{[1,k],t}^{(d)}\bm A_k\rangle- \langle {\bm M}_{[1,k],s}^{(d)}\bm A_k\rangle \\
			=&-\E \langle (\bm{G}_{[1,k],t}-{\bm M}_{[1,k],t}^{(d)})\bm A_k\rangle+\E \langle (\bm{G}_{[1,k],s}-{\bm M}_{[1,k],s}^{(d)})\bm A_k\rangle+\frac{k}{2}\int_s^t\E\langle \bm{G}_{[1,k],r}\bm A_k\rangle\,\dd r \\
			&+ \sum_{i,j=1\atop i< j}^k\int_s^t\E\langle \bm{G}_{[i,j],r}\rangle\langle \bm{G}_{[j,i],r}\rangle\, \dd r+\sum_{i=1}^k \int_s^t\E\langle \bm G_{i,r}-m_{i,r}\rangle \langle \bm{G}^{(i)}_{[1,k],r}\bm A_k\rangle\, \dd r+\frac{\sigma}{Nd}\sum_{i,j=1\atop i\le j}^k\int_s^t \E\langle \bm{G}_{[i,j],r}\bm{G}_{[j,i],r}^\mathfrak{t}\rangle\, \dd r\,.
		\end{split}
	\end{equation}
	
	Using the global law in Proposition~\ref{prop:initial} and \eqref{eq:usefulrel}, and taking the limit $d\to \infty$, this implies that for $|\Im z_i|\gtrsim 1$ we have
	\begin{equation}
		\label{eq:almthereder}
		\langle M_{[1,k],t}A_k\rangle- \langle M_{[1,k],s}A_k\rangle=\frac{k}{2}\int_s^t\langle M_{[1,k],r}A_k\rangle\,\dd r+\sum_{i,j=1, \atop i<j}^{k-1}\int_s^t\langle M_{[i,j],r}\rangle \langle M_{[j,i],r}\rangle\, \dd r.
	\end{equation}
	Finally, dividing \eqref{eq:almthereder} by $t-s$ and taking the limit $s\to t$, we conclude the proof of Lemma \ref{lem:cancM}.
\end{proof}

\begin{proof}[Proof of Lemma \ref{lem:redinphi}]
	The proof of this lemma is very similar to \cite[Lemma 3.3]{A2}. Hence we give the argument only for the case where $k$ is even, if $k$ is odd the proof is completely analogous. Moreover, for notational simplicity we henceforth drop the time dependence and the precise indices of $G_s$'s and $A$'s, i.e.~write $\Im G_s \equiv \Im G_{i,s}$, $A \equiv A_j$, $\rho \equiv \rho_i$ and so on. Then, by application of the general bound
	$$
	| \langle B_1 B_2 B_3 B_4 \rangle | \le N \prod_{i=1}^4 \langle |B_i|^2 \rangle^{1/2} \quad \text{for all} \quad B_i \in \C^{N \times N}
	$$
	applied to $B_i = \sqrt{|\Im G_s|} A (\Im G_s A)^{k/2-1} \sqrt{|\Im G_s|}$ and with the aid of \eqref{eq:Mbound}, we find that 
	\begin{equation*}
\begin{split}
\Phi_{2k}(s) &= \frac{\sqrt{N \hell_s}}{N^{k-1} \, \rho_s^{2k} \,  \langle |A|^2 \rangle^{k} } \big| \big\langle (\Im G_s A)^{2k} - \widehat{{M}}_{[\hat{1},\widehat{2k}],s} A \big\rangle  \big| \\
&\lesssim \sqrt{N \hell_s} + \frac{\sqrt{N \hell_s}}{N^{k-1} \, \rho_s^{2k} \,  \langle |A|^2 \rangle^{k} } N \big| \big\langle (\Im G_s A)^{k}  \big\rangle  \big|^2 \\
&\prec  \sqrt{N \hell_s} + \frac{\sqrt{N \hell_s}}{N^{k-1} \, \rho_s^{2k} \,  \langle |A|^2 \rangle^{k} } N \left[ N^{k/2-1} \rho_s^k \langle |A|^2\rangle^{k/2} \left( 1 + \frac{\phi_k}{\sqrt{N \hell_s}} \right)  \right]^2 \\
&\lesssim \sqrt{N \hell_s} + \frac{\phi_k^2}{\sqrt{N \hell_s}} \,. 
\end{split}
	\end{equation*}
We remark that, in order to bound $\big\langle (\Im G_s A)^{k}  \big\rangle $ in terms of $\phi_k$, we added and subtracted the corresponding $M$-term and used the assumption that $\Phi_k(s) \prec \phi_k$. 
\end{proof}

\end{document}